  \pgfplotsset{compat=newest}
\tikzset{
  fl/.style = {path fading=fade l},
  fr/.style = {path fading=fade r},
  wh/.style = {draw=none,fill=none},
  Gc/.style = {draw=none,circle split, inner sep=0pt,minimum size=8pt,rotate=90,path picture={\draw[pattern=#1] (0,0.07) circle (1.5pt); }},
  Gd/.style = {draw=none,circle split, inner sep=0pt,minimum size=8pt,rotate=270,path picture={\draw[pattern=#1] (0,0.07) circle (1.5pt); }},
  Gf/.style={draw=none,circle split, inner sep=1pt,minimum size=8pt,rotate=90},
  G/.style={circle,draw,minimum size=8pt,inner sep=1pt,font=\tiny},
  xx/.style={circle,fill,draw,inner sep=0pt,minimum size=3pt},
  ab/.style={circle,fill,draw=none,inner sep=0pt,minimum size=0pt,as=},
  Gend/.style={inner sep=0pt,minimum size=3pt},
  r/.style={draw=red},
  o/.style={draw=black,fill=white},
  lb/.style args={#1}{label=below:#1},
  lt/.style args={#1}{label=above:#1},
  lr/.style args={#1}{label=right:#1},
  ll/.style args={#1}{label=left:#1},
  g/.style={postaction={decorate,decoration={
        markings,
        mark=at position .7 with {\arrow[#1]{Stealth[sep=-3pt]}}
      }}},
  s/.style={densely dashed,postaction={decorate,decoration={
        markings,
        mark=at position .7 with {\arrow[#1]{Stealth[sep=-3pt]}}
      }}},
  R/.style ={draw=lightgray, thick,densely dotted,edge node={node[above=-7pt] {\color{gray} $R$ }},anchor=south,pos=0.5,postaction={decoration={
        markings,
        mark=at position .7 with {\arrow[#1]{Stealth[sep=-3pt]}}
      },decorate}},
  K/.style ={draw=lightgray, thick,densely dotted,edge node={node[above=-7pt] {\color{gray} $K$ }},anchor=south,pos=0.5,postaction={decoration={
        markings,
        mark=at position .7 with {\arrow[#1]{Stealth[sep=-3pt]}}
      },decorate}},
we/.style args ={#1}{draw=lightgray, thick,densely dotted,edge node={node[above=-7pt] {\color{gray} #1 }},anchor=south,pos=0.5},
IEg/.style ={draw=lightgray, thick,densely dotted},
  S/.style ={draw=lightgray, thick,densely dotted,edge node={node[above=-7pt] {\color{gray}$S$}},anchor=south,pos=0.5},  
  J/.style ={edge node={node[above=-6pt] {$J$}},anchor=south,pos=0.5},  
  T/.style ={draw=lightgray, thick,densely dotted,edge node={node[above=-7pt] {\color{gray}$T$}},anchor=south,pos=0.5,postaction={decoration={
        markings,
        mark=at position .7 with {\arrow[#1]{Stealth[sep=-3pt]}}
      },decorate}},
  rpd/.style ={draw=lightgray,thick,densely dotted,edge node={node[above=-7pt] {\color{red}$\partial$}},anchor=south,pos=0.5,postaction={decoration={
        markings,
        mark=at position .7 with {\arrow[#1]{Stealth[sep=-3pt]}}
      },decorate}},
  pd/.style ={draw=lightgray,thick,densely dotted,edge node={node[above=-7pt] {\color{gray}$\partial$}},anchor=south,pos=0.5,postaction={decoration={
        markings,
        mark=at position .7 with {\arrow[#1]{Stealth[sep=-3pt]}}
      },decorate}},
  pdr/.style ={g,draw=lightgray,thick,densely dotted,edge node={node[above=-7pt] {\color{red}$\partial$}},anchor=south,pos=0.5},
  pdkr/.style args={#1}{g,draw=lightgray,thick,densely dotted,edge node={node[above=-7pt] { \color{gray}$\partial^{#1}$\color{red}$\partial$}},anchor=south,pos=0.5},
  pdh/.style ={g,draw=lightgray,thick,densely dotted,edge node={node[above=-7pt] {\color{gray}$\widehat\partial$}},anchor=south,pos=0.5},
  pdk/.style args={#1}{g,draw=lightgray,thick,densely dotted,edge node={node[above=-7pt] {\color{gray}$\partial^{#1}$}},anchor=south,pos=0.5},
  eq/.style = {double,postaction={decoration={name=none}}},
  inl/.style args={#1}{initial,initial where=left, initial text=#1,initial distance=10pt},
  pdn/.style args={#1}{g,draw=lightgray,thick,densely dotted,edge node={node[above=-7pt] {\color{gray}$\partial^{#1}$}},anchor=south,pos=0.5},
  inr/.style args={#1}{initial,initial where=right, initial text=#1,initial distance=10pt},
  int/.style args={#1}{initial,initial where=above, initial text=#1,initial distance=10pt},
  inb/.style args={#1}{initial,initial where=below, initial text=#1,initial distance=10pt},
  lbr/.style args={#1}{initial,initial where=left, initial text=$\br$,initial distance=10pt},
  tbr/.style args={#1}{initial,initial where=above, initial text=$\br$,initial distance=10pt},
  bbr/.style args={#1}{initial,initial where=below, initial text=$\br$,initial distance=10pt},
  rbr/.style args={#1}{initial,initial where=right, initial text=$\br$,initial distance=10pt},
  lbk/.style args={#1}{initial,initial where=left, initial text=$\bk^{(b)}$,initial distance=10pt},
  tbk/.style args={#1}{initial,initial where=above, initial text=$\bk^{(b)}$,initial distance=10pt},
  bbk/.style args={#1}{initial,initial where=below, initial text=$\bk^{(b)}$,initial distance=10pt},
  rbk/.style args={#1}{initial,initial where=right, initial text=$\bk^{(b)}$,initial distance=10pt},
  lmok/.style args={#1}{initial,initial where=left, initial text=$\bm_o\bk^{(b)}$,initial distance=10pt},
  tmok/.style args={#1}{initial,initial where=above, initial text=$\bm_o\bk^{(b)}$,initial distance=10pt},
  bmok/.style args={#1}{initial,initial where=below, initial text=$\bm_o\bk^{(b)}$,initial distance=10pt},
  rmok/.style args={#1}{initial,initial where=right, initial text=$\bm_o\bk^{(b)}$,initial distance=10pt},
  lmdk/.style args={#1}{initial,initial where=left, initial text=$\bm_d\bk^{(b)}$,initial distance=10pt},
  mdtk/.style args={#1}{initial,initial where=above, initial text=$\bm_d\widetilde{\bk}^{(b)}$,initial distance=10pt},
  bmdk/.style args={#1}{initial,initial where=below, initial text=$\bm_d\bk^{(b)}$,initial distance=10pt},
  rmdk/.style args={#1}{initial,initial where=right, initial text=$\bm_d\bk^{(b)}$,initial distance=10pt},
  l1/.style args={#1}{initial,initial where=left, initial text=$\mathbf 1$,initial distance=10pt},
  t1/.style args={#1}{initial,initial where=above, initial text=$\mathbf 1$,initial distance=10pt},
  b1/.style args={#1}{initial,initial where=below, initial text=$\mathbf 1$,initial distance=10pt},
  r1/.style args={#1}{initial,initial where=right, initial text=$\mathbf 1$,initial distance=10pt},
  lm/.style args={#1}{initial,initial where=left, initial text=$\vm$,initial distance=10pt},
  tm/.style args={#1}{initial,initial where=above, initial text=$\vm$,initial distance=10pt},
  bm/.style args={#1}{initial,initial where=below, initial text=$\vm$,initial distance=10pt},
  rm/.style args={#1}{initial,initial where=right, initial text=$\vm$,initial distance=10pt},
  inr/.style args={#1}{initial,initial where=right, initial text=#1,initial distance=10pt},
  B/.style args={#1}{thick,draw=lightgray,decorate,decoration={snake,amplitude=.4mm,segment length=.8mm,post length=1.4mm},edge node={node[above=-7pt] {\color{gray} #1 }},anchor=south,pos=0.5},
  br/.style = {bend right},
  b0/.style = {bend left=0},
  bl/.style = {bend left},
  glb/.style = {looseness=20,in =220, out=320},
  gll/.style = {looseness=20,in =130, out=230},
  glr/.style = {looseness=20,in =310, out=50},
  glt/.style = {looseness=20,in =40, out=140},
  gm/.style={postaction={decorate,decoration={
        markings,
        mark=at position .3 with {\arrow[#1]{Diamond[open,sep=-3pt,width=5pt]}}
      }}},
}
\newcommand\ssGraph[1]{
\begin{tikzpicture}[subgraph text none,grow=right,baseline={([yshift=-2pt]current bounding box.center)},font=\footnotesize,>=Stealth]
  \graph[spring electrical layout,components go down left aligned,nodes={draw,circle,as=,minimum size=3pt,inner sep=0pt,fill},node distance = 15pt,component sep=15pt]{ #1  };
  \end{tikzpicture}
}
\tikzset{circle split part fill/.style  args={#1}{%
 alias=tmp@name, 
  postaction={%
    insert path={
     \pgfextra{%
     \pgfpointdiff{\pgfpointanchor{\pgf@node@name}{center}}%
                  {\pgfpointanchor{\pgf@node@name}{east}}%
     \pgfmathsetmacro\insiderad{\pgf@x}
      \fill[white,fill opacity=0] (\pgf@node@name.base) ([xshift=-\pgflinewidth]\pgf@node@name.east) arc
                          (0:180:\insiderad-\pgflinewidth)--cycle;
      \fill[fill=white,preaction={fill, white},pattern=#1] (\pgf@node@name.base) ([xshift=\pgflinewidth]\pgf@node@name.west)  arc
                           (180:360:\insiderad-\pgflinewidth)--cycle;   
      \draw[line width=0.4pt] (\pgf@node@name.base) ([xshift=\pgflinewidth]\pgf@node@name.west)  arc
                           (180:360:\insiderad-\pgflinewidth)--cycle;                                
         }}}}}  
\tikzset{my loop/.style =  {to path={
  \pgfextra{}
  [looseness=6,min distance=4mm]
  \tikz@to@curve@path},font=\sffamily\small
  }}  
\definecolor{col0}{HTML}{FFFFFF}
\definecolor{col1}{HTML}{A2B969}
\definecolor{col2}{HTML}{EBCB38}
\definecolor{col3}{HTML}{0D95BC}
\definecolor{col4}{HTML}{063951}
\definecolor{col5}{HTML}{F36F13}
\definecolor{col6}{HTML}{C13018}
\definecolor{lightgray}{HTML}{CCCCCC}
\newcommand\csum[1]{%
\sum_{\forcsvlist{\createColorCircle@item}{#1}}
}
\newcommand\createColorCircle@item[1]{
\StrDel{#1}{h}[\colNum]
\newif\ifhalf
\IfSubStr{#1}{h}{\halftrue}{\halffalse}
{\color{col\colNum}\ifhalf\circ\else\bullet\fi}
}
\newcommand\plotLambda[1]{
  \def\mgraphspecs{}
  \foreach \ll [count = \countl] in {#1} {
    \csedef{firstCol}{col0}
    \def\graphspecs{}
    \StrCount{\ll}{,}[\graphlen]
    \ifnum\graphlen>1 
    \foreach \node [count = \g] in \ll {
      \StrDel{\node}{m}[\nodenumber]
      \StrDel{\nodenumber}{.}[\nodenumber]
      \global\csedef{tempCol}{col\nodenumber}
      \global\csdef{tempPat}{\col{1}}
      \ifnum\g=1
        \global\csedef{firstCol}{\tempCol}
        \IfSubStr{\node}{.}{\global\csedef{open}{y}}{\global\csedef{open}{n}}
        \IfStrEqCase{\open}{
          {y}{\xappto\graphspecs{\countl\g[as=,draw=none] }}
          {n}{\xappto\graphspecs{\countl\g[as=,preaction={fill, white},fill=\tempCol,pattern=\csname pat\nodenumber\endcsname] }}
        }
      \else
        \IfBeginWith{\node}{m}{\global\csedef{secondArrow}{{<[sep=-3pt,length=8pt]}}}{\global\csedef{secondArrow}{}}%
        \ifnum\g=2
          \IfStrEqCase{\open}{
            {y}{\xappto\graphspecs{ --[dotted,thick] \countl\g[as=,preaction={fill, white},fill=\tempCol,pattern=\csname pat\nodenumber\endcsname] }}
            {n}{\xappto\graphspecs{ --[\firstArrow-\secondArrow] \countl\g[as=,preaction={fill, white},fill=\tempCol,pattern=\csname pat\nodenumber\endcsname] }}
          }
        \else
          \xappto\graphspecs{ --[\firstArrow-\secondArrow] \countl\g[as=,preaction={fill, white},fill=\tempCol,pattern=\csname pat\nodenumber\endcsname] }%
        \fi
      \fi
      \IfEndWith{\node}{m}{\global\csedef{firstArrow}{{>[sep=-3pt,length=8pt]}}}{\global\csedef{firstArrow}{}}
      \global\csedef{prevTempCol}{\tempCol} 
    }
    \IfStrEqCase{\open}{
      {n}{\IfBeginWith{\ll}{m}{\global\csedef{secondArrow}{{<[sep=-3pt,length=8pt]}}}{\global\csedef{secondArrow}{}}
          \xappto\graphspecs{ --[\firstArrow-\secondArrow,decorate,decoration={snake,amplitude=.3mm,segment length=.6mm}] \countl1; } }
      {y}{\xappto\graphspecs{ --[dotted,thick] \countl0[draw=none,as=] ; } }
    }
    \else
      \ifnum\graphlen=0
        \StrDel{\ll}{m}[\nodenumber]
        \csedef{tempCol}{col\nodenumber}
        \IfSubStr{\ll}{m}{\csedef{firstArrow}{{>[sep=-3pt,length=8pt]}}}{\csedef{firstArrow}{}}
        \def\graphspecs{ \countl1[as=,preaction={fill, white},fill = \tempCol,pattern=\csname pat\nodenumber\endcsname] --[my loop, decorate,decoration={snake,amplitude=.3mm,segment length=.6mm},\firstArrow-] \countl1; }
      \else
        \IfSubStr{\ll}{.}{
          \StrDel{\ll}{.}[\nodenumber]
          \StrDel{\nodenumber}{,}[\nodenumber]
          \csedef{tempCol}{col\nodenumber}
          \def\graphspecs{ \countl0[draw=none,as=,orient = left] --[dotted,thick] \countl1[as=,fill = \tempCol,pattern=\csname pat\nodenumber\endcsname] --[dotted,thick] \countl2[draw=none,as=,nudge down=10pt]; }
        }{
          \StrBefore{\ll}{,}[\nodeOne]
          \StrBehind{\ll}{,}[\nodeTwo]
          \StrDel{\nodeOne}{m}[\nodeOneNumber]
          \StrDel{\nodeTwo}{m}[\nodeTwoNumber]
          \def\tempColOne{col\nodeOneNumber}
          \def\tempColTwo{col\nodeTwoNumber}
          \IfEndWith{\nodeOne}{m}{\global\csedef{firstArrowOne}{{>[sep=-3pt,length=8pt]}}}{\global\csedef{firstArrowOne}{}}
          \IfEndWith{\nodeTwo}{m}{\global\csedef{firstArrowTwo}{{<[sep=-3pt,length=8pt]}}}{\global\csedef{firstArrowTwo}{}}
          \IfBeginWith{\nodeOne}{m}{\global\csedef{secondArrowOne}{{>[sep=-3pt,length=8pt]}}}{\global\csedef{secondArrowOne}{}}
          \IfBeginWith{\nodeTwo}{m}{\global\csedef{secondArrowTwo}{{<[sep=-3pt,length=8pt]}}}{\global\csedef{secondArrowTwo}{}}
          \def\graphspecs{ \countl1[as=,preaction={fill, white},fill = \tempColOne,pattern=\csname pat\nodeOneNumber\endcsname] --[bend right,decorate,decoration={snake,amplitude=.3mm,segment length=.6mm}, \firstArrowOne-\secondArrowTwo ] \countl2[as=,preaction={fill, white},fill = \tempColTwo,pattern=\csname pat\nodeTwoNumber\endcsname]; \countl1 --[bend left,\secondArrowOne-\firstArrowTwo] \countl2;}
        }
      \fi
    \fi
    \xappto\mgraphspecs{ \graphspecs }
  }
  \xdef\mgraphspecs{\noexpand\graph[simple necklace layout,componentwise,component packing=skyline,components go right center aligned,orient=0,nodes=G]{ \mgraphspecs }}
  \begin{tikzpicture}[baseline={([yshift=-2pt]current bounding box.center)},font=\tiny,>=Stealth, node distance = 15pt,node sep=12pt,component sep=5pt]
    \mgraphspecs;
  \end{tikzpicture}%
}
\newcommand\pB[1]{
  \StrDel{#1}{h}[\patNum]
  \IfSubStr{#1}{h}{
  \begin{tikzpicture}[baseline={([yshift=-2pt]current bounding box.center)},font=\tiny,>=Stealth, node distance = 0pt,node sep=1pt,component sep=1pt]
   \graph[simple necklace layout,componentwise,component packing=skyline,components go right center aligned,orient=0,nodes=G] { 1[Gf,rotate=270,circle split part fill={\csname pat\patNum\endcsname},as=,minimum size=6pt]; };
  \end{tikzpicture}
  }{
  \begin{tikzpicture}[baseline={([yshift=-2pt]current bounding box.center)},font=\tiny,>=Stealth, node distance = 0pt,node sep=1pt,component sep=1pt]
   \graph[simple necklace layout,componentwise,component packing=skyline,components go right center aligned,orient=0,nodes=G] { 1[pattern=\csname pat\patNum\endcsname,as=,minimum size=6pt]; };
  \end{tikzpicture}
  }
}
\newcommand\plotlLambda[1]{
  \def\mgraphspecs{}
  \foreach \ll [count = \countl] in {#1} {
    \csedef{firstCol}{col0}
    \csedef{openEnd}{no}
    \def\graphspecs{}
    \StrCount{\ll}{,}[\graphlen]
    \foreach \node [count = \g] in \ll {
      \StrDel{\node}{m}[\nodenumber]
      \csedef{type}{n}
      \csedef{marking}{no}
      \IfSubStr{\node}{x}{\csedef{type}{x}}{}
      \IfSubStr{\node}{c}{\csedef{type}{c}}{}
      \IfSubStr{\node}{d}{\csedef{type}{d}}{}
      \IfSubStr{\node}{.}{\csedef{type}{open}}{}
      \IfBeginWith{\node}{m}{\csedef{firstMarking}{yes}\csedef{marking}{yes}}{\csedef{firstMarking}{no}}
      \IfEndWith{\node}{m}{\csedef{secondMarking}{yes}\csedef{marking}{yes}}{\csedef{secondMarking}{no}}
      \StrDel{\nodenumber}{.}[\nodenumber]
      \StrDel{\nodenumber}{c}[\nodenumber]
      \StrDel{\nodenumber}{d}[\nodenumber]
      \StrDel{\nodenumber}{x}[\nodenumber]
      \global\csedef{tempCol}{col\nodenumber}
      \ifnum\g=1
        \global\csedef{firstCol}{\tempCol}
        \IfStrEq{\type}{open}{
          \xappto\graphspecs{ \countl0[xx,as=,grow right,draw=none,fill=white] --[dotted,thick] \countl1[fill=white,preaction={fill, white},pattern=\csname pat\nodenumber\endcsname,as=]}
        }{
        \IfSubStr{\node}{x}{
          \xappto\graphspecs{ \countl0[xx,as=,grow right,fill=lightgray,draw=none] --[color=lightgray]\countl1[Gf,circle split part fill={\csname pat\nodenumber\endcsname},as=]}}
        {
          \IfStrEq{\marking}{yes}
          {
            \xappto\graphspecs{ \countl0[xx,as=,grow right] --[-{<[fill=lightgray,color=black,sep=-3pt,length=8pt]}] \countl1[fill=white,preaction={fill, white},pattern=\csname pat\nodenumber\endcsname,as=]}
          }{
            \xappto\graphspecs{ \countl0[xx,as=,grow right] -- \countl1[fill=white,preaction={fill, white},pattern=\csname pat\nodenumber\endcsname,as=]}
          }
        }}
      \else
        \IfStrEq{\firstMarking}{yes}{\csedef{secondArrow}{{<[sep=-3pt,length=8pt]}}}{\csedef{secondArrow}{}}
        \IfStrEqCase{\type}{
          {n}{\xappto\graphspecs{ --[\firstArrow-\secondArrow] \countl\g[fill=white,preaction={fill, white},pattern=\csname pat\nodenumber\endcsname,as=] }}%
          {c}{\xappto\graphspecs{ --[\firstArrow-\secondArrow] \countl\g[Gc=\csname pat\nodenumber\endcsname,rotate=180,circle split part fill={\csname pat\nodenumber\endcsname},as=] }}%
          {d}{\xappto\graphspecs{ --[\firstArrow-\secondArrow] \countl\g[Gd=\csname pat\nodenumber\endcsname,rotate=180,circle split part fill={\csname pat\nodenumber\endcsname},as=] }}%
          {open}{\global\csedef{openEnd}{yes}}%
        }
      \fi
      \IfStrEq{\secondMarking}{yes}{
        \global\csedef{firstArrow}{{>[sep=-3pt,length=8pt]}}%
      }{
        \global\csedef{firstArrow}{}%
      }
      \global\csedef{prevTempCol}{\tempCol} 
    }
    \IfStrEqCase{\openEnd}{
      {yes}{\xappto\mgraphspecs{ \graphspecs --[dotted,thick] \countl17[Gend,as=,draw=none]; }}%
      {no}{\xappto\mgraphspecs{ \graphspecs -- \countl17[Gend,as=]; }}%
    }%
  }
  \xdef\mgraphspecs{\noexpand\graph[tree layout,componentwise,component packing=skyline,components go down left aligned,nodes=G]{ \mgraphspecs }}
  \begin{tikzpicture}[grow=right,baseline={([yshift=-2pt]current bounding box.center)},font=\tiny,>=Stealth, node distance = 5pt,node sep=12pt,component sep=5pt]
  \mgraphspecs;
  \end{tikzpicture}%
}
\pgfplotsset{plot coordinates/math parser=false}
\newcommand{\titel}{Spectral radius of random matrices with independent entries}
\title{\titel} 
\author{Johannes Alt\footnote{Partially supported by ERC Starting Grant RandMat No.\ 715539 and the SwissMap grant of Swiss National Science Foundation.}\\ 
{\small \begin{tabular}{c}{University of Geneva}\\{johannes.alt@unige.ch} \end{tabular}} 
\and László Erd\H{o}s\footnote{Partially supported by ERC Advanced Grant RanMat No.\ 338804.}
\\{\small \begin{tabular}{c} IST Austria\\ lerdos@ist.ac.at\end{tabular}} 
\and Torben Krüger\footnote{Partially supported by the Hausdorff Center for Mathematics in Bonn.}\\
{\small \begin{tabular}{c} University of Copenhagen\\ tk@math.ku.dk\end{tabular}}
}
\date{}
\numberwithin{equation}{section}
\newcommand{\R}{\mathbb{R}}  
\C\renewcommand{\C}{\mathbb{C}}\else\newcommand{\C}{\mathbb{C}}\fi 
\renewcommand{\Im}{\mathrm{Im}\,} 
\renewcommand{\Re}{\mathrm{Re}\,} 
\newcommand{\N}{\mathbb{N}}  
\newcommand{\E}{\mathbb{E}}  
\renewcommand{\P}{\mathbb{P}}  
\newcommand{\di}{\mathrm{d}} 
\newcommand{\sign}[1]{\mathrm{sign}(#1)} 
\newcommand{\eps}{\varepsilon} 
\newcommand*{\defeq}{\mathrel{\vcenter{\baselineskip0.5ex \lineskiplimit0pt\hbox{\scriptsize.}\hbox{\scriptsize.}}}=}
\newcommand{\pt}{\partial}
\DeclareMathOperator{\supp}{supp}
\DeclareMathOperator{\ran}{ran}
\DeclareMathOperator{\rank}{rank}
\DeclareMathOperator{\graph}{Gr} 
\newcommand{\DD}{\mathbb{D}}
\newcommand{\smallS}{\mathscr{S}}
\newcommand{\smallF}{\mathscr{F}}
\newcommand{\smallT}{\mathscr{T}}
\newcommand{\etaf}{\eta_{\mathrm{f}}}
\newcommand{\dM}{{\mathcal{M}_\mathrm{d}}}
\newcommand{\oM}{{\mathcal{M}_\mathrm{o}}}
\newcommand{\bu}{\mathbf u}
\newcommand{\bv}{\mathbf v}
\newcommand{\bx}{\mathbf x}
\newcommand{\be}{\mathbf e}
\newcommand{\by}{\mathbf y}
\newcommand{\br}{\mathbf r}
\newcommand{\bk}{\mathbf k} 
\newcommand{\bm}{\mathbf m}
\newcommand{\cA}{\mathcal{A}}
\newcommand{\cB}{\mathcal{B}} 
\newcommand{\cC}{\mathcal{C}}
\newcommand{\cD}{\mathcal{D}}
\newcommand{\cK}{\mathcal{K}}
\newcommand{\cL}{\mathcal{L}}
\newcommand{\cP}{\mathcal{P}}
\newcommand{\cQ}{\mathcal{Q}}
\newcommand{\cF}{\mathcal{F}} 
\newcommand{\cS}{\mathcal{S}} 
\newcommand{\cT}{\mathcal{T}} 
\newcommand{\cV}{\mathcal{V}}
\newcommand{\PK}{\mathcal{P}_{\mathcal{K}}}
\newcommand{\QK}{\mathcal{Q}_{\mathcal{K}}}
\renewcommand{\rm}{\mathrm} 
\newcommand{\normtwo}[1]{\lVert #1 \rVert_{2}}
\newcommand{\normtwoop}[1]{\lVert #1 \rVert_{2\to 2}}
\newcommand{\normtwoinf}[1]{\lVert #1 \rVert_{2\to\lVert\,\cdot\,\rVert}}
\newcommand{\normstar}[1]{\lVert #1 \rVert_{*}}
\newcommand{\normstarop}[1]{\lVert #1 \rVert_{\ast \to \ast}}
\newtheoremstyle{test}
  {}
  {}
  {\itshape}
  {}
  {\bfseries}
  {.}
  { }
  {}
\theoremstyle{test}
\newtheorem{defi}{Definition}[section]
\newtheorem*{rem*}{Remark}   
\newtheorem*{ex*}{Example}   
\newtheorem*{pro*}{Proposition} 
\newtheorem*{def*}{Definition}
\newtheorem*{coro*}{Corollary}
\newtheorem*{thm*}{Theorem}
\theoremstyle{test}
    \newtheorem{theorem}[defi]{Theorem}
    \newtheorem{proposition}[defi]{Proposition}
    \newtheorem{corollary}[defi]{Corollary}
    \newtheorem{lemma}[defi]{Lemma}
    \newtheorem{definition}[defi]{Definition}
    \newtheorem{convention}[defi]{Convention}
    \newtheorem{remark}[defi]{Remark}
\newcommand{\bels}[2] {
        \begin{equation} \label{#1} \begin{split} 
                #2 
        \end{split} \end{equation}
        }
\renewcommand{\cal}{\mathcal}
\newcommand{\ol}[1]{\overline{#1} \!\,} 
\newcommand{\wh}{\widehat}
\newcommand{\wt}{\widetilde}
\newcommand{\ord} {\mathcal{O}}
\renewcommand{\P}{\mathbb{P}}
\newcommand{\ii}{\mathrm{i}} 
\newcommand{\dd}{\mathrm{d}}
\newcommand{\pb}[1]{\bigl({#1}\bigr)}
\newcommand{\pbb}[1]{\biggl({#1}\biggr)}
\renewcommand{\sb}[1]{\bigl[{#1}\bigr]}
\newcommand{\abs}[1]{\lvert #1 \rvert}
\newcommand{\absb}[1]{\big\lvert #1 \big\rvert}
\newcommand{\absbb}[1]{\bigg\lvert #1 \bigg\rvert}
\newcommand{\absa}[1]{\left\lvert #1 \right\rvert}
\newcommand{\norm}[1]{\lVert #1 \rVert}
\newcommand{\normb}[1]{\big\lVert #1 \big\rVert}
\newcommand{\normbb}[1]{\bigg\lVert #1 \bigg\rVert}
\newcommand{\avg}[1]{\langle #1 \rangle}
\newcommand{\scalar}[2]{\langle{#1} \mspace{2mu}, {#2}\rangle}
\DeclareMathOperator{\diag}{diag}
\DeclareMathOperator{\Tr}{Tr}
\DeclareMathOperator{\im}{Im}
\DeclareMathOperator*{\spec}{Spec}						
\newcommand{\2} {\mspace{2 mu}}
\newcommand{\genarg} {{\,\cdot\,}}  
\def\blfootnote{\gdef\@thefnmark{}\@footnotetext}
\begin{document}
\maketitle

\vspace*{-1cm} 

\begin{abstract}
We consider random $n\times n$ matrices $X$ with independent and centered entries and a general variance profile. 
We show that the spectral radius of $X$ converges with very high probability to the square root of the spectral radius of the variance matrix of $X$ when $n$
tends to infinity. We also establish the optimal rate of convergence,  that is a new result even for general i.i.d.\ matrices beyond
 the explicitly solvable Gaussian cases. 
The main ingredient is the proof of the local inhomogeneous circular law \cite{Altcirc} at the spectral edge. 
\end{abstract}

\blfootnote{Date: \today}
\blfootnote{Keywords: Spectral radius, inhomogeneous circular law, cusp local law.} 
\blfootnote{MSC2010 Subject Classfications: 60B20, 15B52.} 

\tableofcontents

\section{Introduction} 

Girko's celebrated \emph{circular law}  \cite{Girko1984,bai1997}\footnote{The original proof in \cite{Girko1984}  
  was not
considered complete and Bai published a clean version  under more restrictive conditions \cite{bai1997}. 
An  extended version of Girko's original proof with explanations and corrections appeared in \cite[Chapter~6]{GirkoBook1998}, see also \cite{Girko2012}.}
asserts that the spectrum of an $n\times n$ random matrix $X$ with  centered, independent, identically distributed
(i.i.d.) entries with variance $\E |x_{ij}|^2=1/n$ converges, as $n\to \infty$,  to the unit disc with a uniform 
limiting density of eigenvalues.  The cornerstone of the proof is the \emph{Hermitization
formula} (cf.\ \eqref{eq:girko}) that connects eigenvalues of $X$ to the eigenvalues of a family 
of Hermitian matrices $(X-z)^*(X-z)$ with a complex parameter $z$  \cite{Girko1984}. 
The circular law for i.i.d.\ entries with
the minimal second moment condition was established by Tao and Vu  \cite{tao2010} after several partial
results \cite{goetze2010,Pan2010,tao2008}, see  \cite{bordenave2012} for the extensive history and literature.
 We also refer to the recent circular law for very sparse matrices \cite{Rudelson2019}. 

The circular law establishes the weak limit of the empirical density
of eigenvalues  and thus it accounts for most but not all of them. In particular, it
does not give information on  the spectral radius $\varrho(X)$ of $X$ since the largest (in absolute value)
eigenvalue may behave very differently than the bulk spectrum. In fact, such outliers do not exist  but
this requires a separate proof.
The convergence of the spectral radius of $X$ to 1,
\begin{equation}\label{bai}
   \varrho(X)\to 1, \qquad \mbox{almost surely as $n\to \infty$,}
\end{equation}
was proven by Bai and Yin 
in  \cite{Bai1986} under the fourth moment condition, $\E  | n^{1/2} x_{ij} |^4 \le C$, using Wigner's moment method.
Under stronger conditions the upper bound in \eqref{bai} was independently proven in \cite{Geman1986}, see also \cite{GemanHwang1982,Nemish2018}. 
More recently in  \cite{BordenaveSpectralRadius2018} the convergence $\varrho(X)\to 1$ in probability was
shown assuming only finite
$2+\epsilon$ moment as well as a symmetric entry distribution.

Precise information on the spectral radius  is available only for the Ginibre ensemble,
i.e.\ when $x_{ij}$ are Gaussian; in this case it is known
\cite{Rider2003,RiderSinclair2014} 
that 
\begin{equation}\label{rider}
   \varrho(X) \approx 1 + \sqrt{\frac{\gamma_n}{ 4n}} + \frac{1}{ \sqrt{4n\gamma_n}} \xi, \qquad \gamma_n\defeq\log \frac{n}{2\pi}
   -2\log\log n,
\end{equation}
where $\xi$ is a Gumbel distributed random variable.

In this paper we drop the condition that the matrix elements are identically distributed
and we study the spectral radius of $X$ when the variances $\E |x_{ij}|^2$
have a non-trivial profile given by the matrix $\smallS=(\E |x_{ij}|^2)_{i,j=1}^n$. In our previous work  \cite{Altcirc} we showed
that  the spectral radius of $X$ is arbitrarily close  to the square root of the spectral radius of $\smallS$. More precisely,
for any fixed $\epsilon>0$ we have
\begin{equation}\label{oldrho}
\sqrt{\varrho(\smallS)} - \epsilon\le \varrho(X)\le \sqrt{\varrho(\smallS)} + \epsilon
\end{equation}
with very high probability for large $n$.  Motivated by \eqref{rider} we expect that the precision 
of the approximation in \eqref{oldrho} can be greatly improved and 
the difference between $\varrho(X)$ and $\sqrt{\varrho(\smallS)}$ should not exceed
$n^{-1/2}$ by  much. Indeed, our first  main result proves that  for any $\epsilon>0$ we have
\begin{equation}\label{oldrho1}
\sqrt{\varrho(\smallS)} - n^{-1/2+\epsilon}\le \varrho(X)\le \sqrt{\varrho(\smallS)} + n^{-1/2+ \epsilon}
\end{equation}
with very high probability for large $n$.  Apart from the $n^{\epsilon}$ factor this result
is optimal considering \eqref{rider}. Note that \eqref{oldrho1} is new even for the i.i.d.\ 
case beyond  Gaussian, i.e.\ there is no previous result on the speed of convergence in \eqref{bai}.

We remark that, compared with the spectral radius, much more is known about 
 the largest singular value of $X$ since it is equivalent to the (square root of the)
 largest eigenvalue of the sample covariance matrix $XX^*$. For the top eigenvalues of $XX^*$
 precise   limiting behavior (Tracy-Widom) is known  if $X$ has general i.i.d.\ matrix elements
 \cite{pillai2014}, and even 
 general diagonal  population matrices are allowed \cite{Lee2016}.  Note, however, the largest singular value of $X$ in the i.i.d.\ case
 converges to 2, i.e.\ it is very different from the spectral radius, indicating that $X$ is far from 
being normal.  We stress that understanding the spectral radius is a genuinely non-Hermitian problem hence 
in general it is much harder  than  studying the largest singular value.

While the largest singular value is very important for statistical applications, the
spectral radius is relevant for  time evolution of complex systems. More precisely, 
the spectral radius
controls the eigenvalue with largest real part that  plays
an important role in understanding the long time behavior of  large systems of linear ODE's
with random coefficients of the form 
\begin{equation}\label{ode}
\frac{\di}{\di t}  u_t =  -  g u_t + X u_t 
\end{equation}
 with a tunable coupling constant $g$.
 Such ODE system was first introduced in an ecological model to 
study the interplay between complexity and stability
in May's seminal paper \cite{may1972will},  see also the recent exposition \cite{Allesina2015}.  
 It has since been applied to  many situations when a  transience phenomenon 
is modelled in dynamics of complex systems; especially for neural networks, e.g.\
 \cite{Sompolinsky1988, HENNEQUIN20141394, Grela2017}. 
 Structured neural networks require to generalize May's original i.i.d.\ model to  non-constant variance profile $\smallS$
  \cite{Aljadeff2015,Muir2015,Rajan2006,Gudowska-Nowak2020}
which we study in full generality.
The long time evolution of~\eqref{ode} at critical coupling $g_c\defeq\sqrt{\varrho(\smallS)}$
 in the i.i.d.\ Gaussian case was  computed in \cite{ChalkerMehlig1998} after some non-rigorous steps;
the full mathematical analysis  even for general distribution and beyond the i.i.d. setup
 was given in \cite{ErdosKrugerRenfrew2018,ErdosKrugerRenfrew2019}.  The time-scale on which the solution of~\eqref{ode} 
 at criticality can be computed  depends
 on how precisely $\varrho(X)$ can be controlled by $\sqrt{\varrho(\smallS)}$. In particular, the current improvement of this precision 
 to~\eqref{oldrho1} allows one to extend the result of~\cite[Theorem 2.6]{ErdosKrugerRenfrew2018} to very long
  time scales of order $n^{1/2-\epsilon}$. These applications require a separate analysis, we will not pursue them
   in the present work.

We now explain the key novelties of this paper, more details will be given in Section~\ref{sec:outline}
after presenting the precise results  in Section~\ref{sec:main_results}. The spectral radius of $X$ is ultimately
related to our second main result,  the \emph{local law} for $X$ near the spectral edges, i.e.\ a description of
the eigenvalue density on local scales but still above the eigenvalue spacing; in this case $n^{-1/2}$.
As a byproduct, we also prove the optimal $1/n$ speed of convergence in the inhomogeneous  circular law
\cite{Altcirc,  Cook2018}. 
Note that the limiting density has a discontinuity at the boundary of its support, the disk of radius $\sqrt{\varrho(\smallS)}$
\cite[Proposition~2.4]{Altcirc}, hence the 
typical eigenvalue spacing at the edge and in the bulk coincide, unlike for the Hermitian problems.
The local law  in the bulk for $X$ with a general variance profile has been established in \cite[Theorem~2.5]{Altcirc} on  scale $n^{-1/2+\epsilon}$ and with optimal error bounds. This entails an optimal  local law near zero for the 
\emph{Wigner-type} Hermitian matrix
\begin{equation} \label{eq:def_H_z_intro} 
 H_z \defeq \begin{pmatrix} 0 & X -z \\ (X-z)^* & 0 \end{pmatrix} 
\end{equation}
appearing in Girko's formula. As long as $z$ is in the bulk spectrum of $X$, the relevant spectral parameter 0
lies in the bulk spectrum of $H_z$. Still, the local law for Wigner-type matrices \cite{Ajankirandommatrix}
is not applicable since the \emph{flatness condition},
that requires the variances of all matrix elements of $H_z$ be comparable, is violated
by the large zero blocks in $H_z$. 
In fact, the 
corresponding Dyson equation has an unstable direction due to the \emph{block symmetry} of $H_z$. The main achievement
of \cite{Altcirc} was to handle this instability.

When $z$ is near the spectral edge of $X$, the density of $H_z$ develops a cusp singularity
at 0. The optimal cusp local law for Wigner-type matrices with flatness condition was proven recently in 
\cite{Cusp1}  relying on (i) the improved \emph{fluctuation averaging mechanism} and (ii)
 the deterministic analysis of the corresponding Dyson equation
in \cite{Altshape}. Due to the cusp, the Dyson equation has a natural unstable direction and the corresponding
non-Hermitian perturbation
theory is governed by a cubic equation. 

The Dyson equation corresponding to the matrix $H_z$ for $z$ near the spectral edge of $X$
 exhibits  \emph{both} instabilities simultaneously.
This leads to the main technical achievement of this paper:
 we  prove an  \emph{optimal local law in the cusp regime with the block instability}.
Most of the paper contains  our  refined analysis of the Dyson equation with two instabilities,
a delicate synthesis of the methods developed in  \cite{Altcirc} and \cite{Altshape}.
The necessary  fluctuation averaging argument, however, turns out to be simpler than in \cite{Cusp1},
the block symmetry here helps.
 
We remark that bulk and edge local laws for the i.i.d.\ case have been proven earlier 
\cite{Bourgade2014,BYY_circular2}  with the optimal scale at the edge in \cite{Y_circularlaw}  and later with improved moment assumptions in \cite{Gotze2017};
see also \cite{tao2015}  for similar results under  three moment 
matching condition.   However, these works did not provide the improved local law outside of the spectrum that is necessary to identify the spectral radius.   The main difference is that the i.i.d.\ case results in an explicitly solvable
scalar-valued Dyson equation, so the entire stability analysis boils down to analysing explicit formulas.
The inhomogeneous variance profile $\smallS$ leads to a vector-valued Dyson equation
with no explicit solution at hand; all stability properties must be obtained inherently from the equation itself.
 Furthermore, even in the i.i.d.\ case the local law for $H_z$ in~\cite{BYY_circular2,Y_circularlaw} 
 was not optimal in the edge regime $|z|\approx 1$ and the authors directly estimated only the specific error terms in Girko's formula. 
  The optimality of our local law for $H_z$ at the edge
 is the main reason why the proof of the local circular law in Section~\ref{sec:Xlaw} is very transparent. 
In fact, our current local law is formulated  in the \emph{isotropic} sense (see \eqref{eq:G_minus_M_standard} later)
which is more general than the result in \cite{Bourgade2014,BYY_circular2} even in the i.i.d. case. This generalised version 
was an essential ingredient in the recent proof of edge universality for i.i.d. matrices \cite{cipolloni2019edge}.

\paragraph{Acknowledgements.}  The authors are grateful to Dominik Schr\"oder for  valuable insights, 
discussions on adapting the fluctuation averaging mechanism in \cite{Cusp1} to the current setup
as well as for kindly making his graph drawing macros available to us. We also thank Ga\v{s}per Tka\v{c}ik 
for helping us with the physics literature of complex networks. 
The authors thank Jonas Jalowy for pointing out a step in the proof that was not explained in sufficient detail.

\section{Main results}  \label{sec:main_results} 

Let $X=(x_{ij})_{i,j=1}^n\in \C^{n\times n}$ be a matrix with independent, centered entries.  
Let $\smallS\defeq(\E\abs{x_{ij}}^2)_{i,j=1}^n$ be the matrix collecting the variances of the entries of $X$. 
Furthermore, our main results will require a selection of the following assumptions 
(we remark that the last assumption \ref{assum:bounded_density} can be substantially relaxed, see Remark~\ref{rem:alternative_A3} below).

\paragraph{Assumptions}
\begin{enumerate}[label=(A\arabic*)]
\item \label{assum:flatness} The variance matrix $\smallS$ of $X$ is \emph{flat}\footnotemark, i.e.\ there are constants $s^* > s_* >0$  
such that 
\begin{equation}  \label{eq:condition_flatness} 
  \frac{s_*}{n} \leq \E \abs{x_{ij}}^2 \leq \frac{s^*}{n}  
\end{equation}
for all $i,j =1, \ldots, n$. 
\end{enumerate} 
\footnotetext{The flatness condition in \eqref{eq:condition_flatness} agrees with the concept of flatness introduced for general matrices with independent entries in \cite[Eq.~(2.1)]{Altcirc}. For Hermitian matrices, flatness is defined 
slightly differently, see (3.6) in \cite{Cusp1} and the explanation thereafter.}

\begin{enumerate}[label=(A\arabic*)]
\setcounter{enumi}{1}
\item \label{assum:bounded_moments} The entries of $X$ have \emph{bounded moments} in the sense that, for each $m \in \N$, there is $\mu_m>0$ such that 
\[ \E \abs{x_{ij}}^m  \leq \mu_m n^{-m/2} \] 
for all $i,j = 1, \ldots, n$. 
\item \label{assum:bounded_density} Each entry of $\sqrt{n}X$ has a bounded density on $\C$ in the following sense. There are probability densities $\nu_{ij} \colon \C \to [0,\infty)$ such that 
\[ \P \big( \sqrt{n}\, x_{ij} \in B \big) = \int_B \nu_{ij} (z)\, \di^2 z \] 
for all $i,j = 1, \ldots, n$ and all Borel sets $B \subset \C$ and these densities are bounded in the sense that there are $\alpha, \beta >0$ such that $\nu_{ij} \in L^{1+ \alpha}(\C)$ and 
\[ \norm{\nu_{ij}}_{1 + \alpha} \leq n^\beta \] 
for all $i,j = 1, \ldots, n$. 
\end{enumerate}

In \ref{assum:bounded_density} and in the following, $\di^2 z$ denotes the Lebesgue measure on $\C$. 
The main results remain valid if $X$ has all real entries, i.e.\ the density $\nu_{ij}$ of $\sqrt{n}\, x_{ij}$ in \ref{assum:bounded_density} is supported on $\R$ instead of $\C$ and we consider its $L^{1+\alpha}(\R)$-norm.  
In fact, the proofs are completely analogous. Hence, for simplicity, we only present the proofs in the complex case. 

The following theorem, our first main result, provides a convergence result for the spectral radius of the random matrix $X$. 
For any matrix $R \in \C^{n\times n}$, we write $\varrho(R)$ for its spectral radius, i.e.\ $\varrho(R) \defeq \max_{\lambda \in \spec(R)} \abs{\lambda}$. 

\begin{theorem}[Spectral radius of $X$] \label{thm:spectral_radius_X} 
Let $X$ satisfy \ref{assum:flatness} -- \ref{assum:bounded_density}. 
Then, for each (small) $\eps >0$ and (large) $D>0$, there is $C_{\eps,D} >0$ such that 
\begin{equation} \label{eq:spectral_radius}  
\P \Big( \absa{\varrho(X) - \sqrt{\varrho(\smallS)}} \geq n^{-1/2+\eps} \Big) \leq \frac{C_{\eps,D}}{n^{D}} 
\end{equation}
for all $n \in \N$. 

Here, the constant $C_{\eps, D}$ depends only on $s_*$, $s^*$ from \ref{assum:flatness}, the sequence $(\mu_m)_{m \in \N}$ from \ref{assum:bounded_moments} and 
$\alpha$, $\beta$ from \ref{assum:bounded_density} in addition to $\eps$ and $D$. 
\end{theorem}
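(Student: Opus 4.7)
The plan is to work through Girko's Hermitization: since $0\in\spec(H_z)$ iff $z\in\spec(X)$, the spectral radius $\varrho(X)$ is controlled by the smallest singular value $\sigma_{\min}(X-z)$ uniformly in $z$ over the relevant annular region. Two ingredients drive the argument: (i) the edge local law for $H_z$ announced as the paper's second main result, and (ii) its corollary, the optimal local inhomogeneous circular law for $X$ at the $1/n$ rate. Assumption \ref{assum:bounded_density} enters only to control the tail of $\sigma_{\min}(X-z)$ needed to make Girko's formula rigorous on the relevant mesoscopic scales.

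For the upper bound $\varrho(X) \leq \sqrt{\varrho(\smallS)} + n^{-1/2+\eps}$, I would first analyse the associated Matrix Dyson Equation deterministically: when $\abs{z}^2 \geq \varrho(\smallS) + n^{-1+\eps/2}$, the self-consistent density of states of $H_z$ has a gap around $0$ of width $\gtrsim n^{-1/2+\eps/4}$, so the cusp that sits at the edge $\abs{z}^2 = \varrho(\smallS)$ is replaced by a small gap just outside. The edge local law then transfers this gap to the actual spectrum of $H_z$, so that $0 \notin \spec(H_z)$, and hence $z \notin \spec(X)$, with very high probability for each such $z$. A standard net argument, together with the $1$-Lipschitz dependence of $\sigma_{\min}(X-z)$ on $z$ and the polynomial a priori bound $\norm{X}\leq n^{C}$ coming from \ref{assum:bounded_moments}, upgrades this to a simultaneous statement over the whole region $\{\abs{z} \geq \sqrt{\varrho(\smallS)} + n^{-1/2+\eps}\}$.

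For the lower bound $\varrho(X) \geq \sqrt{\varrho(\smallS)} - n^{-1/2+\eps}$, I would invoke the optimal local circular law for $X$. By \cite[Proposition~2.4]{Altcirc} the limiting eigenvalue density is uniformly positive on its disk of radius $\sqrt{\varrho(\smallS)}$ up to the boundary and vanishes outside, so integrating it over the thin annulus $\{\sqrt{\varrho(\smallS)} - n^{-1/2+\eps} < \abs{z} < \sqrt{\varrho(\smallS)}\}$ gives an expected eigenvalue count of order $n^{1/2+\eps}\gg 1$. The local circular law proved in this paper via Girko's formula and the edge local law for $H_z$ pins down this count with very high probability, forcing $\varrho(X)$ to reach into the annulus.

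The deep difficulty is concentrated not in this deduction but in the edge local law for $H_z$ at $\abs{z}^2 \approx \varrho(\smallS)$: the Matrix Dyson Equation for $H_z$ is simultaneously hit by two distinct instability mechanisms -- the \emph{block-symmetry instability} coming from the zero diagonal blocks of $H_z$, already handled for bulk $z$ in \cite{Altcirc}, and the \emph{cusp instability} at the edge of the self-consistent spectrum, analysed for flat Wigner-type matrices in \cite{Cusp1, Altshape}. The hard part is the refined perturbative stability analysis of the MDE with both unstable directions active at once, coupling the cubic equation that governs the cusp with the one-dimensional block direction; by contrast, the fluctuation averaging step should be lighter than in \cite{Cusp1}, as the block symmetry here supplies additional cancellations.
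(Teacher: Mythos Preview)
Your outline matches the paper's proof: upper bound via the local law for $H_z$ to exclude $0\in\spec(H_z)$ when $|z|>\sqrt{\varrho(\smallS)}+n^{-1/2+\eps}$, lower bound via Theorem~\ref{thm:local_law_X} with $|z_0|=1-n^{-1/2+\eps}$ and $a=1/2-\eps$. Two points need sharpening.

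First, the exponents are off. For $|z|\ge \sqrt{\varrho(\smallS)}+n^{-1/2+\eps}$ one has $|z|^2-\varrho(\smallS)\sim n^{-1/2+\eps}$, not $n^{-1+\eps/2}$; the spectral gap of $H_z$ around $0$ then scales as $(|z|^2-1)^{3/2}\sim n^{-3/4+3\eps/2}$, not $n^{-1/2+\eps/4}$. With your stated threshold the gap would be of order $n^{-3/2}$, well below the fluctuation scale $\etaf\sim n^{-3/4}$ and hence invisible to any local law.

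Second, and this is the substantive point: the \emph{standard} averaged local law $|\avg{G-M}|\prec (n\eta)^{-1}$ does not by itself exclude a kernel eigenvalue of $H_z$, since a single eigenvalue at $0$ already forces $\avg{\Im G}\ge (2n\eta)^{-1}$, which is of the same order. The paper's upper bound relies on the \emph{improved} bound outside the spectrum, \eqref{eq:G_minus_M_average_improved}, giving an extra factor $n^{-\gamma/3}$ when $|z|^2\ge 1+(n^\gamma\eta)^{2/3}$. Combined with $\avg{\Im M}=\pi\rho\ll (n\eta)^{-1}$ in this regime, this pushes $\avg{\Im G}$ strictly below $(2n\eta)^{-1}$ and hence rules out $0\in\spec(H_z)$. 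Your phrase ``the edge local law then transfers this gap'' elides exactly this step, which the paper flags in the introduction as missing even from prior work on the i.i.d.\ case.
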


\begin{remark}[Upper bound on the spectral radius of $X$ without \ref{assum:bounded_density}]  \label{rem:upper_bound_spectral_radius} 
Without Assumption~\ref{assum:bounded_density} our proof still implies the following upper bound on the spectral radius $\varrho(X)$ of $X$. That is, if \ref{assum:flatness} and \ref{assum:bounded_moments} are satisfied then for each $\eps>0$ 
and $D>0$, there is $C_{\eps,D}>0$ such that, for all $n \in \N$, we have 
\[ \P \Big( \varrho(X) \geq \sqrt{\varrho(\smallS)} + n^{-1/2 + \eps} \Big) \leq \frac{C_{\eps,D}}{n^{D}}. \] 
In particular, $X$ does not have any eigenvalue of modulus bigger than $\sqrt{\varrho(\smallS)} + n^{-1/2 + \eps}$ with very high probability.
The assumption \ref{assum:bounded_density} is only used to control the smallest singular value of $X- z$ when relating the eigenvalue density of $X$ and the one of the Hermitization $H_z$ of $X$ (see~\eqref{eq:def_H_z} below) in the proof of Theorem~\ref{thm:local_law_X} below. An eigenvalue of $X$ at $z$ can be excluded directly, without comparing the eigenvalue densities, if the kernel of $H_z$ is trivial. Therefore, \ref{assum:bounded_density} is not needed for the upper bound on $\varrho(X)$.  
\end{remark}

The next main result, Theorem~\ref{thm:local_law_X} below, shows that the eigenvalue density of $X$ is close to a deterministic density on all scales slightly above the typical eigenvalue spacing when $n$ is large. We now prepare the definition of this deterministic density. 
For each $\eta >0$ and $z \in \C$, we denote by $(v_1, v_2) \in (0,\infty)^n \times (0,\infty)^n$ the unique solution to the system of equations 
\begin{equation} \label{eq:dyson_vector_equation}
 \frac{1}{v_1} = \eta + \smallS v_2 + \frac{\abs{z}^2}{\eta + \smallS^t v_1}, \qquad \frac{1}{v_2} = \eta + \smallS^t v_1 + \frac{\abs{z}^2}{\eta + \smallS v_2}. 
\end{equation}
Here, any scalar is identified with the vector in $\C^n$ whose components agree all with the scalar. E.g. $\eta$ is identified with $(\eta, \ldots, \eta) \in \C^n$. 
Moreover, the ratio of two vectors in $\C^n$ is defined componentwise. 
The existence and uniqueness of $(v_1,v_2)$ has been derived in \cite[Lemma~2.2]{Altcirc} from abstract existence and uniqueness results in \cite{Helton01012007}. 

In the following, we consider $v_1=v_1(z,\eta)$ and $v_2=v_2(z,\eta)$ as functions of $\eta >0$ and $z \in \C$. 
In Proposition~\ref{pro:properties_sigma} below, we will show that there is a probability density $\sigma \colon \C \to [0,\infty)$ such that 
\begin{equation} \label{eq:sigma_equal_laplace_potential} 
 \sigma(z) = -\frac{1}{2\pi} \Delta_z \int_0^\infty \bigg( \avg{v_1(z,\eta)} - \frac{1}{1 + \eta} \bigg) \dd \eta, 
\end{equation}
where the equality and the Laplacian $\Delta_z$ on $\C$ are understood in the sense of distributions on $\C$. 
Moreover, $\avg{v_1}$ denotes the mean of the vector $v_1 \in \C^n$, i.e. $\avg{u} \defeq \frac{1}{n} \sum_{i=1}^n u_i$ 
for any $u = (u_i)_{i=1}^n \in \C^n$. 
In Lemma~\ref{lem:properties_L} below, we will show that the integral on the right-hand side of \eqref{eq:sigma_equal_laplace_potential} exists for each $z \in \C$. 
Proposition~\ref{pro:properties_sigma} also proves further properties of $\sigma$, in particular, that the 
support of $\sigma$ is a disk of radius $\sqrt{\varrho(\smallS)}$ around the origin. 

In order to analyze the eigenvalue density of $X$ on local scales, we consider shifted and rescaled test functions as follows. 
For any function $f \colon \C \to \C$, $z_0 \in \C$ and $a >0$, we define 
\[ f_{z_0,a} \colon \C \to \C, \qquad f_{z_0,a}(z) \defeq n^{2a} f(n^{a} (z-z_0)). \] 
The eigenvalues of $X$ are denoted by $\zeta_1, \ldots, \zeta_n$. Now we are ready to state our second main result.

\begin{theorem}[Local inhomogeneous circular law] \label{thm:local_law_X} 
Let $X$ satisfy \ref{assum:flatness} -- \ref{assum:bounded_density}. Let $a \in [0,1/2]$ and $\varphi >0$. 
Then, for every $\eps>0$ and $D>0$, there is $C_{\eps,D}>0$ such that 
\[ \P \bigg( \absbb{\frac{1}{n} \sum_{i =1}^n f_{z_0,a} ( \zeta_i) - \int_{\C} f_{z_0,a} (z) \sigma(z) \di^2 z} \geq \frac{\norm{\Delta f}_{L^1}}{n^{1- 2a - \eps}} \bigg) \leq \frac{C_{\eps,D}}{n^D} \] 
uniformly for all $n \in \N$, $z_0 \in \C$ satisfying $\abs{z_0} \leq \varphi$ and $f \in C_0^2(\C)$ satisfying $\supp f \subset \{ z \in \C \colon \abs{z} \leq \varphi\}$. 
The point $z_0$ and the function $f$ may depend on $n$. 
In addition to $\eps$ and $D$, the constant $C_{\eps,D}$ depends only on $s_*, s^*$ from \ref{assum:flatness}, 
$(\mu_m)_{m \in \N}$ from \ref{assum:bounded_moments}, $\alpha, \beta$ from \ref{assum:bounded_density} as well as $a$ and $\varphi$. 
\end{theorem}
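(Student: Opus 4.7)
The plan is to combine Girko's Hermitization formula with an optimal averaged local law for the Hermitized matrix $H_z$ defined in~\eqref{eq:def_H_z_intro}. Denoting $G^z(w) \defeq (H_z - w)^{-1}$ and letting $M^z(w)$ be the deterministic solution of the corresponding Matrix Dyson Equation (whose block-diagonal entries are, for $w = \ii\eta$, expressed in terms of the vectors $v_1$, $v_2$ of~\eqref{eq:dyson_vector_equation}), Girko's identity together with the representation~\eqref{eq:sigma_equal_laplace_potential} of $\sigma$ rewrites the quantity to be bounded as a weighted integral of $\avg{G^z(\ii\eta) - M^z(\ii\eta)}$ in $z$ and $\eta$. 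Concretely one obtains a representation of the schematic form
\begin{equation*}
\frac{1}{n} \sum_{i=1}^n f_{z_0,a}(\zeta_i) - \int_\C f_{z_0,a}(z) \sigma(z) \di^2 z \;=\; \frac{1}{4\pi} \int_\C \Delta f_{z_0,a}(z) \int_0^\infty \avgb{G^z(\ii\eta) - M^z(\ii\eta)} \di\eta \, \di^2 z,
\end{equation*}
up to contributions at $\eta = \infty$ which cancel between the two sides after a single deterministic subtraction. Changing variables $z = z_0 + n^{-a} w$ reveals that the effective spatial scale is $n^{-a}$ around $z_0$, that $\norm{\Delta f_{z_0,a}}_{L^1} = n^{2a} \norm{\Delta f}_{L^1}$, and hence that the desired bound is equivalent to controlling the inner double integral by $n^{-1+\eps}$ (before multiplication by $\norm{\Delta f}_{L^1}$).

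Next I would split the $\eta$-integral at two thresholds $0 < \eta_f < T$ and estimate the three regimes separately. On the \emph{bulk/edge regime} $\eta \in [\eta_f, T]$ the spectral resolution of $H_z$ lies above the typical eigenvalue spacing, and the averaged local law for $H_z$ will yield $\absa{\avg{G^z(\ii\eta) - M^z(\ii\eta)}} \leq n^\eps/(n\eta)$ with very high probability, uniformly for $z$ in a neighbourhood of the spectral edge $\sqrt{\varrho(\smallS)}$. Integrating in $\eta$ produces at most a logarithm, and the $z$-integral contributes $\norm{\Delta f_{z_0,a}}_{L^1}$, yielding the target bound $n^{-1+2a+\eps}\norm{\Delta f}_{L^1}$. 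On the \emph{large $\eta$ regime} $\eta \geq T$, both $\avg{G^z(\ii\eta)}$ and $\avg{M^z(\ii\eta)}$ decay like $1/\eta$, and a Ward identity together with a single deterministic subtraction upgrades the difference to $O(1/\eta^2)$, so this regime produces a negligible contribution. On the \emph{small $\eta$ regime} $\eta \in (0, \eta_f)$ the resolvent no longer concentrates and the local law is not available; instead one exploits Assumption~\ref{assum:bounded_density}, which via a Rudelson--Vershynin-type argument ensures that the smallest singular value of $X - z$ exceeds $n^{-Q}$ with very high probability for some fixed $Q$, uniformly in $z$ in a compact set. Choosing $\eta_f = n^{-Q-1}$ then gives a deterministic bound on $\Im \avg{G^z(\ii\eta)}$ on an event of probability $1 - n^{-D}$, and the short interval $(0, \eta_f)$ contributes a negligible amount after pairing with $\Delta f_{z_0,a}$.

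The principal obstacle, and the genuine technical content behind the theorem, is the averaged local law for $H_z$ on the optimal scale $\eta \gtrsim n^{-1+2a+\eps}$ that drives the bulk/edge regime. Uniformly for $z$ near $\sqrt{\varrho(\smallS)}$ the self-consistent density of $H_z$ develops a cusp singularity at $0$, so the Matrix Dyson Equation for $H_z$ simultaneously exhibits the block-symmetry instability analysed in \cite{Altcirc} and the cusp instability analysed in \cite{Altshape,Cusp1}. Establishing a local law with $(n\eta)^{-1}$ error in this doubly unstable regime requires a quantitative stability analysis of the Dyson equation that disentangles both unstable directions simultaneously, combined with a fluctuation averaging argument adapted to the $2 \times 2$ block structure of $H_z$. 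I expect this combined stability and fluctuation averaging step to be by far the hardest part; once it is in place, assembling the three $\eta$-regimes above and applying Markov's inequality to high moments to convert the $n^\eps$-high-probability bounds into the final $C_{\eps,D}/n^D$ concentration bound completes the proof of Theorem~\ref{thm:local_law_X}.
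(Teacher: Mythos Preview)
Your overall strategy matches the paper's: Girko's formula reduces the problem to bounding $\int_0^\infty \Im\avg{G^z(\ii\eta)-M^z(\ii\eta)}\,\mathrm d\eta$, the local law for $H_z$ handles $\eta$ above the fluctuation scale, and the smallest singular value bound (via Assumption~\ref{assum:bounded_density}) controls the contribution of the very smallest $\eta$. You also correctly identify the cusp local law for $H_z$ with the additional block instability as the main analytic content.

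There is, however, a genuine gap in your small-$\eta$ argument. You split at a single threshold $\eta_f=n^{-Q-1}$ and invoke the local law on $[\eta_f,T]$. But the averaged local law $\abs{\avg{G-M}}\prec (n\eta)^{-1}$ is only valid for $\eta$ above the \emph{fluctuation scale} $\eta_{\mathrm f}(z)$, which is $\gtrsim n^{-1}$ in the bulk and as large as $n^{-3/4}$ near $|z|=1$; it certainly does not extend down to $n^{-Q-1}$. Conversely, on the event $\{\lambda_{\min}(H_z)\ge n^{-Q}\}$ the bound $\Im\avg{G(\ii\eta)}\le \eta/\lambda_{\min}^2\le \eta n^{2Q}$ is useless once $\eta\gtrsim n^{-2Q}$. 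So the interval roughly $[n^{-2Q},\,n^\zeta\eta_{\mathrm f}(z)]$ is covered by neither argument.

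The paper closes this gap by not splitting the $\eta$-integral at a tiny threshold, but by first writing
\[
\int_0^{\eta_*} \Im\avg{G(\ii\eta)}\,\mathrm d\eta \;=\; \frac{1}{4n}\sum_i \log\Big(1+\frac{\eta_*^2}{\lambda_i^2}\Big),\qquad \eta_*\defeq n^\eps\eta_{\mathrm f}(z),
\]
and then splitting the \emph{sum over eigenvalues} at $|\lambda_i|=n^{-l}$: the contribution from $|\lambda_i|<n^{-l}$ is controlled by the smallest singular value bound, while the remaining sum is estimated via a dyadic decomposition together with the eigenvalue counting $|\{i:|\lambda_i|\le\eta\}|\prec n\eta\rho+1$, itself a consequence of the local law just above the fluctuation scale. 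The corresponding $\int_0^{\eta_*}\avg{\Im M}$ is handled directly from the scaling of $\rho$. This log-conversion plus eigenvalue-counting step is the missing ingredient in your outline; once it is added, the rest of your plan goes through as in the paper.
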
 

The bulk regime, $|z_0|< \sqrt{\varrho(\mathscr S)}$, in Theorem~\ref{thm:local_law_X}  has already been proven in \cite[Theorem 2.5]{Altcirc}.
Choosing $a=0$ and $z_0=0$ in Theorem~\ref{thm:local_law_X} amounts to the optimal $1/n$  speed of convergence in the 
inhomogeneous circular law.  

Finally, we state a corollary of our  result showing that all normalised eigenvectors $u=(u_i)_{i=1}^n\in \C^n$ of $X$ are completely  delocalized 
in the sense that $\max_{i=1}^{n} \abs{u_i}\leq n^{-1/2+\eps}$ with very high probability.  
Eigenvector delocalization  under somewhat different conditions and with very different methods has already  been established  in \cite{rudelson2015} with recent refinements in \cite{Rudelson2016,EigenvectorsLuh,EigenvectorsTikhomirov}. 

\begin{corollary}[Eigenvector delocalization]\label{thm:delocalization} 
Let $X$ satisfy \ref{assum:flatness} and \ref{assum:bounded_moments}. Then, for each $\eps>0$ and $D>0$, 
there is $C_{\eps,D}>0$ such that 
\[ \P \bigg( \exists\, u\neq 0 \, \colon X u = \zeta u \text{ for some }
\zeta \in \C \text{ and } \max_{i=1}^{n} \,\abs{u_i} \geq n^{-1/2 + \eps} \norm{u} \bigg) \leq \frac{C_{\eps,D}}{n^{D}} 
 \] 
for all $n \in \N$. Here, $\norm{u}$ denotes the Euclidean norm of $u$. 
\end{corollary}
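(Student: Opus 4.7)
The plan is to leverage Girko's Hermitization: every eigenvector $u$ of $X$ with eigenvalue $\zeta$ produces a null vector of the $2n\times 2n$ Hermitian matrix $H_\zeta$ from~\eqref{eq:def_H_z_intro}, so pointwise control of the components $u_i$ can be extracted from a pointwise upper bound on the diagonal entries of the resolvent $G_\zeta(\ii\eta)\defeq(H_\zeta-\ii\eta)^{-1}$ at the optimal imaginary scale. Concretely, if $Xu=\zeta u$ with $\norm{u}=1$, then also $(X-\zeta)^\ast v=0$ for some unit vector $v\in\C^n$ (since $X$ and $X^*$ have the same determinant), and $w\defeq\tfrac{1}{\sqrt 2}(v,u)^t\in\C^{2n}$ satisfies $\norm{w}=1$ and $H_\zeta w=0$. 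Expanding $G_\zeta(\ii\eta)$ in an orthonormal eigenbasis of $H_\zeta$ and retaining only the contribution of the null eigenvector yields
\[
\Im G_\zeta(\ii\eta)_{jj}\;\geq\;\frac{\abs{w(j)}^2}{\eta},\qquad j=1,\dots,2n.
\]
Specialising to $j=n+i$ and choosing $\eta\defeq n^{-1+2\eps}$ therefore reduces the corollary to showing that $\max_{j}\Im G_\zeta(\ii\eta)_{jj}\leq C$ with very high probability, simultaneously for every $\zeta$ that is an eigenvalue of $X$.

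For the required resolvent estimate I would invoke the edge/cusp local law for $H_z$ developed in this paper: for any \emph{fixed} $\zeta$ with $\abs{\zeta}\leq\sqrt{\varrho(\smallS)}+1$ and any $\eta\geq n^{-1+2\eps}$, one has $\max_j\Im G_\zeta(\ii\eta)_{jj}\leq C$ with probability at least $1-C_{\eps,D}n^{-D}$. This upper bound holds uniformly in $\zeta$: in the bulk it reduces to the local law of \cite{Altcirc}, while near the edge $\abs{\zeta}\approx\sqrt{\varrho(\smallS)}$, where the self-consistent density of $H_\zeta$ develops a cusp at~$0$, the solution $m_\zeta(\ii\eta)$ of the Dyson equation still has bounded imaginary part and the edge local law transfers this to $G_\zeta$. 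To accommodate the random location of $\zeta$ I would take an $n^{-A}$-net $\mathcal N$ of the disk $\{\abs{\zeta}\leq\sqrt{\varrho(\smallS)}+1\}$ with $A$ sufficiently large, apply the local law and a union bound on $\mathcal N$, and extend to arbitrary $\zeta$ in this disk via the deterministic Lipschitz estimate $\norm{\partial_\zeta G_\zeta(\ii\eta)}\leq C\eta^{-2}$ (from the resolvent identity together with $\norm{G_\zeta(\ii\eta)}\leq\eta^{-1}$). Remark~\ref{rem:upper_bound_spectral_radius}, which uses only \ref{assum:flatness} and \ref{assum:bounded_moments}, confines all eigenvalues of $X$ to this disk with very high probability, so the random $\zeta$ is indeed covered.

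Combining the two steps gives $\abs{u_i}^2\leq 2\eta\,\Im G_\zeta(\ii\eta)_{n+i,n+i}\leq 2Cn^{-1+2\eps}$ for every component, which is the claimed delocalization after relabelling $\eps$. The main obstacle is the first sentence of the second paragraph: obtaining a uniform-in-$\zeta$ diagonal resolvent bound for $H_\zeta$ that remains valid \emph{at the spectral edge}, where the Dyson equation for $H_\zeta$ simultaneously exhibits the block-symmetry instability (handled in \cite{Altcirc}) and the cubic/cusp instability (handled in \cite{Altshape}). This synthesis is the central technical achievement of the paper; once it is in place, the net plus spectral-decomposition argument above is standard and notably does not invoke the boundedness-of-density assumption \ref{assum:bounded_density}, consistent with the hypotheses stated in the corollary.
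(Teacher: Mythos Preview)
Your proposal is correct and follows essentially the same route as the paper: embed $u$ in $\ker H_\zeta$, bound the diagonal of $\Im G_\zeta(\ii\eta)$ at scale $\eta\sim n^{-1+\zeta}$ via the local law uniform in $z$ (Proposition~\ref{pro:local_law_all_scales}), and extract $|u_i|$ by spectral decomposition. The paper uses the simpler null vector $(0,u)^t$ rather than your $\tfrac{1}{\sqrt2}(v,u)^t$, and it absorbs the net-and-Lipschitz step into the already uniform-in-$z$ statement of the local law (splitting bulk and edge between \cite{Altcirc} and Corollary~\ref{cor:delocalization}), but these differences are cosmetic.
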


\begin{remark}[Alternative to Assumption~\ref{assum:bounded_density}] \label{rem:alternative_A3} 
Theorem~\ref{thm:spectral_radius_X}, as well as Theorem~\ref{thm:local_law_X} (with an additional condition $\| \Delta f\|_{L^{2+\epsilon}}\le n^C\| \Delta f\|_{L^1}$,
with some large constant $C$, on the test function $f$)  
 hold if Assumption~\ref{assum:bounded_density} is replaced by the following anticoncentration condition. 
With the L\'evy concentration function 
\[ \cal{L}(Z,t) \defeq \sup_{u \in \R} \P \big( \abs{Z-u} < t) \] 
we require that $\max_{i,j} \cal{L}(\sqrt{n}\,x_{ij}, t) \leq b$ for some constants $t \geq 0$ and $b \in (0,1)$.
In our main proofs, we use \ref{assum:bounded_density} for pedagogical reasons and the necessary modifications will be explained in Remark~\ref{rem:alternative_changes_proof} at the end of Section~\ref{sec:Xlaw} below. 
\end{remark}

\subsection{Outline of the proof} \label{sec:outline}

In this subsection, we outline a few central ideas of the proofs of Theorem~\ref{thm:spectral_radius_X} and Theorem~\ref{thm:local_law_X}. 
The spectrum of the $n\times n$-matrix $X$ can conveniently be studied by analysing the kernel
 of the $2n \times 2n$ Hermitian matrices $H_z$ 
defined through
\begin{equation} \label{eq:def_H_z} 
 H_z \defeq \begin{pmatrix} 0 & X -z \\ (X-z)^* & 0 \end{pmatrix}
 \end{equation}
for $z \in \C$. In fact, $z$ is an eigenvalue of $X$ if and only if the kernel of $H_z$ is nontrivial. 

All spectral properties of a Hermitian matrix can be obtained from its resolvent. In fact, in many cases, the resolvent of a Hermitian random matrix becomes deterministic when its size tends to 
infinity and the limit is the solution to the associated Matrix Dyson equation. 
In our setup, the Matrix Dyson equation (MDE) for the deterministic counterpart $M=M(z,\eta)$ of the resolvent $G = G(z,\eta) = (H_z - \ii\eta)^{-1}$ of $H_z$ is given by 
\begin{equation} \label{eq:mde}
 - M^{-1}(z, \eta) = \begin{pmatrix} \ii \eta & z \\ \bar z & \ii \eta \end{pmatrix} + \cS[M(z,\eta)]. 
\end{equation}
Here, $\eta>0$ and $z \in \C$ are parameters and $\ii\eta$, $z$ and $\bar z$ are identified with the respective multiples of the $n\times n$ identity matrix. 
Moreover, we introduced the self-energy operator $\cS \colon \C^{2n\times 2n} \to \C^{2n\times 2n}$ given by
\begin{equation} \label{eq:def_cS} 
\cS[R] = \begin{pmatrix} \smallS r_2 & 0 \\ 0 & \smallS^t r_1 \end{pmatrix} 
\end{equation} 
for $R=(r_{ij})_{i,j=1}^{2n} \in \C^{2n \times 2n}$, where $r_1 \defeq (r_{ii})_{i=1}^n$, $r_2 \defeq (r_{ii})_{i=n+1}^{2n}$ 
and $\smallS\defeq(\E \abs{x_{ij}}^2)_{i,j=1}^n$. 
The matrix on the right-hand side of \eqref{eq:def_cS} denotes a $2n\times 2n$ diagonal matrix with the vector $(\smallS r_2, \smallS^t r_1) \in \C^{2n}$ on its diagonal. 

Two remarks about \eqref{eq:mde} and \eqref{eq:def_cS} are in order. 
In this paper we are interested exclusively in the kernel of $H_z$. Otherwise $\ii \eta$ on the right-hand side of \eqref{eq:mde} had to be replaced by $E+ \ii \eta$ for some $E\in \R$
 (see \cite{AjankiCorrelated,Erdos2017Correlated,AltEdge} for the general MDE in the random matrix setup). 
We also remark that the self-energy operator $\cS$ in \eqref{eq:def_cS} is chosen slightly differently compared to the choice of the self-energy operator for a Hermitian random matrix in 
\cite{AjankiCorrelated,Erdos2017Correlated,AltEdge}. Instead, we follow here  the convention from \cite{Altcirc}. 
For further details, see Remark~\ref{rem:convention_for_S}  below. 

First, we discuss Theorem~\ref{thm:spectral_radius_X}. 
Suppose we already know that $G$ is very well approximated by $M$. 
Owing to \cite[Proposition~3.2]{Altcirc} (see also Lemma~\ref{lem:scaling_relation_v_u} below), $\Im M(z,\eta)$ vanishes sufficiently fast for $\eta \downarrow 0$ as long as $\abs{z}^2 \geq \varrho(\smallS) + n^{-1/2+\eps}$. 
Then we can immediately conclude that the kernel of $H_z$ has to be trivial. 
Hence, any eigenvalue of $X$ has modulus smaller than $\sqrt{\varrho(\smallS)} + n^{-1/2 + \eps}$. 
Similarly, under the condition $\abs{z}^2 < \varrho(\smallS) - n^{-1/2 + \eps}$, the imaginary part $\Im M(z,\eta)$ is big enough as $\eta \downarrow 0$ due to \cite[Proposition~3.2]{Altcirc}. 
 This will imply that $H_z$ has a nontrivial kernel and, hence, $X$ has an eigenvalue close to $z$, thus completing the proof of~\eqref{eq:spectral_radius}.

Therefore, what remains is to prove a local law for $H_z$, i.e.  that $G$ is very well approximated by $M$. 
The resolvent $G$  satisfies a perturbed version of the MDE \eqref{eq:mde},
\begin{equation} \label{eq:perturbed_mde} 
 -G^{-1} = \begin{pmatrix} \ii \eta & z \\ \bar z & \ii \eta \end{pmatrix} + \cS[G] - DG^{-1},  \qquad \qquad D \defeq (H_z - \E H_z) G + \cS[G] G
\end{equation}
for all $\eta >0$ and $z \in \C$.  
The error matrix $D$ will be shown to be small in Section~\ref{sec:local_law_H} below. Consequently, we will consider \eqref{eq:perturbed_mde} as a perturbed version of the MDE, \eqref{eq:mde} and study its stability properties under small perturbations to conclude that $G$ is close to $M$. 

A simple computation starting from \eqref{eq:mde} and \eqref{eq:perturbed_mde} yields the stability equation associated to the MDE, 
\begin{equation} \label{eq:stability_equation}
\cB[G-M] =  M \cS[G-M](G-M) - MD.  
\end{equation}
Here, $\cB\colon \C^{2n\times 2n} \to \C^{2n\times 2n}$ is the \emph{linear stability operator} of the MDE, 
given explicitly by 
\begin{equation} \label{eq:def_cB} 
\cB[R] \defeq R- M\cS[R]M
\end{equation}
for any $R \in \C^{2n\times 2n}$. 

The stability equation \eqref{eq:stability_equation} is viewed as  a general quadratic equation of the
form
\begin{equation}\label{AB}
\cB[Y] - \cA[Y, Y] + {Z}=0
\end{equation}
for the unknown matrix $Y (= G-M)$ in the regime where ${Z} (=MD)$ is small. 
Here, $\cB$ is a linear map and $\cA$ is a bilinear map on the space of matrices. 
This problem would be easily solved by a standard implicit function theorem if $\cB$ had a stable  (i.e. bounded) inverse;
this is the case in the bulk regime. 
When $\cB$ has unstable directions, i.e. eigenvectors corresponding to eigenvalues very close to zero, 
then these directions need to be handled separately. 

The linear stability operator \eqref{eq:def_cB} for 
Wigner-type matrices with  a flat variance matrix
in the edge or cusp regime gives rise to one unstable direction $B$ with $\cB[B]\approx 0$.
In this case, the solution is, to leading order, parallel to the unstable direction $B$, hence it can be written as
$Y = \Theta B + \mathrm{error}$ with some complex scalar coefficient $\Theta$, determining the leading behavior of $Y$.
For such $Y$ the linear term in \eqref{AB} becomes lower order and the quadratic term as well as the 
error term in $Y$ play an important role. Systematically expanding $Y$ 
up to higher orders in the small parameter $\| { Z}\|\ll 1$, we arrive at an approximate cubic equation for $\Theta$
of the form $c_3 \Theta^3 + c_2 \Theta^2 + c_1 \Theta = \mathrm{small}$, with very precisely  computed coefficients. The
 full derivation of this cubic equation is given in \cite[Lemma A.1]{Cusp1}. In the bulk regime $|c_1|\sim 1$, hence
 the equation is practically linear.  In the regime where the density vanishes, we have $c_1\approx 0$, hence
 higher order terms become relevant. At the edge we have $|c_2|\sim 1$, so we have a quadratic equation, while
 in the cusp regime $c_2\approx 0$, but $|c_3|\sim 1$, so we have a cubic equation. It turns out 
 that under  the flatness condition no other cases are possible, i.e. $|c_1|+ |c_2|+|c_3| \sim 1$. 
 This trichotomic structural property of the underlying cubic equation was first discovered in \cite{AjankiSingularities}, developed further in \cite{Altshape},
and played an essential role in proving cusp local laws
for Wigner-type matrices in \cite{Ajankirandommatrix, Cusp1}. 
 
In our current situation, lacking flatness for $H_z$, 
a second unstable direction of $\cB$ is present due to the specific block structure of the matrix $H_z$
which creates a major complication. 
We denote the unstable directions of $\cB$ by $B$ and $B_*$. One of them, $B$, is the relevant one and it behaves very similarly to the one present in \cite{AltEdge,Altshape,Cusp1}. The novel unstable direction $B_*$ 
originates from the specific block structure of $H_z$ and $\cS$
 in \eqref{eq:def_H_z} and \eqref{eq:def_cS}, respectively, and is related to the unstable direction in \cite{Altcirc}. 
We need to treat both unstable directions separately.
In a generic situation, the solution to \eqref{AB}
would be of the form $Y = \Theta B + \Theta_* B_* + \mathrm{error}$, where
the complex scalars $\Theta$ and $\Theta_*$ satisfy a system of coupled cubic equations that is hard to analyse. 
Fortunately, for our applications, we have an additional input, namely we know that there is a matrix, 
concretely $E_-$, such that $Y=G-M$ is orthogonal to $E_-$, while $B_*$ is far from being orthogonal to $E_-$
(see \eqref{eq:def_E_pm} below for the definition of $E_-$ and \eqref{eq:scalar_E_minus_G_equals_0} and \eqref{eq:scalar_E_minus_M_equals_0} for the orthogonality to $G$ and $M$, respectively). 
The existence of such a matrix and a certain non-degeneracy of the two unstable directions guarantee that $\Theta_*$ is negligible and $Y$ is still essentially parallel to one unstable direction, $Y = \Theta B + \mathrm{error}$. Hence, we 
still need to analyse a single cubic equation for $\Theta$, 
albeit its coefficients, given in terms of $B$, $B_*$, $M$, $D$, $\cB$ and $\cS$,
see Lemma~\ref{lem:cubic_equation_abstract} for their precise form, 
 are much more complicated than those in \cite{Altshape,Cusp1}.

Summarizing, to understand the relationship between $G-M$ and $D$  from \eqref{eq:stability_equation}
requires an analysis of the small eigenvalues of $\cB$ in the regime, where $\abs{z}^2$ is close to $\varrho(\smallS)$ and $\eta$ is small. 
This analysis is based on viewing the non-normal operator $\cB$ as a perturbation around an operator of the form $1- \cC \cF$, where $\cC$ is unitary and $\cF$ is Hermitian. 
The unperturbed operator, $1- \cC\cF$, is also non-normal but simpler to analyze compared to $\cB$. In fact, $1- \cC \cF$ has a single small eigenvalue and this eigenvalue has (algebraic and geometric) multiplicity two
and we can construct appropriate eigendirections. 
A very fine perturbative argument reveals that after perturbation these two eigendirections will be associated to two different (small) eigenvalues $\beta$ and $\beta_*$. 
The distance between them is controlled from below which allows us to follow the perturbation of the eigendirections as well. 
Precise perturbative expansions of $B$ and $B_*$ around the corresponding eigenvectors of $1 - \cC \cF$ and a careful use of the specific structure of $\cS$ in \eqref{eq:def_cS} 
reveal that, up to a small error term, $B$ is orthogonal to $E_-$ while $B_*$ is far from orthogonal to $E_-$.

Moreover, we have to show that $M D$ in \eqref{eq:stability_equation} is sufficiently small in the unstable direction $B$ to compensate for the blow-up of $\cB^{-1}$ originating from the relevant small eigenvalue $\beta$. 
To that end, we need to adjust the \emph{cusp fluctuation averaging} mechanism discovered in \cite{Cusp1} to the current setup which will be done in Subsection~\ref{subsec:proof_D_bounds} below. 
This part also uses the specific block structure of $H_z$ in \eqref{eq:def_H_z}. 
We can, thus, conclude that $G-M$ is small due to \eqref{eq:stability_equation} which completes the sketch of the proof of Theorem~\ref{thm:spectral_radius_X}.

The proof of Theorem~\ref{thm:local_law_X} also follows from the local law for $H_z$
 since the observable of the eigenvalues of $X$ is related to 
the resolvent $G$ while the integral over $f_{z_0,a} \sigma$ is related to $M$. 
Indeed, \cite[Eq.'s~(2.10), (2.13) and (2.14)]{Altcirc} imply that 
\begin{equation} \label{eq:girko} 
\frac{1}{n} \sum_{i=1}^n f_{z_0,a}(\zeta_i) = \frac{1}{4\pi n} \int_{\C} \Delta f_{z_0,a}(z) \log \abs{\det H_z} \di^2 z =
 -\frac{1}{4\pi n} \int_{\C} \Delta f_{z_0,a}(z) \int_0^\infty \Im \Tr G(z,\eta) \di \eta \, \di^2 z. 
\end{equation}
The first identity in \eqref{eq:girko} is known as \emph{Girko's Hermitization formula}, the second identity 
 (after a regularization of the $\eta$-integral at infinity)  was first used in \cite{tao2015}. 
On the other hand, since the imaginary part of the diagonal of $M$ coincides with the solution $(v_1,v_2)$ of \eqref{eq:dyson_vector_equation} (see \eqref{eq:block_structure_M} below), 
the definition of $\sigma$ in \eqref{eq:sigma_equal_laplace_potential} yields 
\[ \int_\C f_{z_0,a} (z) \sigma(z)\di^2 z = -\frac{1}{4\pi n} \int_\C \Delta f_{z_0,a}(z) \int_0^\infty \Im \Tr M(z,\eta) \di \eta\,  \di^2 z. \] 
Therefore, Theorem~\ref{thm:local_law_X} also follows once the closeness of $G$ and $M$ has been established as explained above.

\subsection{Notations and conventions}  \label{sec:notations} 

In this section, we collect some notations and conventions used throughout the paper. 
We set $[k] \defeq \{1, \ldots, k \} \subset \N$ for any $k \in \N$. 
For $z \in \C$ and $r >0$, we define the disk $D_r(z)$ in $\C$ of radius $r$ centered at $z$ through $D_r(z) \defeq\{ w \in \C \colon \abs{z-w} < r \}$.  
We use $\di^2 z$ to denote integration with respect to the Lebesgue measure on $\C$. 

We now introduce some notation used for vectors, matrices and linear maps on matrices. 
Vectors in $\C^{2n}$ are denoted by boldfaced small Latin letters like $\bx$, $\by$ etc. 
For vectors $\bx = (x_a)_{a \in [2n]}, \,\by = (y_a)_{a \in [2n]} \in \C^{2n}$, 
we consider the normalized Euclidean scalar product $\scalar{\bx}{\by}$ and the induced
normalized Euclidean norm $\norm{\bx}$ defined by 
\[\scalar{\bx}{\by} = (2n)^{-1} \sum_{a \in [2n]} \overline{x_a} y_a, \qquad \qquad \norm{\bx} \defeq
\scalar{\bx}{\bx}^{1/2}. \] 
Functions of vectors such as roots, powers or inverse and operations such as products of vectors are understood entrywise. 

Matrices in $\C^{2n \times 2n}$ are usually denoted by capitalized Latin letters. We especially use $G$, $H$, $J$, $M$, $R$, $S$ and $T$. 
For a matrix $R\in \C^{2n \times 2n}$, we introduce the real part $\Re R$ and the imaginary part $\Im R$ defined 
through 
\[ \Re R \defeq \frac{1}{2} \big( R + R^*\big), \qquad \qquad \Im R\defeq \frac{1}{2\ii} \big( R - R^* \big). \] 
We have $R = \Re R + \ii \Im R$ for all $R \in \C^{2n \times 2n}$.  
On $\C^{2n\times 2n}$, we consider the normalized trace $\avg{\genarg}$ and the normalized Hilbert-Schmidt scalar product $\scalar{\genarg}{\genarg}$ defined by 
\[ \avg{V} \defeq \frac{1}{2n} \Tr(V) = \frac{1}{2n} \sum_{i=1}^{2n} v_{ii}, \qquad \scalar{V}{W} \defeq \frac{1}{2n} \Tr (V^* W) = \avg{V^*W} \] 
for all $V=(v_{ij})_{i,j=1}^{2n}, W \in \C^{2n \times 2n}$. The norm on $\C^{2n \times 2n}$ induced by the normalized Hilbert-Schmidt scalar product is denoted by $\normtwo{\genarg}$, i.e. $\normtwo{V} \defeq \avg{V^*V}^{1/2}$ for any $V \in \C^{2n \times 2n}$. 
Moreover, for $V \in \C^{2n \times 2n}$, we write $\norm{V}$ for the operator norm of $V$ induced by the normalized Euclidean norm $\norm{\genarg}$ on $\C^{2n}$.

We use capitalized calligraphic letters like $\cS$, $\cB$ and $\cT$ to denote linear maps on $\C^{2n\times 2n}$. 
In particular, for $A, B \in \C^{2n \times 2n}$, we define the linear map $\cC_{A,B} \colon \C^{2n\times 2n} \to \C^{2n\times 2n}$ through $\cC_{A, B} [R] \defeq A R B$ for all $R \in \C^{2n\times 2n}$. 
This map satisfies the identities $\cC_{A,B}^* = \cC_{A^*, B^*}$ and $\cC_{A,B}^{-1} = \cC_{A^{-1},B^{-1}}$, 
where the second identity requires the matrices $A$ and $B$ to be invertible. 
 We set $\cC_{A} \defeq \cC_{A,A}$ for any matrix $A \in \C^{2n\times 2n}$.  
For a linear map $\cT$ on $\C^{2n\times 2n}$, we consider several norms. 
We denote by $\norm{\cT}$ the operator norm of $\cT$ induced by $\norm{\genarg}$ on $\C^{2n\times 2n}$. 
Moreover, $\normtwoop{\cT}$ denotes the operator norm of $\cT$ induced by $\normtwo{\genarg}$ on $\C^{2n\times 2n}$. 
We write $\normtwoinf{\cT}$ for the operator norm of $\cT$ when the domain is equipped with $\normtwo{\genarg}$ and the target is equipped with $\norm{\genarg}$.  

In order to simplify the notation in numerous computations, we use the following conventions. 
In vector-valued relations, we identify a scalar with the vector whose components all agree with this scalar. 
Moreover, we use the block matrix notation 
\begin{equation} \label{eq:block_matrix_notation} 
 \begin{pmatrix} a & b \\ c & d \end{pmatrix} 
\end{equation}
exclusively for $2n\times 2n$-matrices. Here, each block is of size $n \times n$. If $a$, $b$, $c$ or $d$ are vectors (or scalars) then with a slight abuse of notations they are identified with the diagonal $n\times n$ matrices with $a$, $b$, $c$ or $d$, respectively, on the diagonal 
(or the respective multiple of the $n\times n$ identity matrix). 
Furthermore, we introduce the $2n\times 2n$ matrices $E_+$ and $E_-$ given in the block matrix notation of 
\eqref{eq:block_matrix_notation} by 
\begin{equation} \label{eq:def_E_pm} 
 E_+ \defeq \begin{pmatrix} 1 & 0 \\ 0 & 1 \end{pmatrix}, \qquad E_- \defeq \begin{pmatrix} 1 & 0 \\ 0 & -1 \end{pmatrix}. 
\end{equation}
We remark that $E_+$ coincides with the identity matrix in $\C^{2n\times 2n}$. 
In our argument, the following sets of $2n \times 2n$-matrices appear frequently. The \emph{diagonal} matrices $\dM\subset \C^{2n \times 2n}$ and the \emph{off-diagonal} matrices $\oM\subset \C^{2n \times 2n}$ are defined through 
\[ \dM \defeq \bigg\{ \begin{pmatrix} a & 0 \\ 0 & b \end{pmatrix} \colon a,b \in \C^n \bigg\}, 
\qquad \qquad \oM \defeq \bigg\{ \begin{pmatrix} 0 & a  \\  b & 0  \end{pmatrix} \colon a,b \in \C^n \bigg\}.  \] 
 Note that the subspaces $\dM$ and $\oM$ are orthogonal with respect to the normalized Hilbert-Schmidt scalar product defined above.  

In each section of this paper, we will specify a set of \emph{model parameters} which are basic parameters
of our model, e.g. $s_*$ and $s^*$ in \eqref{eq:condition_flatness}. All of our estimates will hold uniformly for all models that satisfy our assumptions with the same model parameters. 
For $f, g \in [0,\infty)$, the comparison relation $f \lesssim g$ is true if $f \leq C g$ for some constant $C>0$ that depends only on model parameters. 
We also write $f \gtrsim g$ if $g \lesssim f$ and $f \sim g$ if $f \lesssim g$ and $f \gtrsim g$. 
If $f(i)$ and $g(i)$ depend on a further parameter $i \in I$ and $f(i) \leq C g(i)$ for all $i \in I$ then we say $f \lesssim g$ uniformly for $i \in I$. 
We use the same notation for nonnegative vectors and positive semidefinite matrices. Here, for vectors 
$\bx = (x_a)_{a \in [2n]}, \,\by =(y_a)_{a \in [2n]}\in [0,\infty)^{2n}$, the comparison relation $\bx \lesssim \by$ means $x_a \lesssim y_a$
uniformly for all $a \in [2n]$, i.e. the implicit constant can be chosen independently of $a$. 
For positive semidefinite matrices $R_1, R_2 \in \C^{2n\times 2n}$, $R_1 \lesssim R_2$ if $\scalar{\bx}{R_1 \bx} \lesssim \scalar{\bx}{R_2 \bx}$ uniformly for all $\bx \in \C^{2n}$. 
For $\eps >0$, scalars $f_1, f_2 \in \C$, matrices $R_1, R_2 \in \C^{2n\times 2n}$ and operators $\cT_1, \cT_2$ on $\C^{2n\times 2n}$, we write $f_1 = f_2 +\ord(\eps)$, $R_1= R_2 + \ord(\eps)$ and $\cT_1 = \cT_2 +\ord(\eps)$ if $\abs{f_1 - f_2} \lesssim \eps$, $\norm{R_1 - R_2} \lesssim \eps$ and $\norm{\cT_1 - \cT_2} \lesssim \eps$, respectively.

\section{Analysis of the Matrix Dyson equation}  \label{sec:stability}

In this section, we study the linear stability of the MDE, \eqref{eq:mde}. According to the quadratic stability equation, \eqref{eq:stability_equation}, associated to the MDE
the linear stability is governed by the behaviour of the stability operator $\cB \defeq 1 - \cC_M \cS$ (compare \eqref{eq:def_cB}). The main result of this section, Proposition~\ref{pro:stability_operator} below, 
provides a complete understanding of the small, in absolute value, eigenvalues of $\cB$ in the regime when $\rho= \rho(z,\eta)$ is small. 
Here, $\rho = \rho(z,\eta)$ is defined through 
\begin{equation} \label{eq:def_rho} 
\rho \defeq \frac{1}{\pi} \avg{\Im M} 
\end{equation}
for $\eta >0$ and $z \in \C$, where $M$ is the solution to \eqref{eq:mde}.  
For the small eigenvalues and their associated eigenvectors, very precise expansions in terms of $M$ are derived in Proposition~\ref{pro:stability_operator}.

We warn the reader that $\rho$ should not be confused with the spectral radii $\varrho(X)$ and $\varrho(\smallS)$ used in Section~\ref{sec:main_results}. 
The function $\rho$ is the \emph{harmonic extension} of the \emph{self-consistent density of states} of $H_z$  
(see e.g. \cite[Eq.~(2)]{AltEdge} for the definition of the self-consistent density of states).  

In the remainder of the present section, we assume $\eta \in (0,1]$ and $ z \in D_\tau(0)$ for some fixed $\tau > 1$. In this section, the comparison relation $\lesssim$ introduced in 
Section~\ref{sec:notations} is understood with respect to the model parameters $\{s_*, s^*, \tau \}$. We recall that $s_*$ and $s^*$ constituted the bounds on the entries of $\smallS$ 
in \eqref{eq:condition_flatness}. 

The following proposition is the main result of the present section. 

\begin{proposition}[Properties of the stability operator $\cB$]  \label{pro:stability_operator} 
There are (small) $\rho_* \sim 1$ and $\eps \sim 1$ such that if $\rho + \eta/\rho \leq \rho_*$ then 
$\cB$ has two eigenvalues $\beta$ and $\beta_*$ in $D_\eps(0)$, i.e.  $\spec (\cB) \cap D_\eps(0) = \{ \beta, \beta_*\}$. 
Moreover, $\beta$ and $\beta_*$ have geometric and algebraic multiplicity one, $0<\abs{\beta_*} < \abs{\beta}$ and 
\begin{equation} \label{eq:beta_beta_star_scaling} 
 \abs{\beta_*} \sim \eta/\rho, \qquad \abs{\beta} \sim \eta/\rho + \rho^2. 
\end{equation}
Furthermore, $\cB$ has left and right eigenvectors $\wh{B}_*$, $\wh{B}$ and $B_*$, $B$ corresponding to $\beta_*$
and $\beta$, respectively, i.e., 
\[ \cB[B_*] = \beta_* B_*, \qquad \cB[B] = \beta B, \qquad \cB^*[\wh{B}_*] =\overline{\beta_*} \wh{B}_*, \qquad \cB^*[\wh{B}] = \bar{\beta} \wh{B}, \] 
which satisfy 
\begin{subequations} \label{eq:expansions_eigenvectors_cB} 
\begin{align}
 B & = \rho^{-1} \Im M - 2\ii \rho^{-1}(\Im M)(\Im M^{-1}) (\Re M) + \ord(\rho^2 + \eta/\rho),  \label{eq:expansion_B}\\ 
 B_* & = \rho^{-1} E_-\Im M + \ord(\rho^2 + \eta/\rho),\label{eq:expansion_B_star} \\
 \wh{B}  & = - \rho^{-1} \Im (M^{-1}) + \ord(\rho^2 + \eta/\rho),\label{eq:expansion_L} \\ 
 \wh{B}_* & = - \rho^{-1} E_-\Im (M^{-1}) + \ord(\rho^2 + \eta/\rho). \label{eq:expansion_L_star}
\end{align} 
\end{subequations} 
For fixed $z$, the eigenvalues $\beta$ and $\beta_*$ as well as the eigenvectors $B$, $B_*$, $\wh{B}$ and $\wh{B}_*$ are continuous functions of $\eta$ as long as $\rho + \eta/\rho \leq \rho_*$. 
We also have the expansions 
\begin{subequations} 
\begin{align} 
\beta \scalar{\wh{B}}{B} & =  \pi \eta \rho^{-1} + 2 \rho^2 \psi + \ord(\rho^3 + \eta\rho + \eta^2/\rho^2), \label{eq:expansion_beta}\\ 
 \beta_* \scalar{\wh{B}_*}{B_*} & = \pi\eta \rho^{-1} + \ord(\rho^3 + \eta\rho + \eta^2/\rho^2), \label{eq:expansion_beta_star}
\end{align} 
\end{subequations} 
where $\psi \defeq \rho^{-4} \avg{[(\Im M)(\Im M^{-1})]^2}$. We have $\psi \sim 1$, $\abs{\scalar{\wh{B}}{B}}\sim 1$ and $\abs{\scalar{\wh{B}_*}{B_*}} \sim 1$. 

Moreover, the resolvent of $\cB$ is bounded on the spectral subspace complementary to $\beta$ and $\beta_*$. That is, if $\cQ$ is the spectral projection of $\cB$ associated to $\spec(\cB)\setminus \{ \beta, \beta_*\}$
then 
\begin{equation} \label{eq:cB_inverse_Q_lesssim_one} 
 \norm{\cB^{-1} \cQ} + \norm{(\cB^*)^{-1} \cQ^*} \lesssim 1.  
\end{equation}
\end{proposition}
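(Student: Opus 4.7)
The plan is to study $\cB = 1 - \cC_M\cS$ by viewing it as a perturbation of a structurally simpler operator of the form $1 - \mathcal{U}\cF$, where $\mathcal{U}$ is unitary on the Hilbert--Schmidt space $(\C^{2n\times 2n}, \scalar{\cdot}{\cdot})$ and $\cF$ is self-adjoint positive with $\normtwoop{\cF}\leq 1$. Such a splitting is obtained by exploiting the imaginary part of the MDE~\eqref{eq:mde}, which reads $\Im M = \eta MM^* + M\cS[\Im M]M^*$, together with the polar decomposition $M = \mathcal{W}\abs{M}$: a suitable balanced conjugation of $\cC_M\cS$ by $\abs{M}^{1/2}$ and $\mathcal{W}$ produces the desired product $\mathcal{U}\cF$. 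This part follows the strategy developed for flat Wigner-type matrices in \cite{Altshape, Cusp1}, adapted to the non-flat block structure of $\cS$ from \eqref{eq:def_cS}.

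The main structural novelty, compared with \cite{Cusp1}, is that the unperturbed operator $1-\mathcal{U}\cF$ has a \emph{double} eigenvalue near $0$. This doubling comes from an $E_-$-symmetry of $\cS$: since $\cS[R]$ is block-diagonal, one has $\cS[E_- R E_-] = \cS[R]$ and $E_-\cS[R]E_- = \cS[R]$, so $\cS$ commutes with $E_-$-conjugation. Consequently the top eigenspace of $\cF$ decomposes into $E_-$-even and $E_-$-odd sectors, each carrying one leading eigenvector with profile proportional to $\Im M$ and $E_-\Im M$ respectively. Conjugating back by the similarity recovers, to leading order, the two eigendirections $B\propto \rho^{-1}\Im M$ and $B_*\propto \rho^{-1}E_-\Im M$ from \eqref{eq:expansion_B}--\eqref{eq:expansion_B_star}, and analogously for $\wh B,\wh B_*$ in \eqref{eq:expansion_L}--\eqref{eq:expansion_L_star} using the identity $\Im M^{-1} = -M^{-1}(\Im M)M^{-*}$ and the MDE.

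The precise expansions \eqref{eq:expansion_beta}--\eqref{eq:expansion_beta_star} are then produced by a careful Rayleigh--Schr\"odinger / Schur-complement computation restricted to the two-dimensional unstable subspace. The linear term $\pi\eta/\rho$ is common to both eigenvalues and is read off directly from the imaginary-part MDE. The quadratic correction $2\rho^2\psi$ is picked up by $\beta$ only (the $E_-$-even direction), arising from second-order mixing with the \emph{stable} part of $\spec(\cF)$; the corresponding off-diagonal coupling in the $E_-$-odd sector is killed to the required order by the $E_-$-invariance of $\cS$, so $\beta_*$ retains only its linear part. The pairings $\scalar{\wh B}{B}\sim 1$ and $\scalar{\wh B_*}{B_*}\sim 1$ follow by direct computation since the leading profiles have nondegenerate overlap of size $\sim 1$. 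Continuity of the eigenvalues and eigenvectors in $\eta$ is automatic from the analyticity of $M$ on $\{\eta>0\}$ combined with the size-$\sim 1$ separation of the top packet from the rest of $\spec(\cF)$, which also yields \eqref{eq:cB_inverse_Q_lesssim_one} via the Riesz projection formula and the boundedness of the similarity factor.

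The hard part will be certifying that $\beta$ and $\beta_*$ are truly \emph{simple} and quantifying their separation. Since $\absb{\abs{\beta}-\abs{\beta_*}}\sim \rho^2$ is comparable to each eigenvalue itself, one cannot invoke a naive ``gap to the bulk implies gap to each other'' argument; the splitting has to be extracted from the explicit $2\times 2$ effective matrix on the unstable subspace, where the $E_-$-parity argument becomes essential for ruling out off-diagonal leading terms between the even and odd sectors and for pinning down the coefficients in \eqref{eq:expansions_eigenvectors_cB}. This is the point where the analysis genuinely departs from \cite{Altshape, Cusp1} and requires a synthesis with the block-instability analysis of \cite{Altcirc}.
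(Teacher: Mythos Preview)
Your overall strategy matches the paper's: factorize $\cB$ via a polar-type decomposition of $M$, reduce to analyzing an operator of the form $1-(\text{unitary})\cdot\cF$, then run a $2\times 2$ Schur-complement expansion on the unstable subspace. But your account of \emph{why} there are two unstable directions is off, and this would derail the execution.

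You claim that ``the top eigenspace of $\cF$ decomposes into $E_-$-even and $E_-$-odd sectors, each carrying one leading eigenvector''. In fact the top eigenvalue $\normtwoop{\cF}$ of $\cF$ is \emph{simple} (Perron--Frobenius for the strictly positive matrix $\smallF$ underlying $\cF|_\dM$), with a unique positive eigenvector $F$. The relevant symmetry is $\cF[E_-R]=-E_-\cF[R]$, which places $E_-F$ at the eigenvalue $-\normtwoop{\cF}$, i.e.\ at the \emph{opposite} end of $\spec\cF$, not at the top. The doubling is therefore not visible in $\cF$ alone. The paper introduces an intermediate operator $\cK\defeq 1-\cC_P\cF$ with $P\defeq\mathrm{sign}(\Re U)$; since $PE_-=-E_-P$, the sign flip from $\cF$ is compensated and $\cC_P\cF$ commutes with left-multiplication by $E_-$. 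Thus $\cK$ has a single small eigenvalue $\kappa\sim\eta/\rho$ of \emph{exact} multiplicity two, spanned by $K_+\approx F_U=\rho^{-1}\Im U$ and $K_-=E_-K_+$. One then views $\cL\defeq 1-\cC_U\cF$ (to which $\cB$ is similar via $\cC_Q$) as a perturbation of $\cK$ of size $\|\cL-\cK\|\lesssim\rho$, and the $2\times 2$ effective matrix you allude to is computed in the basis $(K_+,K_-)$ of $\ran\cP_\cK$. The crucial algebraic fact (Lemma~\ref{lem:expansion_Lambda}) is that this matrix is $\mathrm{diag}\big(2\rho^2\avg{F_U^4}/\avg{F_U^2},\,0\big)+\ord(\eta\rho)$: the $K_-$-direction does not move at order $\rho^2$ because $\cD^*[E_-F_U]=0$ exactly, while $\cD^*[F_U]=2\rho^2\cF[F_U^3]$. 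This is what yields the splitting $\beta-\beta_*\sim\rho^2$ and the expansions \eqref{eq:expansion_beta}--\eqref{eq:expansion_beta_star}. Your ``$E_-$-parity kills the off-diagonal'' intuition is morally right, but it operates on $\ran\cP_\cK$, not on the top eigenspace of $\cF$.

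A secondary point: the paper uses the \emph{balanced} decomposition $M=QUQ$ with $Q\in\dM$ positive diagonal, not the standard polar decomposition $M=\mathcal W|M|$. Diagonality of $Q$ is what makes $\cF=\cC_Q\cS\cC_Q$ preserve $\dM$ and keeps the commutation relations with $E_-$ clean; with the standard $|M|$ (which is not in $\dM$) the bookkeeping becomes substantially heavier, and the identification of $P$ as the natural ``unperturbed'' unitary is obscured.
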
 

We now make a few remarks about Proposition~\ref{pro:stability_operator}. 
First, owing to Lemma~\ref{lem:scaling_relation_v_u} below (also note \eqref{eq:normalization_spectral_radius} below), the condition $\rho + \eta/\rho \leq \rho_*$ with $\rho_* \sim 1$ is satisfied if $\abs{\abs{z}^2 - \varrho(\smallS)} \leq \delta$ and $\eta \in (0,\delta]$ 
for some (small) $\delta \sim 1$. 
Secondly, we note that $B$, $B_*$, etc. are called eigenvectors despite that they are in fact matrices in $\C^{2n\times 2n}$. 
Finally, the second term on the right-hand side of \eqref{eq:expansion_B} is of order $\rho$, hence it is subleading compared to the first term $\rho^{-1} \Im M \sim 1$. 

We now explain the relation between the solution $M$ to the MDE, \eqref{eq:mde}, and the solution $(v_1, v_2)$ to \eqref{eq:dyson_vector_equation}. 
The $2n \times 2n$ matrix $M$ satisfies 
\begin{equation} \label{eq:block_structure_M} 
 M(z, \eta) = \begin{pmatrix} \ii v_1 & -z u \\ -\bar z u & \ii v_2 \end{pmatrix},
\end{equation}
where $(v_1, v_2)$ is the unique solution of \eqref{eq:dyson_vector_equation} 
and $u$ is defined through 
\[ u \defeq \frac{v_1}{\eta + \smallS^t v_1} = \frac{v_2}{ \eta + \smallS v_2}. \] 
Note that $u \in (0,\infty)^n$. We remark that \eqref{eq:block_structure_M} is the unique solution to \eqref{eq:mde} with the side condition that $\Im M$ is a positive definite matrix. 
The existence and uniqueness of such $M$ follows from \cite{Helton01012007}. 

Throughout this section, the special structure of $M$ as presented in \eqref{eq:block_structure_M} will play an important role. 
As a first instance, we see that the representation of $M$ in \eqref{eq:block_structure_M} implies 
\begin{equation} \label{eq:Im_M_Re_M} 
 \Im M = \begin{pmatrix} v_1 & 0 \\ 0 & v_2\end{pmatrix} , \qquad \Re M = \begin{pmatrix} 0 & - z u \\ -\bar z u & 0 \end{pmatrix}. 
\end{equation}
Therefore, $\Im M \in \dM$ and $\Re M \in \oM$. This is an important ingredient in the proof of the following corollary. 

\begin{corollary} 
There is $\rho_* \sim 1$ such that $\rho + \eta/\rho \leq \rho_*$ implies 
\begin{equation} \label{eq:bound_scalar_E_minus_B} 
 \abs{\scalar{E_-}{B}}  \lesssim \rho^2 + \eta/\rho, 
\end{equation}
where $B$ is the right eigenvector of $\cB$ from Proposition~\ref{pro:stability_operator}. 
\end{corollary}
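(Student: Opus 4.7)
My plan is to substitute the expansion \eqref{eq:expansion_B} of $B$ into $\scalar{E_-}{B}$ and to verify that each of the two explicit leading terms is orthogonal to $E_-$. The error term in \eqref{eq:expansion_B}, controlled in operator norm by $\ord(\rho^2 + \eta/\rho)$, contributes at most $\ord(\rho^2 + \eta/\rho)$ to $\scalar{E_-}{B}$, since $\normtwo{E_-} = 1$ and $\normtwo{R} \leq \norm{R}$ for any matrix $R$. So \eqref{eq:bound_scalar_E_minus_B} will follow once the two explicit leading terms are shown to pair to zero with $E_-$.

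For the second leading term, $-2\ii\rho^{-1}(\Im M)(\Im M^{-1})(\Re M)$, I would use the orthogonal decomposition $\C^{2n\times 2n} = \dM \oplus \oM$ with respect to $\scalar{\genarg}{\genarg}$. By \eqref{eq:Im_M_Re_M}, both $\Im M$ and $\Im M^{-1}$ lie in $\dM$ while $\Re M$ lies in $\oM$, so their product lies in $\dM\cdot\dM\cdot\oM \subset \oM$; since $E_- \in \dM$, the pairing with $E_-$ vanishes identically.

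For the first leading term, \eqref{eq:Im_M_Re_M} reduces $\scalar{E_-}{\rho^{-1}\Im M}$ to $\tfrac{1}{2\rho}(\avg{v_1} - \avg{v_2})$, so the task becomes proving the identity $\avg{v_1} = \avg{v_2}$ from the scalar Dyson system \eqref{eq:dyson_vector_equation} whenever $\eta > 0$. To extract this identity, I would multiply the two equations in \eqref{eq:dyson_vector_equation} componentwise by $v_1$ and $v_2$, respectively, using the relation $v_1/(\eta + \smallS^t v_1) = v_2/(\eta + \smallS v_2) = u$ defined after \eqref{eq:block_structure_M} to rewrite the $\abs{z}^2$ terms; this yields the two componentwise identities
\begin{equation*}
\eta v_1 + v_1\cdot\smallS v_2 + \abs{z}^2 u = 1, \qquad \eta v_2 + v_2\cdot\smallS^t v_1 + \abs{z}^2 u = 1.
\end{equation*}
Subtracting and summing over all $n$ coordinates, the bilinear contributions $\sum_{i,j} v_{1,i} s_{ij} v_{2,j}$ and $\sum_{i,j} v_{2,i} s_{ji} v_{1,j}$ coincide after relabelling $i \leftrightarrow j$, so what remains is $n\eta(\avg{v_1} - \avg{v_2}) = 0$, which forces $\avg{v_1} = \avg{v_2}$ since $\eta > 0$.

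The only nontrivial input is the identity $\avg{v_1} = \avg{v_2}$, which reflects the fact that the bilinear form $(a,b) \mapsto \sum_{i,j} a_i s_{ij} b_j$ is symmetric under the simultaneous swap $(\smallS, a, b) \leftrightarrow (\smallS^t, b, a)$. Everything else amounts to bookkeeping based on the block decomposition $\C^{2n\times 2n} = \dM \oplus \oM$ and the specific structure of $M$ from \eqref{eq:block_structure_M}.
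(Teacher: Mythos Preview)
Your proof is correct and follows essentially the same approach as the paper: substitute the expansion \eqref{eq:expansion_B}, observe that the second explicit term lies in $\oM$ and is therefore orthogonal to $E_-\in\dM$, and verify that the first term vanishes via $\avg{v_1}=\avg{v_2}$. The only difference is that the paper cites $\scalar{E_-}{M}=0$ from \eqref{eq:scalar_E_minus_M_equals_0} (itself taken from \cite[Eq.~(3.8)]{Altcirc}), whereas you re-derive this identity directly from \eqref{eq:dyson_vector_equation}; one small imprecision is that $\Im M^{-1}\in\dM$ does not follow from \eqref{eq:Im_M_Re_M} alone but from taking the imaginary part of the MDE \eqref{eq:mde}, as the paper notes.
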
 

\begin{proof} 
The expansion of $B$ in \eqref{eq:expansion_B} yields 
\[  \scalar{E_-}{B} = \rho^{-1} \scalar{E_-}{\Im M} - 2 \ii \rho^{-1} \scalar{E_-}{(\Im M)(\Im M^{-1})(\Re M)} + \ord(\rho^2 + \eta/\rho). \] 
We now conclude \eqref{eq:bound_scalar_E_minus_B} by showing that the first two terms on the right-hand side vanish. 
The identity \eqref{eq:scalar_E_minus_M_equals_0} below implies $\scalar{E_-}{\Im M} = 0$. 
Moreover, by \eqref{eq:Im_M_Re_M}, we have $\Re M \in \oM$ and $\Im M \in \dM$. Taking the imaginary part of \eqref{eq:mde} thus yields $\Im M^{-1} \in \dM$. 
Therefore, $\scalar{E_-}{(\Im M)(\Im M^{-1})(\Re M)}=0$ since $\Re M \in \oM$ while $E_- (\Im M)(\Im M^{-1}) \in \dM$. 
This completes the proof of \eqref{eq:bound_scalar_E_minus_B}. 
\end{proof} 

\subsection{Preliminaries} 

The MDE, \eqref{eq:mde}, and its solution have a special scaling when $\cS$ and, hence, $\smallS$, are rescaled by $\lambda>0$, i.e. $\cS$ in \eqref{eq:mde} is 
replaced by $\lambda \cS$. Indeed, if $M=M(z,\eta)$ is the solution to \eqref{eq:mde} with positive definite imaginary part then $M_\lambda(z,\eta) \defeq \lambda^{-1/2} M(z\lambda^{-1/2}, \eta \lambda^{-1/2})$ is the 
 solution to 
\[ - M_\lambda^{-1} = \begin{pmatrix} \ii \eta & z \\ \bar z & \ii \eta \end{pmatrix} + \lambda\cS[M_\lambda]. \] 
with positive imaginary part. 
The same rescaling yields the positive solution of \eqref{eq:dyson_vector_equation} when $\smallS$ is replaced by $\lambda \smallS$ (see the explanations around (3.7) in \cite{Altcirc}). 
Therefore, by a simple rescaling, we can assume that the spectral radius is one, 
\begin{equation} \label{eq:normalization_spectral_radius} 
\varrho(\smallS) = 1. 
\end{equation} 
 We remark that the other assumptions on $X$ are still satisfied since the flatness condition \ref{assum:flatness} directly implies $\varrho(\smallS) \sim 1$.  
In the remainder of the paper, we will always assume \eqref{eq:normalization_spectral_radius}. 

\paragraph{Balanced polar decomposition of $M$} 
We first introduce a polar decomposition of $M$ that will yield a useful factorization of $\cB$ which is the basis of its spectral analysis. 
To that end, we define 
\begin{equation} \label{eq:def_U_Q} 
 U \defeq \begin{pmatrix} \ii \sqrt{\frac{v_1 v_2}{u}} & - z \sqrt{u} \\ - \bar z \sqrt{u} & \ii\sqrt{\frac{v_1 v_2}{u}} \end{pmatrix}, \qquad 
Q \defeq \begin{pmatrix} \big(\frac{uv_1}{v_2}\big)^{1/4} & 0 \\ 0 & \big(\frac{uv_2}{v_1}\big)^{1/4} \end{pmatrix}, 
\end{equation}
where roots and powers of vectors are taken entrywise. 
Starting from these definitions, an easy computation shows that $M$ admits the following \emph{balanced polar decomposition} 
\begin{equation} \label{eq:balanced_polar_decomposition} 
 M = Q U Q. 
\end{equation}
Such polar decomposition for the solution of the Dyson equation was introduced in \cite{AjankiCorrelated}.  

The following lemma collects a few basic properties of $M$, $\rho$, $U$ and $Q$, mostly borrowed from \cite{Altcirc}.  

\begin{lemma}[Basic properties of $M$, $\rho$, $U$ and $Q$] \phantomsection \label{lem:scaling_relation_v_u} 
\begin{enumerate}[label=(\roman*)]
\item Let $\eta \in (0,1]$ and $z \in D_\tau(0)$. We have 
\begin{equation} \label{eq:scalar_E_minus_M_equals_0} 
 \scalar{E_-}{M} = 0. 
\end{equation}
Moreover, $Q = Q^* \in \dM$,  $\Im U \in \dM$, $\Re U \in \oM$ and $U$ is unitary. 
\item Uniformly for all $\eta\in (0,1]$ and $z \in D_\tau(0)$, $\rho$ satisfies the scaling relations 
\begin{equation} \label{eq:scaling_rho}  
\rho \sim \begin{cases} \eta^{1/3} + (1- \abs{z}^2)^{1/2}, & \text{ if } \abs{z} \leq 1, \\
\frac{\eta}{\abs{z}^2 - 1 + \eta^{2/3}}, & \text{ if } 1 \leq \abs{z} \leq \tau, \end{cases}
\end{equation} 
for the matrices $U$, $Q$ and $M$, we have the estimates 
\begin{equation}\label{eq:scaling_U_Q_M} 
 \Im U \sim \rho , \qquad Q \sim 1, \qquad \norm{M} \lesssim 1, 
\end{equation} 
and for the entries of $M$, we have 
\begin{equation}  \label{eq:scaling_v_1_v_2_u} 
v_1 \sim v_2 \sim \rho, \qquad u \sim 1.
\end{equation}
\item For fixed $z\in \C$, $M$, $\rho$, $U$ and $Q$ are continuous functions of $\eta$. 
\end{enumerate}
\end{lemma}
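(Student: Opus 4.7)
My plan is to treat the three parts in order. Part~(i) is purely algebraic and follows directly from the explicit expressions~\eqref{eq:block_structure_M} and~\eqref{eq:def_U_Q}. From~\eqref{eq:block_structure_M} one reads off $\scalar{E_-}{M} = \frac{\ii}{2}(\avg{v_1}-\avg{v_2})$, so the identity $\scalar{E_-}{M}=0$ reduces to $\avg{v_1}=\avg{v_2}$. I would establish this by contracting the first equation of~\eqref{eq:dyson_vector_equation} with $v_1$ and the second with $v_2$ and averaging: the bilinear couplings satisfy $\avg{v_1\, \smallS v_2}=\avg{v_2\,\smallS^t v_1}$, and both $\abs{z}^2$-terms collapse to the same quantity $\avg{\abs{z}^2 u}$ via the two defining expressions $u = v_1/(\eta+\smallS^t v_1)=v_2/(\eta+\smallS v_2)$, so subtracting leaves $\eta\avg{v_1}=\eta\avg{v_2}$. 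The structural claims $Q=Q^*\in\dM$, $\Im U\in\dM$, $\Re U\in\oM$ are immediate from~\eqref{eq:def_U_Q} since $v_1,v_2,u$ are entrywise positive; the balanced polar decomposition $M=QUQ$ is a direct $2\times 2$ block multiplication; and unitarity of $U$ reduces to the entrywise identity $v_1 v_2/u + \abs{z}^2 u = 1$, which is exactly the second equation of~\eqref{eq:dyson_vector_equation} multiplied by $v_2$ after using both defining relations for $u$.

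For part~(ii), the scaling~\eqref{eq:scaling_rho} of $\rho$ is precisely~\cite[Proposition~3.2]{Altcirc} under the normalization $\varrho(\smallS)=1$ from~\eqref{eq:normalization_spectral_radius}, so I would simply quote it. To upgrade the averaged identity $\rho=\frac{1}{\pi}\avg{v_1}=\frac{1}{\pi}\avg{v_2}$ (which follows from part~(i) together with~\eqref{eq:Im_M_Re_M}) to the pointwise bounds $v_1\sim v_2\sim\rho$, I would use flatness~\ref{assum:flatness}: it implies $\smallS v_2\sim\avg{v_2}\sim\rho$ and $\smallS^t v_1\sim\avg{v_1}\sim\rho$ uniformly in the coordinate. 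Substituted back into~\eqref{eq:dyson_vector_equation}, this gives $v_1^{-1}\sim\rho+\abs{z}^2/\rho\sim\rho^{-1}$ pointwise in the small-$\rho$ regime near $\abs{z}=1$ (with only easier bounds when $\abs{z}$ is bounded away from $1$), hence $v_1\sim\rho$ and symmetrically $v_2\sim\rho$. The bound $u = v_1/(\eta+\smallS^t v_1)\sim 1$ then follows, and $\Im U\sim\rho$, $Q\sim 1$, $\norm{M}\lesssim\norm{Q}^2\,\norm{U}\lesssim 1$ drop out of~\eqref{eq:def_U_Q} and unitarity of $U$.

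For part~(iii), I would invoke a soft implicit-function-theorem argument: for fixed $z$, $(v_1,v_2)$ is the unique positive solution of the real-analytic system~\eqref{eq:dyson_vector_equation}, and its linearization at the solution is invertible for every $\eta>0$ (essentially the content of~\cite{Helton01012007} applied to the MDE~\eqref{eq:mde}, which yields a strictly contractive fixed-point structure whenever $\Im M\succ 0$). Hence $(v_1,v_2)$, and therefore $M$, $\rho$, $U$ and $Q$ through~\eqref{eq:block_structure_M}--\eqref{eq:def_U_Q}, depend continuously (in fact real-analytically) on $\eta$. The main technical point I anticipate is the passage from averaged to pointwise scaling in part~(ii), since the subsequent perturbation analysis in Proposition~\ref{pro:stability_operator} relies crucially on entrywise bounds; but flatness reduces this passage to a one-line consequence of $\smallS v\sim\avg{v}$, so no genuinely new analytic input beyond~\cite{Altcirc} is required.
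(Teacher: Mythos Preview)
Your proposal is correct and follows essentially the same route as the paper, which mostly cites \cite{Altcirc} for the quantitative bounds. Two minor differences are worth noting. First, for $\scalar{E_-}{M}=0$ and for the unitarity identity $v_1v_2/u+\abs{z}^2 u=1$, you give direct two-line derivations from the Dyson system, whereas the paper simply invokes \cite[Eq.~(3.8)]{Altcirc} and \cite[Eq.~(3.32)]{Altcirc}; your arguments are exactly what those references contain. Second, and more substantively, in part~(ii) you and the paper go in opposite directions: the paper quotes \cite[Eq.~(3.10),~(3.11)]{Altcirc} for the \emph{pointwise} scaling $v_1\sim v_2\sim$ (right-hand side of \eqref{eq:scaling_rho}) and then deduces the scaling of the average $\rho$, while you quote the scaling of $\rho$ and bootstrap to the pointwise bounds via flatness and $\smallS v\sim\avg{v}$. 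Your route is perfectly valid (the key inequality $\rho^2+\abs{z}^2\sim 1$ that you need for $v_1^{-1}\sim\rho^{-1}$ holds uniformly on $D_\tau(0)\times(0,1]$ by a case distinction on $\abs{z}$), and it has the mild advantage of making the role of flatness explicit. For part~(iii) the paper just says ``$M$ is analytic, hence continuous''; this analyticity is of course a consequence of the implicit function theorem argument you sketch, so the two are the same.
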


\begin{proof} 
First, we remark that $\abs{z}^2$ was denoted by $\tau$ in \cite{Altcirc}. 

The identity in \eqref{eq:scalar_E_minus_M_equals_0} follows from \cite[Eq.~(3.8)]{Altcirc}. 
Obviously, \eqref{eq:def_U_Q} and $v_1, v_2, u >0$ yield $Q = Q^* \in \dM$, $\Im U \in \dM$ and $\Re U \in \oM$. 
A simple computation reveals that $U$ is unitary as $u = v_1 v_2 + \abs{z}^2 u^2$ due to \cite[Eq.~(3.32)]{Altcirc}. 

From \cite[Eq.~(3.10),~(3.11)]{Altcirc}, we conclude that $v_1$, $v_2$ and $\Im M$ scale as the right-hand side of \eqref{eq:scaling_rho}. Hence, \eqref{eq:scaling_rho}
follows from the definition of $\rho$ in \eqref{eq:def_rho}. Consequently, $v_1 \sim \rho \sim v_2$. 
Owing to \cite[Eq.~(3.26)]{Altcirc}, we have $u \sim 1$ uniformly for all $z \in D_\tau(0)$ and $\eta \in (0,1]$.  
Thus, $v_1 \sim \rho \sim v_2$ yields the first two scaling relations in \eqref{eq:scaling_U_Q_M}. 
As $U$ is unitary we have $\norm{U} = 1$. Thus, \eqref{eq:balanced_polar_decomposition} and the first two scaling relations in \eqref{eq:scaling_U_Q_M} imply the last bound in \eqref{eq:scaling_U_Q_M}.  

For fixed $z \in \C$, the matrix $M$ is an analytic, hence, continuous function of $\eta$. Thus, $\rho$, $v_1$ and $v_2$ are continuous functions of $\eta$. Consequently, as $v_1, v_2, u>0$, the matrices  
$U$ and $Q$ are also continuous in $\eta$. 
This completes the proof of Lemma~\ref{lem:scaling_relation_v_u}. 
\end{proof} 

\paragraph{Factorization of $\cB$} 
We now present a factorization of $\cB$ which will be the basis of our spectral analysis of $\cB$ as a linear map on the Hilbert space $(\C^{2n\times 2n}, \scalar{\genarg}{\genarg})$. From \eqref{eq:balanced_polar_decomposition}, we easily obtain 
\begin{equation} \label{eq:cB_rep_C_F} 
 \cB = 1 - \cC_M \cS = \cC_Q(1- \cC_{U} \cF) \cC_Q^{-1}, 
\end{equation}
where we introduced the positivity-preserving and Hermitian operator $\cF$ on $\C^{2n \times 2n}$ defined by 
\begin{equation} \label{eq:def_cF} 
\cF \defeq \cC_Q \cS \cC_Q. 
\end{equation}

Owing to \eqref{eq:cB_rep_C_F} and $Q \sim 1$ by \eqref{eq:scaling_U_Q_M}, the spectral properties of $\cB$ stated in Proposition~\ref{pro:stability_operator} can be obtained by analysing 
$1 - \cC_U \cF$. 
If $\rho$ is small then $U$ is well approximated by $P$ defined through 
\begin{equation} \label{eq:def_P} 
 P \defeq \sign{\Re U} =  \frac{\Re U}{\abs{\Re U}}= \begin{pmatrix} 0 & - z/\abs{z} \\  - \bar z /\abs{z} & 0 \end{pmatrix}. 
\end{equation}
Indeed, $1- \abs{\Re U}  = 1- \sqrt{1 - (\Im U)^2} \lesssim (\Im U)^2 \lesssim \rho^2$ implies that 
\begin{equation} \label{eq:P_minus_U_lesssim_rho} 
\norm{P - \Re U} \lesssim \rho^2, \qquad \qquad \norm{P - U} \lesssim \rho. 
\end{equation}
Therefore, we will first analyse the operators $\cF$ and $\cC_P \cF$. The proof of Proposition~\ref{pro:stability_operator} will then follow by perturbation theory since \eqref{eq:P_minus_U_lesssim_rho} implies
\begin{equation} \label{eq:norm_cK_minus_cL} 
 \norm{1 - \cC_U \cF - (1- \cC_P \cF)} \lesssim \norm{P - U} \lesssim \rho. 
\end{equation}

\paragraph{Commutation relations} 

\begin{lemma}[Commutation relations of $E_-$ with $M$, $Q$, $U$ and $P$] 
We have 
\begin{subequations} 
\begin{alignat}{3} 
ME_- & = - E_-M^*,\qquad \qquad \qquad &  M^* E_- & = - E_- M, && \label{eq:M_and_Emin} \\ 
 Q E_- & = E_- Q, \qquad \qquad &    Q^{-1} E_- & = E_- Q^{-1} && \label{eq:Q_Emin} \\ 
 U E_- &= - E_- U^*,\qquad \qquad &   U^* E_- &= - E_- U,  && \label{eq:U_Emin} \\ 
 PE_- &= - E_- P, \qquad \qquad &&&& \label{eq:P_Emin}
\end{alignat} 
\end{subequations} 

\end{lemma}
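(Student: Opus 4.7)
The plan is to exploit a simple algebraic observation: for any $2n \times 2n$ matrix $A$ written in the block form \eqref{eq:block_matrix_notation}, conjugation/multiplication by $E_-$ flips the signs of the off-diagonal blocks, so $AE_- = E_- A$ holds precisely when $A \in \dM$ and $AE_- = -E_- A$ holds precisely when $A \in \oM$. With this dichotomy in hand, all four identities follow by inspecting which summand of the decomposition $A = \Re A + \ii \Im A$ lies in $\dM$ and which in $\oM$.

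First I would dispatch the easy cases. The identity $Q E_- = E_- Q$ in \eqref{eq:Q_Emin} is immediate from Lemma~\ref{lem:scaling_relation_v_u}(i), which asserts $Q \in \dM$; the companion $Q^{-1} E_- = E_- Q^{-1}$ then follows by multiplying both sides by $Q^{-1}$ on left and right (equivalently, $Q^{-1} \in \dM$ as $Q$ is block-diagonal and invertible). Similarly, \eqref{eq:P_Emin} follows directly from the explicit off-diagonal form of $P$ in \eqref{eq:def_P}.

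For the remaining two pairs I would use the decomposition into real and imaginary parts. By Lemma~\ref{lem:scaling_relation_v_u}(i), $\Re U \in \oM$ and $\Im U \in \dM$, so
\[
U E_- = (\Re U) E_- + \ii (\Im U) E_- = - E_- (\Re U) + \ii E_- (\Im U) = -E_-(\Re U - \ii \Im U) = -E_- U^*,
\]
which is the first half of \eqref{eq:U_Emin}. The second half follows either by taking adjoints (noting $E_-^* = E_-$) or by multiplying on both sides by $E_-$ and using $E_-^2 = 1$. The proof of \eqref{eq:M_and_Emin} is structurally identical, with \eqref{eq:Im_M_Re_M} playing the role of Lemma~\ref{lem:scaling_relation_v_u}(i): since $\Re M \in \oM$ and $\Im M \in \dM$, the same computation gives $M E_- = -E_- M^*$, and the second identity follows as before.

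There is no genuine obstacle here; the content of the lemma is essentially a bookkeeping exercise once the block structure of $M$, $U$, $Q$ and $P$ has been identified. The only point to watch is the sign when $E_-$ is passed across the imaginary unit $\ii$ in $U = \Re U + \ii \Im U$ and $M = \Re M + \ii \Im M$, which is why the conjugate $U^*$ (respectively $M^*$) appears on the right-hand side rather than $U$ itself.
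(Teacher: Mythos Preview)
Your proof is correct. The only difference from the paper's argument is in how \eqref{eq:U_Emin} is obtained: the paper first establishes \eqref{eq:M_and_Emin} by direct computation from the block form \eqref{eq:block_structure_M} and then derives \eqref{eq:U_Emin} as a consequence of \eqref{eq:M_and_Emin} and \eqref{eq:Q_Emin} via the polar decomposition $M = QUQ$, whereas you prove \eqref{eq:M_and_Emin} and \eqref{eq:U_Emin} independently by the same $\Re/\Im$ block-structure argument. Your route is slightly more uniform, while the paper's route makes explicit that \eqref{eq:U_Emin} is not an additional fact but forced by the other two identities together with $M = QUQ$; both are equally short.
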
 

\begin{proof} 
The identities in \eqref{eq:M_and_Emin} follow by a simple computation starting from \eqref{eq:block_structure_M}.
Owing to $Q \in \dM$ we immediately obtain  \eqref{eq:Q_Emin}. 
The relations in \eqref{eq:U_Emin} are a direct consequence of \eqref{eq:M_and_Emin} and \eqref{eq:Q_Emin}. 
The matrix representation of $P$ in \eqref{eq:def_P} directly implies \eqref{eq:P_Emin}. 
\end{proof}

\paragraph{Spectral properties of $\cF$}

\begin{lemma}[Spectral properties of $\cF$] \label{lem:spec_cF} 
For all $\eta \in (0,1]$ and $z \in D_{\tau}(0)$, the following holds. 
\begin{enumerate}[label=(\roman*)]
\item The range of $\cF$ is contained in the diagonal matrices, i.e. $\ran \cF \subset \dM$. Moreover, for all $R \in \C^{2n \times 2n}$, 
\begin{equation} \label{eq:cF_Emin} 
 \cF[RE_-] = - \cF[R]E_- = - E_-\cF[R] = \cF[E_-R]. 
\end{equation}
\item The top eigenvalue $\normtwoop{\cF}$ of $\cF$ is simple and satisfies
\begin{equation} \label{eq:normtwo_cF} 
 1- \normtwoop{\cF} \sim \eta /\rho. 
\end{equation}
\item There is a unique positive definite eigenvector $F$ with $\normtwo{F} = 1$ associated to $\normtwoop{\cF}$. It satisfies $F \in \dM$. 
\item The eigenvalue $-\normtwoop{\cF}$ of $\cF$ is also simple and $E_-F$ is an eigenvector corresponding to it. 
\item There are $\rho_* \sim 1$ and $\vartheta \sim 1$ such that $\eta/\rho \leq \rho_*$ implies 
\[ \normtwo{\cF[R]} \leq \normtwoop{\cF}(1- \vartheta)  \normtwo{R} \] 
for all $R \in \C^{2n\times 2n}$ satisfying $R \perp F$ and $R \perp E_- F$. 
\end{enumerate} 
\end{lemma}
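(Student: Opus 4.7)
For item (i), I would proceed by direct block computation. The definition \eqref{eq:def_cS} of $\cS$ shows its range lies in $\dM$, and since $Q\in\dM$ by Lemma~\ref{lem:scaling_relation_v_u}, $\cC_Q$ preserves $\dM$; hence $\ran\cF\subset\dM$. For the commutation relations, writing $R$ in $n\times n$ blocks one checks that $E_-R$ and $RE_-$ both have block-diagonals $(r_1,-r_2)$, and therefore $\cS[E_-R]=\cS[RE_-]=\diag(-\smallS r_2,\smallS^t r_1)=-E_-\cS[R]=-\cS[R]E_-$, the last two identities using $\cS[R]\in\dM$ so that it commutes with $E_-$. Since $Q$ is block-diagonal it also commutes with $E_-$ by \eqref{eq:Q_Emin}, so the identities extend to $\cF=\cC_Q\cS\cC_Q$.

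For (iii) and (iv), the starting point is that $\cS$ is self-adjoint for $\scalar{\cdot}{\cdot}$ (a short index computation), so $\cF=\cF^*$ since $Q=Q^*$. Combined with $\ran\cF\subset\dM$ this forces $\dM^\perp\subset\ker\cF$, and reduces the spectral analysis to $\cF|_\dM$. Identifying $\dM$ with $\C^n\oplus\C^n$ through the block diagonals, $\cF|_\dM$ is represented by the bipartite block matrix $\left(\begin{smallmatrix}0&T\\T^t&0\end{smallmatrix}\right)$ with $T\defeq\diag(q_1^2)\smallS\diag(q_2^2)$, where $q_1,q_2$ denote the blocks of $Q$. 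Under flatness \ref{assum:flatness} and the scaling $q_1,q_2\sim 1$ from \eqref{eq:scaling_U_Q_M}, the matrix $T$ has strictly positive entries, so Perron--Frobenius applied to the strictly positive $TT^t$ yields that $\sigma_1(T)^2$ is a simple eigenvalue with a unique positive left singular vector $u$; the corresponding right singular vector $v$ is then also positive. Thus $\pm\normtwoop{\cF}=\pm\sigma_1(T)$ are simple eigenvalues of $\cF$, with eigenvectors $F=\diag(u,v)\in\dM$ (positive definite) and $E_-F=\diag(u,-v)$, proving (iii) and (iv). Note that (iv) can equivalently be read off from $\cF[E_-R]=-E_-\cF[R]$, established in (i).

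The crux is the quantitative estimate \eqref{eq:normtwo_cF}. Here I would use the explicit near-eigenvector $\widetilde F\defeq Q^{-1}(\Im M)Q^{-1}$. A short calculation from \eqref{eq:def_U_Q} and \eqref{eq:Im_M_Re_M} gives $\widetilde F=\sqrt{v_1v_2/u}\,E_+$, a positive diagonal matrix with entries $\sim\rho$. Taking the imaginary part of the MDE \eqref{eq:mde} gives $\cS[\Im M]=-\eta E_+-\Im(M^{-1})$, and using the vector Dyson equation \eqref{eq:dyson_vector_equation} together with the definition of $Q$ one verifies that $Q\,\Im(M^{-1})\,Q=-\widetilde F$. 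Substituting yields the \emph{defect equation}
\[
\cF[\widetilde F]=\widetilde F-\eta Q^2,
\]
with $\eta Q^2$ strictly positive and $(Q^2)_a\sim 1$. The lower bound $\normtwoop{\cF}\geq 1-C\eta/\rho$ follows from the variational principle $\normtwoop{\cF}\geq\scalar{\widetilde F}{\cF[\widetilde F]}/\normtwo{\widetilde F}^2$, using $\normtwo{\widetilde F}\sim\rho$. The upper bound $\normtwoop{\cF}\leq 1-c\eta/\rho$ follows by Collatz--Wielandt applied to the positive matrix representing $\cF|_\dM$: $\normtwoop{\cF}\leq\max_a(\cF[\widetilde F]/\widetilde F)_a=1-\eta\min_a(Q^2/\widetilde F)_a$, and $(Q^2/\widetilde F)_a\sim 1/\rho$ by \eqref{eq:scaling_U_Q_M}--\eqref{eq:scaling_v_1_v_2_u}.

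Finally, for (v), I would appeal to a quantitative Perron--Frobenius estimate: under flatness the entries of $TT^t$ are uniformly comparable to $1/n$, so a Hilbert/Birkhoff projective-metric argument (or a direct entry-comparison) gives $\sigma_1(T)^2-\sigma_2(T)^2\gtrsim 1$, hence a uniform gap $\sigma_1(T)-\sigma_2(T)\gtrsim 1$ since $\sigma_1(T)\sim 1$. This is the bound on $\cF|_{\{F,E_-F\}^\perp}$, from which the claimed contraction follows for $\eta/\rho$ small. The main obstacle is precisely this \emph{quantitative} gap in (v) — mere simplicity of $\pm\normtwoop{\cF}$ from PF is not enough, and one must extract a $\vartheta\sim 1$ depending only on the model parameters $s_*,s^*,\tau$ from the uniform positivity of $TT^t$.
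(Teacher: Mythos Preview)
Your proposal is correct and follows essentially the same route as the paper: for (i) the paper does the identical block computation; for (iii)--(v) the paper reduces to the same bipartite block matrix $\bigl(\begin{smallmatrix}0&\smallF\\ \smallF^t&0\end{smallmatrix}\bigr)$ on $\dM$ (your $T$ equals the paper's $\smallF$) and invokes Perron--Frobenius, citing \cite[Lemma~3.4]{Altcirc} and \cite[Lemma~3.3]{AltGram} for the quantitative spectral gap rather than sketching the Birkhoff/Hopf argument directly. The one minor difference is in (ii): your defect equation $\cF[\widetilde F]=\widetilde F-\eta Q^2$ is exactly $\rho$ times the paper's \eqref{eq:F_U_eigenvector_cF} (since $\widetilde F=\Im U=\rho F_U$), but the paper takes its scalar product with the \emph{exact} Perron eigenvector $F$ to obtain the clean identity $1-\normtwoop{\cF}=(\eta/\rho)\avg{FQ^2}/\avg{FF_U}\sim\eta/\rho$ in one stroke, whereas you use $\widetilde F$ itself as a test vector in the variational principle and in Collatz--Wielandt to get the two bounds separately---both arguments are valid, the paper's is just slightly slicker.
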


Before the proof of Lemma~\ref{lem:spec_cF}, we introduce $F_U$ defined through 
\begin{equation} \label{eq:def_F_U} 
 F_U \defeq \rho^{-1} \Im U = \rho^{-1} \begin{pmatrix} \sqrt{\frac{v_1v_2}{u}} & 0 \\ 0 & \sqrt{\frac{v_1v_2}{u}} \end{pmatrix}. 
\end{equation}
The importance of $F_U$ originates from the approximate eigenvector relation 
\begin{equation} \label{eq:F_U_eigenvector_cF} 
 (1-\cF)[F_U] = \frac{\eta}{\rho} Q^2, 
\end{equation}
which is a consequence of the MDE, \eqref{eq:mde}. Indeed, \eqref{eq:mde} and \eqref{eq:balanced_polar_decomposition} imply 
\[ -U^* = Q\begin{pmatrix} \ii \eta & z \\ \bar z & \ii \eta \end{pmatrix} Q + \cF[U]. \] 
Dividing the imaginary part of this identity by $\rho$ yields \eqref{eq:F_U_eigenvector_cF}.
Moreover, from \eqref{eq:scaling_U_Q_M}, we directly deduce that 
\begin{equation} \label{eq:F_U_sim_1} 
 F_U \sim 1. 
\end{equation}

\begin{proof} 
The definition of $\cS$ in \eqref{eq:def_cS} implies $\ran \cS \subset \dM$. Since $Q \in \dM$ by \eqref{eq:def_U_Q} we deduce $\ran \cF \subset \dM$. 
As $\ran \cS \subset \dM$, we also have $\cS[RE_-] = - \cS[R] E_- = - E_- \cS[R] = \cS[E_-R]$ for all $R \in \C^{2n \times 2n}$. 
This completes the proof of (i) due to \eqref{eq:Q_Emin}.

Since $\ran \cF \subset \dM$, the restriction $\cF|_\dM$ contains all spectral properties of $\cF$ (apart from information about the possible eigenvalue $0$).  
The restriction $\cF|_\dM$ is given by 
\[ \cF\bigg[ \begin{pmatrix} r_1 & 0 \\ 0 & r_2 \end{pmatrix} \bigg] = \begin{pmatrix} \smallF r_2 & 0 \\ 0 & \smallF^t r_1 \end{pmatrix}\] 
for $r_1, r_2 \in \C^n$, 
where we introduced the $n\times n$-matrix $\smallF$ defined by 
\begin{equation} \label{eq:def_smallF} 
 \smallF r \defeq  \bigg(\frac{u v_1}{v_2}\bigg)^{1/2} \smallS \bigg(r \bigg(\frac{uv_2}{v_1}\bigg)^{1/2}\bigg) 
\end{equation}
for $r \in \C^n$. 
Hence, in the standard basis of $\dM\cong \C^{2n}$, the restriction $\cF|_\dM$ is represented by the $2n \times 2n$ matrix 
\[\boldsymbol{F} = \begin{pmatrix} 0 & \smallF\\ \smallF^t & 0 \end{pmatrix},\]
which was introduced in \cite[Eq.~(3.27b)]{Altcirc} and analyzed in \cite[Lemma~3.4]{Altcirc}  using \cite[Lemma~3.3]{AltGram}. 
Using the notation of \cite[Lemma~3.3]{AltGram}, we have $L=2$ due to \eqref{eq:condition_flatness} and $r_+ \sim r_- \sim 1$ due to \eqref{eq:scaling_U_Q_M}.
Thus, \cite[Lemma~3.3]{AltGram} directly implies the simplicity of the top eigenvalue $\normtwoop{\cF}$ and the existence of a unique positive definite eigenvector $F$ of $\cF$ 
corresponding to $\normtwoop{\cF}$ with $\normtwo{F}=1$. Moreover, $F \in \dM$.  
Owing to the second relation in \eqref{eq:cF_Emin}, $E_-F$ is an eigenvector of $\cF$ associated to $-\normtwoop{\cF}$. 

For the proof of (ii), we apply $\scalar{F}{\genarg}$ to \eqref{eq:F_U_eigenvector_cF} and obtain 
\[ 1- \normtwoop{\cF} = \frac{\eta}{\rho} \frac{\avg{FQ^2}}{\avg{FF_U}} \sim \frac{\eta}{\rho} \frac{\avg{F}}{\avg{F}} \sim \frac{\eta}{\rho}.\] 
Here, we used the positive definiteness of $F$, $Q \sim 1$ by \eqref{eq:scaling_U_Q_M} and $F_U \sim 1$ by \eqref{eq:F_U_sim_1} in the second step. 
 Since $\wh{\lambda} \sim 1$ due to \cite[eq.~(3.17)]{AltGram} in \cite[Lemma~3.3]{AltGram} and $r_+ \sim r_- \sim 1$ (see above),  
 the bound in (v) for $R \in \dM$ follows from \cite[Lemma~3.3]{AltGram}.  Since $\cF$ vanishes on the orthogonal complement of $\dM$, this
 completes the proof of Lemma~\ref{lem:spec_cF}.  
\end{proof} 

\subsection{Spectral properties of $\cC_P \cF$ and $\cC_U \cF$}

For brevity we introduce the following shorthand notations for the operators in the following lemma. 
We define 
\begin{equation}  \label{eq:def_cK_cL} 
 \cK \defeq 1 - \cC_P \cF, \qquad \cL \defeq 1 - \cC_U \cF. 
\end{equation}
In the following lemma, we prove some resolvent bounds for these operators and show that they have at most two small eigenvalues. 

\begin{lemma}[Resolvent bounds, number of small eigenvalues]  \label{lem:small_eigenvalues} 
There are (small) $\rho_* \sim 1$ and $\eps \sim 1$ such that for all $z \in D_\tau(0)$ and $\eta >0$ satisfying $\rho + \eta/\rho \leq \rho_*$ and, for all $\cT \in \{ \cK, \cL\}$, 
the following holds. 
\begin{enumerate}[label=(\roman*)]
\item \label{item:small_eigenvalues_resolvent_bound} 
For all $\omega \in \C$ with $\omega \notin D_\eps(0) \cup D_{1- 2 \eps}(1)$, we have 
\[ \normtwoop{(\cT-\omega)^{-1}} + \norm{(\cT- \omega)^{-1}} + \norm{(\cT^*- \omega)^{-1}} \lesssim 1.\] 
\item The spectral projection $\cP_\cT$ of $\cT$, defined by 
\begin{equation} \label{eq:cP_cT_contour_integral} 
\cP_\cT \defeq - \frac{1}{2\pi \ii}  \int_{\pt D_\eps(0)} (\cT- \omega)^{-1} \di \omega, 
\end{equation}
satisfies $\rank \cP_\cT = 2$. Moreover, for $\cQ_\cT \defeq 1- \cP_\cT$, we have 
\begin{equation}\label{eq:QT}
 \norm{\cP_\cT} + \norm{\cQ_\cT} +  \norm{\cP_\cT^*} + \norm{\cQ_\cT^*}  + \normtwoop{\cT^{-1}\cQ_\cT} + \norm{(\cT^*)^{-1} \cQ_\cT} + \norm{\cT^{-1}\cQ_\cT}  \lesssim 1.
 \end{equation}
 \item For fixed $z \in D_\tau(0)$, the spectral projections $\cP_\cT$ and $\cQ_{\cT}$ are continuous in $\eta$ as long as $\rho + \eta/\rho \leq \rho_*$. 
\end{enumerate} 
\end{lemma}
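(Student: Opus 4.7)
The plan is to prove the lemma for $\cK = 1 - \cC_P \cF$ first and then transfer all conclusions to $\cL = 1 - \cC_U \cF$ via the bound $\norm{\cK-\cL}\lesssim \rho$ from \eqref{eq:norm_cK_minus_cL}: for $\rho_*\sim 1$ small enough, a Neumann series argument combined with the contour-integral representation \eqref{eq:cP_cT_contour_integral} carries the resolvent bounds and the rank of $\cP_\cT$ from $\cK$ to $\cL$ in all three norms appearing in \eqref{eq:QT}. Part (iii), continuity in $\eta$, is then immediate from the contour-integral representation together with the continuity of $P$, $U$, $Q$ and $\cF$ in $\eta$ provided by Lemma~\ref{lem:scaling_relation_v_u}(iii).

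The heart of the argument is the construction of two approximate null vectors of $\cK$ using $F_U = \rho^{-1}\Im U$ from \eqref{eq:def_F_U}. The two diagonal $n\times n$ blocks of $F_U$ coincide by \eqref{eq:def_F_U}, so the block-swap action of $\cC_P$ fixes it, $\cC_P[F_U]=F_U$, and the identity $PE_- = -E_-P$ yields $\cC_P[E_-F_U] = -E_-F_U$. Combined with the approximate eigenvector relation $(1-\cF)[F_U] = (\eta/\rho)Q^2$ from \eqref{eq:F_U_eigenvector_cF} and with \eqref{eq:cF_Emin}, a direct computation gives
\[ \cK[F_U] = (\eta/\rho)\, P Q^2 P, \qquad \cK[E_-F_U] = -(\eta/\rho)\, P E_- Q^2 P, \]
both of size $\lesssim \eta/\rho$ in the Hilbert--Schmidt (HS) norm and in the $\norm{\cdot}$-operator norm since $Q\sim 1$ by \eqref{eq:scaling_U_Q_M} and $P$ is unitary. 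The vectors $F_U$ and $E_-F_U$ are HS-orthogonal and of size $\sim 1$ in both norms by \eqref{eq:F_U_sim_1}, hence genuinely linearly independent approximate null vectors of $\cK$.

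To upgrade these to an exact count, I would decompose $\C^{2n\times 2n} = V \oplus V^\perp$ orthogonally in the HS inner product with $V\defeq \linspan\{F_U, E_-F_U\}$. Since $F_U$ is an approximate top eigenvector of the Hermitian operator $\cF$ with residual $O(\eta/\rho)$ and the spectral gap to the rest of $\spec(\cF)$ is $\sim 1$ by Lemma~\ref{lem:spec_cF}(v), standard Hermitian perturbation theory places $V$ within HS-distance $O(\eta/\rho)$ of the true top/bottom eigenspace $\linspan\{F,E_-F\}$. Consequently $\normtwoop{\cF|_{V^\perp}}\le 1-\vartheta/2$ for $\rho_*$ small enough (also using $\cF|_\oM = 0$), and since $\cC_P$ is an HS-isometry the same bound holds for $\cC_P\cF$ acting on $V^\perp$. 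A Schur complement / Feshbach block decomposition of $\omega - \cC_P\cF$ with respect to $V\oplus V^\perp$ then reduces the eigenvalue equation for $\omega\in D_\eps(0)$, with $\eps\sim 1$ chosen below $\vartheta/4$, to a $2\times 2$ matrix eigenproblem with entries $O(\eta/\rho)$. This produces precisely two eigenvalues of $\cK$ in $D_\eps(0)$ and simultaneously the HS resolvent bound $\normtwoop{(\cK-\omega)^{-1}}\lesssim 1$ outside $D_\eps(0)\cup D_{1-2\eps}(1)$.

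The most delicate step will be the simultaneous control of all three norms appearing in \eqref{eq:QT}, since the $\norm{\cdot}$-induced operator norm is not a Hilbert space norm and $\cC_P\cF$ is genuinely non-normal. My strategy is that the flatness assumption \ref{assum:flatness} gives $\norm{\cS}\lesssim 1$ directly from \eqref{eq:condition_flatness}, and \eqref{eq:scaling_U_Q_M} gives $\norm{Q^{\pm 1}},\norm{P},\norm{U}\lesssim 1$, so every operator in sight is uniformly bounded in the $\norm{\cdot}$-operator norm. The key additional fact is that the rank-two spectral projection $\cP_\cK$ has range $V$ spanned by the explicit matrices $F_U$ and $E_-F_U$, which have bounded norm in \emph{both} normalizations thanks to \eqref{eq:F_U_sim_1}; representing $\cP_\cK$ and $\cK^{-1}\cQ_\cK$ as contour integrals along a compact contour avoiding the spectrum and combining with the HS resolvent bound from the previous paragraph yields the analogous bounds in the $\norm{\cdot}$-operator norm. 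The transfer to $\cL$ then proceeds by the perturbative argument of the first paragraph.
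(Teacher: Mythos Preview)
Your Feshbach/Schur-complement route to the Hilbert--Schmidt estimates is sound and in fact more direct than the paper's argument. To count $\rank\cP_\cK$, the paper introduces the auxiliary flow $\cT_t = 1 - \big[(1-t)\cF + t\,\cC_P\cF\cC_P\big]\cF$ interpolating between $1-\cF^2$ and $1-(\cC_P\cF)^2$, proves a uniform coercivity bound $\normtwo{(\cT_t-\omega)[R]}\gtrsim 1$ along the flow, tracks $\rank\cP_{\cT_t}$ continuously in $t$, and then combines the inclusion $\spec(\cC_P\cF)\subset\spec((\cC_P\cF)^2)$ with a separate Perron--Frobenius argument (Lemma~\ref{lem:cK_small_eigenvalue}) to pin the rank at two. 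Your direct block reduction with respect to $V=\linspan\{F_U,E_-F_U\}$, using that both $\cK$ and $\cK^*$ send $F_U,E_-F_U$ to matrices of size $O(\eta/\rho)$, delivers the rank count and the $\normtwoop{\cdot}$ resolvent bound in one step and bypasses the flow entirely.

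There is, however, a genuine gap in your passage from $\normtwoop{\cdot}$ to the $\norm{\cdot}$-operator norm. Contour integration only preserves the norm you already control: a bound $\normtwoop{(\cK-\omega)^{-1}}\lesssim 1$ along $\partial D_\eps(0)$ yields $\normtwoop{\cP_\cK}\lesssim 1$ and $\normtwoop{\cK^{-1}\cQ_\cK}\lesssim 1$, not their $\norm{\cdot}$-analogues. Your observation about the explicit range of $\cP_\cK$ could perhaps rescue $\norm{\cP_\cK}$, but it cannot help with $\cK^{-1}\cQ_\cK$, whose range is the full $\ran\cQ_\cK$. The mechanism the paper uses (an instance of \cite[Lemma~B.2]{Altshape}) is the \emph{mixed-norm} estimate $\normtwoinf{\cS}\lesssim 1$, i.e.\ $\norm{\cS[R]}\lesssim\normtwo{R}$, which follows from flatness \ref{assum:flatness} (each entry of $\cS[R]$ is an average of diagonal entries of $R$) and is strictly stronger than the bound $\norm{\cS}\lesssim 1$ you invoke. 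Plugging $\normtwoinf{1-\cT}\lesssim 1$ into the resolvent identity $(\cT-\omega)^{-1}=(1-\omega)^{-1}\big[1+(1-\cT)(\cT-\omega)^{-1}\big]$ and using $\normtwo{R}\le\norm{R}$ upgrades the HS resolvent bound to $\norm{(\cT-\omega)^{-1}}\lesssim 1$ for $\abs{1-\omega}\gtrsim 1$. Without this ingredient the $\norm{\cdot}$-estimates in \eqref{eq:QT} remain open.
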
 

The proof of Lemma~\ref{lem:small_eigenvalues} is motivated by the proofs of \cite[Lemma~4.7]{AltEdge} and \cite[Lemma~5.1]{Altshape}. However, the additional extremal eigendirection of $\cF$ 
requires a novel flow interpolating between $1- \cF^2$ and $1- (\cC_P \cF)^2$ instead of $1- \cF$ and $1- \cC_P \cF$.

\begin{proof}
From \eqref{eq:condition_flatness}, we deduce that $\cS[R] \lesssim \avg{R}$ for all positive semidefinite matrices $R \in \C^{2n\times 2n}$. 
Thus, \cite[Lemma~B.2(i)]{Altshape} implies that $\normtwoinf{\cS} \lesssim 1$. 
Therefore, for all $\cT \in \{ \cK, \cL\}$, we have $\normtwoinf{1-\cT} \lesssim 1$ due to $Q \sim 1$ and $\norm{U} =1$ by Lemma~\ref{lem:scaling_relation_v_u}. 
Hence, owing to \cite[Lemma~B.2(ii)]{Altshape} and $\abs{\omega -1}\gtrsim 1$, it suffices to find $\eps \sim 1$ such that 
\begin{enumerate}[label=(\roman*)]
\item uniformly for all $\omega \notin D_\eps(0) \cup D_{1-2\eps}(1)$, we have 
\begin{equation} \label{eq:normtwo_cT_omega} 
\normtwoop{(\cT - \omega)^{-1}} \lesssim 1, 
\end{equation} 
\item the rank of $\cP_\cT$ equals 2, i.e., $\rank \cP_\cT =2$ 
\end{enumerate}
for $\cT \in \{  \cK, \cL \}$. 
Both claims for $\cT = \cK$ will follow from the corresponding statements for $\cT = 1- (\cC_P \cF)^2$ which we now establish by interpolating between $1 - \cF^2$ and $1- (\cC_P \cF)^2$.  
If $\cT = 1- \cF^2$ then both assertions follow from Lemma~\ref{lem:spec_cF}. Moreover, a simple perturbation argument using Lemma~\ref{lem:spec_cF} shows that 
\begin{equation} \label{eq:eigenvector_cF_approx_F_U} 
 F = \normtwo{F_U}^{-1} F_U + \ord(\eta/\rho), 
\end{equation}
where $F$ is the eigenvector of $\cF$ introduced in Lemma~\ref{lem:spec_cF} (cf.\ the proof of \cite[Lemma~3.5]{Altcirc} for a similar argument). 

In order to interpolate between $1 - \cF^2$ and $1- (\cC_P \cF)^2$ we use the following flow. For any $t \in [0,1]$, we define 
\[ \cT_t \defeq 1- \cV_t \cF, \qquad \cV_t \defeq (1 - t) \cF + t \cC_P \cF\cC_P. \] 
Then $\cT_0 = 1 - \cF^2$ and $\cT_1 = 1- (\cC_P \cF)^2$. 
We now show \eqref{eq:normtwo_cT_omega} for $\cT = \cT_t$ uniformly for all $t \in [0,1]$.  
To that end, we verify that $\normtwo{(\cT_t - \omega)[R]}\gtrsim 1$ uniformly for $R \in \C^{2n \times 2n}$ satisfying $\normtwo{R} = 1$. 
If $\abs{\omega} \geq 3$ then this follows from $\normtwoop{\cV_t} \leq \normtwoop{\cF} \leq 1$ by \eqref{eq:normtwo_cF}. 
Let $\abs{\omega} \leq 3$ and $R \in \C^{2n \times 2n}$ satisfy $\normtwo{R} =1$. 
We have the orthogonal decomposition $R = \alpha_+ F + \alpha_- E_- F + R_\perp$, where $R_\perp \perp E_\pm F$ (recall $E_+=1$ from \eqref{eq:def_E_pm}), and estimate 
\begin{equation} \label{eq:proof_resolvent_bound_aux1}  
\begin{aligned} 
\normtwo{(\cT_t -\omega)[R]}^2 & = \abs{\omega}^2 (\abs{\alpha_+}^2 + \abs{\alpha_-}^2) + \normtwo{(1-\omega - \cV_t \cF)[R_\perp]}^2 + \ord(\eta/\rho)\\ 
& \geq \eps^2 ( \abs{\alpha_+}^2 + \abs{\alpha_-}^2) + (\vartheta - 2\eps)^2 \normtwo{R_\perp}^2 + \ord(\eta/\rho). 
\end{aligned} 
\end{equation} 
We now explain how \eqref{eq:proof_resolvent_bound_aux1} is obtained. 
The identity in \eqref{eq:proof_resolvent_bound_aux1} follows from $\cV_t \cF [E_\pm F] = E_\pm F + \ord(\eta/\rho)$ due to \eqref{eq:eigenvector_cF_approx_F_U}, \eqref{eq:normtwo_cF}, 
 $\cC_P [F_U] = F_U$, \eqref{eq:P_Emin} and \eqref{eq:cF_Emin}. 
The lower bound in \eqref{eq:proof_resolvent_bound_aux1} is a consequence of 
\[ \normtwo{(1- \omega - \cV_t \cF)[R_\perp]} \geq (\abs{1-\omega} - \normtwoop{\cF}(1-\vartheta)) \normtwo{R_\perp}
\geq (\vartheta - 2\eps) \normtwo{R_\perp},  \] 
where, in the first step, we used $\normtwoop{\cV_t} \leq 1$ and $\normtwo{\cF[R_\perp]} \leq \normtwoop{\cF}(1-\vartheta) \normtwo{R_\perp}$ due to part (v) of Lemma~\ref{lem:spec_cF}. 
In the second step, we employed $\normtwoop{\cF} \leq 1$ and $\abs{1- \omega} \geq 1 -2 \eps$. 
This shows \eqref{eq:proof_resolvent_bound_aux1} which implies \eqref{eq:normtwo_cT_omega} 
for $\cT = \cT_t$ if $\eps \sim 1$ and $\rho_* \sim 1$ are chosen sufficiently small. 

A similar but simpler argument to the proof of \eqref{eq:proof_resolvent_bound_aux1} shows that 
$\normtwo{(\cK - \omega)[R]} \gtrsim 1$ uniformly for all $R \in \C^{2n \times 2n}$ satisfying $\normtwo{R} =1$.
This implies \eqref{eq:normtwo_cT_omega} for $\cT = \cK$.  
 In particular, by \cite[Lemma~B.2(ii)]{Altshape} and $\abs{\omega -1}\gtrsim 1$, the 
bound $\normtwoop{(\cK-\omega)^{-1}}\lesssim 1$ from \eqref{eq:normtwo_cT_omega} implies the same bound in the norm $\norm{\genarg}$, i.e., 
Lemma~\ref{lem:small_eigenvalues} \ref{item:small_eigenvalues_resolvent_bound} for $\cT = \cK$. Hence, the contour integral representation for $\cP_\cT$ in \eqref{eq:cP_cT_contour_integral} implies the bounds on the projections in \eqref{eq:QT}.
The remaining bounds in \eqref{eq:QT} follow similarly from $\eps \sim 1$ and the contour integral representation 
\[ \cK^{-1} \cQ_{\cK} = -\frac{1}{2\pi \ii} \int_{\pt D_{1-2\eps}(1)} \omega^{-1} \big( \cK - \omega)^{-1} \dd \omega, \] 
which is a consequence of Lemma~\ref{lem:small_eigenvalues} \ref{item:small_eigenvalues_resolvent_bound} for $\cT = \cK$.

Owing to Lemma~\ref{lem:scaling_relation_v_u} (iii) and $\cF = \cC_Q \cS \cC_Q$ (cf.~\eqref{eq:def_cF}), $\cK$ and $\cL$ are continuous functions of $\eta$. Hence, the contour integral representation of $\cP_\cT$ in \eqref{eq:cP_cT_contour_integral} 
implies (iii). 

What remains in order to complete the proof of Lemma~\ref{lem:small_eigenvalues} for $\cT = \cK$ is showing $\rank \PK = 2$. 
The bound in \eqref{eq:normtwo_cT_omega} with $\cT = \cT_t$ implies that $\cP_{\cT_t}$ is well defined for all $t \in [0,1]$. 
Moreover, the map $t \mapsto \rank \cP_{\cT_t}$ is continuous and, hence, constant as a continuous, integer-valued map. 
Therefore, 
\begin{equation} \label{eq:rank_P_T_1} 
 \rank \cP_{\cT_1} = \rank \cP_{\cT_0} = 2,  
\end{equation}
where we used in the last step that $\cT_0 = 1-\cF^2$ and Lemma~\ref{lem:spec_cF} (ii), (iv) and (v). 
Since the generalized eigenspace of $1- (\cC_P\cF)^2$ corresponding to $1-\mu^2$ contains in the generalized eigenspace of $1- \cC_P \cF$ corresponding to $1 - \mu$ for any $\mu \in \C$, 
the identity $\rank \cP_{\cT_1}=2$ from \eqref{eq:rank_P_T_1} implies 
\begin{equation} \label{eq:rank_P_K_leq_2} 
\rank \PK \leq 2. 
\end{equation}
The following lemma provides the corresponding lower bound. 

\begin{lemma}[Eigenvalues of $\cK$ in $D_\eps(0)$] \label{lem:cK_small_eigenvalue} 
Let $\eps$ and $\rho_*$ be chosen as in Lemma~\ref{lem:small_eigenvalues}. 
If $\rho + \eta/\rho \leq \rho_*$ then $\spec(\cK) \cap D_\eps(0)$ consists of a unique eigenvalue $\kappa$ of $\cK$. This eigenvalue is positive, has algebraic and geometric multiplicity two 
and is a continuous function of $\eta$ for fixed $z \in D_\tau(0)$. 
\end{lemma}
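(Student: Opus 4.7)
The strategy I would follow is to exploit that $\cF$ maps into $\dM$ and depends only on the $2n$-diagonal of its input, reducing the analysis of $\cK$ to a finite-dimensional problem for a positive matrix, to which Perron--Frobenius applies.

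First I would decompose $\C^{2n\times 2n} = \dM \oplus \cO$, where $\cO \defeq \{R : \diag(R) = 0\}$ ($\diag$ extracting the $2n\times 2n$ diagonal). Since $\ran \cF \subset \dM$ and $\cF$ sees only the diagonal of its argument, $\cC_P \cF$ maps $\C^{2n\times 2n}$ into $\dM$ and vanishes on $\cO$. Thus $\cK$ preserves both summands, acts as the identity on $\cO$ (no small eigenvalues there), and reduces on $\dM$ to a block-diagonal operator. Identifying $\dM \cong \C^n \oplus \C^n$ via $\diag(r_1,r_2) \leftrightarrow (r_1,r_2)$, a direct computation using $\cF[\diag(r_1,r_2)] = \diag(\smallF r_2, \smallF^t r_1)$ from the proof of Lemma~\ref{lem:spec_cF} together with the fact that $P D P$ swaps the two diagonal blocks of an element $D\in\dM$ yields $\cK|_\dM \cong (1 - \smallF^t) \oplus (1 - \smallF)$.

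Next I would apply Perron--Frobenius to $\smallF$, which has strictly positive entries by \ref{assum:flatness} combined with the positivity of $v_1,v_2,u$ from Lemma~\ref{lem:scaling_relation_v_u}. This yields a simple positive eigenvalue $\mu_{\max} \defeq \rho(\smallF)$ with strictly positive right and left Perron eigenvectors $p$ and $q$. Then $B_1 \defeq \diag(0,p)$ and $B_2 \defeq \diag(q,0)$ are two linearly independent eigenvectors of $\cK|_\dM$ corresponding to the common eigenvalue $\kappa \defeq 1 - \mu_{\max}$, giving geometric multiplicity at least two. Once I know $\kappa \in D_\eps(0)$, combining this with $\rank \PK \leq 2$ from Lemma~\ref{lem:small_eigenvalues} forces both multiplicities to equal exactly $2$ and $\kappa$ to be the unique eigenvalue of $\cK$ in $D_\eps(0)$. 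Positivity of $\kappa$ follows from $\mu_{\max} \leq \normtwoop{\smallF} = f < 1$, where the strict inequality is part (ii) of Lemma~\ref{lem:spec_cF}; continuity of $\kappa$ in $\eta$ reduces, via Lemma~\ref{lem:scaling_relation_v_u}(iii) and standard perturbation theory for the simple Perron eigenvalue, to the continuity of $\smallF$ in $\eta$.

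The hard part will be showing that $\kappa$ actually lies in $D_\eps(0)$, i.e.\ that $\mu_{\max}$ is close to $1$ and not merely bounded by $f$. I would proceed via Collatz--Wielandt applied with the test vector $f_2$ coming from the top eigenvector $F = \diag(f_1,f_2)$ of $\cF$: the relation $\smallF f_2 = f f_1$ gives
\[ f \min_i \frac{f_{1,i}}{f_{2,i}} \leq \mu_{\max} \leq f \max_i \frac{f_{1,i}}{f_{2,i}}. \]
To control these ratios, I would combine a first-order perturbative expansion $F = F_U / \normtwo{F_U} + \ord(\eta/\rho)$ around the simple eigenvalue $f$ (using \eqref{eq:F_U_eigenvector_cF} together with the spectral gap in Lemma~\ref{lem:spec_cF}) with the crucial structural fact, visible from \eqref{eq:def_F_U}, that $F_U$ has identical top-left and bottom-right blocks. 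This yields $f_{1,i}/f_{2,i} = 1 + \ord(\eta/\rho)$ uniformly in $i$, hence $\mu_{\max} = f + \ord(\eta/\rho)$, and together with $1 - f \sim \eta/\rho$ from Lemma~\ref{lem:spec_cF}(ii) one concludes $\kappa \sim \eta/\rho$. In particular $\kappa \in D_\eps(0)$ provided $\rho_*$ is chosen small enough, completing the argument.
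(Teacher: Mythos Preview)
Your proof is correct and shares the paper's overall strategy: reduce to $\dM$, identify $\cK|_{\dM}$ with $(1-\smallF^t)\oplus(1-\smallF)$, and apply Perron--Frobenius to $\smallF$. The one substantive difference is how you verify that $\kappa=1-\rho(\smallF)$ actually lies in $D_\eps(0)$. You do this quantitatively via the Collatz--Wielandt bounds together with the expansion $F=F_U/\normtwo{F_U}+\ord(\eta/\rho)$ and the block symmetry of $F_U$, obtaining $\kappa\lesssim\eta/\rho$ directly (and then $\kappa\sim\eta/\rho$ once combined with the trivial lower bound $\kappa\geq 1-\normtwoop{\cF}\sim\eta/\rho$). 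The paper does not spell this step out; the intended argument is that, since $\cT_1=\cK(2-\cK)$ with $2-\cK$ boundedly invertible (from the resolvent bound for $\cK$ already established), small eigenvalues of $\cT_1$ and of $\cK$ are in bijection, so $\rank\cP_{\cT_1}=2$ forces exactly one eigenvalue of $\smallF$ to lie near $1$, and by Perron--Frobenius this must be $\rho(\smallF)$. Your route is more self-contained and delivers the scaling $\kappa\sim\eta/\rho$ as a byproduct (the paper derives it only later, in \eqref{eq:expansion_lambda_S}); the paper's route avoids the Collatz--Wielandt estimate but leans on the relation to $\cT_1$. One minor remark: the input $\rank\PK\leq 2$ you cite is established inside the proof of Lemma~\ref{lem:small_eigenvalues} (via the flow argument for $\cT_1$) \emph{before} the present lemma is invoked, so the reference is logically sound despite the nesting.
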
 

\begin{proof} 
Since $\rank \PK \leq 2$ by \eqref{eq:rank_P_K_leq_2}, the set 
$\spec(\cK) \cap D_\eps(0)$ contains (counted with algebraic multiplicity) at most two eigenvalues of $\cK$. 
We will now show that it contains one eigenvalue of (algebraic and geometric) multiplicity two.  
As $\ran \cC_P\cF \subset \dM$ it suffices to study the corresponding eigenvalue problem on $\dM$. 
Let $r_1, r_2 \in \C^{n}$ be vectors. We apply $\cC_P \cF$ to the diagonal matrix $R = \diag(r_1,r_2) \in \dM$ and obtain 
\begin{equation} \label{eq:cC_P_cF_applied_to_diagonal} 
  \cC_P \cF[R] = \cC_P \cF\bigg[ \begin{pmatrix} r_1 & 0 \\ 0 & r_2 \end{pmatrix}\bigg] = \begin{pmatrix} \smallF^t r_1 & 0 \\ 0& \smallF r_2 \end{pmatrix}, 
\end{equation}
where $\smallF$ denotes the $n\times n$-matrix defined in \eqref{eq:def_smallF} in the proof of Lemma~\ref{lem:spec_cF}. 
The spectral radii of the matrix $\smallF$ and its transpose $\smallF^t$ agree.  
We denote this common spectral radius by $1- \kappa$.  Since $\normtwoop{\cC_P\cF} = \normtwoop{\cF} < 1$ by Lemma~\ref{lem:spec_cF} we have $\kappa>0$. 
The entries of the matrices $\smallF$ and $\smallF^t$ are strictly positive. 
Hence, by the Perron-Frobenius theorem, there are $r_1, r_2 \in (0,\infty)^n$ such that $\smallF^t r_1 = (1-\kappa)r_1$ and $\smallF r_2 = (1-\kappa)r_2$. 
Thus, $\cK[R] = \kappa R$, where we used \eqref{eq:cC_P_cF_applied_to_diagonal} and introduced $R \defeq \diag(r_1, r_2) \in \C^{2n \times 2n}$. Since $r_1, r_2 >0$, 
$E_- R$ and $R$ are linearly independent. Moreover, $\cK[E_- R] = (1- \cC_P \cF)[E_-R] = E_-\cK[R]=\kappa E_- R$ due to \eqref{eq:cF_Emin} 
and \eqref{eq:P_Emin}. 
Therefore, $\spec(\cK) \cap D_\eps(0) = \{ \kappa \}$ and $R$ and $E_-R$ span the eigenspace of $\cK$ associated to $\kappa$, i.e., $\rank \PK = 2$. 
Since $\cK$ and $\PK$ are continuous functions of $\eta$, the eigenvalue $\kappa = \Tr(\cK \PK)/2$ is also continuous with respect to $\eta$. 
This completes the proof of Lemma~\ref{lem:cK_small_eigenvalue}. 
\end{proof} 

Since $\rank \PK = 2$ by Lemma~\ref{lem:cK_small_eigenvalue}, 
this completes the proof of Lemma~\ref{lem:small_eigenvalues} for $\cT = \cK$. 

Owing to \eqref{eq:norm_cK_minus_cL}, we have $\normtwoop{\cL - \cK} \lesssim \rho$. 
Hence, possibly shrinking $\eps \sim 1$ and $\rho_* \sim 1$ and a simple perturbation theory argument 
show the estimates in \ref{item:small_eigenvalues_resolvent_bound} and \eqref{eq:QT} for $\cT = \cL$. 
Moreover, viewing $\cL$ as perturbation of $\cK$ and using $\rank \PK =2$ yield $\rank \cP_{\cL} =2$ 
for sufficiently small $\rho_* \sim 1$. 
This completes the proof of Lemma~\ref{lem:small_eigenvalues}. 
\end{proof}

Using the spectral properties of $\cK$ established in Lemma~\ref{lem:small_eigenvalues}, we show in the following lemma 
that $E_\pm F_U$ are approximate eigenvectors of $\cK$ associated to its small eigenvalue $\kappa$ from Lemma~\ref{lem:cK_small_eigenvalue}.  

\begin{lemma}[Eigenvectors of $\cK$ associated to $\kappa$] \label{lem:eigenvectors_C_S_cF} 
Let $\eps$ and $\rho_*$ be chosen as in Lemma~\ref{lem:small_eigenvalues} as well as $\cP_\cK$, $\cQ_\cK$ defined in Lemma~\ref{lem:small_eigenvalues} for $\cK = 1- \cC_P \cF$. 
If $\rho + \eta/\rho \leq \rho_*$ then the following holds. 
\begin{enumerate}[label=(\roman*)]
\item  \label{item:K_plus} 
There are left and right eigenvectors $\wh{K}_+$ and $K_+$ of $\cK$ corresponding to $\kappa$ such that 
\begin{equation} \label{eq:eigenvectors_1_C_S_F} 
K_+ = F_U - \frac{\eta}{\rho}\cK^{-1}\cQ_\cK\cC_P[Q^2],\qquad \qquad \wh{K}_+ = F_U - \frac{\eta}{\rho} (\cK^*)^{-1} \cQ_\cK^*[Q^2].
\end{equation}
They are elements of $\dM$, continuous functions of $\eta$ for fixed $z \in D_\tau(0)$ and satisfy 
\begin{equation} \label{eq:scalar_L_S_R_S} 
\scalar{\wh{K}_+}{K_+} = \avg{F_U^2} + \ord(\eta^2/\rho^2). 
\end{equation}
Moreover, we have 
\begin{equation} \label{eq:expansion_lambda_S} 
 \kappa = \frac{\eta}{\rho} \frac{\pi}{\avg{F_U^2}} + \ord(\eta^2/\rho^2) . 
\end{equation}
\item \label{item:K_minus} Furthermore, $K_- \defeq E_-K_+$ and $\wh{K}_- \defeq E_- \wh{K}_+$ are also right and left eigenvectors of $\cK$ corresponding to $\kappa$ that are linearly independent of $K_+$ and $\wh{K}_+$, respectively. 
\item \label{item:K_projections} The projections $\PK$ and $\PK^*$ have the representation 
\[ \PK = \frac{\scalar{\wh{K}_+}{\genarg}}{\scalar{\wh{K}_+}{K_+}} K_+ + \frac{\scalar{\wh{K}_-}{\genarg}}{\scalar{\wh{K}_-}{K_-}} K_-, \qquad  
\PK^* = \frac{\scalar{K_+}{\genarg}}{\scalar{K_+}{\wh{K}_+}} \wh{K}_+ + \frac{\scalar{K_-}{\genarg}}{\scalar{K_-}{\wh{K}_-}} \wh{K}_-. \] 
In particular, $\ran \PK\subset \dM$ and $\ran \PK^*\subset \dM$ as well as $\PK \oM = \PK^* \oM = \{ 0 \}$. 
\end{enumerate} 
\end{lemma}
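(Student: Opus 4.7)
The natural choice is $K_+ \defeq \PK F_U$ and $\wh{K}_+ \defeq \PK^* F_U$, which are automatically $\kappa$-eigenvectors of $\cK$ and $\cK^*$ by Lemma~\ref{lem:cK_small_eigenvalue}. The key input is \eqref{eq:F_U_eigenvector_cF}: since $F_U$ has equal $n\times n$ diagonal blocks (by \eqref{eq:def_F_U}) and $P^2 = I$, one has $\cC_P[F_U] = F_U$, and self-adjointness of $\cS$ and $\cC_P$ (the latter from $P^* = P$) then gives
\[ \cK F_U = \frac{\eta}{\rho}\cC_P[Q^2], \qquad \cK^* F_U = \frac{\eta}{\rho}Q^2. \]
Decomposing $F_U = K_+ + \cQ_\cK F_U$ and using that $\cQ_\cK$ commutes with $\cK$ yields $\cQ_\cK F_U = (\eta/\rho)\cK^{-1}\cQ_\cK\cC_P[Q^2]$, with the inverse interpreted via the uniform bound $\norm{\cK^{-1}\cQ_\cK}\lesssim 1$ from Lemma~\ref{lem:small_eigenvalues}; this is the first half of \eqref{eq:eigenvectors_1_C_S_F}, and the formula for $\wh{K}_+$ is obtained in the same way. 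That $K_+, \wh{K}_+ \in \dM$ follows because $\cF$ annihilates $\oM$, so $\cK|_\oM = 1$ and any spectral subspace of $\cK$ or $\cK^*$ separated from eigenvalue $1$ lies in $\dM$. Continuity in $\eta$ is inherited from Lemma~\ref{lem:small_eigenvalues}(iii) and Lemma~\ref{lem:scaling_relation_v_u}(iii).

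For \eqref{eq:scalar_L_S_R_S}, I start from $\scalar{\wh{K}_+}{K_+} = \scalar{F_U}{\PK F_U} = \avg{F_U^2} - \scalar{F_U}{\cQ_\cK F_U}$ (using $\PK^2 = \PK$). The crucial refinement is the identity $\scalar{F_U}{\cQ_\cK F_U} = \scalar{\cQ_\cK^* F_U}{\cQ_\cK F_U}$, which holds because $\ran\PK^*\perp\ran\cQ_\cK$ and $F_U = \wh{K}_+ + \cQ_\cK^* F_U$. By the closed-form expressions above and the uniform bounds from Lemma~\ref{lem:small_eigenvalues}, both $\cQ_\cK F_U$ and $\cQ_\cK^* F_U$ have norm $\lesssim \eta/\rho$, so their inner product is $\ord(\eta^2/\rho^2)$. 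For \eqref{eq:expansion_lambda_S}, I pair $\cK K_+ = \kappa K_+$ against $F_U$ and use $\cK^* F_U = (\eta/\rho)Q^2$ to obtain $\kappa \scalar{F_U}{K_+} = (\eta/\rho)\scalar{Q^2}{K_+}$. Combined with $\scalar{F_U}{K_+} = \scalar{\wh{K}_+}{K_+}$ (the same orthogonality trick), $\scalar{Q^2}{K_+} = \scalar{Q^2}{F_U} + \ord(\eta/\rho)$, and the direct computation $\scalar{Q^2}{F_U} = \rho^{-1}\avg{\Im M} = \pi$ (from \eqref{eq:def_U_Q} and \eqref{eq:def_rho}), this yields the expansion of $\kappa$.

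For item (ii), the commutation $\cK E_- = E_-\cK$---a consequence of \eqref{eq:cF_Emin} and \eqref{eq:P_Emin}---makes $E_-$ commute with $\PK$ and $\PK^*$, so $K_- \defeq E_- K_+$ and $\wh{K}_- \defeq E_- \wh{K}_+$ are again eigenvectors at $\kappa$; linear independence follows since $K_+ = F_U + \ord(\eta/\rho)$ has both diagonal blocks close to the strictly positive vector $(v_1 v_2/u)^{1/2}/\rho$ when $\eta/\rho$ is small. The main---and what I expect to be the most delicate---point for (iii) is the biorthogonality $\scalar{\wh{K}_+}{K_-} = \scalar{\wh{K}_-}{K_+} = 0$. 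I plan to establish it by exploiting the block structure: the block swap carried out by $\cC_P$ cancels the block swap inside $\cF|_\dM$, so $\cC_P\cF$ acts on $\dM$ as the block-diagonal operator $\smallF^t \oplus \smallF$, giving $\PK|_\dM = \PK_u \oplus \PK_\ell$ with $\PK_u, \PK_\ell$ the Perron projections of $\smallF^t, \smallF$, and transposition duality then forces $\PK^*|_\dM = \PK_\ell \oplus \PK_u$. Applied to $F_U = \diag(f,f)$ one finds $K_+ = \diag(A r_1, B r_2)$ and $\wh{K}_+ = \diag(B r_2, A r_1)$ with positive scalars $A, B$ and Perron eigenvectors $r_1, r_2$; the cross inner products then vanish by direct inspection. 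Combined with $\PK|_\oM = 0$ (since $\oM \subset \ran\cQ_\cK$ as $\cK|_\oM = 1 \neq \kappa$), biorthogonality implies the representation in (iii) through the standard biorthogonal expansion.
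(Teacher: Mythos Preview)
Your proof is correct and follows essentially the same approach as the paper: you define $K_+ = \PK[F_U]$, $\wh{K}_+ = \PK^*[F_U]$, derive the explicit formulas from $\cK[F_U] = (\eta/\rho)\cC_P[Q^2]$ and $\cK^*[F_U] = (\eta/\rho)Q^2$ exactly as the paper does, and obtain \eqref{eq:scalar_L_S_R_S} and \eqref{eq:expansion_lambda_S} by the same orthogonality trick $\scalar{\wh{K}_+}{K_+ - F_U} = 0 = \scalar{\wh{K}_+ - F_U}{K_+}$. The one difference is that you regard part~(iii) as delicate and spell out the biorthogonality $\scalar{\wh{K}_+}{K_-} = 0$ via the block-diagonal decomposition $\cC_P\cF|_\dM = \smallF^t \oplus \smallF$ and the resulting swap $\PK^*|_\dM = \PK_\ell \oplus \PK_u$; the paper instead treats (iii) as an immediate consequence of (i), (ii) and the previous lemmas---your explicit computation is correct and indeed supplies what the paper leaves implicit, but it is not really more delicate than (i): once $\cK|_\dM$ is recognized as block-diagonal (which already underlies Lemma~\ref{lem:cK_small_eigenvalue}), the biorthogonality is just the standard dual-basis identity for the rank-one Perron projections in each block.
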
 

For the proof, we note that the definition of $F_U$ in \eqref{eq:def_F_U}, \eqref{eq:balanced_polar_decomposition} and the definition of $\rho$ in \eqref{eq:def_rho} 
imply 
\begin{equation} \label{eq:scalar_Q_2_F_U} 
 \scalar{Q^2}{F_U} = \rho^{-1} \avg{Q \Im U Q } = \pi. 
\end{equation}

\begin{proof} 
We start the proof of \ref{item:K_plus} by remarking that 
the eigenspace of $\cK$ associated to $\kappa$ is contained in $\dM$ since $\ran \cC_P \cF \subset \dM$. 
Next, we apply $\QK\cC_P$ to \eqref{eq:F_U_eigenvector_cF}, use $\cC_P[F_U] = F_U$ and $\cK = 1 - \cC_P \cF$ and obtain 
\[ \cK \QK[F_U] = \frac{\eta}{\rho} \QK\cC_P[Q^2].  \] 
Hence, setting $K_+ \defeq \cP_{\cK}[F_U]$ yields 
\[ K_+ = \cP_{\cK}[F_U] = F_U - \QK[F_U] = F_U - \frac{\eta}{\rho}\cK^{-1}\QK\cC_P[Q^2]. \] 
This proves the expansion of $K_+$ in \eqref{eq:eigenvectors_1_C_S_F}. 
For the proof of the expansion of $\wh{K}_+$, we use $\cC_P [F_U] = F_U$ in \eqref{eq:F_U_eigenvector_cF}, apply $\QK^*$ to the result and set $\wh{K}_+ \defeq \cP_{\cK}^*[F_U]$. 
Then the expansion of $\wh{K}_+$ follows similarly as the one of $K_+$. 
The continuity of $F_U = \rho^{-1} \Im U$ due to Lemma~\ref{lem:scaling_relation_v_u} (iii) 
and the continuity of $\PK$ and $\PK^*$ due to 
 Lemma~\ref{lem:small_eigenvalues} (iii) imply that $K_+$ and $\wh{K}_+$ are also continuous. 

The relation in \eqref{eq:scalar_L_S_R_S} follows directly from \eqref{eq:eigenvectors_1_C_S_F} since $\scalar{\wh{K}_+}{K_+ - F_U} =  \scalar{\wh{K}_+ - F_U}{K_+} = 0$ due to $\QK \PK = 0$ and, thus, 
\[ \scalar{\wh{K}_+}{K_+} = \scalar{\wh{K}_+}{F_U} + \scalar{\wh{K}_+}{K_+-F_U} = \avg{F_U^2} + \scalar{\wh{K}_+ - F_U}{K_+} + \ord(\eta^2/\rho^2). \] 
For the proof of \eqref{eq:expansion_lambda_S}, we deduce from \eqref{eq:eigenvectors_1_C_S_F} and $\QK[K_+] = 0$ that
\[ \kappa \scalar{\wh{K}_+}{K_+} = \scalar{\cK^*[\wh{K}_+]}{K_+} = \scalar{\cK^*[F_U]}{K_+}  
= \frac{\eta}{\rho} \scalar{Q^2}{F_U}  + \ord(\eta^2/\rho^2) 
= \frac{\eta\pi}{\rho} + \ord(\eta^2/\rho^2),  \] 
where we used $\cC_P [F_U] = F_U$, \eqref{eq:F_U_eigenvector_cF} and  \eqref{eq:scalar_Q_2_F_U}. 
Therefore, we obtain \eqref{eq:expansion_lambda_S} due to \eqref{eq:scalar_L_S_R_S}. 

We now prove \ref{item:K_minus}. 
From \eqref{eq:cF_Emin} and \eqref{eq:P_Emin}, we deduce that $\cK$ commutes with $\cC_{E_-,1}$. Hence, 
$E_- K_+$ and $E_-\wh{K}_+$ are right and left eigenvectors of $\cK$ corresponding to $\kappa$ as well. 
For sufficiently small $\rho_* \sim 1$ and $\eta/\rho \leq \rho_*$, $K_+$ and $\wh{K}_+$ are strictly positive definite. Hence, $K_-$ and $\wh{K}_-$ are linearly independent of $K_+$ and $\wh{K}_+$, respectively. 

Part \ref{item:K_projections} follows directly from Lemma~\ref{lem:small_eigenvalues}, Lemma~\ref{lem:cK_small_eigenvalue} and Lemma~\ref{lem:eigenvectors_C_S_cF} \ref{item:K_plus}, \ref{item:K_minus}. This completes the proof of Lemma~\ref{lem:eigenvectors_C_S_cF}. 
\end{proof}

\subsection{Eigenvalues of $\cL$ in $D_\eps(0)$} \label{subsec:eigenvalues_cL} 

In this section, we study the small eigenvalues of $\cL$ as perturbations of the small eigenvalue $\kappa$ of $\cK$ (see Lemma~\ref{lem:cK_small_eigenvalue}). 

\begin{lemma}[Eigenvalues of $\cL$ in $D_\eps(0)$]  \label{lem:eigenvalues_cL}
There are $\rho_* \sim 1$ and $\eps \sim 1$ such that if $\rho + \eta/\rho \leq \rho_*$ then 
 $\spec(\cL) \cap D_\eps(0)$ consists of two eigenvalues $\beta$ and $\beta_*$. 
Each of these eigenvalues has algebraic and geometric multiplicity one. 
Moreover, they satisfy $\abs{\beta_*} < \abs{\beta}$ and 
\begin{equation} \label{eq:eigenvalues_cL_expansion} 
\beta_*  = \kappa + \ord (\rho^3 + \eta \rho), \qquad \qquad  
\beta  = \kappa + 2 \rho^2 \frac{\avg{F_U^4}}{\avg{F_U^2}} + \ord(\rho^3 + \eta \rho). 
\end{equation}  
Furthermore, $\beta$ and $\beta_*$ are continuous functions of $\eta$ for fixed $z \in D_\tau(0)$. 
\end{lemma}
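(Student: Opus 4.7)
My approach is to treat $\cL = \cK + \cE$ as a perturbation of $\cK$, with $\cE \defeq (\cC_P - \cC_U)\cF$, and to reduce the eigenvalue problem on $D_\eps(0)$ to a $2\times 2$ matrix problem on $\ran \cP_\cK$. Since $\norm{\cE} \lesssim \rho$ by \eqref{eq:norm_cK_minus_cL} and $\cP_\cL$ has rank $2$ by Lemma~\ref{lem:small_eigenvalues}, the contour-integral identity
$\cP_\cL - \cP_\cK = \frac{1}{2\pi\ii}\oint_{\pt D_\eps(0)}(\cK - \omega)^{-1}\cE(\cL - \omega)^{-1}\di\omega$
together with the resolvent bounds \eqref{eq:QT} yields $\norm{\cP_\cL - \cP_\cK} \lesssim \rho$. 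A standard Feshbach--Schur reduction (using $\cP_\cK \cK \cQ_\cK = 0 = \cQ_\cK\cK\cP_\cK$) identifies the eigenvalues of $\cL$ in $D_\eps(0)$ with those of the effective operator $\cP_\cK\cL\cP_\cK + \cP_\cK\cE\cQ_\cK(\lambda - \cQ_\cK\cL\cQ_\cK)^{-1}\cQ_\cK\cE\cP_\cK$ restricted to $\ran \cP_\cK$. Using the biorthogonal representation of $\cP_\cK$ from Lemma~\ref{lem:eigenvectors_C_S_cF}(iii), the matrix of $\cP_\cK\cL\cP_\cK$ in the basis $\{K_+, K_-\}$ takes the form $N_{\sigma\tau} = \kappa\,\delta_{\sigma\tau} + \scalar{\wh K_\sigma}{\cE K_\tau}/\scalar{\wh K_\sigma}{K_\sigma}$ for $\sigma, \tau \in \{+, -\}$.

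The heart of the argument is an explicit computation of $\cE K_\tau$. Using the block forms \eqref{eq:def_U_Q} of $U$ and \eqref{eq:def_P} of $P$ together with the unitarity identity $u = v_1 v_2 + \abs{z}^2 u^2$, a direct block-matrix manipulation shows that for any $A = \diag(a_1, a_2) \in \dM$,
\[ (\cC_P - \cC_U)[A] = \frac{v_1 v_2}{u}(a_1 + a_2)\, E_+ \;-\; \ii\, \abs{z}\sqrt{v_1 v_2}\,(a_1 + a_2)\, P, \]
where the first term lies in $\dM$ and the second in $\oM$. Because $\wh K_\sigma \in \dM$ while $P \in \oM$, only the first term contributes to $\scalar{\wh K_\sigma}{\cE K_\tau}$. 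Substituting $A = \cF[K_\tau]$ and expanding via $\cF[F_U] = F_U - (\eta/\rho)Q^2$ from \eqref{eq:F_U_eigenvector_cF}, together with $K_+ = F_U + \ord(\eta/\rho)$, $K_- = E_- K_+$ and $\cF[E_- R] = -E_- \cF[R]$ from \eqref{eq:cF_Emin}, one finds that the block sums satisfy $a_1^+ + a_2^+ = 2\rho^{-1}\sqrt{v_1v_2/u} + \ord(\eta/\rho)$ and $a_1^- + a_2^- = \ord(\eta/\rho)$, the latter because both blocks of $F_U$ coincide. Combining this with $\wh K_\pm = E_\pm F_U + \ord(\eta/\rho)$ and $\scalar{\wh K_\sigma}{K_\sigma} = \avg{F_U^2} + \ord(\eta^2/\rho^2)$ from \eqref{eq:scalar_L_S_R_S}, elementary simplification produces
\[ N_{++} = \kappa + 2\rho^2\frac{\avg{F_U^4}}{\avg{F_U^2}} + \ord(\rho^3 + \eta\rho), \qquad N_{--} = \kappa + \ord(\rho^3 + \eta\rho), \qquad N_{+-},\,N_{-+} = \ord(\eta\rho). \]
The off-diagonal smallness arises because either the two blocks of $\wh K_+$ are equal at leading order (producing cancellation against $E_-$ for the $(-,+)$ entry) or the two blocks of $\cF[K_+]$ are equal at leading order (forcing $a_1^- + a_2^- = \ord(\eta/\rho)$ for the $(+,-)$ entry).

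Diagonalising the $2\times 2$ matrix $N$, the eigenvalues are $\beta = N_{++} + \ord\bigl(N_{+-}N_{-+}/(N_{++} - N_{--})\bigr) = \kappa + 2\rho^2\avg{F_U^4}/\avg{F_U^2} + \ord(\rho^3 + \eta\rho)$ and $\beta_* = N_{--} + \ord(\ldots) = \kappa + \ord(\rho^3 + \eta\rho)$, since the gap $\abs{N_{++} - N_{--}} \sim \rho^2$ dominates both the off-diagonal contribution $\ord(\eta^2)$ and the Feshbach--Schur correction discussed below. Consequently $\beta$ and $\beta_*$ are distinct and each of algebraic and geometric multiplicity one. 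The strict inequality $\abs{\beta_*} < \abs{\beta}$ follows because $\kappa > 0$ and $\beta - \kappa$ has the strictly positive leading shift $2\rho^2 \avg{F_U^4}/\avg{F_U^2}$. The continuity of $\beta$ and $\beta_*$ in $\eta$ for fixed $z$ is inherited from the continuity of $\cL$ (through $M$ and $U$) and of $\cP_\cK$ established in Lemma~\ref{lem:small_eigenvalues}, together with that of $\kappa$, $K_\pm$ and $\wh K_\pm$ in Lemmas~\ref{lem:cK_small_eigenvalue} and~\ref{lem:eigenvectors_C_S_cF}.

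The main obstacle is that the naive first-order perturbation error of order $\norm{\cE}^2 = \ord(\rho^2)$ is the same size as the shift we are trying to resolve. This is overcome by exploiting that $\cF$ annihilates $\oM$ (because $\ran \cS \subset \dM$): although $\cE K_\tau$ is dominated by an $\oM$-component of size $\rho$, that component is killed the next time $\cF$ is applied, so the Feshbach--Schur correction $\cP_\cK \cE \cQ_\cK(\cdots)\cQ_\cK\cE\cP_\cK$ contributes matrix elements of size only $\ord(\rho^4)$ in the $\{K_+, K_-\}$ basis rather than the a priori $\ord(\rho^2)$. The same block-structure argument, together with the symmetric $\diag(c,c)$ shape of the leading parts of $F_U$ and $\cF[F_U]$, is what forces $N_{\pm\mp}$ down to $\ord(\eta\rho)$ and thereby separates the two eigenvalues $\beta$ and $\beta_*$ with their very different shifts from $\kappa$.
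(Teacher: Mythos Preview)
Your proof follows the same Schur-complement strategy as the paper: reduce the eigenvalue problem for $\cL$ in $D_\eps(0)$ to a $2\times 2$ problem on $\ran\cP_\cK$ in the basis $\{K_+, K_-\}$, then solve the resulting quadratic. The paper computes the $2\times 2$ matrix (called $\Lambda$ there, see \eqref{eq:def_Lambda}) via the auxiliary Lemmas~\ref{lem:expansion_Lambda} and~\ref{lem:expansion_cD_K}, which expand $\cD[E_\pm F_U]$ and $\cD^*[E_\pm F_U]$ through the algebraic identities \eqref{eq:cD_F_U_all_cases}; you instead derive an exact closed formula for $(\cC_P - \cC_U)[A]$ on $\dM$ directly from the block forms of $U$ and $P$, which immediately explains why the $\dM$-part is of size $\rho^2$ and the $\oM$-part of size $\rho$. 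For the Schur-complement correction the paper uses the operator-norm bounds $\norm{\cP_\cK\cD\cQ_\cK} \lesssim \rho^2 + \eta$ and $\norm{\cQ_\cK\cD\cP_\cK} \lesssim \rho + \eta$ from Lemma~\ref{lem:expansion_cD_K} to obtain $\ord(\rho^3 + \eta\rho)$; your sharper $\ord(\rho^4)$ bound is correct but tacitly relies on one fact you did not state: the resolvent $(\lambda - \cQ_\cK\cL\cQ_\cK)^{-1}$ maps $\oM$ into $\oM$ (since $\cL|_{\oM} = 1$ and $\cP_\cK\oM = \{0\}$, the block matrix of $\cQ_\cK\cL\cQ_\cK$ with respect to $(\dM\cap\ran\cQ_\cK)\oplus\oM$ is lower triangular), so the dominant $\oM$-component is not mixed back into $\dM$ before the second $\cF$ kills it. Both routes yield \eqref{eq:eigenvalues_cL_expansion}; your block computation is more direct, while the paper's modular lemmas are reused later in the eigenvector analysis.
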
 

We remark that the eigenvalues of $\cL$ are denoted by $\beta$ and $\beta_*$ since the spectra of $\cL$ 
and $\cB$ agree. Indeed, $\cB$ and $\cL$ are related through the similarity transform $\cB = \cC_Q\cL \cC_Q^{-1}$ due to \eqref{eq:cB_rep_C_F}. 

To lighten the notation in the following, we denote the difference between $\cL$ and $\cK$ by 
\begin{equation} \label{eq:def_cD} 
 \cD \defeq \cL - \cK. 
\end{equation}

\begin{proof} 
We decompose $\cL$ according to the splitting $\PK + \QK = 1$, i.e. we write
\begin{equation} \label{eq:decomposition_cL} 
 \cL =  \begin{pmatrix} \PK \cL \PK &  \PK \cL \QK \\ \QK \cL \PK & \QK \cL \QK \end{pmatrix}.
\end{equation}
More precisely, by \eqref{eq:decomposition_cL} we mean that
 we consider the (not necessarily orthogonal) decomposition $\C^{2n} = \ran \PK + \ran \QK$
into two complementary subspaces and the operators in the right-hand side of \eqref{eq:decomposition_cL}  act among the appropriate
subspaces in this decomposition, e.g. $\QK \cL \PK$ is a linear operator from $\ran \PK$ to $\ran \QK$.
 Notice that  $\QK (\cL -\omega) \QK$
 (viewed as a linear map on $\ran \QK$) is invertible  if $|\omega| \le \eps$, where
 $\eps$ was chosen as in Lemma~\ref{lem:small_eigenvalues}. To see this,  we use the identity
 \begin{equation}\label{eq:triv}
 (A+B) \big[ I + A^{-1} B\big]^{- 1}A^{-1} = I
\end{equation}
for $A= \QK (\cK-\omega) \QK$, $B= \QK (\cL-\cK) \QK$ and $I$ being the identity map on $\ran \QK$
and notice that $A$ (viewed as a map on $\ran \QK$)
 is invertible by Lemma~\ref{lem:small_eigenvalues}, in fact $A^{-1} = \QK (\cK-\omega)^{-1} \QK$
 with $\| A^{-1}\|\lesssim1$ by \eqref{eq:QT}.
 Moreover $\| A^{-1} B \|\le \| A^{-1}\| \| \QK\|^2 \| \cL -\cK\| \lesssim \rho$, where we used 
$\norm{\cL - \cK} \lesssim \rho$ by \eqref{eq:norm_cK_minus_cL} and $\|\QK\| \le 1 + \|\PK\| \lesssim 1$ by \eqref{eq:QT}. 
Therefore $I+A^{-1} B$ is also invertible if $\rho$ is sufficiently small, yielding
 the invertibility of $A+B=  \QK (\cL-\omega) \QK$ from \eqref{eq:triv} and that
 \begin{equation}\label{eq:QLbound}
\big\| \big[ \QK (\cL-\omega) \QK\big]^{-1}\big\| \lesssim 1.
\end{equation}
 
Moreover, we use \eqref{eq:decomposition_cL} and Schur's determinant identity to compute the determinant of $\cL -\omega$ and obtain 
\begin{equation}\label{eq:proof_eigenvalues_cL_aux1}
 \det(\cL - \omega) = \det(\QK(\cL-\omega) \QK)  \det( \PK ( \cL - \omega) \PK - \PK \cL\QK ( \QK (\cL - \omega)\QK)^{-1} \QK \cL \PK). 
\end{equation}
Since the first determinant on the right-hand side is not zero for $|\omega|\le \eps$, the small eigenvalues of $\cL$
are exactly those $\omega$'s for which the second determinant vanishes. Note that this is a $2\times 2$ determinant
since $\ran \PK$ is two dimensional. Now we write this determinant in a convenient basis.
In the basis $(K_+,\, K_-)$ of $\ran \PK$ (cf.\ Lemma~\ref{lem:eigenvectors_C_S_cF}), we have 
\begin{equation} \label{eq:PK_cL_minus_omega_PK_aux1} 
 \PK ( \cL - \omega) \PK = \PK ( \cK - \omega) \PK  + \PK \cD \PK = \begin{pmatrix} \kappa - \omega & 0 \\ 0 &\kappa - \omega \end{pmatrix} + \Lambda, 
\end{equation}
where we introduce the $2 \times 2$-matrix $\Lambda$ defined through 
\begin{equation} \label{eq:def_Lambda}
 \Lambda \defeq \begin{pmatrix} 
\frac{\scalar{\hat{K}_+}{\cD[K_+]}}{\scalar{\hat{K}_+}{K_+}}  & \frac{\scalar{\hat{K}_+}{\cD[K_-]} }{\scalar{\hat{K}_+}{K_+}} \\ 
\frac{\scalar{\hat{K}_-}{\cD[K_+]}}{\scalar{\hat{K}_-}{K_-}}  & \frac{\scalar{\hat{K}_-}{\cD[K_-]} }{\scalar{\hat{K}_-}{K_-}} 
\end{pmatrix}. 
\end{equation} 
The following lemma which will be shown in Subsection~\ref{subsec:proofs_aux_results} provides a precise expansion of $\Lambda$ in the small $\rho$ regime. 
\begin{lemma}[Expansion of $\Lambda$] \label{lem:expansion_Lambda} 
For $\Lambda$ defined in \eqref{eq:def_Lambda}, we have the expansion
\[ \Lambda = 2\rho^2 \begin{pmatrix} \frac{\avg{F_U^4}}{\avg{F_U^2}} & 0 \\ 0 &0 \end{pmatrix} + \ord(\eta \rho ). \] 
\end{lemma}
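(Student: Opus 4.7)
The plan is to reduce the four entries of $\Lambda$ to two model quantities, $\scalar{F_U}{\cD[F_U]}$ and its twisted analogue $\scalar{F_U}{\cD'[F_U]}$ with $\cD'[R] \defeq P\cF[R]P - U^*\cF[R]U$, plus corrections of size $O(\eta\rho)$ coming from the fact that $K_\pm$ and $\wh{K}_\pm$ differ from $F_U$ only by $\ord(\eta/\rho)$ in $\dM$ (by Lemma~\ref{lem:eigenvectors_C_S_cF}). The central algebraic tool will be the decomposition $U = P\abs{\Re U} + \ii\rho F_U$, which follows from $\Re U = P\abs{\Re U}$ and $\Im U = \rho F_U$; the crucial point is that, by \eqref{eq:def_F_U}, both $F_U$ and $\abs{\Re U} = \sqrt{1 - \rho^2 F_U^2}$ have \emph{identical} vectors on their two diagonal blocks, so they commute with every element of $\dM$ and with $P$. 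In particular $PF_UP = F_U$ and $P\abs{\Re U}P = \abs{\Re U}$, which makes certain cancellations exact rather than merely perturbative in $\rho$.

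For the $(+,+)$-entry I will combine $\cD = (\cC_P - \cC_U)\cF$, the eigenvector identity \eqref{eq:F_U_eigenvector_cF} in the form $\cF[F_U] = F_U - (\eta/\rho)Q^2$, and the exact algebraic identity
\begin{equation*}
 F_U - UF_UU = 2\rho^2 F_U^3 - 2\ii\rho F_U^2 P \abs{\Re U},
\end{equation*}
obtained by expanding $U = P\abs{\Re U} + \ii\rho F_U$ and using the above commutation facts. This gives $\cD[F_U] = (F_U - UF_UU) + (\eta/\rho)(UQ^2U - PQ^2P)$. The off-diagonal piece $-2\ii\rho F_U^2 P\abs{\Re U} \in \oM$ is killed by any inner product with $\wh{K}_+ \in \dM$; and the $\dM$-component of $UQ^2U - PQ^2P$ is $O(\rho^2)$, because the $O(\rho)$-part of $U - P = \ii\rho F_U + O(\rho^2)$ is purely off-diagonal. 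Combined with the Lemma~\ref{lem:eigenvectors_C_S_cF} corrections to $F_U$ and with $\scalar{\wh{K}_+}{K_+} = \avg{F_U^2} + \ord(\eta^2/\rho^2)$ from \eqref{eq:scalar_L_S_R_S}, and using $\eta \lesssim \rho$ in the regime $\eta/\rho \leq \rho_*$, this produces the asserted $(+,+)$-entry $2\rho^2 \avg{F_U^4}/\avg{F_U^2} + \ord(\eta\rho)$.

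For the three remaining entries I will use the commutation relations \eqref{eq:U_Emin}, \eqref{eq:P_Emin} and \eqref{eq:cF_Emin}, together with the fact that $E_-$ commutes with every element of $\dM$, to reduce the computation to $\cD'$: one checks $\cD[E_-R] = E_-\cD'[R]$ for $R \in \dM$. The $(-,-)$-entry then reduces to $\scalar{F_U}{\cD'[F_U]}$; crucially, the analogous exact identity $F_U - U^*F_UU = 0$ (obtained by the same expansion but with the opposite sign on $\Im U$, whose cross terms now cancel rather than double) collapses the leading term, leaving only $\cD'[F_U] = (\eta/\rho)(U^*Q^2U - PQ^2P)$, whose $\dM$-component is again $O(\rho^2)$, hence $\Lambda_{--} = \ord(\eta\rho)$. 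The two off-diagonal entries of $\Lambda$ reduce to inner products of the form $\scalar{F_U}{E_- F_U^3}$, which vanishes outright since $E_-$ flips the sign of the second block while $F_U^k$ is scalar-diagonal. The main technical obstacle throughout will be the careful bookkeeping of $\dM$- versus $\oM$-components: the naive operator-norm bounds $\norm{U-P} \lesssim \rho$ and $\norm{UQ^2U - PQ^2P} \lesssim \rho$ would yield only $\ord(\eta)$ errors, and the gain to $\ord(\eta\rho)$ rests precisely on the observation that the leading $O(\rho)$-parts of these quantities live entirely in $\oM$ and thus disappear upon pairing against any vector in $\dM$.
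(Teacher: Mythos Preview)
Your proposal is correct and follows essentially the same approach as the paper: both reduce $\scalar{\wh{K}_{s_1}}{\cD[K_{s_2}]}$ to $\scalar{E_{s_1}F_U}{\cD[E_{s_2}F_U]}+O(\eta\rho)$ via the key observation that for $R_1,R_2\in\dM$ one has $\scalar{R_1}{(\cC_P-\cC_U)[R_2]}=O(\rho^2)$ (the paper states this as \eqref{eq:expansion_Lambda_error_term}; you phrase it as ``the $\dM$-component is $O(\rho^2)$''), and then compute the four leading terms directly---the paper via $\cD^*$ and the identities \eqref{eq:cD_star_F_U}--\eqref{eq:cD_star_Emin_F_U}, you via $\cD$ and your auxiliary $\cD'$ (with the clean observation $U^*F_UU=F_U$ by unitarity), but the underlying algebra is the same. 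One slip to fix: you write that ``the $O(\rho)$-part of $U-P=\ii\rho F_U+O(\rho^2)$ is purely off-diagonal'', but $F_U\in\dM$; what you actually need (and what makes your argument work) is that the resulting \emph{cross terms} $\ii\rho(F_UQ^2P+PQ^2F_U)$ in the expansion of $UQ^2U$ lie in $\oM$, which holds because $P\in\oM$ while $F_U,Q^2\in\dM$.
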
 
Lemma~\ref{lem:expansion_Lambda} and \eqref{eq:PK_cL_minus_omega_PK_aux1} imply 
\begin{equation} \label{eq:PK_cL_minus_omega_PK_aux2} 
 \PK (\cL -\omega) \PK = \begin{pmatrix} \kappa + 2 \rho^2 \frac{\avg{F_U^4}}{\avg{F_U^2}}  - \omega & 0 \\ 0 & \kappa - \omega \end{pmatrix} + \ord(\eta \rho).  
\end{equation}

What remains in order to compute the $2 \times 2$-determinant in \eqref{eq:proof_eigenvalues_cL_aux1} 
is estimating $\PK \cL\QK$ and $\QK \cL \PK$. To that end we use the following lemma 
which will be proven in Subsection~\ref{subsec:proofs_aux_results} below.

\begin{lemma}[Expansion of $\cD[K_\pm{]}$ and $\cD^*[\wh{K}_\pm{]}$] \label{lem:expansion_cD_K} 
Let $\cD$ be defined as in \eqref{eq:def_cD}. Let $K_\pm$ and $\wh{K}_\pm$ be the eigenvectors of $\cK$
introduced in Lemma~\ref{lem:eigenvectors_C_S_cF}. Then we have 
\[ \cD[K_+] = \ord(\rho + \eta), \qquad \cD[K_-] = \ord(\eta), \qquad \cD^*[\wh{K}_+] = \ord(\rho^2 + \eta), 
\qquad \cD^*[\wh{K}_-] = \ord(\eta). \] 
\end{lemma}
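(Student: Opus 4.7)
The plan is to exploit the factorization $\cD = \cL - \cK = (\cC_P - \cC_U)\cF$ and expand the difference of sandwich operators in powers of $\rho$. Writing $U = \abs{\Re U}\,P + \ii\rho\, F_U$ with $\abs{\Re U} = \sqrt{1-\rho^2 F_U^2}$, and noting that $F_U = \diag(f_U, f_U)$ is ``block-scalar'' (both diagonal blocks are the same $n\times n$ diagonal matrix $f_U$, so $F_U$ commutes with $P$ and with every $n\times n$-block diagonal matrix in sight), a direct computation gives, for any $R\in\dM$,
\begin{equation*}
(\cC_P - \cC_U)[R] \,=\, PRP - URU \,=\, \rho^2 F_U^2\,(PRP + R)\, -\, \ii\rho\,\abs{\Re U}\,(PR F_U + F_U RP),
\end{equation*}
with the analogous formula for $(\cC_P - \cC_{U^*})[R]$ obtained by flipping the sign of the $\ii\rho$-term. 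Two structural facts will be decisive: the off-diagonal piece on the right lies in $\oM$ whenever $R\in\dM$, and $\cF$ annihilates $\oM$ (since $\cF[R] = Q\cS[QRQ]Q$ and $\cS$ sees only diagonal entries).

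For $\cD[K_+] = (\cC_P - \cC_U)\cF[K_+]$ I apply the displayed formula with $R = \cF[K_+]\in\dM$; each summand is uniformly $O(\rho)$, so $\cD[K_+] = O(\rho) = O(\rho+\eta)$. For $\cD^*[\wh K_+] = \cF[(\cC_P - \cC_{U^*})[\wh K_+]]$ the operator $\cF$ is applied \emph{after} the difference: since $\wh K_+\in\dM$, the $O(\rho)$ off-diagonal piece is killed by $\cF|_{\oM} = 0$ and only the $O(\rho^2)$ diagonal piece survives, giving $\cD^*[\wh K_+] = O(\rho^2) = O(\rho^2+\eta)$.

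For the twisted vectors $K_- = E_- K_+$ and $\wh K_- = E_- \wh K_+$ the target bounds require genuine cancellation, and I extract it by pulling $E_-$ outside using $PE_- = -E_- P$, $UE_- = -E_- U^*$ together with \eqref{eq:cF_Emin}. For $\cD[K_-]$ this yields $\cD[K_-] = E_-(P T_+ P - U^* T_+ U)$ with $T_+\defeq\cF[K_+]$, which expands (now with $U^*\cdot U$ rather than $U\cdot U$) to $E_-\bigl[\rho^2 F_U^2(PT_+P - T_+) - \ii\rho\,\abs{\Re U}\,F_U\,[P,T_+]\bigr]$; the block-scalar property of $F_U$ is what turns the middle expression into a genuine commutator. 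I then invoke the eigenvalue equation for $K_+$ in the form $T_+ = (1-\kappa)P K_+ P$, which gives simultaneously $PT_+P - T_+ = (1-\kappa)K_+ - \cF[K_+]$ and $[P, T_+] = (1-\kappa)[K_+,P]$. Both remainders are $O(\eta/\rho)$: the first by combining $K_+ = F_U + O(\eta/\rho)$, $\kappa = O(\eta/\rho)$ from Lemma~\ref{lem:eigenvectors_C_S_cF} with $(1-\cF)[F_U] = (\eta/\rho)Q^2$ from \eqref{eq:F_U_eigenvector_cF}; the second because $[F_U,P]=0$, so $[K_+,P] = [K_+ - F_U, P]$. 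The $\rho^2$ and $\rho$ prefactors then turn this into $\cD[K_-] = O(\eta)$. The estimate $\cD^*[\wh K_-] = O(\eta)$ follows by the same device: pulling $E_-$ through yields $\cD^*[\wh K_-] = E_-\cF[P\wh K_+P - U\wh K_+ U^*]$, and a parallel expansion reduces the bound to $P\wh K_+P - \wh K_+$ and $[P,\wh K_+]$, both $O(\eta/\rho)$ because $\wh K_+ = F_U + O(\eta/\rho) \in \dM$ and $F_U$ is block-scalar.

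The main delicate step will be the $E_-$-algebra: one has to keep careful track of the commutation signs to see that the expansion of $U^* T_+ U$ (respectively of $U\wh K_+ U^*$) is the variant in which the middle expression becomes a commutator $F_U[P,\,\cdot\,]$, and then that both surviving remainders land simultaneously at the common order $\eta/\rho$. Everything else is bookkeeping about which products lie in $\dM$ versus $\oM$, combined with $\cF|_{\oM}=0$.
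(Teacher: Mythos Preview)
Your proof is correct. The key idea---expanding $U = \abs{\Re U}\,P + \ii\rho F_U$ and exploiting that $F_U$ is block-scalar so it commutes with $P$ and with everything in $\dM$---works exactly as you say, and the commutator structure that emerges in the $E_-$ cases is precisely what yields the extra cancellation.

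The paper's proof takes a shorter route that you may find instructive. Rather than working with the exact eigenvectors $K_\pm,\wh K_\pm$, it first replaces them by their leading parts $E_\pm F_U$; since $K_\pm = E_\pm F_U + \ord(\eta/\rho)$ by Lemma~\ref{lem:eigenvectors_C_S_cF} and $\norm{\cD}+\norm{\cD^*}\lesssim\rho$, this substitution costs only $\ord(\eta)$. Then it computes $\cD[E_\pm F_U]$ and $\cD^*[E_\pm F_U]$ \emph{exactly}, using not merely that $F_U$ commutes with $P$ but that $F_U$ commutes with $U$ itself (as $U$ is unitary, hence normal, $\Im U$ commutes with $U$). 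This gives clean closed formulas such as $UF_UU=F_UU^2$ and the exact identities $U E_- F_U U = -E_- F_U$, $U^* E_- F_U U^* = -E_- F_U$, from which $\cD[E_-F_U]=\ord(\eta)$ and $\cD^*[E_-F_U]=0$ drop out without any expansion in $\rho$. Your approach recovers the same cancellations by expanding $U$ and invoking the eigenvalue equation $PT_+P=(1-\kappa)K_+$; this is more hands-on and slightly longer, but it has the minor advantage of never needing the full commutation $[F_U,U]=0$, only the block-scalar property.
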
 

As $\QK \cL \PK = \QK \cK \PK + \QK \cD \PK = \QK \cD \PK$ and $\PK \cL \QK = \PK \cK \QK + \PK \cD \QK = \PK \cD \QK$, the representation of $\PK$ 
in Lemma~\ref{lem:eigenvectors_C_S_cF} \ref{item:K_projections}  yields 
\begin{subequations} 
\begin{align} 
 \norm{\QK \cL \PK} & \lesssim \max\{ \norm{\QK \cD[K_+]}, \norm{\QK \cD[K_-]} \} \lesssim \rho + \eta,  
\label{eq:proof_eigenvalues_cL_aux2}\\ 
 \norm{\PK \cL \QK} & \lesssim \sup_{\norm{R}=1} \max\big \{ \abs{\scalar{\wh{K}_+}{\cD \QK[R]}}, \abs{\scalar{\wh{K}_-}{\cD \QK[R]}} \big\} \lesssim \rho^2 + \eta, 
\label{eq:proof_eigenvalues_cL_aux3}
\end{align} 
\end{subequations} 
where the last steps follow from Lemma~\ref{lem:expansion_cD_K} and \eqref{eq:QT}.  

Therefore, we combine \eqref{eq:QLbound}, \eqref{eq:PK_cL_minus_omega_PK_aux2}, 
\eqref{eq:proof_eigenvalues_cL_aux2} and \eqref{eq:proof_eigenvalues_cL_aux3}, use $\eta \lesssim \rho$ and obtain 
\begin{equation} \label{eq:proof_eigenvalues_cL_aux4} 
\PK ( \cL - \omega) \PK - \PK \cL\QK ( \QK (\cL - \omega)\QK)^{-1} \QK \cL \PK = 
\begin{pmatrix} \kappa + 2 \rho^2 \frac{\avg{F_U^4}}{\avg{F_U^2}}  - \omega & 0 \\ 0 & \kappa - \omega \end{pmatrix} + \ord(\rho^3 + \eta \rho)  
\end{equation}
with respect to the basis vectors $K_+$ and $K_-$. 

We now analyse the small eigenvalues of $\cL$. 
We have seen after \eqref{eq:proof_eigenvalues_cL_aux1} that, for any $|\omega|\le \eps$, we have $\det(\cL -\omega)=0$ if and only if 
\[ \det\Big(\PK ( \cL - \omega) \PK - \PK \cL\QK ( \QK (\cL - \omega)\QK)^{-1} \QK \cL \PK \Big) = 0. \]
Owing to \eqref{eq:proof_eigenvalues_cL_aux4}, the latter relation is equivalent to  
\begin{equation} \label{eq:proof_eigenvalues_cL_aux5} 
 0 = ( \kappa - \omega + \rho^2 \gamma +\delta_{11}) (\kappa - \omega  + \delta_{22}) - \delta_{12} \delta_{21}, 
\end{equation}
where $\gamma \defeq 2 \avg{F_U^4}/\avg{F_U^2}$ and $\delta_{ij}$ are the entries of the error term on the right-hand side of \eqref{eq:proof_eigenvalues_cL_aux4}. In particular, $\gamma \sim 1$ by \eqref{eq:F_U_sim_1} and $\delta_{ij} = \ord(\rho^3 + \eta \rho)$. 
The quadratic equation in \eqref{eq:proof_eigenvalues_cL_aux5} has the solutions 
\[ \omega_\pm = \kappa + \frac{\gamma}{2} \rho^2 ( 1 \pm 1) + \ord(\delta + \delta^2/\rho^2),\] 
where $\delta \defeq \sup_{i,j} \abs{\delta_{ij}}$. 
As $\kappa$, $\rho$, $\gamma$ and $\delta_{ij}$ are continuous in $\eta$ (the continuity of $\delta_{ij}$ follows from the continuity of $\PK$ and $\cL$), $\omega_\pm$ are continuous in $\eta$. 
Since $\delta = \ord(\rho^3 + \eta \rho)$ and $\rho \gtrsim \eta$, that is 
\[ 
 \omega_+   = \kappa + 2 \rho^2\frac{\avg{F_U^4}}{\avg{F_U^2}} + \ord(\rho^3 + \eta \rho), \qquad  
 \omega_-  = \kappa + \ord(\rho^3 + \eta \rho). 
\] 
Clearly, $\omega_+$ and $\omega_-$ are different from each other if $\rho + \eta/\rho \leq \rho_*$ 
and $\rho_* \sim 1$ is chosen small enough. 
Hence, $\omega_+$ and $\omega_-$ are two small eigenvalues of $\cL$. 
Lemma~\ref{lem:small_eigenvalues} implies that $\cL$ possesses at most two small eigenvalues,
 thus we have fully described the spectrum of $\cL$ close to zero. 
\end{proof} 

\subsection{Eigenvectors of $\cL$ and proof of Proposition~\ref{pro:stability_operator}}

By Lemma~\ref{lem:eigenvalues_cL}, there are two eigenvalues of $\cL$ in $D_\eps(0)$. The following lemma relates the corresponding eigenvectors to $F_U$ via the eigenvectors of $\cK$ from Lemma~\ref{lem:eigenvectors_C_S_cF}. The eigenvectors of $\cL$ will be perturbations of those of $\cK$. The main mechanism is that the two small eigenvalues of $\cL$ are sufficiently separated, $\abs{\beta- \beta_*} \sim \rho^2$ (cf.\ \eqref{eq:eigenvalues_cL_expansion}). We will use that this separation is much larger than $\rho^3 + \eta\rho$, the effect of the 
perturbation $\cD$ between the unperturbed spectral subspaces $\ran \PK$ and $\ran \QK$ (see \eqref{eq:proof_eigenvectors_cL_aux3} below). 
Hence, regular perturbation theory applies. 
 
Owing to $\cB = \cC_Q \cL \cC_Q^{-1}$ by \eqref{eq:cB_rep_C_F}, we will conclude Proposition~\ref{pro:stability_operator} immediately from this lemma. 

\begin{lemma}[Eigenvectors of $\cL$] \label{lem:eigenvectors_cL} 
There is $\rho_* \sim 1$ such that if $\rho + \eta/\rho \leq \rho_*$ then there are right (left) eigenvectors $L$ and $L_*$ ($\wh{L}$ and $\wh{L}_*$) of $\cL$ 
associated to the eigenvalues $\beta$ and $\beta_*$ from Lemma~\ref{lem:eigenvalues_cL}, respectively, satisfying
\begin{subequations} 
\begin{alignat}{2} 
L & = F_U + 2 \rho \ii F_U^2 (\Re U) + \ord(\rho^2 + \eta/\rho), \qquad \qquad & L_* & = E_-F_U + \ord(\rho^2 + \eta/\rho), \label{eq:expansion_right_eigen_cL} \\ 
\wh{L} & = F_U + \ord(\rho^2 + \eta/\rho), \qquad \qquad & \wh{L}_* & = E_- F_U + \ord(\rho^2 + \eta/\rho).\label{eq:expansion_left_eigen_cL}  
\end{alignat} 
\end{subequations} 
Moreover, $L$, $L_*$, $\wh{L}$ and $\wh{L}_*$ are continuous functions of $\eta$. 
For their scalar products, we have the expansions 
\begin{equation} \label{eq:scalar_products_eigenvectors_cL} 
 \scalar{\wh{L}}{L} = \avg{F_U^2} + \ord(\rho^2 + \eta/\rho), \qquad \qquad \scalar{\wh{L}_*}{L_*} = \avg{F_U^2} + \ord(\rho^2 + \eta/\rho). 
\end{equation}
\end{lemma}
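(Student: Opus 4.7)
The plan is to use quasi-degenerate perturbation theory around the doubly degenerate eigenvalue $\kappa$ of $\cK$. Writing $\cL = \cK + \cD$ with $\cD$ defined in \eqref{eq:def_cD} and $\norm{\cD} \lesssim \rho$ by \eqref{eq:norm_cK_minus_cL}, I would use the block decomposition $1 = \PK + \QK$ exactly as in the proof of Lemma~\ref{lem:eigenvalues_cL}. The calculation in that proof (specifically \eqref{eq:proof_eigenvalues_cL_aux4}) already shows that the effective $2\times 2$ matrix for $\cL$ restricted to $\ran \PK$ is, in the basis $\{K_+, K_-\}$, diagonal to leading order with entries $\beta$ and $\beta_*$ separated by $\sim \rho^2$. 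Consequently, the internal eigenvectors within $\ran \PK$ are $K_+$ and $K_-$ up to coefficients whose off-diagonal parts are at most $O(\eta/\rho)$, i.e.\ I take the ansatz $L = K_+ + c_- K_- + \QK L$ with $c_- = O(\eta/\rho)$, and analogously $L_* = c_+ K_+ + K_- + \QK L_*$. The $\QK$-part is recovered from the second Schur block, $\QK L = -(\QK \cL \QK - \beta)^{-1}\QK \cL K_+$ (and similarly for $L_*$), which is well-defined by the resolvent bound \eqref{eq:QLbound}.

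The main computational step is to expand $\QK \cL K_+ = \QK \cD K_+$ to order $\rho$. Using $\cD = -(\cC_U - \cC_P)\cF$ together with \eqref{eq:F_U_eigenvector_cF}, which gives $\cF[F_U] = F_U - (\eta/\rho)Q^2$, and the first-order expansion $U = P + \ii\rho F_U + O(\rho^2)$ from \eqref{eq:P_minus_U_lesssim_rho}, together with the eigenvector identity $\cC_P[F_U]=F_U$ and the block structure of $F_U$ which commutes with $P$, a direct calculation yields $\cD[K_+] = -2\ii \rho\, F_U^2 P + \ord(\rho^2 + \eta/\rho)$. The crucial observation is that $F_U^2 P \in \oM$, so by the block form \eqref{eq:def_cS} of $\cS$ we have $\cF[F_U^2 P] = 0$, hence $\cD[F_U^2 P] = 0$ as well. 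Thus $F_U^2 P$ is an honest eigenvector of both $\cK$ and $\cL$ with eigenvalue $1$, and $(\QK \cL \QK - \beta)^{-1}[F_U^2 P] = (1-\beta)^{-1} F_U^2 P = F_U^2 P + \ord(\rho^2+\eta/\rho)$. Combined with $\Re U = P + \ord(\rho^2)$ this yields $\QK L = 2\ii\rho\, F_U^2 (\Re U) + \ord(\rho^2 + \eta/\rho)$, and adding $K_+ = F_U + \ord(\eta/\rho)$ gives the claimed expansion \eqref{eq:expansion_right_eigen_cL} of $L$.

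The analogous step for $L_*$ is simpler, since Lemma~\ref{lem:expansion_cD_K} already furnishes $\cD[K_-] = \ord(\eta)$, so no linear $\rho$-correction arises and $L_* = K_- + \ord(\rho^2 + \eta/\rho) = E_- F_U + \ord(\rho^2 + \eta/\rho)$ follows from $K_- = E_- K_+$ and the expansion of $K_+$ in Lemma~\ref{lem:eigenvectors_C_S_cF}. For the left eigenvectors $\wh L, \wh L_*$, I repeat the argument with $\cL^*$ in place of $\cL$; Lemma~\ref{lem:expansion_cD_K} provides $\cD^*\wh K_+ = \ord(\rho^2 + \eta)$ and $\cD^*\wh K_- = \ord(\eta)$, so both $\wh L = \wh K_+ + \ord(\rho^2 + \eta/\rho)$ and $\wh L_* = E_-\wh K_+ + \ord(\rho^2 + \eta/\rho)$ follow directly without an explicit $\rho$-correction. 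Continuity of $L, L_*, \wh L, \wh L_*$ in $\eta$ is inherited from continuity of $K_\pm$, $\wh K_\pm$ in Lemma~\ref{lem:eigenvectors_C_S_cF}, of $\PK, \QK$ in Lemma~\ref{lem:small_eigenvalues}, and of $\beta, \beta_*$ in Lemma~\ref{lem:eigenvalues_cL}. Finally, the scalar products \eqref{eq:scalar_products_eigenvectors_cL} are obtained by plugging the expansions into $\scalar{\wh L}{L}$ and $\scalar{\wh L_*}{L_*}$: the potentially dangerous cross term $2\ii\rho \avg{F_U \cdot F_U^2 (\Re U)} = 2\ii\rho \avg{F_U^3 (\Re U)}$ vanishes because $F_U^3 \in \dM$ while $\Re U \in \oM$, leaving $\avg{F_U^2} + \ord(\rho^2 + \eta/\rho)$.

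The main obstacle is recognizing the algebraic miracle that $F_U^2 P$ is an exact eigenvector of $\cL$ with eigenvalue $1$; without this the resolvent $(\QK\cL\QK - \beta)^{-1}$ would only yield an implicit expression for the leading correction to $L$, and the precise matching with $2\ii\rho F_U^2(\Re U)$ would be obscured. This feature is special to the block structure of $\cS$ and the fact that $\cF$ annihilates off-diagonal matrices, which is why the sharp form of \eqref{eq:expansion_right_eigen_cL} is available.
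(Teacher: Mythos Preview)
Your approach is essentially the same as the paper's --- quasi-degenerate perturbation theory with $1=\PK+\QK$, computing the $\QK$-part via a Schur-type formula and the $\PK$-part via the effective $2\times 2$ matrix. Your treatment of $L$ is correct, and the observation that $F_U^2 P\in\oM$ lies in the kernel of $\cF$ (hence is an honest eigenvector of $\cL$ with eigenvalue $1$) is exactly what the paper uses when it says that $\cK^{-1}\QK$ acts as the identity on $\oM$.

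There is, however, a genuine gap in your handling of the mixing coefficients $c_\pm$. You claim that they are $O(\eta/\rho)$ by citing \eqref{eq:proof_eigenvalues_cL_aux4}, but that display only records that the off-diagonal error in the effective $2\times 2$ matrix is $\ord(\rho^3+\eta\rho)$; dividing by the gap $\sim\rho^2$ yields only $c_\pm=\ord(\rho+\eta/\rho)$. For $L$ this already suffices for $c_-$ if one instead uses the finer row-by-row bound in \eqref{eq:proof_eigenvectors_cL_aux2} (the second row has error $\ord(\eta\rho)$, coming from $\cD^*[\wh K_-]=\ord(\eta)$), but you do not invoke this. More seriously, for $L_*$ the crude bound $c_+=\ord(\rho+\eta/\rho)$ gives only $L_*=E_-F_U+\ord(\rho+\eta/\rho)$, strictly weaker than the target $\ord(\rho^2+\eta/\rho)$. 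The paper closes this gap by an iteration: it first uses $c_+=\ord(\rho+\eta/\rho)$ together with $\cD[E_-F_U]=\ord(\eta)$ to show $\QK[L_*]=\ord(\rho^2+\eta)$, then feeds this improved $\QK[L_*]$ back into \eqref{eq:proof_eigenvectors_cL_aux3} to upgrade the first-row error to $\ord(\rho^4+\eta\rho)$, whence $c_+=\ord(\rho^2+\eta/\rho)$. Alternatively, a direct entry-by-entry analysis of the effective matrix (using Lemmas~\ref{lem:expansion_Lambda} and~\ref{lem:expansion_cD_K} rather than the lumped bound \eqref{eq:proof_eigenvalues_cL_aux4}) shows that the off-diagonal entries are actually $\ord(\eta\rho)$, giving $c_\pm=\ord(\eta/\rho)$ in one shot --- but this argument is not the one you wrote down.
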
 

Before the proof of Lemma~\ref{lem:eigenvectors_cL}, we first conclude Proposition~\ref{pro:stability_operator} from Lemma~\ref{lem:small_eigenvalues}, Lemma~\ref{lem:eigenvalues_cL} and Lemma~\ref{lem:eigenvectors_cL}. 

\begin{proof}[Proof of Proposition~\ref{pro:stability_operator}]
We choose $\eps \sim 1$ as in Lemma~\ref{lem:eigenvalues_cL} and $\rho_* \sim 1$ as in Lemma~\ref{lem:eigenvectors_cL}. 
Since $\cB = \cC_Q \cL \cC_Q^{-1}$ due to \eqref{eq:cB_rep_C_F}, the spectra of $\cB$ and $\cL$ agree. Hence, $\spec(\cB) \cap D_\eps(0) = \{ \beta, \beta_*\}$, 
with $\beta$ and $\beta_*$ as introduced in Lemma~\ref{lem:eigenvalues_cL}. 
From Lemma~\ref{lem:eigenvalues_cL}, \eqref{eq:expansion_lambda_S} and $\avg{F_U^2} \sim 1$ by \eqref{eq:F_U_sim_1}, we obtain
the scaling relations in \eqref{eq:beta_beta_star_scaling} by shrinking $\rho_* \sim 1$ if needed.

We now derive \eqref{eq:expansion_beta} and \eqref{eq:expansion_beta_star}. 
From \eqref{eq:balanced_polar_decomposition}, \eqref{eq:def_F_U} and $\Im U = - \Im U^* = - \Im U^{-1}$ for the unitary operator $U$ (cf.\ Lemma~\ref{lem:scaling_relation_v_u} (i)), we conclude $\psi = \avg{F_U^4}$. 
Moreover, $\psi \sim 1$ due to \eqref{eq:F_U_sim_1}. 
The identity $\cB = \cC_Q \cL \cC_Q^{-1}$ and $Q = Q^*$ also imply $B =\cC_Q[L]$, $B_* = \cC_Q[L_*]$, $\wh{B} = \cC_Q^{-1}[\wh{L}]$ and $\wh{B}_* = \cC_Q^{-1}[\wh{L}_*]$. 
Hence, $\scalar{\wh{B}}{B} = \scalar{\wh{L}}{L}$ and $\scalar{\wh{B}_*}{B_*} = \scalar{\wh{L}_*}{L_*}$ as $Q = Q^*$ (In particular, $\abs{\scalar{\wh{B}}{B}} \sim 1$ and $\abs{\scalar{\wh{B}_*}{B_*}} \sim 1$).
Therefore, the expansions of $\beta$ and $\beta_*$ in Lemma~\ref{lem:eigenvalues_cL}, the expansion of $\kappa$ in \eqref{eq:expansion_lambda_S}, $\avg{F_U^2} \sim 1$ due to \eqref{eq:F_U_sim_1} 
and \eqref{eq:scalar_products_eigenvectors_cL} yield \eqref{eq:expansion_beta} and \eqref{eq:expansion_beta_star}.

The balanced polar decomposition, \eqref{eq:balanced_polar_decomposition}, the definition of \eqref{eq:def_F_U} and $\Im U = - \Im U^* = - \Im U^{-1}$ yield that 
\begin{equation} \label{eq:proof_stab_op_aux1} 
\begin{aligned}
 Q E_\pm F_U Q & = \rho^{-1} E_\pm \Im M, \qquad\qquad  \qquad Q^{-1} E_\pm F_U Q^{-1} = - \rho^{-1} E_\pm \Im M^{-1}, \\ 
 Q F_U^2 (\Re U) Q & = Q F_U QQ^{-1} F_U Q^{-1} Q (\Re U) Q = -\rho^{-2} (\Im M)(\Im M^{-1})(\Re M) . 
\end{aligned} 
\end{equation} 
Since $B =\cC_Q[L]$, $B_* = \cC_Q[L_*]$, $\wh{B} = \cC_Q^{-1}[\wh{L}]$ and $\wh{B}_* = \cC_Q^{-1}[\wh{L}_*]$, 
the expansions in \eqref{eq:expansions_eigenvectors_cB}, thus, follow from Lemma~\ref{lem:eigenvectors_cL}, \eqref{eq:proof_stab_op_aux1} and $Q \sim 1$ in \eqref{eq:scaling_U_Q_M}. 
Moreover, the continuity of $Q$, $Q^{-1}$ and the eigenvectors of $\cL$ from Lemma~\ref{lem:eigenvectors_cL} yield the continuity of the eigenvectors of $\cB$. 

The identity $\cB = \cC_Q \cL\cC_Q^{-1}$ also implies that $\cB^{-1} \cQ = \cC_Q \cL^{-1} \cQ_\cL \cC_Q^{-1}$. Similarly, $(\cB^*)^{-1} \cQ^* = \cC_Q^{-1} (\cL^*)^{-1} \cQ_{\cL}^* \cC_Q$ 
as $Q = Q^*$. 
Hence, the bounds in \eqref{eq:cB_inverse_Q_lesssim_one} follow from \eqref{eq:QT} in Lemma~\ref{lem:small_eigenvalues}. 
This completes the proof of Proposition~\ref{pro:stability_operator}. 
\end{proof} 

The remainder of this subsection is devoted to the proof of Lemma~\ref{lem:eigenvectors_cL}. 

\begin{proof}[Proof of Lemma~\ref{lem:eigenvectors_cL}]
We fix $\lambda \in \{ \beta, \beta_*\}$. 
Since $\beta$ and $\beta_*$ have multiplicity one and together with $\cL$ they are continuous functions of $\eta$ due to Lemma~\ref{lem:eigenvalues_cL} and Lemma~\ref{lem:scaling_relation_v_u} (iii), respectively, 
we find an eigenvector $L'$ of $\cL$ associated to $\lambda$ such that $L'$ is a continuous function of $\eta$ and $\norm{L'} = 1$. 

We apply $\PK$ to the eigenvector relation $\lambda L' = \cL[L']$, use $\cL = \cK + \cD$ and $\PK \cK = \cK \PK = \kappa \PK$ to obtain 
\begin{equation} \label{eq:proof_eigenvectors_cL_aux1} 
 \lambda \PK[L'] = \PK (\cK + \cD)[L'] = \kappa \PK[L'] + \PK \cD\PK[L'] + \PK \cD \QK[L'].  
\end{equation}

We express \eqref{eq:proof_eigenvectors_cL_aux1} in the basis $(K_+,\,K_-)$ of $\ran \PK$ (cf.\ Lemma~\ref{lem:eigenvectors_C_S_cF} \ref{item:K_projections}).
We use that $\PK \cD\PK = \Lambda$ in this basis, where $\Lambda$ is defined as in \eqref{eq:def_Lambda}, and decompose $\PK[L'] = \gamma_+ K_+ + \gamma_- K_-$ for some $\gamma_+, \gamma_- \in \C$. 
This yields 
\begin{equation} \label{eq:proof_eigenvectors_cL_aux3} 
 (\Lambda - \delta)\begin{pmatrix} \gamma_+ \\ \gamma_- \end{pmatrix} = - \begin{pmatrix} \frac{\scalar{\wh{K}_+}{\cD \QK [L']}}{\scalar{\wh{K}_+}{K_+}} \\ 
\frac{\scalar{\wh{K}_-}{\cD \QK [L']}}{\scalar{\wh{K}_-}{K_-}}  \end{pmatrix} 
= -  \begin{pmatrix} \frac{\scalar{\cD^*[\wh{K}_+]}{\QK [L']}}{\scalar{\wh{K}_+}{K_+}} \\ 
\frac{\scalar{\cD^*[\wh{K}_-]}{\QK [L']}}{\scalar{\wh{K}_-}{K_-}}  \end{pmatrix} = \begin{pmatrix} \ord((\rho^3 + \eta \rho) \norm{\PK[L']}) \\ \ord(\eta \rho \norm{\PK[L']}) \end{pmatrix}, 
\end{equation} 
where $\delta \defeq \lambda - \kappa$. 
Here, in the last step, we used Lemma~\ref{lem:expansion_cD_K} to estimate $\cD^*[\wh{K}_\pm]$. 
For the other factor, $\QK[L']$, we use the general eigenvector perturbation result, Lemma~\ref{lem:perturbation_theory_2} in Appendix~\ref{app:perturbation_th}. More precisely, applying $\QK$ to \eqref{eq:pert_th_eigenvector_right}
and using $\QK[K] = \QK \PK[L'] = 0$, we obtain 
$\norm{\QK[L']} \lesssim \norm{\cD} \norm{\PK[L']} \lesssim \rho \norm{\PK[L']}$ since $\norm{\cD} \lesssim \rho$ by \eqref{eq:norm_cK_minus_cL}. For the denominators in \eqref{eq:proof_eigenvectors_cL_aux3}, we use 
that $\abs{\scalar{\wh{K}_s}{K_s}} \sim 1$ for $s \in \{ \pm\}$ by Lemma~\ref{lem:eigenvectors_C_S_cF} and \eqref{eq:F_U_sim_1}.

From $\norm{L'}  = 1$ and \eqref{eq:QT}, we conclude $\norm{\PK[L']} \lesssim 1$. Thus,  $\gamma_\pm = \frac{\scalar{\wh{K}_\pm}{L'}}{\scalar{\wh{K}_\pm}{K_\pm}} = \ord(1)$ as $\abs{\scalar{\wh{K}_\pm}{K_\pm}} \sim 1$ by \eqref{eq:scalar_L_S_R_S} and \eqref{eq:F_U_sim_1}. 
Consequently,  \eqref{eq:proof_eigenvectors_cL_aux3} and Lemma~\ref{lem:expansion_Lambda} imply 
\begin{equation} \label{eq:proof_eigenvectors_cL_aux2} 
\begin{pmatrix} 2 \rho^2 \frac{\avg{F_U^4}}{\avg{F_U^2}} - \delta & 0 \\ 0 & -\delta \end{pmatrix} \begin{pmatrix} \gamma_ + \\ \gamma_- \end{pmatrix} 
= \begin{pmatrix} \ord(\rho^3 + \eta \rho) \\ \ord(\eta \rho) \end{pmatrix}. 
\end{equation} 
In order to compute $\gamma_+$ and $\gamma_-$, we now distinguish the two cases $\lambda = \beta$ and $\lambda=\beta_*$ and apply Lemma~\ref{lem:eigenvalues_cL} to estimate $\delta$. 
If $\lambda = \beta$ then $\abs{\delta} \sim \rho^2$ by Lemma~\ref{lem:eigenvalues_cL} and \eqref{eq:F_U_sim_1}. 
Hence, \eqref{eq:proof_eigenvectors_cL_aux2} implies $\abs{\gamma_-} \lesssim \eta/\rho$. 
Thus, $\abs{\gamma_+} \sim 1$ as $\abs{\gamma_+}\norm{K_+} \geq \norm{L'} -\abs{\gamma_-} \norm{K_-} - \norm{\QK[L']}$, $\abs{\gamma_-} \lesssim \eta/\rho $, $\norm{\QK[L']} \lesssim \rho$ and $\norm{L'} 
 =  1 \sim \norm{K_\pm}$. 
In particular, $L \defeq L' \scalar{\wh{K}_+}{K_+}/\scalar{\wh{K}_+}{L'} = L' /\gamma_+$ is continuous in $\eta$ and 
\[ \PK[L] = K_+ + \ord(\eta/\rho) = F_U + \ord(\eta/\rho), \] 
where we used \eqref{eq:eigenvectors_1_C_S_F} in the last step. 
We now apply Lemma~\ref{lem:perturbation_theory_2} to compute $\QK[L]$ with $K = \PK[L] = F_U + \ord(\eta/\rho)$. 
From \eqref{eq:cD_F_U} below, we obtain $(\cK - \kappa)^{-1}\QK \cD[F_U] = -2 \rho \ii F_U^2 (\Re U) + \ord(\rho^2 + \eta)$ since $\QK$ and $\cK$ agree with the identity map on $\oM$ and $F_U^2 (\Re U) \in \oM$. 
Hence, Lemma~\ref{lem:perturbation_theory_2} and $\norm{\cD} \lesssim \rho$ directly imply the expansion of $L$ in \eqref{eq:expansion_right_eigen_cL}.

We now consider the case $\lambda = \beta_*$. 
Lemma~\ref{lem:eigenvalues_cL} with $\lambda = \beta_*$ implies $\abs{\delta} \lesssim \rho^3 + \eta \rho$ and, thus, $\abs{2\rho^2 \frac{\avg{F_U^4}}{\avg{F_U^2}} - \delta} \sim \rho^2$. Hence, $\abs{\gamma_+} \lesssim \rho + \eta/\rho$ 
and, similarly to the other case, we set $L_* \defeq L'/\gamma_-$ and conclude 
\[ \PK[L_*] = E_- F_U + \ord(\rho + \eta/\rho).  \] 

Owing to \eqref{eq:pert_th_eigenvector_right} in Lemma~\ref{lem:perturbation_theory_2} with $K = \PK[L_*]$, we have 
\begin{equation} \label{eq:proof_eigenvectors_cL_aux4} 
 \QK[L_*] = - (\cK-\kappa)^{-1} \QK \cD \PK[L_*] + \ord(\rho^2) = - (\cK -\kappa)^{-1} \QK \cD[E_-F_U] + \ord(\rho^2 + \eta) = \ord(\rho^2 + \eta), 
\end{equation}
where the last step follows from \eqref{eq:cD_Emin_F_U} below. 
Therefore, the second identity in \eqref{eq:proof_eigenvectors_cL_aux3}, Lemma~\ref{lem:expansion_Lambda}, Lemma~\ref{lem:expansion_cD_K} and $\rho\gtrsim \eta$ by \eqref{eq:scaling_rho} imply
\[  
\begin{pmatrix} 2 \rho^2 \frac{\avg{F_U^4}}{\avg{F_U^2}} - \delta & 0 \\ 0 & -\delta \end{pmatrix} \begin{pmatrix} \gamma_ + \\ \gamma_- \end{pmatrix} 
= \begin{pmatrix} \ord(\rho^4 + \eta \rho) \\ \ord(\eta \rho) \end{pmatrix}. 
\] 
As $\abs{2\rho^2 \frac{\avg{F_U^4}}{\avg{F_U^2}} - \delta} \sim \rho^2$, we conclude $\abs{\gamma_+} \lesssim \rho^2 + \eta/\rho$. 
Hence, $\PK[L_*] = E_-F_U + \ord(\rho^2 + \eta/\rho)$ and the expansion of $L_*$ in \eqref{eq:expansion_right_eigen_cL} follows from \eqref{eq:proof_eigenvectors_cL_aux4}. 

A completely analogous argument starting from $\cL^* [\wh{L}] = \bar \lambda \wh{L}$ yields the expansions of $\wh{L}$ and $\wh{L}_*$ in \eqref{eq:expansion_left_eigen_cL}. 
We leave the details to the reader. 
From \eqref{eq:expansion_right_eigen_cL} and \eqref{eq:expansion_left_eigen_cL}, we directly obtain \eqref{eq:scalar_products_eigenvectors_cL} since $F_U^3(\Re U) \in \oM$ implies
$\avg{F_U^3 (\Re U)} = 0$. 
This completes the proof of Lemma~\ref{lem:eigenvectors_cL}. 
\end{proof}

\subsection{Proofs of the auxiliary Lemmas~\ref{lem:expansion_Lambda} and \ref{lem:expansion_cD_K}}
\label{subsec:proofs_aux_results} 

In this section, we show Lemma~\ref{lem:expansion_Lambda} and Lemma~\ref{lem:expansion_cD_K} which were both stated in Subsection~\ref{subsec:eigenvalues_cL}. 

\begin{proof}[Proof of Lemma~\ref{lem:expansion_cD_K}] 
Lemma~\ref{lem:expansion_cD_K} follows directly from Lemma~\ref{lem:eigenvectors_C_S_cF} and 
the precise expansions of $\cD[E_\pm F_U]$ and $\cD^*[E_\pm F_U]$ established in 
\eqref{eq:cD_F_U_all_cases} below.  

In the following computations, we constantly use that $P$, $U$, $U^*$ and $F_U$ commute with each other.  
From \eqref{eq:F_U_eigenvector_cF}, \eqref{eq:P_minus_U_lesssim_rho} and 
$1-U^2 = -2 \ii \Re U \Im U + 2 (\Im U)^2$, we obtain 
\begin{subequations} \label{eq:cD_F_U_all_cases}
\begin{equation} \label{eq:cD_F_U}
 \cD[F_U] = (\cC_P - \cC_{U}) F_U + \ord(\eta) = F_U (1-U^2) + \ord(\eta)= - 2 \rho \ii F_U^2 (\Re U) + 2\rho^2 F_U^3 + \ord(\eta). 
\end{equation}
Similarly, \eqref{eq:F_U_eigenvector_cF}, \eqref{eq:P_minus_U_lesssim_rho}, \eqref{eq:P_Emin} and $UE_-F_UU = - E_- U ^* F_U U = - E_- F_U$ by \eqref{eq:U_Emin} imply 
\begin{equation} \label{eq:cD_Emin_F_U}
 \cD[E_-F_U] = - (\cC_P - \cC_U)E_-F_U + \ord(\eta)= E_-F_U  - E_-F_U + \ord(\eta) =\ord(\eta). 
\end{equation}
Since $1-(U^*)^2 = 2 \ii \Re U \Im U + 2 (\Im U)^2$, $F_U^2 (\Re U) \in \oM$ by Lemma~\ref{lem:scaling_relation_v_u} (i) and $\cF$ vanishes on $\oM$, we get  
\begin{equation} \label{eq:cD_star_F_U}
 \cD^*[F_U] = \cF( \cC_P - \cC_{U^*})[F_U] = \cF[F_U ( 1 - (U^*)^2)] = 2 \ii \rho 
\cF[F_U^2 (\Re U)] + 2 \rho^2 \cF[F_U^3]  = 2 \rho^2 \cF[F_U^3].  
\end{equation} 
From $P E_- F_U P = - E_-F_U = U^* E_- F_U U^*$ by \eqref{eq:P_Emin} and \eqref{eq:U_Emin}, we deduce 
\begin{equation} \label{eq:cD_star_Emin_F_U}
 \cD^*[E_- F_U] = \cF[-E_- F_U + E_-F_U] = 0. 
\end{equation}
\end{subequations} 
This completes the proof of Lemma~\ref{lem:expansion_cD_K}. 
\end{proof} 

\begin{proof}[Proof of Lemma~\ref{lem:expansion_Lambda}]
We first show that, for all $s_1, s_2 \in \{ \pm \}$, we have 
\begin{equation} \label{eq:proof_expansion_Lambda_reduction_to_leading_term} 
 \scalar{\wh{K}_{s_1}}{\cD[K_{s_2}]} = \scalar{E_{s_1} F_U}{\cD[E_{s_2} F_U]} + \ord(\eta\rho + \eta^2).  
\end{equation}
In fact, it is easy to see that \eqref{eq:proof_expansion_Lambda_reduction_to_leading_term} follows from 
$K_\pm, \wh{K}_\pm \in \dM$ and \eqref{eq:eigenvectors_1_C_S_F} in Lemma~\ref{lem:eigenvectors_C_S_cF} as well as 
\begin{equation} \label{eq:expansion_Lambda_error_term} 
 \scalar{R_1}{(\cC_P - \cC_U)[R_2]} = \ord(\rho^2) 
\end{equation}
for all $R_1, R_2 \in \dM$ satisfying $\norm{R_1}, \norm{R_2} \lesssim 1$.  
For the proof of \eqref{eq:expansion_Lambda_error_term}, we expand $U = \Re U + \ii \Im U$ and obtain 
\[ \scalar{R_1}{(\cC_P- \cC_U)[R_2]} = \scalar{R_1}{(\cC_P - \cC_{\Re U})[R_2]} - \ii \scalar{R_1}{\Im U R_2 \Re U + \Re U R_2 \Im U} + \ord(\rho^2)  = \ord(\rho^2). \] 
Here, we used in the first step that $\Im U = \ord(\rho)$. For the second step, we noted that the first term is $\ord(\rho^2)$ due to \eqref{eq:P_minus_U_lesssim_rho} and the second term vanishes as $\Re U \in \oM$ while 
$R_1, R_2, \Im U \in \dM$ due to Lemma~\ref{lem:scaling_relation_v_u} (i). 

What remains is computing $\scalar{E_{s_1}F_U}{\cD[E_{s_2} F_U]} = \scalar{\cD^*[E_{s_1}F_U]}{E_{s_2}F_U}$. 
From \eqref{eq:cD_star_F_U} and \eqref{eq:F_U_eigenvector_cF}, we obtain 
\[ \scalar{\cD^*[F_U]}{F_U} = 2 \rho^2 \avg{F_U^4} + \ord(\eta \rho), \qquad 
\scalar{\cD^*[E_-F_U]}{F_U} = - 2 \rho^2 \avg{F_U^4 E_-} + \ord(\eta \rho) = \ord(\eta \rho), \] 
where we used in the very last step that $\avg{F_U^4 E_-} = 0$ since the diagonal $n$-vector components of $F_U$ 
are identical due to \eqref{eq:def_F_U}. 
Moreover, \eqref{eq:cD_star_Emin_F_U} directly implies that $\scalar{\cD^*[E_-F_U]}{E_\pm F_U} = 0$. 

Hence, owing to \eqref{eq:proof_expansion_Lambda_reduction_to_leading_term}, we deduce  
\[ \Lambda = 2\rho^2 \begin{pmatrix} \frac{\avg{F_U^4}}{\avg{F_U^2}} & 0 \\ 0 &0 \end{pmatrix} + \ord(\eta \rho + \eta^2). \] 
Using that $\rho \gtrsim \eta$ due to \eqref{eq:scaling_rho} completes the proof of Lemma~\ref{lem:expansion_Lambda}. 
\end{proof}

\subsection{Derivatives of $M$} 

As a first application of our analysis of the stability operator $\cB$ we show the following bound on the derivatives of $M$, the solution to the MDE, \eqref{eq:mde}, with respect to $\eta$, $z$ and $\bar z$.

\begin{lemma}[Bounds on derivatives of $M$] There is $\rho_* \sim 1$ such that $\rho + \eta/\rho \leq \rho_*$ implies
\begin{equation} \label{eq:pt_eta_M_bound} 
 \normb{\pt_\eta M} + \normb{\pt_z M} + \normb{\pt_{\bar z} M} \lesssim \frac{1}{\rho^2 + \eta/\rho}. 
\end{equation}
\end{lemma}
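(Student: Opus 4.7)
The plan is to differentiate the MDE~\eqref{eq:mde} with respect to each parameter $\xi \in \{\eta, z, \bar z\}$ and then invert the linear stability operator $\cB$. Writing the MDE as $-M^{-1} = A_z + \cS[M]$ with $A_z \defeq \begin{pmatrix} \ii\eta & z \\ \bar z & \ii \eta \end{pmatrix}$, applying $\partial_\xi$ (so that $\partial_\xi(-M^{-1}) = M^{-1}(\partial_\xi M) M^{-1}$) and multiplying by $M$ on both sides yields
\begin{equation*}
\cB[\partial_\eta M] = \ii M^2, \qquad \cB[\partial_z M] = M \begin{pmatrix} 0 & 1 \\ 0 & 0 \end{pmatrix} M, \qquad \cB[\partial_{\bar z} M] = M \begin{pmatrix} 0 & 0 \\ 1 & 0 \end{pmatrix} M.
\end{equation*}
Since $\norm{M} \lesssim 1$ by \eqref{eq:scaling_U_Q_M}, each right-hand side $R_\xi$ satisfies $\norm{R_\xi} \lesssim 1$.

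With $\cP = 1 - \cQ$ the spectral projection of $\cB$ onto $\linspan\{B, B_*\}$, Proposition~\ref{pro:stability_operator} gives the decomposition
\begin{equation*}
\partial_\xi M = \frac{\scalar{\wh{B}}{R_\xi}}{\beta \scalar{\wh{B}}{B}} B + \frac{\scalar{\wh{B}_*}{R_\xi}}{\beta_* \scalar{\wh{B}_*}{B_*}} B_* + \cB^{-1}\cQ[R_\xi].
\end{equation*}
The expansions \eqref{eq:expansions_eigenvectors_cB} combined with \eqref{eq:scaling_U_Q_M} imply $\norm{B}, \norm{B_*}, \norm{\wh{B}}, \norm{\wh{B}_*} \lesssim 1$, and Proposition~\ref{pro:stability_operator} provides $\abs{\scalar{\wh{B}}{B}}, \abs{\scalar{\wh{B}_*}{B_*}} \sim 1$, $\norm{\cB^{-1}\cQ} \lesssim 1$, and $\abs{\beta} \sim \rho^2 + \eta/\rho$. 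Hence the first and third summands are bounded by $C(\rho^2 + \eta/\rho)^{-1}$. The delicate point, which is the main obstacle, is the second summand: in the cusp regime $\eta \ll \rho^3$ the factor $\abs{\beta_*}^{-1} \sim \rho/\eta$ greatly exceeds $(\rho^2 + \eta/\rho)^{-1}$, so $\abs{\scalar{\wh{B}_*}{R_\xi}}$ must be shown to be small enough to absorb this blow-up.

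To overcome it I will exploit the block-symmetry orthogonality $\scalar{E_-}{M(z,\eta)} = 0$ from \eqref{eq:scalar_E_minus_M_equals_0}, which holds for all admissible $\eta > 0$ and $z \in D_\tau(0)$ and hence differentiates to $\scalar{E_-}{\partial_\xi M} = 0$ for each $\xi$. Pairing the decomposition above against $E_-$ yields
\begin{equation*}
\frac{\scalar{\wh{B}_*}{R_\xi}}{\beta_* \scalar{\wh{B}_*}{B_*}} \scalar{E_-}{B_*} = -\frac{\scalar{\wh{B}}{R_\xi}}{\beta \scalar{\wh{B}}{B}} \scalar{E_-}{B} - \scalar{E_-}{\cB^{-1}\cQ[R_\xi]}.
\end{equation*}
By \eqref{eq:bound_scalar_E_minus_B}, $\abs{\scalar{E_-}{B}} \lesssim \rho^2 + \eta/\rho$ cancels the $1/\beta$ factor in the first right-hand term, so the entire right-hand side is $\ord(1)$. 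On the other hand, inserting \eqref{eq:expansion_B_star} and using $\scalar{E_-}{E_-\Im M} = \scalar{1}{\Im M} = \pi\rho$ from the definition of $\rho$ in \eqref{eq:def_rho} gives $\scalar{E_-}{B_*} = \pi + \ord(\rho^2 + \eta/\rho)$, in particular $\abs{\scalar{E_-}{B_*}} \sim 1$. This forces $\abs{\scalar{\wh{B}_*}{R_\xi}/\beta_*} \lesssim 1$, so the $B_*$-contribution to $\partial_\xi M$ is $\ord(1) \lesssim (\rho^2 + \eta/\rho)^{-1}$, and combining the three terms yields \eqref{eq:pt_eta_M_bound}. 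Morally, the hidden constraint $\scalar{E_-}{M} = 0$ neutralizes the more dangerous unstable direction $B_*$; without it only the far weaker estimate with $\rho/\eta$ on the right-hand side would follow.
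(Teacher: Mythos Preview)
Your proof is correct and follows essentially the same route as the paper's: differentiate the MDE to obtain $\cB[\partial_\xi M]=R_\xi$, expand $\partial_\xi M$ along the spectral decomposition of $\cB$ from Proposition~\ref{pro:stability_operator}, and then use the constraint $\scalar{E_-}{\partial_\xi M}=0$ (coming from $\scalar{E_-}{M}=0$) together with $\abs{\scalar{E_-}{B}}\lesssim \rho^2+\eta/\rho$ and $\abs{\scalar{E_-}{B_*}}\sim 1$ to show that the $B_*$-coefficient is $\ord(1)$, so that the $B$-term dominates. The only cosmetic addition you might make is to note explicitly that $\cB$ is invertible under the hypothesis $\rho+\eta/\rho\le\rho_*$ (by Proposition~\ref{pro:stability_operator}), which justifies the implicit function theorem and hence the differentiability of $M$ before you write $\cB[\partial_\xi M]=R_\xi$.
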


\begin{proof} 
We only show the bound on $\pt_\eta M$. The estimates on $\pt_z M$ and $\pt_{\bar z} M$ are shown analogously. 

If $\rho_* \sim 1$ is chosen small enough and $\rho + \eta/\rho \leq \rho_*$ then $\cB$ is invertible due to Proposition~\ref{pro:stability_operator}.  
Thus, applying the implicit function theorem to \eqref{eq:mde} yields that $M$ is differentiable with respect to $\eta$ and $\pt_\eta M = \ii \cB^{-1}[M^2]$. 
Hence, by Proposition~\ref{pro:stability_operator}, we have 
\begin{equation} \label{eq:aux_identity_derivative_M} 
 -\ii \pt_\eta M = \frac{\scalar{\wh{B}}{M^2}}{\beta\scalar{\wh{B}}{B}} B + \frac{\scalar{\wh{B}_*}{M^2}}{\beta_*\scalar{\wh{B}_*}{B_*}} B_* + \cB^{-1} \cQ [M^2]. 
\end{equation}
Moreover, differentiating $\scalar{E_-}{M} = 0$, which holds due to \eqref{eq:scalar_E_minus_M_equals_0}, with respect to $\eta$ yields 
\[ \scalar{E_-}{\pt_\eta M} = 0. \] 
Hence, we apply $\scalar{E_-}{\genarg}$ to \eqref{eq:aux_identity_derivative_M}
and obtain 
\begin{equation} \label{eq:proof_derivative_M_aux1} 
  \frac{\scalar{\wh{B}_*}{M^2}}{\beta_*\scalar{\wh{B}_*}{B_*}} \scalar{E_-}{B_*} = - \frac{\scalar{\wh{B}}{M^2}}{\beta\scalar{\wh{B}}{B}} \scalar{E_-}{B} + \scalar{E_-}{\cB^{-1}\cQ[M^2]}. 
\end{equation}
The right-hand side of \eqref{eq:proof_derivative_M_aux1} is $\ord(1)$ since $\wh{B}$, $M$ and $\cB^{-1} \cQ [M^2]$ are bounded, $\abs{\scalar{\wh{B}}{B}} \sim 1$ and 
$\abs{\scalar{E_-}{B}} \lesssim \abs{\beta}$ by \eqref{eq:beta_beta_star_scaling} and \eqref{eq:bound_scalar_E_minus_B}.   
Since $\abs{\scalar{E_-}{B_*}} \sim 1$ and $\norm{B_*} \lesssim 1$ by \eqref{eq:expansion_B_star}, 
we conclude that the second term on the right-hand side of \eqref{eq:aux_identity_derivative_M} is $\ord(1)$. 
Thus, the leading term to $\pt_\eta M$ comes from the first term on the right-hand side of \eqref{eq:aux_identity_derivative_M} 
which can be bounded by $(\rho^2 + \eta/\rho)^{-1}$ due to the boundedness of $\wh{B}$, $M$ and $B$ 
as well as $\abs{\scalar{\wh{B}}{B}} \sim 1$ and \eqref{eq:beta_beta_star_scaling}. 
\end{proof}

\subsection{Existence and properties of $\sigma$} 

The next proposition shows the existence of a probability density $\sigma$ satisfying \eqref{eq:sigma_equal_laplace_potential}. 
In particular, the logarithmic potential of the probability measure $\sigma(z) \dd^2 z$ is given 
by the function $-2\pi L$, where, for any $z \in \C$, $L(z)$ is defined through   
\begin{equation} \label{eq:def_L} 
 L(z) \defeq -\frac{1}{2\pi} \int_0^\infty \bigg( \avg{\Im M(z,\eta)} - \frac{1}{1 + \eta} \bigg) \, \dd \eta. 
\end{equation}

Throughout this subsection, we use the normalization $\varrho(\smallS) = 1$ (see \eqref{eq:normalization_spectral_radius}). 

\begin{proposition}[Properties of $\sigma$] \label{pro:properties_sigma} 
There exists an integrable function $\sigma \colon \C \to [0,\infty)$ such that $\sigma = \Delta_z L$ in the
sense of distributions. 
Moreover, $\sigma$ can be chosen to satisfy the following properties:  
\begin{enumerate}[label=(\roman*)]
\item The function $\sigma$ is a probability density on $\C$ with respect to $\dd^2z$. 
\item For all $z\in \C$ with $\abs{z} >1$, we have $\sigma(z) = 0$. 
\item The restriction $\sigma|_{D(0,1)}$ is infinitely often continuously differentiable and 
$\sigma(z) \sim 1$ uniformly for $z \in D(0,1)$. 
\end{enumerate} 

\end{proposition}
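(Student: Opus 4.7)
The plan is to interpret $L(z)$ as (a scalar multiple of) a logarithmic potential, extract its Laplacian as $\sigma$, and then verify properties (i)--(iii) using the vector Dyson equation~\eqref{eq:dyson_vector_equation} and the analysis of $M$ performed in Section~\ref{sec:stability}.

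\textbf{Well-definedness of $L$.} First I would check that the integral in \eqref{eq:def_L} converges for every $z \in \C$. Since $\avg{\Im M} = \pi \rho$, the scaling \eqref{eq:scaling_rho} shows that the integrand is bounded near $\eta = 0$ in the bulk $\abs{z} \leq 1$ and of order $\ord(\eta)$ in the exterior, hence integrable near zero. As $\eta \to \infty$, iterating the MDE~\eqref{eq:mde} gives $M(z,\eta) = \ii \eta^{-1} + \ord(\eta^{-3})$, so $\avg{\Im M(z,\eta)} - (1+\eta)^{-1} = \ord(\eta^{-2})$, which is integrable at infinity. Continuity of $L$ on $\C$ then follows by dominated convergence using the derivative bound~\eqref{eq:pt_eta_M_bound} together with~\eqref{eq:scaling_rho}.

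\textbf{Smoothness and definition of $\sigma$.} Next, I would differentiate under the integral sign to show $L \in C^\infty(\C \setminus \{\abs{z}=1\})$. Two-fold $z, \bar z$ derivatives of $\avg{\Im M(z,\eta)}$ are controlled through~\eqref{eq:pt_eta_M_bound}, and combined with~\eqref{eq:scaling_rho} they are integrable in $\eta$ uniformly on compact subsets away from the unit circle. Defining $\sigma(z) \defeq \Delta_z L(z)$ pointwise on $\{\abs{z} \neq 1\}$ and extending by zero, approximating $L$ by the truncated integrals $\int_0^T$ and passing to the distributional limit shows that $\sigma = \Delta_z L$ holds in the sense of distributions on all of $\C$.

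\textbf{Properties (ii) and (iii).} For (ii), in the exterior $\abs{z} > 1$ the vector Dyson equation~\eqref{eq:dyson_vector_equation} admits a smooth solution with $v_1, v_2 \to 0$ and $u \to (\abs{z}^2 -1)^{-1}$ as $\eta \downarrow 0$, so $\avg{\Im M(z,\eta)}$ extends smoothly in $z$ across $\eta=0$ and an integration by parts in $\eta$ yields $\int_0^\infty \Delta_z \avg{\Im M} \dd\eta = 0$, giving $\sigma \equiv 0$ on $\{\abs{z} > 1\}$. For (iii), the key point is that \eqref{eq:dyson_vector_equation} at $\eta = 0$ has a strictly positive smooth solution on the open disk $\{\abs{z} <1\}$ with $v_1, v_2 \sim (1- \abs{z}^2)^{1/2}$ (from~\eqref{eq:scaling_rho} at $\eta = 0$) and $u \sim 1$. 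Inserting this into an explicit formula for $\sigma$ as a derivative of the potential (obtainable by differentiating the MDE in $z, \bar z$) yields both smoothness of $\sigma$ and $\sigma(z) \sim 1$ uniformly on $\{\abs{z} <1\}$.

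\textbf{Probability density property and main obstacle.} Finally, for (i) I would use the asymptotic $L(z) = \log\abs{z} + o(1)$ as $\abs{z} \to \infty$, which follows from a $1/\abs{z}$-expansion of $M$, together with the divergence theorem on a large disk, to conclude $\int_\C \sigma\, \dd^2 z = 1$. Nonnegativity of $\sigma$ on $\{\abs{z}<1\}$ from (iii) combined with $\sigma \equiv 0$ on $\{\abs{z}>1\}$ then gives the probability density property. The main obstacle is the rigorous integration by parts in the exterior and the extraction of the explicit pointwise formula for $\sigma$ in the interior; both rely on delicate structural identities for $(v_1, v_2, u)$ satisfying \eqref{eq:dyson_vector_equation}, and closely parallel the analysis of \cite[Proposition~2.4]{Altcirc}.
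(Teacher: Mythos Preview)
Your overall strategy matches the paper's: establish enough regularity of $L$, define $\sigma$ as the pointwise Laplacian off the unit circle, verify $\sigma=0$ in the exterior, and defer the hard analytic facts (smoothness, $\sigma\sim 1$, probability density) to the structure of \eqref{eq:dyson_vector_equation}, essentially as in \cite[Proposition~2.4]{Altcirc}. The paper does exactly this, packaging the regularity of $L$ into a separate lemma (Lemma~\ref{lem:properties_L}) and citing \cite[Proposition~2.4]{Altcirc} for (i) and (iii).

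There is, however, one genuine gap in your argument for the distributional identity $\sigma=\Delta_z L$ on all of $\C$. Truncating the $\eta$-integral at the top, $\int_0^T$, does nothing for the delicate regime near $|z|=1$: the potential problem there comes from $\eta\to 0$, not $\eta\to\infty$, so $L_T$ has the same regularity issues at the circle as $L$ itself. What must be ruled out is a singular contribution to $\Delta_z L$ supported on $\{|z|=1\}$, and your truncation step does not do that. The paper's mechanism is different: it shows that $L$ is $C^1$ on \emph{all} of $\C$ (Lemma~\ref{lem:properties_L}\,(ii)), using a bound $\norm{\partial_z M}+\norm{\partial_{\bar z}M}\lesssim (\eta^{2/3}+\eta^2)^{-1}$ that is integrable in $\eta$ \emph{uniformly} in $z$ (Lemma~\ref{lem:derivatives_M_controlled_by_eta}). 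With $L\in C^1(\C)$ one integrates by parts separately on $D(0,1)$ and $\C\setminus\overline{D(0,1)}$; the boundary terms at $|z|=1$ match and cancel, yielding \eqref{eq:integration_by_parts_L_Delta_f} and hence $\sigma=\Delta_z L$ in distributions with no singular part on the circle. You only establish continuity of $L$, not of $\partial_z L,\partial_{\bar z}L$, so this step is missing. Note also that your cited bound \eqref{eq:pt_eta_M_bound} is only valid in the regime $\rho+\eta/\rho\le\rho_*$; away from the edge one needs the complementary input from \cite[Eq.~(4.2)]{Altcirc} to get a bound holding uniformly in $z$.
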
 

The next lemma will directly imply Proposition~\ref{pro:properties_sigma} and be proved after the proof of this proposition. 

\begin{lemma}[Properties of $L$] \label{lem:properties_L} 
The function $L(z)$ from \eqref{eq:def_L} has the following properties: 
\begin{enumerate}[label=(\roman*)]
\item The integrand on the right-hand side of \eqref{eq:def_L} is Lebesgue-integrable for every $z \in \C$. 
\item $L$ is a rotationally symmetric function and 
continuously differentiable with respect to $z$ and $\bar z$ on $\C$. 
\item $\Delta_z L(z)$ exists for all $z \in\C$ satisfying $\abs{z} \neq 1$. 
\end{enumerate} 
\end{lemma}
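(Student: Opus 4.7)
\textbf{Proof proposal for Lemma~\ref{lem:properties_L}.} The plan is to handle the three claims in turn, combining the small-$\eta$ scaling from Lemma~\ref{lem:scaling_relation_v_u} with a large-$\eta$ perturbative expansion of the MDE~\eqref{eq:mde}.

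For (i), I would split the integral at $\eta=1$. On $[0,1]$, the relations \eqref{eq:scaling_rho} and \eqref{eq:scaling_U_Q_M} give $\avg{\Im M(z,\eta)}=\pi\rho\lesssim 1$ for each fixed $z$, while $1/(1+\eta)\le 1$, so the integrand is uniformly bounded. For $\eta$ much larger than $1\vee\abs{z}^2$, iterating the identity $M=-(\ii\eta+Z+\cS[M])^{-1}$, with $Z\defeq\begin{pmatrix} 0 & z \\ \bar z & 0 \end{pmatrix}$, yields
\[ M=\frac{\ii}{\eta}-\frac{Z+\cS[M]}{\eta^2}+\ord(\eta^{-3}). \]
Since $\Im Z=0$, $\Im\cS[M]=\cS[\Im M]$ and $\norm{\cS}\lesssim 1$, taking imaginary parts and averaging gives $\avg{\Im M}=\eta^{-1}+\ord(\eta^{-3})$. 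Matching with $1/(1+\eta)=\eta^{-1}-\eta^{-2}+\ord(\eta^{-3})$ shows the integrand is $\ord(\eta^{-2})$ at infinity, establishing integrability.

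For (ii), rotational symmetry follows from a diagonal-unitary conjugation. Let $U_\phi$ be the $2n\times 2n$ block-diagonal matrix whose upper (resp.\ lower) $n\times n$ block is $e^{\ii\phi/2}$ (resp.\ $e^{-\ii\phi/2}$) times the identity. Since $U_\phi^* R U_\phi$ has the same diagonal as $R$ and $\cS[R]$ depends only on $\diag R$ while being itself diagonal, $\cS$ commutes with this conjugation: $\cS[U_\phi^* R U_\phi]=\cS[R]=U_\phi^*\cS[R]U_\phi$. Conjugating~\eqref{eq:mde} therefore replaces $z$ in $Z$ by $e^{-\ii\phi}z$, and uniqueness of the MDE solution with positive imaginary part gives $U_\phi^* M(z,\eta)U_\phi=M(e^{-\ii\phi}z,\eta)$. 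Taking $\phi=\arg z$, the equality of diagonals yields $\avg{\Im M(z,\eta)}=\avg{\Im M(\abs{z},\eta)}$, hence $L(z)=L(\abs{z})$. For $C^1$-regularity I would differentiate under the integral via dominated convergence: the bound~\eqref{eq:pt_eta_M_bound} gives $\norm{\pt_z M}+\norm{\pt_{\bar z}M}\lesssim(\rho^2+\eta/\rho)^{-1}$ in the regime $\rho+\eta/\rho\le\rho_*$, which by~\eqref{eq:scaling_rho} is integrable in $\eta$ locally uniformly in $z$ (in the worst case $\abs{z}=1$ the bound is $\ord(\eta^{-2/3})$, still integrable near $0$), and for $\eta$ large the expansion above gives $\pt_z M=-\pt_z Z/\eta^2+\ord(\eta^{-3})=\ord(\eta^{-2})$.

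For (iii), iterating the implicit-differentiation computation behind~\eqref{eq:pt_eta_M_bound} and once more invoking $\scalar{E_-}{\pt_z\pt_{\bar z}M}=0$ (inherited from $\scalar{E_-}{M}=0$) to suppress the singular $B_*$-direction of $\cB^{-1}$, exactly as in the proof of~\eqref{eq:pt_eta_M_bound}, one obtains a polynomial-in-$(\rho^2+\eta/\rho)^{-1}$ bound on $\norm{\pt_z\pt_{\bar z}M}$ in the small-$\eta$ regime. Under the assumption $\abs{z}\ne 1$, the scaling~\eqref{eq:scaling_rho} forces $\rho^2+\eta/\rho\gtrsim\min(1-\abs{z}^2,\,\abs{z}^2-1)$ uniformly in small $\eta$, so this bound is locally uniformly bounded in $\eta$ and $z$; combined with the $\ord(\eta^{-4})$ decay from the large-$\eta$ expansion, the integrand's second derivative is dominated by an integrable function, and a final dominated-convergence argument produces $\pt_z\pt_{\bar z}L$ and hence $\Delta_z L$. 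The main obstacle, and precisely the reason the statement excludes $\abs{z}=1$, is that at the edge $\rho^2+\eta/\rho\sim\eta^{2/3}$ makes any inverse-polynomial bound non-integrable near $\eta=0$; this reflects the jump discontinuity of $\sigma$ across the unit circle predicted by Proposition~\ref{pro:properties_sigma}.
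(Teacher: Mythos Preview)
Your argument is essentially sound and close in spirit to the paper's, with one minor gap and one genuine methodological difference.

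The gap is in your use of~\eqref{eq:pt_eta_M_bound} for part~(ii): this bound is only established in the regime $\rho+\eta/\rho\le\rho_*$, i.e.\ near the spectral edge $\abs{z}\approx 1$ and for small $\eta$. For $z$ in the bulk (say $\abs{z}\le 1-c$) and $\eta$ small, $\rho\sim 1$ and~\eqref{eq:pt_eta_M_bound} does not apply; similarly for large $\abs{z}$. In those regimes one must instead use that the stability operator $\cB$ is uniformly boundedly invertible (as in \cite[Eq.~(4.2)]{Altcirc} and the proof of Lemma~\ref{lem:derivatives_M_controlled_by_eta}), which immediately gives $\norm{\pt_z M}\lesssim 1$ there. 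The paper packages all regimes into the single uniform bound~\eqref{eq:control_derivative_M} of Lemma~\ref{lem:derivatives_M_controlled_by_eta}, $\norm{\pt_z M}+\norm{\pt_{\bar z}M}\lesssim \eta^{-2/3}+\eta^{-2}$, and then simply invokes dominated convergence. Your inline derivation recovers this in the edge and large-$\eta$ regimes but leaves the bulk implicit.

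The genuine difference is in part~(iii). The paper does not bound $\pt_z\pt_{\bar z}M$ at all; instead it uses the rotational symmetry to write $\Delta_z\avg{\Im M}=4\avg{\tau\pt_\tau^2\mathbf v^\tau+\pt_\tau\mathbf v^\tau}|_{\tau=\abs{z}^2}$ and then cites the integrability of this expression in $\eta$ from \cite[Proposition~2.4(i)]{Altcirc} (where the full $\tau$-analysis was carried out). Your route---iterating the implicit-function-theorem computation behind~\eqref{eq:pt_eta_M_bound}, using $\scalar{E_-}{\pt_z\pt_{\bar z}M}=0$ to suppress the $B_*$-direction, and observing that for $\abs{z}\ne 1$ the resulting polynomial in $(\rho^2+\eta/\rho)^{-1}$ is bounded---is more self-contained and more in keeping with the present paper's stability framework. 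It works, but the actual verification that the second-derivative expansion closes (you pick up terms like $\cB^{-1}[(\pt_{\bar z}M)\cS[\pt_z M]M]$ and $(\pt_{\bar z}\cB^{-1})[\ldots]$, each requiring the $E_-$-orthogonality trick once more) is somewhat more laborious than your sketch suggests. The paper's citation buys brevity; your approach buys independence from \cite{Altcirc}.
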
 

\begin{proof}[Proof of Proposition~\ref{pro:properties_sigma}]  
By Lemma~\ref{lem:properties_L} (iii), we know that $\Delta_z L$ defines a function on $\C \setminus \{ z \in \C \colon \abs{z} = 1 \}$. 
We set $\mathbf v(z,\eta)=(v_1(z,\eta),v_2(z,\eta))$ and remark that $\avg{\Im M} = \avg{\mathbf v}$. 
Moreover, \cite[Eq.~(4.2)]{Altcirc} implies that $\mathbf v$ has a smooth extension to $\C \setminus \overline{D(0,1)} \times [0,\infty)$ and 
$\lim_{\eta \downarrow 0} \mathbf v(z,\eta) = 0$ for $\abs{z} > 1$ due to \eqref{eq:scaling_rho}. 
Therefore, we can follow the proof of \cite[Eq.~(4.10)]{Altcirc} and obtain  
 \[ \int_{\tau_0 \leq \abs{z}^2 \leq \tau_1} \Delta_z L(z) \dd^2 z = 0  \] 
for all $\tau_1 > \tau_0 >1$. As $\tau_1$ and $\tau_0$ are arbitrary, we conclude $\sigma(z) = \Delta_z L(z) =0$ if $\abs{z}> 1$. 

Let $f \in C_0^\infty(\C)$ be a smooth function with compact support. We compute 
\begin{equation} \label{eq:integration_by_parts_L_Delta_f} 
 \int_\C L(z) \Delta_z f(z) \dd^2 z = \int_{D(0,1)} L(z) \Delta_z f(z) \dd^2 z + \int_{\C \setminus \overline{D(0,1)}} L(z) \Delta_z f(z) \dd^2 z = \int_{\C} \sigma(z) f(z) \dd^2 z. 
\end{equation}
Here, we moved $\Delta_z$ to $L$ in the second step and used that $\sigma(z) = \Delta_z L(z) = 0$ for $\abs{z}>1$ and that the boundary terms cancel each other due to the continuity of $L$,  $\pt_z L$ and 
$\pt_{\bar z} L$ from Lemma~\ref{lem:properties_L} (ii). 
Since $\sigma(z) = 0$ for $\abs{z}>1$, setting $\sigma(z) = 0$ if $\abs{z}=  1$ and using \cite[Proposition~2.4]{Altcirc} for the remaining properties complete the proof of Proposition~\ref{pro:properties_sigma} .  
\end{proof} 

In the proof of Lemma~\ref{lem:properties_L}, we will make use of the following lemma whose proof we postpone until the end of this section. 

\begin{lemma}\label{lem:derivatives_M_controlled_by_eta} 
Uniformly for $z \in \C$ and $\eta >0$, we have 
\begin{align} 
\normbb{M(z,\eta) - \frac{\ii}{1 + \eta}} & \lesssim \frac{1}{1+\eta^{2}}, \label{eq:approximating_M_large_eta} \\  
 \norm{\pt_z M(z,\eta)} + \norm{\pt_{\bar z} M(z,\eta)} & \lesssim \frac{1}{\eta^{2/3} +  \eta^{2}}. 
\label{eq:control_derivative_M} 
\end{align}
\end{lemma}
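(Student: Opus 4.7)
The proof treats the two bounds separately, each split into the regimes $\eta\ge 1$ and $\eta\le 1$.

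For bound \eqref{eq:approximating_M_large_eta} I first record the a priori estimate $\|M\|\le 1/\eta$: this follows from taking the imaginary part of \eqref{eq:mde} to obtain $\Im M = \eta MM^* + M\cS[\Im M]M^*\ge \eta MM^*$ (using positivity preservation of $\cS$ and of $\Im M$), whence $\eta\|M\|^2\le\|\Im M\|\le\|M\|$. In the regime $\eta\ge 1$, set $M_0\defeq\tfrac{\ii}{1+\eta}I$ and $\Delta\defeq M-M_0$. The resolvent identity $\Delta = M(M_0^{-1}-M^{-1})M_0$ combined with \eqref{eq:mde} yields
\begin{equation*}
\Delta = MEM_0 + M\cS[\Delta]M_0, \qquad E\defeq -\ii I + A_z + \tfrac{\ii}{1+\eta}\cS[I],
\end{equation*}
where $A_z\defeq\begin{pmatrix}0 & z\\ \bar z & 0\end{pmatrix}$. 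Using $\|M\|,\|M_0\|\le 1/\eta$ together with the operator-norm bound on $\cS$, for $\eta$ large enough the $\cS[\Delta]$-term is absorbed into the left-hand side and one obtains $\|\Delta\|\lesssim\|E\|/\eta^2\lesssim 1/\eta^2\lesssim 1/(1+\eta^2)$. In the regime $\eta\le 1$ the right-hand side is $\sim 1$, so the target reduces to the uniform estimate $\|M\|\lesssim 1$, which I extract from the vector equation \eqref{eq:dyson_vector_equation} using the algebraic identity $u = v_1v_2 + |z|^2 u^2$ (giving $|z|u\lesssim 1$) together with boundedness of $v_1,v_2$ obtained by multiplying \eqref{eq:dyson_vector_equation} by $v_1$, the flatness of $\smallS$, and case analysis on $|z|$.

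For bound \eqref{eq:control_derivative_M}, differentiating \eqref{eq:mde} in $z$ and using $\pt_z M^{-1}=-M^{-1}(\pt_z M)M^{-1}$ gives
\begin{equation*}
\cB[\pt_z M] = MF_zM, \qquad F_z\defeq\pt_z A_z = \begin{pmatrix}0 & I\\ 0 & 0\end{pmatrix}.
\end{equation*}
For $\eta\ge 1$, absorbing the $M\cS[\pt_z M]M$-term using $\|M\|\le 1/\eta$ directly yields $\|\pt_z M\|\lesssim 1/\eta^2$. For $\eta\le 1$, I invoke Proposition~\ref{pro:stability_operator} to decompose
\begin{equation*}
\pt_z M = \frac{\scalar{\wh B}{MF_zM}}{\beta\scalar{\wh B}{B}}B + \frac{\scalar{\wh B_*}{MF_zM}}{\beta_*\scalar{\wh B_*}{B_*}}B_* + \cB^{-1}\cQ[MF_zM],
\end{equation*}
and mimic the orthogonality argument used in the proof of \eqref{eq:pt_eta_M_bound}. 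Differentiating the constraint \eqref{eq:scalar_E_minus_M_equals_0} yields $\scalar{E_-}{\pt_z M}=0$; applying $\scalar{E_-}{\,\cdot\,}$ to the displayed decomposition and using $|\scalar{E_-}{B}|\lesssim|\beta|$ from \eqref{eq:bound_scalar_E_minus_B} together with $|\scalar{E_-}{B_*}|\sim 1$ (a consequence of \eqref{eq:expansion_B_star} and $E_-^2=I$) forces the coefficient of $B_*$ to be of size $\ord(1)$. Hence the leading contribution comes from the $B$-term, which is $\lesssim 1/|\beta|\lesssim 1/(\eta/\rho+\rho^2)\lesssim 1/\eta^{2/3}$ by \eqref{eq:beta_beta_star_scaling} and \eqref{eq:scaling_rho}. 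The estimate for $\pt_{\bar z}M$ is obtained identically with $F_z$ replaced by $F_{\bar z}\defeq\pt_{\bar z}A_z$.

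The main obstacle will be the orthogonality cancellation in the small-$\eta$ analysis of bound \eqref{eq:control_derivative_M}: the bound $|\scalar{E_-}{B}|\lesssim|\beta|$ from \eqref{eq:bound_scalar_E_minus_B} must precisely match the size of $\beta$ so as to neutralize the would-be blow-up $1/|\beta_*|$ coming from the $B_*$-direction, given that $|\beta_*|$ can be much smaller than $|\beta|$ in the edge regime.
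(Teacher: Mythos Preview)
Your overall strategy matches the paper's, and the orthogonality argument you describe for $\partial_z M$ in the small-$\eta$ regime is exactly the mechanism behind \eqref{eq:pt_eta_M_bound}. However, there is a genuine gap in your treatment of \eqref{eq:control_derivative_M} for $\eta\le 1$: Proposition~\ref{pro:stability_operator}, and with it the decomposition into $B$, $B_*$ and $\ran\cQ$, is only available under the hypothesis $\rho+\eta/\rho\le\rho_*$. By \eqref{eq:scaling_rho} this condition holds only in a neighbourhood of the spectral edge $|z|\approx 1$ with $\eta$ small; it fails in the bulk ($|z|\le 1-\delta$, where $\rho\sim 1$) and away from the disk ($|z|\ge 1+\delta$, where $\eta/\rho\gtrsim\delta$). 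In those complementary regimes the argument is in fact easier --- $\cB$ is uniformly invertible, so $\|\partial_z M\|=\|\cB^{-1}[MF_zM]\|\lesssim 1$ --- but your proposal does not address them. The paper covers $|z|\ge 2$ or $\eta\ge 1$ by showing $1-\|\cF\|_{2\to 2}\sim 1$ directly (via \eqref{eq:normtwo_cF} and the scaling of $\eta/\rho$ in that regime), hence $\|\cB^{-1}\|\lesssim 1$; for $|z|\le 2$, $\eta\le 1$ it combines \eqref{eq:pt_eta_M_bound} near the edge with the bulk stability estimate from \cite{Altcirc}.

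A related minor point: your absorption argument for $\eta\ge 1$ requires $\|M\|^2\|\cS\|<1$, i.e.\ $\eta$ above some fixed threshold; the intermediate range and the large-$|z|$, small-$\eta$ case in your sketch of \eqref{eq:approximating_M_large_eta} should also be tied to the uniform invertibility of $\cB$ (or the uniform bound $\|M\|\lesssim(1+\eta)^{-1}$) that the paper establishes for $|z|\ge 2$ or $\eta\ge 1$.
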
 

\begin{proof}[Proof of Lemma~\ref{lem:properties_L}] 
The assertion in (i) follows immediately from \eqref{eq:approximating_M_large_eta}. 
Moreover, since $M(z,\eta)$ is continuously differentiable with respect to $z$ and $\bar z$, the bound \eqref{eq:control_derivative_M} implies (ii). 
In \cite[Proposition~2.4(i)]{Altcirc}, it was shown that $\Delta_z \avg{\Im M(z,\eta)} = \Delta_z \avg{\mathbf v(z,\eta)} = 4 \avg{\tau \pt_\tau^2 \boldsymbol{v}^\tau + \pt_\tau \boldsymbol{v}^\tau}|_{\tau = \abs{z}^2}$ (the last equality uses the notation of \cite{Altcirc}) 
is integrable in $\eta>0$ on $[0,\infty)$ for all $z \in \C$ with $\abs{z} < 1$. 
Completely analogously, the integrability can be shown if $\abs{z} > 1$.  
This shows that $\Delta_z$ and the $\eta$-integral can be interchanged which proves (iii).
\end{proof}

\begin{proof}[Proof of Lemma~\ref{lem:derivatives_M_controlled_by_eta}] 
From \eqref{eq:mde}, it is easy to get $\norm{M} \leq \eta^{-1}$ for all $z \in \C$ and $\eta >0$. As in the proof of Lemma~\ref{lem:scaling_relation_v_u}, we see that, 
uniformly for $\abs{z} \geq 2$ or $\eta \geq 1$, we have 
\begin{equation} \label{eq:scaling_relations_large_eta_z} 
\rho \sim \frac{\eta}{\abs{z}^2 + \eta^2}, \qquad v_1 \sim v_2 \sim \rho, \qquad u \sim \frac{1}{\abs{z}^2 + \eta^2}, \qquad \Im U \sim \frac{\eta}{\abs{z} + \eta } , \qquad Q \sim \frac{1}{\abs{z}^{1/2} + \eta^{1/2}}. 
\end{equation}
In particular, $\norm{M} \lesssim (1 + \eta)^{-1}$ and \eqref{eq:approximating_M_large_eta} follows from multiplying \eqref{eq:mde} by $\ii \eta^{-1} M$ 
and using $\norm{\cal{S}} \lesssim 1$. 

We remark that \eqref{eq:control_derivative_M} follows from \eqref{eq:pt_eta_M_bound} and \eqref{eq:scaling_rho} as well as similarly to the proof of \cite[Eq.~(4.2)]{Altcirc} 
if $\abs{z} \leq 2$ and $\eta \leq 1$. 
Hence, we assume $\abs{z} \geq 2$ or $\eta \geq 1$ in the remainder of the proof. 
In this regime, we obtain $1 - \normtwoop{\cF} \sim 1$ by following the proof of \eqref{eq:normtwo_cF} and using 
\eqref{eq:scaling_relations_large_eta_z}.  
Therefore, \eqref{eq:cB_rep_C_F}, $\normtwoinf{\cS} \lesssim 1$ (cf.\ the proof of Lemma~\ref{lem:small_eigenvalues}) and \eqref{eq:scaling_relations_large_eta_z} imply $\norm{\cB^{-1}} \lesssim 1$. 
We differentiate \eqref{eq:mde} with respect to $z$ and, thus, obtain 
\[ \norm{\pt_z M(z,\eta)} = \normbb{\cB^{-1}\bigg[M \begin{pmatrix} 0 & 1 \\ 0 & 0 \end{pmatrix} M\bigg] } \lesssim \eta^{-2}. \] 
Together with a similar argument for $\pt_{\bar z} M$ this completes the proof of Lemma~\ref{lem:derivatives_M_controlled_by_eta}. 
\end{proof}

\section{Cubic equation associated to stability equation of the MDE}  \label{sec:cubic_equation} 

In this section, we study specific perturbations to the Matrix Dyson equation (MDE). 
Throughout this section, $M$ is the solution to the unperturbed MDE, \eqref{eq:mde}. We consider solutions $G$ of the perturbed MDE, \eqref{eq:perturbed_mde}, for some $D \in\C^{2n \times 2n}$ with the additional constraint $\scalar{E_-}{G} = 0$, keeping in mind that in our application the resolvent $G = (H_z - \ii\eta)^{-1}$ (see \eqref{eq:scalar_E_minus_G_equals_0} below) satisfies this constraint. 
Since $D$ is small, we need to study the stability of the MDE, \eqref{eq:mde}, under a small perturbation. 
The linear stability operator of this perturbation problem is $\cB=1-\cC_M \cS$ (see \eqref{eq:def_cB}).
When $\rho$ is small, the inverse of $\cB$  blows up, hence we need to expand to the next order, i.e. study the quadratic stability equation, \eqref{eq:stability_equation}. 
The following proposition describes the stability properties 
of \eqref{eq:stability_equation} in this regime. 
In fact, the difference $G-M$ is dominated by the contribution $\Theta \defeq \scalar{\wh{B}}{G-M}/\scalar{\wh{B}}{B}$ of $G-M$ in the unstable direction $B$ of $\cB$ (cf.\ Proposition~\ref{pro:stability_operator}). 
The scalar quantity $\Theta$ satisfies a cubic equation. In order to control $G-M$, we will control this cubic equation via a bootstrapping argument 
in $\eta$ in Section~\ref{sec:local_law_H} below.

In the following proposition and the rest of the paper, we use a special matrix norm to estimate the distance between $G$ and $M$. We denote this norm by $\normstar{\genarg}$. 
It is slightly modified compared of those defined in \cite{AltEdge,Cusp1}
in order to account for the additional unstable direction of $\cB$. 
Such norms are tailored to local law proofs by the cumulant method and have first appeared in \cite{Erdos2017Correlated}. 
We need some auxiliary notations in order to define $\normstar{\genarg}$. 
For $a,b,c,d \in [2n]$, we set  
$\kappa_c(ab, cd) \defeq \delta_{ad}\delta_{bc} s_{ab}$ and $\kappa_d(ab,cd) \defeq \delta_{ac} \delta_{bd} t_{ab}$, 
where $s_{ab} \defeq \E \abs{w_{ab}}^2$ and $t_{ab} \defeq \E w_{ab}^2$ with $W = (w_{ab})_{a,b \in [2n]}\defeq H_z - \E H_z$ (compare \eqref{eq:def_H_z} for the definition of $H_z$). 
Moreover, for a vector $\bx =(x_a)_{a \in [2n]}$, we write 
$\kappa_c(\bx b, cd) = \sum_a x_a \kappa_c(ab,cd)$ for a weighted version of $\kappa_c$. We use an analogous convention for $\kappa_d$. 
If we replace an index of a scalar quantity by a dot ($\cdot$) then this denotes the corresponding vector, where the omitted index runs through $[2n]$, e.g. $R_{a \cdot}$ denotes the vector $(R_{ab})_{b \in [2n]}$. 
For an $2n\times 2n$ matrix $R$ and vectors $\bx, \by\in \C^{2n}$, we use the short-hand notation 
$R_{\bx\by}$ to denote the quadratic form $\scalar{\bx}{R\by}$ and $R_{\bx a}$ to denote $\scalar{\bx}{R\mathbf e_a}$,  where $\mathbf e_a$ is the $a$-th normalized standard basis vector. 
With these conventions, we define some sets of testvectors. 
For fixed vectors $\bx, \by \in \C^{2n}$, we define 
\[ \begin{aligned} 
I_0 & \defeq \{ \bx, \by \} \cup \{ \mathbf e_a, (\wh{B}^*)_{a \cdot}, ((\wh{B}_*)^*)_{a \cdot} \colon a \in [2n] \}, \\ 
I_{k+1}  & \defeq I_k \cup \{ M \bu \colon \bu \in I_k \} \cup \{ \kappa_c((M\bu) a, b \cdot ),\kappa_d( (M\bu)a, b \cdot) \colon \bu \in I_k, a,b \in [2n] \} . 
\end{aligned} \] 
We now introduce the $\normstar{\genarg}$-norm defined by 
\[ \normstar{R} \defeq \normstar{R}^{K,\bx,\by} \defeq \sum_{0 \leq k < K} (2n)^{-k/2K} \norm{R}_{I_k}+ (2n)^{-1/2} \max_{\bu \in I_K} \frac{\norm{R_{\cdot \bu}}}{\norm{\bu}}, \qquad \norm{R}_I \defeq \max_{\bu,\bv \in I} \frac{\abs{R_{\bu \bv}}}{\norm{\bu}\norm{\bv}}. \] 
We remark that the norm $\normstar{\genarg}$ depends on $\eta$ and $z$ via $M = M(z,\eta)$. 
However, this will not play any important role in our arguments. 

In this section, the model parameters for the comparison relation $\lesssim$ are given by $s_*$ and $s^*$ from \eqref{eq:condition_flatness} as well as $\tau$ from the upper bound on $\abs{z}$. 

\begin{proposition}[Cubic equation for $\Theta$] \label{pro:cubic_equation} 
There is $\rho_* \sim 1$ such that if $\rho + \eta/\rho \leq \rho_*$ for some fixed $z \in D_\tau(0)$ and $\eta \in (0,1]$ then the following holds. 
We fix $K \in \N$, $\bx, \by \in \C^{2n}$ and set $\normstar{\genarg} \defeq \normstar{\genarg}^{K,\bx, \by}$. 
If $G$ and $D$ satisfy \eqref{eq:perturbed_mde}, $\scalar{E_-}{G} = 0$ and $\normstar{G-M} + \normstar{D} \lesssim n^{-30/K}$ then 
\begin{equation} \label{eq:G_minus_M_expansion_via_Theta_and_D} 
G - M = \Theta B - \cB^{-1} \cQ[MD] +  \Theta^2 \cB^{-1} \cQ [M \cS[B]B] + E,
\end{equation}
where $\Theta \defeq \scalar{\wh{B}}{G-M}/\scalar{\wh{B}}{B}$ and the error matrix $E$ has the upper bound 
\begin{equation} \label{eq:normstar_E} 
\normstar{E} \lesssim \abs{\Theta}^2 (\rho + \eta/\rho) + n^{16/K}\big(\abs{\Theta}^3 + \abs{\Theta}(\normstar{D} + \rho^2 + \eta/\rho) + \normstar{D}^2 + \abs{\scalar{R_1}{D}} \big) 
\end{equation}
with $R_1 \defeq M^* (\cB^{-1} \cQ)^*[E_-]$,  thus, the $2n\times 2n$-matrix $R_1$ does not depend on  $G$ and $D$ and satisfies $\norm{R_1} \lesssim 1$. 
 
Moreover, $\Theta$ fulfils the approximate cubic equation 
\begin{equation} \label{eq:cubic_equation} 
 \Theta^3 + \xi_2 \Theta^2 + \xi_1 \Theta = \eps_* 
\end{equation}
whose coefficients $\xi_2$ and $\xi_1$ satisfy the scaling relations 
\begin{equation} \label{eq:scaling_xi_2_xi_1} 
 \abs{\xi_2} \sim \rho, \qquad \abs{\xi_1} \sim \eta/\rho + \rho^2 
\end{equation} 
and the error term $\eps_*$ is bounded by 
\begin{equation} \label{eq:bound_eps_star} 
 \abs{\eps_*} \lesssim n^{62/K} \big( \normstar{D}^3 + \abs{\scalar{R_1}{D}}^{3/2} + \abs{\scalar{R_2}{D}}^{3/2} \big) + \abs{\scalar{\wh{B}}{MD}} + \abs{\scalar{\wh{B}}{M (\cS \cB^{-1} \cQ[MD])(\cB^{-1} \cQ [MD])}}. 
\end{equation} 
Here, the matrix $R_2\in \C^{2n \times 2n}$  does not depend on  $G$ and $D$ and satisfies $\norm{R_2} \lesssim 1$. 
\end{proposition}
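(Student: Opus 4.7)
The plan is to start from the quadratic stability equation~\eqref{eq:stability_equation} and use the full spectral decomposition of $\cB$ provided by Proposition~\ref{pro:stability_operator}. Setting $Y \defeq G-M$, the spectral projection onto $\spec(\cB) \cap D_\eps(0) = \{\beta,\beta_*\}$ together with $\cQ$ gives
\[ Y = \Theta\, B + \Theta_* B_* + \cQ[Y], \qquad \Theta_* \defeq \frac{\scalar{\wh{B}_*}{Y}}{\scalar{\wh{B}_*}{B_*}}. \]
Applying $\cB^{-1}\cQ$ to~\eqref{eq:stability_equation} and using the boundedness~\eqref{eq:cB_inverse_Q_lesssim_one} yields
\[ \cQ[Y] = -\cB^{-1}\cQ[MD] + \cB^{-1}\cQ\bigl[M\cS[Y]Y\bigr]. \]
I would then expand $M\cS[Y]Y$ using the above decomposition. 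The leading contribution is $\Theta^2 M\cS[B]B$; all other terms either contain a factor of $\Theta_*$ (which will be shown negligible), or at least one factor of $\cQ[Y]$ or $D$, and will be absorbed into the $\normstar{\genarg}$ error. Controlling these remainder terms in $\normstar{\genarg}$ requires the algebraic properties of this norm (products, applications of $\cS$), which produce the $n^{16/K}$ loss.

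The crucial step is bounding $\Theta_*$. Here the side condition $\scalar{E_-}{G}=0$ (as in~\eqref{eq:scalar_E_minus_G_equals_0}) combined with $\scalar{E_-}{M}=0$ from~\eqref{eq:scalar_E_minus_M_equals_0} gives $\scalar{E_-}{Y}=0$; pairing this with the decomposition of $Y$ above yields
\[ \Theta_*\, \scalar{E_-}{B_*} = -\Theta\, \scalar{E_-}{B} - \scalar{E_-}{\cQ[Y]}. \]
Since $\abs{\scalar{E_-}{B_*}}\sim 1$ by~\eqref{eq:expansion_B_star} while $\abs{\scalar{E_-}{B}} \lesssim \rho^2+\eta/\rho$ by~\eqref{eq:bound_scalar_E_minus_B}, $\Theta_*$ is small relative to $\Theta$. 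Substituting the expansion of $\cQ[Y]$ identifies a linear functional of $D$, $\scalar{E_-}{\cB^{-1}\cQ[MD]}=\scalar{R_1}{D}$ with $R_1 = M^*(\cB^{-1}\cQ)^*[E_-]$, and this is precisely the term appearing in~\eqref{eq:normstar_E} and~\eqref{eq:bound_eps_star}. Inserting the resulting expression for $\Theta_*$ (and the just-derived formula for $\cQ[Y]$) back into the decomposition of $Y$ produces the claimed identity~\eqref{eq:G_minus_M_expansion_via_Theta_and_D} for $G-M$; the error estimate~\eqref{eq:normstar_E} comes by tracking each omitted cross-term, using the smallness of $\normstar{Y}$ and $\normstar{D}$ together with the boundedness of $M$, $B$, $B_*$ and the operators $\cB^{-1}\cQ$, $\cS$.

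For the scalar cubic equation, I would test the stability equation against $\wh B$, using $\cB^*[\wh B] = \bar\beta\,\wh B$ to obtain
\[ \beta\,\Theta\,\scalar{\wh{B}}{B} = \scalar{\wh{B}}{M\cS[Y]Y} - \scalar{\wh{B}}{MD}. \]
Substituting $Y = \Theta B + \Theta_* B_* + \cQ[Y]$ and, inside $\cQ[Y]$, the expansion $\cQ[Y] = \Theta^2 \cB^{-1}\cQ[M\cS[B]B] - \cB^{-1}\cQ[MD] + \cdots$ generates three structural terms in $\Theta$: the linear coefficient $\xi_1$ is (after dividing by $\scalar{\wh B}{B}$) essentially $\beta$, which by~\eqref{eq:beta_beta_star_scaling} satisfies $|\xi_1| \sim \eta/\rho+\rho^2$; the quadratic coefficient $\xi_2$ comes from $\scalar{\wh B}{M\cS[B]B}/\scalar{\wh B}{B}$, and its size $|\xi_2|\sim \rho$ follows from the expansions~\eqref{eq:expansions_eigenvectors_cB} combined with the explicit structure of $M$ in~\eqref{eq:block_structure_M}, \eqref{eq:Im_M_Re_M}; the cubic coefficient, obtained from substituting the quadratic part of $\cQ[Y]$ back into $M\cS[Y]Y$, can be normalized to one (dividing the whole equation by it, which is $\sim 1$ by the same structural computation). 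The $\scalar{R_1}{D}$ contribution enters through $\Theta_*$ as explained above, and an analogous term $\scalar{R_2}{D}$ enters when the linear piece of $\cQ[Y]$ in $D$ is plugged into $M\cS[Y]Y$ paired with $\Theta B$; both give only $|{\cdot}|^{3/2}$ contributions to $\eps_*$ after the Young-type splitting that balances them against the cubic in $\Theta$.

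The main technical obstacle is the meticulous error bookkeeping in the $\normstar{\genarg}$-norm: one must verify that every term discarded in the derivation of~\eqref{eq:G_minus_M_expansion_via_Theta_and_D} and~\eqref{eq:cubic_equation} satisfies the small-norm estimates claimed in~\eqref{eq:normstar_E} and~\eqref{eq:bound_eps_star}, and that the specific linear functionals $\scalar{R_1}{D}$, $\scalar{R_2}{D}$ can be isolated from the rest. This requires using the stability of the testvector sets $I_k$ under multiplication by $M$ and by the cumulant-weighted operators (so that, e.g., the quadratic term $M\cS[Y]Y$ remains controlled in $\normstar{\genarg}$), the smallness of $\Theta_*$ forced by the $E_-$-orthogonality, and the scaling $\rho\gtrsim\eta$ from~\eqref{eq:scaling_rho} to consolidate the error bounds into the compact form stated.
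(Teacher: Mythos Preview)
Your proposal is correct and follows essentially the same approach as the paper: the paper simply packages your spectral-decomposition-plus-$E_-$-orthogonality argument into an abstract lemma (Lemma~\ref{lem:cubic_equation_abstract}) applied with $\cA[R,T]=\tfrac12 M(\cS[R]T+\cS[T]R)$, $Y=G-M$, $X=MD$, and then computes the coefficients $\mu_3,\mu_2$ exactly as you outline. One point worth flagging is that the scalings $|\xi_2|\sim\rho$ and $|\mu_3|\sim 1$ rely on specific cancellations you pass over: for $\mu_2=\scalar{\wh B}{M\cS[B]B}$ the naive leading term $\avg{F_U^3\Re U}$ vanishes because $F_U^3\Re U\in\oM$ (see \eqref{eq:expansion_mu_2}), and for $\mu_3$ one must also check that the cross term $\scalar{\wh B}{\cA[B,B_*]}$ is $\ord(\rho^2+\eta/\rho)$ (see \eqref{eq:scalar_L_cA_B_B_star}), so that the $B_*$-channel does not pollute the cubic coefficient.
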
 

We note that $R_2$ has an explicit definition (see \eqref{eq:def_R_2} below) but its exact form will not be important.

\begin{proof} 
The proof follows from an application of Lemma~\ref{lem:cubic_equation_abstract} to \eqref{eq:stability_equation} with the choices 
$\cA[R,T]\defeq \frac{1}{2} M( \cS[R] T + \cS[T]R)$, $\cB=1-\cC_M \cS$ as in \eqref{eq:def_cB}, $Y = G - M$ and ${ Z} = MD$ in Lemma~\ref{lem:cubic_equation_abstract}. 
Note that $\scalar{E_-}{G-M} = 0$ by assumption and \eqref{eq:scalar_E_minus_M_equals_0}. 
We first check the conditions of Lemma~\ref{lem:cubic_equation_abstract} in \eqref{eq:conditions_abstract_cubic} with $\norm{\genarg} \equiv \normstar{\genarg}$ and $\lambda \defeq n^{1/2K}$. Partly, they will be a consequence of the bounds 
\begin{subequations}  \label{eq:aux_bounds_star_norm} 
\begin{align} 
&\normstar{M \cS[R]T} \lesssim n^{1/2K} \normstar{R} \normstar{T}, \quad \normstar{MR} \lesssim n^{1/2K} \normstar{R}, \quad \normstarop{\cQ} \lesssim 1, \quad \normstarop{\cB^{-1} \cQ} \lesssim 1, \\ 
&\abs{\scalar{\wh{B}}{R}} + \abs{\scalar{\wh{B}_*}{R}} + \abs{\scalar{E_-}{R}} \lesssim \normstar{R}, \label{eq:bounds_star_norm_scalar_products} 
\end{align} 
\end{subequations}
for all $R, T \in \C^{2n\times 2n}$. The proof of \eqref{eq:aux_bounds_star_norm} is very similar to the one of \cite[Lemma~3.4]{AltEdge}. 
 Since \eqref{eq:bounds_star_norm_scalar_products} does not have a counterpart in \cite[Lemma~3.4]{AltEdge}, we provide the details at the end of this section.  

Owing to \eqref{eq:expansion_B} and \eqref{eq:expansion_B_star}, we have 
$\normstar{B} + \normstar{B_*}\lesssim \norm{B} + \norm{B_*} \lesssim 1$.  
The third, sixth and ninth term in \eqref{eq:conditions_abstract_cubic} are $\sim 1$ by Proposition~\ref{pro:stability_operator}.  
This completes the proof of \eqref{eq:conditions_abstract_cubic} with $\lambda \defeq n^{1/2K}$. 

Therefore, Lemma~\ref{lem:cubic_equation_abstract} with $\delta \defeq n^{-81/4K}$, $\abs{\Theta} \lesssim \normstar{G-M} \lesssim n^{-30/K}$ and $\normstar{M D} \lesssim n^{1/2K} \normstar{D}$ imply 
\begin{equation} \label{eq:proof_cubic_stability_aux1}  
\begin{aligned} 
& \mu_3 \Theta^3 + \mu_2 \Theta^2 - \beta \scalar{\wh{B}}{B} \Theta = -\mu_0 + \scalar{R_2}{D} \Theta  \\ 
 & \quad \qquad + \ord\big( n^{-1/4K} \abs{\Theta}^3 + n^{62/K}( \normstar{D}^3 + \abs{\scalar{R_1}{D}}^{3/2}) 
 + n^{20/K}\abs{\Theta}^2( \abs{\scalar{E_-}{B}}^2 + \abs{\scalar{\wh{B}}{\cA[B,B_*]}}^2) \big), 
\end{aligned} 
\end{equation}  
where $\mu_3$, $\mu_2$ and $\mu_0$ are defined as in \eqref{eq:coefficients_abstract_cubic}, $R_1 = M^* (\cB^{-1} \cQ)^*[E_-]$
and we introduced 
\begin{equation} \label{eq:def_R_2} 
 R_2 \defeq  M^* (\cB^{-1} \cQ)^* \bigg[ \cS[B^*]M^* \wh{B} + \cS[ M^* \wh{B} B^*] - \frac{\scalar{B}{E_-}}{\scalar{B_*}{E_-}} \Big( \cS[(B_*)^*] M^* \wh{B} + \cS[ M^* \wh{B} (B_*)^*] \Big) \bigg].  
\end{equation}
Note that $R_1$ and $R_2$ are independent of $G$ and $D$ and satisfy $\norm{R_1} \lesssim 1$ and $\norm{R_2} \lesssim 1$ due to Proposition~\ref{pro:stability_operator} and \eqref{eq:scaling_U_Q_M}.

The remaining task is expanding $\mu_3$, $\mu_2$, $-\beta\scalar{\wh{B}}{B}$ and $\scalar{\wh{B}}{\cA[B,B_*]}$ on the right-hand side of \eqref{eq:proof_cubic_stability_aux1} with the help of 
Proposition~\ref{pro:stability_operator}. 
The coefficient $-\beta\scalar{\wh{B}}{B}$ has already been identified in \eqref{eq:expansion_beta}. 
For the others, we will rewrite the expansions in Proposition~\ref{pro:stability_operator} in terms of $U$, $Q$ and $F_U=\rho^{-1} \Im U$ defined in \eqref{eq:def_U_Q} and \eqref{eq:def_F_U} 
via $M = QUQ$ by \eqref{eq:balanced_polar_decomposition}. In particular, 
\begin{equation} \label{eq:Im_M_in_terms_of_Q_U} 
 \hspace*{-0.2cm} \Im M = Q (\Im U) Q = \rho Q F_U Q, \qquad - \Im M^{-1} = Q^{-1} (\Im U) Q^{-1} = \rho Q^{-1} F_U Q^{-1}, \qquad \Re M = Q (\Re U) Q. 
\end{equation}
Note that $U$, $U^*$ and $F_U$ commute. Moreover, since $U$ is unitary (cf.\ Lemma~\ref{lem:scaling_relation_v_u} (i)), the estimate \eqref{eq:scaling_U_Q_M} implies 
\begin{equation} \label{eq:Re_U_squared_equal_1} 
 (\Re U)^2 = 1- (\Im U)^2 = 1 + \ord(\rho^2). 
\end{equation}
We recall that $\psi$ defined in Proposition~\ref{pro:stability_operator} satisfies $\psi = \avg{F_U^4}$ (cf.\ the proof of Proposition~\ref{pro:stability_operator}). 
In the following, we will frequently use that $\avg{R} = 0$ if $R \in \oM$.

We now compute the coefficients from \eqref{eq:coefficients_abstract_cubic}. Indeed, we now show that 
\begin{subequations}  \label{eq:expansion_mu} 
 \begin{align} 
\mu_3 & = \psi + \ord(\rho + \eta/\rho) , \label{eq:mu_3} \\ 
\mu_2 & = 3 \ii \rho \psi + \ord(\rho^2 + \eta/\rho).\label{eq:mu_2}  
\end{align} 
\end{subequations} 
As a preparation of the proof of \eqref{eq:expansion_mu}, we expand $\cA[B,B]$. Proposition~\ref{pro:stability_operator}, \eqref{eq:Im_M_in_terms_of_Q_U} and the definition $\cF=\cC_Q \cS \cC_Q$ from \eqref{eq:def_cF} yield 
\begin{equation} \label{eq:cA_B_B} 
\begin{aligned} 
\cA[B,B] = M\cS[B]B & = Q U  \cF [ F_U + 2 \ii \rho F_U^2 \Re U] (F_U + 2 \ii \rho F_U^2 \Re U) Q  + \ord(\rho^2 + \eta/\rho)\\ 
& = Q U F_U ( F_U + 2 \ii \rho F_U^2 \Re U) Q + \ord(\rho^2 + \eta/\rho) \\ 
& = Q ( F_U^2 \Re U + 3 \ii \rho F_U^3) Q + \ord(\rho^2 + \eta/\rho). 
\end{aligned} 
\end{equation}
Here, we used that $\cF$ vanishes on $\oM$ and $F_U^2 \Re U \in \oM$ as well as \eqref{eq:F_U_eigenvector_cF} in the second step and $U = \Re U + \ii \Im U = \Re U + \ii \rho F_U = \Re U + \ord(\rho)$ by \eqref{eq:scaling_U_Q_M} in the last step. 

We recall the definitions $\cL=1-\cC_U \cF$ and $\cK=1-\cC_P \cF$ from \eqref{eq:def_cK_cL}.
Since $\cB^{-1} \cQ = \cC_Q \cL^{-1} \cQ_\cL \cC_Q^{-1} = \cC_Q \cK^{-1} \QK \cC_Q^{-1} + \ord(\rho)$ by Lemma~\ref{lem:small_eigenvalues} and \eqref{eq:norm_cK_minus_cL} we deduce from \eqref{eq:cA_B_B} 
that 
\begin{equation} \label{eq:cB_inverse_Q_cA_B_B} 
 \cB^{-1} \cQ \cA[B, B] = \cC_Q \cK^{-1} \QK[F_U^2\Re U] + \ord(\rho + \eta/\rho) = \cC_Q [F_U^2\Re U] + \ord(\rho + \eta/\rho). 
\end{equation}
The last step follows since $\cK^{-1} \QK$ acts as the identity map on $\oM$ and $F_U^2 \Re U \in \oM$. 

\begin{proof}[Proof of \eqref{eq:mu_3}]
For the first term in the definition of $\mu_3$ of \eqref{eq:coefficients_abstract_cubic}, we use Proposition~\ref{pro:stability_operator}, \eqref{eq:Im_M_in_terms_of_Q_U} and \eqref{eq:cB_inverse_Q_cA_B_B} to obtain 
\begin{equation} \label{eq:first_term_mu3} 
 2\scalar{\wh{B}}{\cA[B,\cB^{-1} \cQ \cA[B,B]} = \avg{F_U U\cF[F_U] F_U^2 (\Re U)} + \ord(\rho + \eta/\rho) = \psi + \ord(\rho + \eta/\rho).  
\end{equation}
In the first step, we also employed that $\cS$ vanishes on $\oM$ and  $\cC_Q[F_U^2 \Re U] \in \oM$, which follows from $Q\in \dM$ and $F_U^2 \Re U \in \oM$.   
The second step is a consequence of \eqref{eq:F_U_eigenvector_cF}, \eqref{eq:Re_U_squared_equal_1} and $\psi = \avg{F_U^4}$.  

To estimate the second term in the definition of $\mu_3$, we now estimate $\scalar{\wh{B}}{\cA[B,B_*]}$. Proposition~\ref{pro:stability_operator} and \eqref{eq:Im_M_in_terms_of_Q_U} imply
\begin{equation} \label{eq:scalar_L_cA_B_B_star} 
\begin{aligned} 
 2 \scalar{\wh{B}}{\cA[B,B_*]} & = \avg{F_U^2 U \cF[F_U] E_-} + \avg{U \cF[E_-F_U] F_U^2 (1 + 2 \ii \rho F_U \Re U)} + \ord(\rho^2 + \eta/\rho) \\ 
 &  = \avg{F_U^3 \Re U E_-} + \ii \rho \avg{F_U^4 E_-} - \avg{E_- \Re U F_U^3} - 2 \ii \rho \avg{E_- F_U^4(\Re U)^2} +\ord(\rho^2 + \eta/\rho) \\ 
 & = \ord(\rho^2 + \eta/\rho). 
\end{aligned} 
\end{equation}
Here, we used that $\cF$ vanishes on $F_U^2 \Re U \in \oM$ in the first step. The second step follows from \eqref{eq:F_U_eigenvector_cF} and \eqref{eq:cF_Emin}. 
In the last step, after cancelling the first and third terms, we employed \eqref{eq:Re_U_squared_equal_1} and $\avg{F_U^4 E_-} = 0$ by \eqref{eq:def_F_U}. 

The expansion in \eqref{eq:cB_inverse_Q_cA_B_B} and $E_- Q (\Re U) F_U^2 Q \in \oM$ imply
\begin{equation} \label{eq:scalar_Emin_cB_inverse_Q_cA_B_B} 
\scalar{E_-}{\cB^{-1} \cQ \cA[B,B]} = \avg{E_- Q (\Re U) F_U^2 Q} + \ord(\rho +\eta/\rho) = \ord(\rho + \eta/\rho). 
\end{equation}
From \eqref{eq:first_term_mu3}, \eqref{eq:scalar_L_cA_B_B_star}, $\abs{\scalar{E_-}{B_*}} \sim 1$ by \eqref{eq:expansion_B_star} and \eqref{eq:scalar_Emin_cB_inverse_Q_cA_B_B} 
we conclude that $\mu_3$ defined in \eqref{eq:coefficients_abstract_cubic} satisfies \eqref{eq:mu_3}. 
\end{proof} 

\begin{proof}[Proof of \eqref{eq:mu_2}]
We now turn to the expansion of $\mu_2$.  
From Proposition~\ref{pro:stability_operator}, \eqref{eq:Im_M_in_terms_of_Q_U} and \eqref{eq:cA_B_B}, we conclude 
\begin{equation} \label{eq:expansion_mu_2} 
\mu_2= \scalar{\wh{B}}{\cA[B,B]} 
  = \avg{F_U^3 \Re U + 3 \ii \rho F_U^4} + \ord(\rho^2 + \eta/\rho) 
  = 3 \ii \rho \psi + \ord(\rho^2 + \eta/\rho). 
\end{equation}
Here, we used $F_U^3 (\Re U) \in \oM$ and $\psi = \avg{F_U^4}$ in the last step. 
This completes the proof of \eqref{eq:mu_2}. 
\end{proof}

We now continue to estimate the right-hand side of \eqref{eq:proof_cubic_stability_aux1}.  
Young's inequality  implies that $\abs{\scalar{R_1}{D} \Theta} \leq n^{-1/4K}\abs{\Theta}^3 + n^{1/8K} \abs{\scalar{R_1}{D}}^{3/2}$. 
Then, we incorporate the error terms on the right-hand side of \eqref{eq:proof_cubic_stability_aux1} bounded by $n^{-1/4K} \abs{\Theta}^3$ and introduce $\wt{\mu}_3$ such that $\wt{\mu}_3 \Theta^3 = \mu_3 \Theta^3 + \ord(n^{-1/4K}\abs{\Theta}^3)$. 
Hence, $\abs{\wt{\mu}_3} \sim 1$ by \eqref{eq:mu_3} and $\psi \sim 1$ by Proposition~\ref{pro:stability_operator}. 
After this rearrangement, we divide \eqref{eq:proof_cubic_stability_aux1} by $\wt{\mu}_3$ and set 
\[\xi_2 \defeq \mu_2 / \wt{\mu}_3, \qquad
\xi_1 \defeq \big( - \beta\scalar{\wh{B}}{B} + \ord(n^{20/K} \abs{\Theta}( \abs{\scalar{E_-}{B}}^2 + \abs{\scalar{\wh{B}}{\cA[B,B_*]}}^2)) \big) /\wt{\mu}_3. \]  
Since $\abs{\wt{\mu}_3} \sim 1$, we conclude $\abs{\xi_2} \sim \abs{\mu_2} \sim \rho$ due to \eqref{eq:mu_2} and $\psi \sim 1$. 
For the scaling relation of $\xi_1$, we note that $\abs{\beta \scalar{\wh{B}}{B}} \sim \rho^2 + \eta/\rho$ by \eqref{eq:beta_beta_star_scaling} and $\abs{\scalar{\wh{B}}{B}} \sim 1$ from Proposition~\ref{pro:stability_operator}. 
Moreover, from \eqref{eq:scalar_L_cA_B_B_star} and \eqref{eq:bound_scalar_E_minus_B}, we obtain 
$n^{20/K} \abs{\Theta}( \abs{\scalar{E_-}{B}}^2 + \abs{\scalar{\wh{B}}{\cA[B,B_*]}}^2)) \lesssim n^{-10/K} ( \rho^4 + \eta^2/\rho^2)$. 
Hence, $\abs{\xi_1} \sim \rho^2 + \eta/\rho$. 
This completes the proof of \eqref{eq:cubic_equation}, the scaling relations \eqref{eq:scaling_xi_2_xi_1} and the bound on $\eps_*$ in \eqref{eq:bound_eps_star}. 

Finally, the expansion of $G-M$ in \eqref{eq:G_minus_M_expansion_via_Theta_and_D} and the error estimate in \eqref{eq:normstar_E} follow 
from \eqref{eq:expansion_Y_abstract_cubic} in Lemma~\ref{lem:cubic_equation_abstract} together with \eqref{eq:scalar_Emin_cB_inverse_Q_cA_B_B}, $\normstar{MD} \lesssim n^{1/2K} \normstar{D}$, \eqref{eq:bound_scalar_E_minus_B} and $R_1=M^* (\cB^{-1} \cQ)^*[E_-]$. 
This completes the proof of Proposition~\ref{pro:cubic_equation}. 
\end{proof}

\begin{proof}[Proof of \eqref{eq:bounds_star_norm_scalar_products}] 
This proof is motivated by the proof of the bound on $\normstar{\cP[R]}$ in \cite[Lemma~3.4]{AltEdge}. 
We compute 
\begin{equation} \label{eq:scalar_wh_B_R} 
\scalar{\wh{B}}{R} = \frac{1}{2n} \Tr (\wh{B}^* R ) = \frac{1}{2n} \sum_{a=1}^{2n} \scalar{\wh{B} \mathbf e_a}{R \mathbf e_a} = \frac{1}{2n} \sum_{a=1}^{2n} R_{\wh{B}_{a \cdot}^*a}. 
\end{equation}
Since $\norm{\wh{B}} \lesssim 1$ by Proposition~\ref{pro:stability_operator}, we have $\norm{\wh{B}_{a\cdot}^*} \lesssim 1$. 
Thus, we obtain $\abs{R_{\wh{B}_{a\cdot}^*a}} \leq \norm{R}_{I_0} \norm{\wh{B}_{a\cdot}^*} \lesssim \normstar{R}$. Using this in \eqref{eq:scalar_wh_B_R} completes the proof of the bound on $\scalar{\wh{B}}{R}$ in \eqref{eq:bounds_star_norm_scalar_products}. 

The bound for $\scalar{\wh{B}_*}{R}$ is proven in the same way. The proof of the estimate on $\scalar{E_-}{R}$ is simpler.
\end{proof}

\section{Local law for $H_z$}  \label{sec:local_law_H}

The main result of this section, Theorem~\ref{thm:local_law_H}, is a precise expansion of the resolvent of $H_z$ at $\ii \eta$ when $\eta>0$ is sufficiently small and the modulus of $z\in \C$ is close to 1. 
We recall that we assume $\varrho(\smallS) = 1$ (cf.\ \eqref{eq:normalization_spectral_radius} and the associated explanations). 
For the formulation of Theorem~\ref{thm:local_law_H} as well as the subsequent statements and arguments, we use the following notion for high probability estimates. 

\begin{definition}[Stochastic domination] \label{def:stochastic_domination} 
Let $\Phi= (\Phi^{(n)})_n$ and $\Psi =(\Psi^{(n)})_n$ be two sequences of nonnegative random variables. 
We say $\Phi$ is \emph{stochastically dominated} by $\Psi$ and write $\Phi \prec \Psi$ if, for all (small) $\eps>0$ and (large) $D>0$, there is a constant $C_{\eps,D}>0$ such that 
\begin{equation}\label{eq:def_stochastic_domination} 
 \P \Big( \Phi^{(n)} > n^\eps \Psi^{(n)} \Big) \leq \frac{ C_{\eps,D}}{n^D} 
\end{equation}
for all $n \in \N$. If $\Phi^{(n)}$ and $\Psi^{(n)}$ depend on some parameter family $U^{(n)}$ and \eqref{eq:def_stochastic_domination} holds for all $u \in U^{(n)}$ then we say that $\Phi \prec \Psi$ uniformly for all $u \in U^{(n)}$. 
\end{definition} 

In the following, let $\rho = \avg{\Im M}/\pi$ (cf.~\eqref{eq:def_rho}), $H_z$ be defined as in \eqref{eq:def_H_z} and $G \defeq (H_z - \ii \eta)^{-1}$. 
Moreover, $M$ denotes the solution of the MDE, \eqref{eq:mde}. 
For each $z\in \C$, we now introduce the \emph{fluctuation scale} $\etaf=\etaf(z)$ of eigenvalues of $H_z$ around zero: 
We set 
\begin{equation} \label{eq:def_etaf} 
 \etaf(z) \defeq 
\begin{cases} (1-\abs{z}^2)^{-1/2} n^{-1}, & \text{ if } \abs{z}^2 \leq 1 - n^{-1/2}, \\ 
n^{-3/4}, & \text{ if } 1 - n^{-1/2} < \abs{z}^2 \leq 1 + n^{-1/2}, \\ 
(\abs{z}^2 - 1)^{1/6} n^{-2/3}, & \text{ if } 1 + n^{-1/2} < \abs{z}^2 \leq 2,\\ 
n^{-2/3}, & \text{ if } \abs{z}^2 > 2. 
\end{cases} 
\end{equation}
The fluctuation scale describes the typical eigenvalue spacing of $H_z$ at zero (first two cases) and at the spectral 
edges of the eigenvalue density of $H_z$ close to zero (last two cases). 
The definition of $\etaf$ in \eqref{eq:def_etaf} is motivated by the definition of the fluctuation scale in \cite{Cusp1} and the scaling relations of $\rho$ from \eqref{eq:scaling_rho}. 
For $\abs{z} > 1$, the eigenvalue density of $H_z$ has a gap of size $\Delta \sim (\abs{z}^2-1)^{3/2}$ around zero, hence, \eqref{eq:def_etaf} is analogous to \cite[Eq.~(2.7)]{Cusp1}. 
If $\abs{z} \leq 1$ then the eigenvalue density of $H_z$ has a small local minimum of height $\rho_0 \sim (1-\abs{z}^2)^{1/2}$ 
at zero. So \eqref{eq:def_etaf} should be compared to \cite[Eq.~(A.8a)]{Cusp1}.

\begin{theorem}[Local law for $H_z$] \label{thm:local_law_H} 
Let $X$ satisfy \ref{assum:flatness} and \ref{assum:bounded_moments}. Then there is $\tau_* \sim 1$ such that, for each $\zeta>0$, the estimates 
\begin{equation} \label{eq:G_minus_M_standard}  
 \abs{\scalar{\bx}{(G- M)\by}} \prec \norm{\bx}\norm{\by}\bigg(\sqrt{\frac{\rho}{n\eta}} + \frac{1}{n\eta} \bigg), \qquad \qquad 
 \abs{\avg{R(G-M)}}  \prec \frac{\norm{R}}{n \eta}  
\end{equation} 
hold uniformly for all $z \in \C$ satisfying $\abs{\abs{z} - 1} \leq \tau_*$, for all $\eta \in [n^{\zeta}\etaf(z),n^{100}]$,  
for any deterministic vectors $\bx, \by \in \C^{2n}$ and deterministic matrix $R \in \C^{2n\times 2n}$. 

Moreover, outside the spectrum, for each $\zeta>0$ and $\gamma>0$, we have the improved bound 
\begin{equation} \label{eq:G_minus_M_average_improved} 
 \abs{\avg{R (G-M)}} \prec \norm{R} \frac{n^{-\gamma/3}}{n\eta} 
\end{equation}
uniformly for all $z \in \C$ and $\eta \in \R$ satisfying $\abs{z}^2 \geq 1 + (n^\gamma \eta)^{2/3}$, $ \abs{z} \leq 1 + \tau_*$, $n^{\zeta} \etaf(z) \leq \eta \leq \tau_*$ and $R \in \C^{2n\times 2n}$. 
\end{theorem}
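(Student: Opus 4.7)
The plan is to follow the standard three-step scheme for MDE-based local laws: (i) prove a high-probability bound on the renormalized error matrix $D = (H_z - \E H_z)G + \cS[G]G$ via a multivariate cumulant expansion; (ii) feed these bounds into the approximate cubic equation of Proposition~\ref{pro:cubic_equation} to convert smallness of $D$ into smallness of $G-M$; and (iii) run a continuity bootstrap in $\eta$ from a large initial scale down to the target $\eta = n^{\zeta}\etaf$.

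First I would prove the generic $\normstar{\genarg}$-bound $\normstar{D} \prec \sqrt{\rho/(n\eta)} + 1/(n\eta)$ by the multivariate cumulant expansion technique developed for Wigner-type and correlated random matrices (cf.\ the analogous estimates in \cite{AltEdge,Cusp1}), using the a priori control $\normstar{G-M} \prec n^{-\zeta/2}$ supplied by the bootstrap together with \ref{assum:flatness}--\ref{assum:bounded_moments}. In parallel, I would establish the refined contractions
\[
 \absb{\scalar{\wh{B}}{MD}} + \absb{\scalar{R_1}{D}} + \absb{\scalar{R_2}{D}} \prec \frac{1}{n\eta},
\]
which are exactly the quantities feeding the error term $\eps_*$ in \eqref{eq:bound_eps_star}. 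These improved bounds are the cusp fluctuation averaging estimates of \cite{Cusp1} adapted to the block-symmetric setting; as remarked in Section~\ref{sec:outline}, the block symmetry of $H_z$ and $\cS$ actually simplifies the underlying graph expansions compared with the Hermitian case treated there.

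Feeding these inputs into Proposition~\ref{pro:cubic_equation} gives the approximate cubic $\Theta^3 + \xi_2 \Theta^2 + \xi_1 \Theta = \eps_*$ with $\abs{\xi_2}\sim\rho$, $\abs{\xi_1}\sim\eta/\rho + \rho^2$ and $\abs{\eps_*}\prec 1/(n\eta)$. A dichotomy analysis of this cubic, of exactly the type carried out in \cite{Altshape,Cusp1}, yields $\abs{\Theta} \prec \sqrt{\rho/(n\eta)} + 1/(n\eta)$, the linear coefficient $\xi_1$ dominating in the bulk while the quadratic $\xi_2$ dominates near the edge/cusp. Substituting this into the expansion \eqref{eq:G_minus_M_expansion_via_Theta_and_D} and using the uniform resolvent bound $\normstarop{\cB^{-1}\cQ}\lesssim 1$ from Proposition~\ref{pro:stability_operator} produces both the isotropic and the averaged bounds in \eqref{eq:G_minus_M_standard}. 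The bootstrap starts at $\eta \sim n^{100}$ where $\norm{G}\le 1/\eta$ makes the target trivial, and descends on an exponentially fine grid; continuity in $\eta$ of $G$, $M$ and of the cubic coefficients prevents jumps between the solution branches of the cubic. Finally, \eqref{eq:G_minus_M_average_improved} outside the spectrum follows from the small-$\rho$ scaling $\rho \lesssim \eta/(\abs{z}^2-1)$ given by \eqref{eq:scaling_rho}, combined with the averaged variant of the cumulant expansion, which yields an additional factor of $\rho$ on the leading fluctuation contribution and therefore turns the generic $\sqrt{\rho/(n\eta)}$-term into $\rho/(n\eta)\lesssim n^{-\gamma/3}/(n\eta)$ in this regime.

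The technical heart of the argument, and the main obstacle, is the refined cusp fluctuation averaging in Step (i): showing that the specific contractions $\scalar{\wh{B}}{MD}$, $\scalar{R_1}{D}$, $\scalar{R_2}{D}$ obey the improved $1/(n\eta)$ bound rather than the generic $1/\sqrt{n\eta}$. This hinges on algebraic cancellations in the cumulant graph expansion that exploit $\wh{B} \approx -\rho^{-1}\Im M^{-1}$ from \eqref{eq:expansion_L} together with the off-diagonal structure of $\Re M$ and the block form of $\cS$. The block symmetry of $H_z$ streamlines these cancellations by killing entire classes of graphs that had to be treated in~\cite{Cusp1}, but the bookkeeping of the hierarchy of error terms and the matching of the $n^{1/K}$ losses inherent to the $\normstar{\genarg}$-norm against the desired bound will still require substantial care.
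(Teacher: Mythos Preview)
Your high-level scheme (bound $D$ via cumulant expansion, feed into Proposition~\ref{pro:cubic_equation}, bootstrap in $\eta$) matches the paper's. But your quantitative claim $\abs{\eps_*}\prec 1/(n\eta)$ is too weak, and with it the cubic dichotomy does not close at the cusp.

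Take $\abs{z}=1$ and $\eta$ just above $\etaf\sim n^{-3/4}$, so $\rho\sim\eta^{1/3}$, $\abs{\xi_2}\sim\eta^{1/3}$ and $\abs{\xi_1}\sim\eta^{2/3}$. If the right-hand side of the cubic satisfies only $d_*\prec 1/(n\eta)\sim n^{-1/4}$, then
\[
\min\Big\{d_*^{1/3},\ d_*^{1/2}/\abs{\xi_2}^{1/2},\ d_*/\abs{\xi_1}\Big\}\ \sim\ \min\big\{n^{-1/12},\ 1,\ n^{1/4}\big\}\ =\ n^{-1/12},
\]
whereas the target is $\sqrt{\rho/(n\eta)}+1/(n\eta)\sim n^{-1/4}$. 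The correct bound on $\abs{\scalar{\wh{B}}{MD}}$ (the dominant term in $\eps_*$) carries an extra small factor $\sigma_q\sim\rho+\sqrt{\eta/\rho}+\psi+\psi_q'+\psi_q''$ in front of $(\rho+\Xi)/(n\eta)$; see Proposition~\ref{pro:D_bounds}~\eqref{eq:D_bound_cusp_FA} and the derivation of~\eqref{eq:cubic_equation_refined} in Lemma~\ref{lem:cubic_inequality_refined}. At the cusp this gives $d_*\sim\sigma_q\cdot\rho/(n\eta)\sim n^{-3/4}$, and then all three branches of the cubic yield $n^{-1/4}$. Without this $\sigma_q$ factor the argument fails precisely in the regime that distinguishes the edge local law from the bulk one.

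You also misallocate where the refinement is needed. The terms $\abs{\scalar{R_1}{D}}$ and $\abs{\scalar{R_2}{D}}$ do \emph{not} require any improved bound: in \eqref{eq:bound_eps_star} they appear to the power $3/2$, so the generic averaged estimate~\eqref{eq:D_bound_average} already makes them of size $((\rho+\Xi)/(n\eta))^{3/2}$, which is harmless. The only place the cusp fluctuation averaging is used is for $\scalar{\wh{B}}{MD}$, and the mechanism is not a cancellation tailored to $\wh{B}$: the improved bound~\eqref{eq:D_bound_cusp_FA} holds for \emph{every} deterministic $R\in\oM$. One then writes $\wh{B}^*M=B_1+B_2$ with $B_1\in\oM$, $\norm{B_1}\lesssim 1$ and $\norm{B_2}\lesssim\rho+\eta/\rho$, using that $\wh{B}^*\approx-\rho^{-1}\Im M^{-1}\in\dM$ while $M\approx\Re M\in\oM$. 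It is this structural fact --- the cusp averaging works uniformly on $\oM$ rather than for one specific weight --- that constitutes the simplification over~\cite{Cusp1}, and it is what your last paragraph gestures at but does not capture.
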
 

We stress that the spectral parameter of the resolvent $G$ in the previous theorem and throughout the entire paper lies on the 
imaginary axis and is given by $\ii \eta$. 
With additional efforts our method can be extended to spectral parameters near the imaginary axis, but the Hermitization formula \eqref{eq:girko} requires to understand the resolvent of $H_z$ only on the imaginary axis, so we restrict ourselves to this case. 
After the proof of Theorem~\ref{thm:local_law_H}, we will establish the following corollary that will directly imply Corollary~\ref{thm:delocalization}. 

\begin{corollary}[Isotropic eigenvector delocalization]\label{cor:delocalization} 
Let $\tau_* \sim 1$ be chosen as in Theorem~\ref{thm:local_law_H}.  Let $\bx\in \C^{2n}$ be a fixed deterministic vector. 
If $\bu \in \C^{2n}$ is contained in the kernel of $H_z$ for some $z \in \C$ satisfying $\abs{\abs{z} - 1} \leq \tau_*$, 
i.e. $H_z \bu = 0$ then 
\[ \abs{\scalar{\bx}{\bu}}  \prec n^{-1/2} \norm{\bx} \norm{\bu}. \] 
\end{corollary}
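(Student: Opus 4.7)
The plan is to derive Corollary~\ref{cor:delocalization} as a standard spectral consequence of the isotropic local law Theorem~\ref{thm:local_law_H}. The only nontrivial input beyond the local law is that the fluctuation scale $\etaf$ is calibrated precisely so that $\etaf \rho \lesssim n^{-1}$ for all relevant $z$.

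First, I would use the spectral theorem for the Hermitian matrix $H_z$. Writing $\Im G(z,\eta)$ through its spectral decomposition, dropping all terms except the contribution from the kernel, and carefully tracking the factor $2n$ that separates the normalized inner product $\scalar{\genarg}{\genarg}$ from the standard Euclidean one, yields the deterministic pointwise lower bound
\begin{equation*}
 \abs{\scalar{\bu}{\bx}}^2 \;\leq\; \eta\, \norm{\bu}^2 \, \scalar{\bx}{(\Im G(z,\eta))\bx}
\end{equation*}
valid for any $\bu \in \ker H_z$. This step is purely algebraic and requires no randomness. Combining it with the isotropic local law \eqref{eq:G_minus_M_standard} applied with $\by = \bx$, and with the trivial upper bound $\scalar{\bx}{(\Im M)\bx} \lesssim \rho \norm{\bx}^2$ that follows from $\Im M = \diag(v_1, v_2)$ by \eqref{eq:Im_M_Re_M} and the entrywise scaling $v_1, v_2 \sim \rho$ from \eqref{eq:scaling_v_1_v_2_u}, produces
\begin{equation*}
 \abs{\scalar{\bu}{\bx}}^2 \;\prec\; \norm{\bu}^2 \norm{\bx}^2 \pa{ \eta \rho + \sqrt{\tfrac{\eta \rho}{n}} + \tfrac{1}{n}}
\end{equation*}
uniformly for all $z$ with $\abs{\abs{z}-1}\le \tau_*$ and all $\eta$ in the admissible range of Theorem~\ref{thm:local_law_H}. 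Because the bound is uniform in $z$, we may then specialize to the (possibly random) value of $z$ for which $\bu$ lies in $\ker H_z$.

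Second, I would optimize by taking $\eta = n^{\zeta} \etaf(z)$ for an arbitrarily small $\zeta >0$. A short case-by-case check using the definition of $\etaf$ in \eqref{eq:def_etaf} together with the $\rho$-scaling \eqref{eq:scaling_rho} shows that $\eta\rho \lesssim n^{2\zeta-1}$ in all regimes $\abs{\abs{z}-1}\le \tau_*$: in the bulk $\abs{z}^2 \le 1 - n^{-1/2}$ the dominant contribution is $\rho \sim (1-\abs{z}^2)^{1/2}$ so $\etaf \rho \sim n^{-1}$; in the transition region $\abs{\abs{z}^2 -1}\le n^{-1/2}$ one has $\etaf \sim n^{-3/4}$ and $\rho \sim n^{-1/4}$, again giving $\etaf\rho \sim n^{-1}$; outside, $\rho \sim \eta/(\abs{z}^2-1)$ yields $\eta \rho \sim n^{2\zeta}(\abs{z}^2-1)^{-2/3}n^{-4/3} \lesssim n^{2\zeta - 1}$ for $\abs{z}^2-1 \ge n^{-1/2}$. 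Plugging back, $\abs{\scalar{\bu}{\bx}}^2 \prec n^{2\zeta-1}\norm{\bu}^2\norm{\bx}^2$, and since $\zeta>0$ is arbitrary we obtain $\abs{\scalar{\bu}{\bx}} \prec n^{-1/2}\norm{\bu}\norm{\bx}$.

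No genuine obstacle arises: all the analytic work is already packaged inside Theorem~\ref{thm:local_law_H}. The only care required is (i) the correct bookkeeping of the factor $2n$ relating the normalized and standard inner products in the spectral lower bound, and (ii) verifying that the uniformity in $z$ in Theorem~\ref{thm:local_law_H} is strong enough to be evaluated at the random $z$ determined by $\bu$—which follows immediately since the exceptional probability in Theorem~\ref{thm:local_law_H} is $n^{-D}$ for any $D$, and the full statement may be read as a uniform very-high-probability estimate over $z$.
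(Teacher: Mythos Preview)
Your proposal is correct and follows essentially the same route as the paper. The paper's proof is a one-line reference to Proposition~\ref{pro:local_law_all_scales} together with the standard delocalization argument from \cite[Corollary~1.14]{Ajankirandommatrix}, which is exactly the spectral-decomposition lower bound $\abs{\scalar{\bu}{\bx}}^2 \le \eta\,\norm{\bu}^2\,\scalar{\bx}{(\Im G)\bx}$ that you spell out; your case-by-case verification that $\etaf\rho \lesssim n^{-1}$ is the content implicitly invoked by that citation.
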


We remark that the conclusion of Corollary~\ref{cor:delocalization} is also true if $\abs{\abs{z} - 1} >\tau_*$. This can easily be shown following the proof of \cite[Theorem~5.2]{Altcirc}, where certain steps of the proof of 
the local law from \cite{AjankiCorrelated} have been used except that now analogous inputs from \cite{Erdos2017Correlated} are needed instead of \cite{AjankiCorrelated}.

\begin{proof}[Proof of Corollary~\ref{thm:delocalization}]
Let $u\in \C^n$ be an eigenvector of $X$, i.e. $X u = \zeta u$ for some $\zeta \in \C$. If $\abs{\zeta} \leq 1 - \tau_*$ then the claim follows from \cite[Corollary~2.6]{Altcirc}. 
Otherwise, we can assume that $\abs{\abs{\zeta} - 1} \leq \tau_*$ by \cite[Theorem~2.5 (ii)]{Altcirc}. 
We set $\bu \defeq (0,u)^t$ and obtain $H_\zeta \bu = 0$. 
Hence, we choose $\bx = \mathbf{e}_i$ in Corollary~\ref{cor:delocalization} and obtain a bound on $\abs{u_i}$. 
Finally, taking the maximum over $i \in [n]$ and a simple union bound complete the proof of Corollary~\ref{thm:delocalization}.  
\end{proof} 

\begin{remark}[Choice of $\cS$ in MDE] \label{rem:convention_for_S} 
We warn the reader that in this paper the definition of the self-energy operator $\cS$ given in \eqref{eq:def_cS} coincides with the definition in \cite[Eq.~(3.3)]{Altcirc}. 
However, this convention differs from the more canonical choice, $R \mapsto \E[(H- \E H)R(H- \E H)]$, typically used for a Hermitian random matrix $H$ in several other works (e.g.~\cite{AjankiCorrelated,Erdos2017Correlated,AltEdge}). 
The present choice of $\cS$ substantially simplifies the analysis of the associated MDE. 
As a price for this, we will need a simple adjustment when estimating the error term $D$ 
in the perturbed Dyson equation, \eqref{eq:perturbed_mde}, since the convenient estimate on $D$ builds upon the canonical choice of $\cS$. As Proposition~\ref{pro:D_bounds} below shows, nevertheless the very same estimates on $D$ as for the canonical choice \cite{Cusp1} 
can be obtained for the current choice. 
\end{remark} 

We will establish Theorem~\ref{thm:local_law_H} in Subsection~\ref{subsec:proof_local_law_H} below. The proof will consist of a 
bootstrapping argument using the stability properties of the MDE in the previous sections, in particular, Proposition~\ref{pro:cubic_equation}, 
and the following bounds on the error term $D$ in the perturbed MDE for $G$, \eqref{eq:perturbed_mde}. 
To formulate these bounds, we now introduce some norms for random matrices and a spectral domain. 
For $p \geq 1$, a scalar-valued random variable $Z$ and a random matrices $Y\in \C^{2n\times 2n}$, we define the $p$th-moment norms 
\[ \norm{Z}_p \defeq \big( \E \abs{Z}^p \big)^{1/p}, \qquad \norm{Y}_p \defeq \sup_{\bx,\by} \frac{\norm{\scalar{\bx}{Y\by}}_p}{\norm{\bx}\norm{\by}}. \] 
For $\zeta>0$, we introduce the spectral domain 
\[ \DD_\zeta \defeq \{ (z,\eta) \in \C \times \R \colon n^{-1+ \zeta}\leq \eta \leq \tau_*, \, \abs{\abs{z} - 1} \leq \tau_* \}, \] 
where $\tau_*\sim 1$ is chosen such that \eqref{eq:scaling_rho} implies $\rho + \eta/\rho\leq \rho_*$ for all $(z,\eta) \in \DD_\zeta$ 
with $\rho_*$ from Proposition~\ref{pro:cubic_equation}. 

In the following, we will work with several quantities that will depend on $\eps$ and $p$. In order to simplify the notation, we will use the following notatoin. Let $f(\eps,p,a)$ and $g(\eps,p,a)$ are two quantities that depend on $\eps$ and $p$ as well as other parameters $a$. 
We write $f \leq_{\eps,p} g$ if there is a constant $C(\eps,p)$ such that $f(\eps,p,a) \leq C(\eps,p) g(\eps,p,a)$ for all $\eps$, $p$ and $a$. 

\begin{proposition} \label{pro:D_bounds} 
Let $D$ be the error matrix from \eqref{eq:perturbed_mde}. 
Under the assumptions of Theorem~\ref{thm:local_law_H}, there is a constant $C>0$ such that for any $p \geq 1$, $\eps >0$, $(z,\eta) \in \DD_0$,
and any deterministic  $\bx, \by \in \C^{2n}$ 
and $R \in \C^{2n \times 2n}$, we have the moment bounds
\begin{subequations} 
\begin{align} 
\norm{\scalar{\bx}{D\by}}_p & \leq_{\eps,p} \norm{\bx} \norm{\by} n^\eps  \psi_q' \Big( 1 + \norm{G}_q \Big)^C \bigg( 1 + \frac{\norm{G}_q}{\sqrt{ n}} \bigg)^{Cp}, \label{eq:D_bound_isotropic} \\ 
\norm{\avg{RD}}_p & \leq_{\eps,p} \norm{R}  n^\eps    \Big[\psi_q'\Big]^2 \Big( 1 + \norm{G}_q \Big)^C \bigg( 1 + \frac{\norm{G}_q}{\sqrt{ n}} \bigg)^{Cp}. \label{eq:D_bound_average} 
\end{align} 
Moreover, if $R \in \oM$ then we have the improved estimate 
\begin{equation} \label{eq:D_bound_cusp_FA} 
\norm{\avg{R D}}_p \leq_{\eps,p} \norm{R}  n^\eps  \sigma_q \Big[\psi + \psi_q'\Big]^2 \Big( 1 + \norm{G}_q \Big)^C \bigg( 1 + \frac{\norm{G}_q}{\sqrt{n}} \bigg)^{Cp}. 
\end{equation}
\end{subequations} 
Here, we used the $z$-dependent control parameters
\[ \psi \defeq \sqrt{\frac{\rho}{n\eta}}, \quad \psi_q' \defeq \sqrt{\frac{\norm{\Im G}_q}{n\eta}}, \quad \psi_q'' \defeq \norm{G-M}_q, \quad \sigma_q \defeq \rho + \psi + \sqrt{\frac{\eta}{\rho}} + \psi_q' + \psi_q'' \] 
with $q \defeq Cp^3/\eps$. 
\end{proposition}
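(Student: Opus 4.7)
The three bounds have the same shape as the bounds on the error matrix of the Matrix Dyson equation for Hermitian Wigner-type matrices in \cite[Proposition~4.1]{Cusp1}, and my strategy is to import that proof, paying attention to the non-canonical choice of self-energy $\cS$ (cf.~Remark~\ref{rem:convention_for_S}) and to the block structure of $H_z$.

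First I would fix $p\geq 1$ and, for the isotropic bound \eqref{eq:D_bound_isotropic}, estimate $\E|\scalar{\bx}{D\by}|^{2p}$ by a multivariate cumulant expansion in the independent entries of $W \defeq H_z - \E H_z$. Applied to $\scalar{\bx}{WG\by}$, the second-cumulant level reproduces the canonical self-energy $\cS_0[R] \defeq \E[WRW]$ acting on $G$, producing a cancellation with the deterministic counterterm $\cS[G]G$ inside $D$ \emph{up to} the discrepancy $\cS_0 - \cS$. By \eqref{eq:def_cS}, this discrepancy consists only of the ``transpose'' contributions carrying the weights $t_{ab} = \E w_{ab}^2$ (the $\kappa_d$-entries); by \ref{assum:bounded_moments} and the off-diagonal-block form of $W$, this residual is of the same order as the canonical term and can be absorbed into the error budget without changing the final bound.

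Second, the higher cumulant terms produce graph polynomials in the entries of $G$. I would bound these by the Ward identity $\sum_c |G_{ac}|^2 = \eta^{-1}(\Im G)_{aa}$, the resolvent identity $G = M - G(H_z - \E H_z)M + G\cS[G-M]M$, and the bounds on $M$ from Section~\ref{sec:stability}. Each vertex in a graph naturally produces a factor of $\psi$, $\psi_q'$ or $\psi_q''$, which is precisely what the statement requires. The definition of the star-norm $\normstar{\genarg}$ through the nested test-vector families $I_k$ is tailored to close this expansion self-consistently in the isotropic case; averaging one additional index then yields \eqref{eq:D_bound_average}.

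Third, the improved bound \eqref{eq:D_bound_cusp_FA} for $R\in\oM$ requires the \emph{cusp fluctuation averaging} mechanism of \cite[Section~4]{Cusp1}, which extracts one additional factor of $\sigma_q$ per averaged index from a hidden cancellation between the contributions of the second and third cumulants. Here, the hypothesis $R\in\oM$ interacts with the block-diagonal form of $\cS$ in \eqref{eq:def_cS} and with the block-off-diagonal structure of $W$ to enforce additional index alignments, which make several of the diagrams appearing in \cite{Cusp1} vanish automatically. The main obstacle will lie in the diagrammatic bookkeeping of this third step: one must verify that every graph arising from the cumulant expansion either already carries a surplus factor of $\sigma_q$ (because of the off-diagonality of $R$) or can be treated by the CFA argument of \cite{Cusp1} with minimal modification. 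As noted in Section~\ref{sec:outline}, the block symmetry of $H_z$ makes this adaptation strictly simpler than in \cite{Cusp1}, so the argument should ultimately reduce to a case-by-case power count over the possible index alignments in the resulting diagrams.
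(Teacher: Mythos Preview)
Your overall strategy is right and aligns with the paper: reduce to the canonical self-energy $\wt{\cS}[R]=\E WRW$, import the graph expansion from \cite{Cusp1} for the first two bounds, and adapt the cusp fluctuation averaging for the third. However, there are two concrete points where your plan diverges from what actually works.

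First, your treatment of the discrepancy $\wt{\cS}-\cS$ is too casual. Saying it ``can be absorbed into the error budget'' is not enough: for the averaged bound \eqref{eq:D_bound_cusp_FA} you must show that $\avg{R\,\cT[G]G}$ itself satisfies the improved $\sigma_q$-bound, where $\cT\defeq\wt{\cS}-\cS$. The paper makes this explicit by first proving all three bounds for $\wt D\defeq WG+\wt{\cS}[G]G$ together with a separate estimate on $\norm{\avg{R\cT[G]G}}_p$, and only then passing to $D$. Without isolating $\cT[G]G$ you will not see why the improved bound survives the change of self-energy.

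Second, and more importantly, you misidentify the mechanism behind the $\sigma_q$-gain. It is \emph{not} a cancellation between second and third cumulants, nor do diagrams ``vanish automatically'' by index alignment. The actual mechanism is structural: writing $R=J\diag(\br)$ with $J$ the block-swap matrix and inserting the identity $G=M-G\cS[M]M-GWM$ at a $\sigma$-cell produces a leading term containing $\cS[MJK^{(b)}M^*]$. Since $M$ has diagonal part of order $\rho$ (it is almost block-off-diagonal, cf.\ \eqref{eq:Im_M_Re_M}), this quantity is $\ord(\rho/N)$ entrywise, which yields the $\sigma_q$-factor directly---no stability operator, no delicate cancellation. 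This is precisely why the argument here is simpler than in \cite{Cusp1}. Technically, the paper handles the extra $J$ by enlarging the graph calculus with ``twisted'' $G$-edges encoding $GJ$ and $(GJ)^*$; the Ward estimates and derivative rules carry over verbatim, so the single-index graph machinery of \cite[Section~4]{Cusp1} applies with only this cosmetic change.
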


\begin{remark}\label{rem:sigma}
This proposition is the exact counterpart of the cusp fluctuation averaging in \cite[Proposition 4.12]{Cusp1}
with  $\sigma=0$, hence the definition of $\sigma_q$ does not contain $\sigma$. 
Notice that  $\sigma=0$ in our case following from the fact that the spectral parameter $\ii\eta$ lies on the imaginary axis to which
the spectrum is symmetric.
\end{remark}

We remark that $\psi$ in Proposition~\ref{pro:D_bounds} is different from the $\psi$ defined in Proposition~\ref{pro:stability_operator}. 
This should not lead to any confusion since the latter notation is used only in Section~\ref{sec:stability} and \ref{sec:cubic_equation} while the former is used exclusively in Proposition~\ref{pro:D_bounds} 
and Section~\ref{subsec:proof_D_bounds}. 
We prefer to stick to these notations for compatibility with the publications \cite{Ajankirandommatrix,Altshape,Cusp1}. 
We will show Proposition~\ref{pro:D_bounds} in Subsection~\ref{subsec:proof_D_bounds} below.

\subsection{Proof of Theorem~\ref{thm:local_law_H}} \label{subsec:proof_local_law_H} 

This subsection is devoted to the proof of Theorem~\ref{thm:local_law_H}. To that end, we follow the arguments from \cite[Sections~3.2, 3.3]{Cusp1}, where the local law for a general Hermitian matrix close to a cusp regime was 
deduced from estimates on $D$ as provided in Proposition~\ref{pro:D_bounds}. We will present the main steps of the proof, focusing on the differences, but for arguments that require only simple (mostly notational) adjustments we will refer the reader to \cite{Cusp1}. 
When comparing to \cite{Cusp1}, the reader should think of the following cases described in the notation of \cite[Eq.~(3.7b)]{Cusp1}. 
For $\abs{z} \leq 1$, the eigenvalue density of $H_z$ has a local minimum of size $\rho(\tau_0) \sim (1-\abs{z}^2)^{1/2}$ at $\tau_0 = 0$ and $\omega = 0$. For $\abs{z} > 1$, the spectrum of $H_z$ has a symmetric gap of size $\Delta 
\sim (\abs{z}^2-1)^{3/2}$ around zero and we study the resolvent of $H_z$ at the middle of this gap, $\abs{\omega} = \Delta/2$. 
For a random matrix $Y \in \C^{2n\times 2n}$ and a deterministic control parameter $\Lambda = \Lambda(z)$, we define 
the notations $\abs{Y} \prec \Lambda$ and $\abs{Y}_\mathrm{av} \prec \Lambda$ as follows 
\begin{alignat*}{4} 
\abs{Y} & \prec \Lambda \qquad &\Longleftrightarrow&& \qquad  \abs{Y_{\bx\by}} &\prec \Lambda \norm{\bx}\norm{\by} \quad &&\text{ uniformly for all } \bx, \by \in \C^{2n}, \\ 
\abs{Y}_\mathrm{av} & \prec \Lambda \qquad & \Longleftrightarrow &&\qquad  \abs{\avg{RY}} &\prec \Lambda \norm{R} \quad &&\text{ uniformly for all } R \in \C^{2n\times 2n}. 
\end{alignat*}  
We recall that by definition $Y_{\bx\by}=\scalar{\bx}{Y \by}$ for $\bx, \by \in \C^{2n}$. 
The following lemma relates this notion of high probability bounds to the high moments estimates introduced above. We leave the simple adjustments of the proof of \cite[Lemma~3.7]{AltEdge} to the reader.

\begin{lemma} \label{lem:high_moment_bound_to_high_prob_bound} 
Let $Y$ be a random matrix in $\C^{2n\times 2n}$, $\Phi$ a deterministic control parameter such that $\Phi\geq n^{-C}$ and $\norm{Y} \leq n^C$ for some $C >0$. Let $K \in \N$ be fixed. Then we have 
\[ \normstar{Y}^{K,\bx,\by} \prec \Phi \text{ uniformly for } \bx, \by \in \C^{2n} \quad \Longleftrightarrow \quad \abs{Y} \prec \Phi \quad \Longleftrightarrow \quad \norm{Y}_p \leq_{\eps,p} n^\eps \Phi \text{ for all } \eps >0, p \geq 1. \] 
\end{lemma}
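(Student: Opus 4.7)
The plan is to prove the three conditions equivalent in a cycle: (a) $\|Y\|_p \leq_{\eps,p} n^\eps \Phi$ for all $\eps,p$ implies $\abs{Y}\prec\Phi$, (b) $\abs{Y}\prec\Phi$ implies $\normstar{Y}^{K,\bx,\by}\prec\Phi$ uniformly in $\bx,\by$, and (c) $\normstar{Y}^{K,\bx,\by}\prec\Phi$ uniformly implies $\abs{Y}\prec\Phi$, which in turn implies the moment bound. Since the structure is standard (an adaptation of \cite[Lemma~3.7]{AltEdge}), the main task is to verify that the specific choice of the $\normstar{\genarg}$-norm used in this paper, with its additional set of test vectors $(\wh{B}^*)_{a\cdot}$ and $((\wh{B}_*)^*)_{a\cdot}$, still behaves well under the counting argument needed for step (b).

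For (a), I would fix $\bx,\by\in\C^{2n}$ and apply Markov's inequality: for every $\eps>0$, $D>0$, choose $p$ so that $\eps p \geq 2D$; then $\P(\abs{Y_{\bx\by}} > n^\eps\Phi\norm{\bx}\norm{\by}) \leq n^{-\eps p}\norm{Y_{\bx\by}}_p^p/(\Phi\norm{\bx}\norm{\by})^p \leq n^{-\eps p/2}$, which is $\leq C_{\eps,D}n^{-D}$. Uniformity in $\bx,\by$ is automatic since the bound does not depend on them. Conversely, for the reverse implication in (c), I would split $\E\abs{Y_{\bx\by}}^p$ into the high-probability event $\{\abs{Y_{\bx\by}}\leq n^\eps \Phi\norm{\bx}\norm{\by}\}$, which contributes at most $n^{\eps p}\Phi^p\norm{\bx}^p\norm{\by}^p$, and its complement, on which I use the deterministic bound $\abs{Y_{\bx\by}}\leq\norm{Y}\norm{\bx}\norm{\by}\leq n^C\norm{\bx}\norm{\by}$ together with the probability bound $\leq C_{\eps,D}n^{-D}$ for any $D$; choosing $D$ much larger than $Cp$ gives the claimed bound on $\norm{Y_{\bx\by}}_p$.

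For (b), the key observation is that $\abs{I_k}$ grows only polynomially in $n$ (each iteration multiplies the size by $\ord(n^2)$, and $K$ is fixed), so a union bound over all $\bu,\bv\in I_k$ with $0\leq k<K$ yields $\max_k(2n)^{-k/2K}\norm{Y}_{I_k}\prec\Phi$ uniformly. The last term, $(2n)^{-1/2}\max_{\bu\in I_K}\norm{Y_{\cdot\bu}}/\norm{\bu}$, requires a slightly different argument: from $\abs{Y_{\mathbf e_a\bu}}\prec\Phi\norm{\bu}$ uniformly in $a\in[2n]$ and $\bu\in I_K$, summing $\norm{Y_{\cdot\bu}}^2=(2n)^{-1}\sum_a\abs{Y_{\mathbf e_a\bu}}^2\prec\Phi^2\norm{\bu}^2$ (a union bound over the $\ord(n)$ basis vectors loses only an $n^\eps$-factor in stochastic domination) gives $\norm{Y_{\cdot\bu}}\prec\Phi\norm{\bu}$, so the $(2n)^{-1/2}$ prefactor actually gives a stronger bound than needed.

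For (c), the implication $\normstar{Y}^{K,\bx,\by}\prec\Phi\Rightarrow\abs{Y}\prec\Phi$ is immediate once I observe that $\bx,\by\in I_0$ by construction of the test-vector sets, so $\abs{Y_{\bx\by}}/(\norm{\bx}\norm{\by})\leq\norm{Y}_{I_0}\leq\normstar{Y}^{K,\bx,\by}$. I do not expect any genuine obstacle beyond careful bookkeeping; the only mildly delicate point is ensuring that the $I_k$ depend on $\bx,\by$ only through membership in $I_0$, so that uniformity is preserved throughout the argument, and that the additional vectors $(\wh{B}^*)_{a\cdot}$ and $((\wh{B}_*)^*)_{a\cdot}$ in $I_0$ are deterministic (which they are, being built from $M$, hence cause no issue with the union bound).
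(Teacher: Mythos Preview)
Your proposal is correct and follows exactly the approach the paper intends: the paper itself gives no argument beyond ``We leave the simple adjustments of the proof of \cite[Lemma~3.7]{AltEdge} to the reader,'' and your sketch fills in precisely those adjustments. The only points specific to this paper are the enlarged $I_0$ (with the extra vectors $(\wh{B}^*)_{a\cdot}$ and $((\wh{B}_*)^*)_{a\cdot}$) and the resulting polynomial growth of $\abs{I_k}$, both of which you identify and handle correctly.
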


The next lemma adapts Proposition~\ref{pro:cubic_equation} to the random matrix setup with the help of Proposition~\ref{pro:D_bounds}. 
The lemma is the analog of \cite[Lemma~3.8]{Cusp1} in our setup.  

\begin{lemma} \label{lem:cubic_inequality_refined} 
Let $\zeta, c>0$ be fixed and sufficiently small. We assume that $\abs{G-M} \prec \Lambda$, $\abs{\Im(G-M)}\prec \Xi$ and $\abs{\Theta} \prec \theta$ at some fixed $(z,\eta) \in\DD_{\zeta}$ for some 
deterministic control parameters $\Lambda$, $\Xi$ and $\theta$ such that $\Lambda+ \Xi + \theta \lesssim n^{-c}$. Then, for any sufficiently small $\delta >0$, the estimates 
\begin{equation} \label{eq:cubic_equation_refined} 
\abs{ \Theta^3 + \xi_2 \Theta^2 + \xi_1 \Theta} \prec n^{2 \delta} \bigg( \rho + \frac{\eta^{1/2}}{\rho^{1/2}} + \bigg( \frac{\rho + \Xi}{n\eta} \bigg)^{1/2} \bigg) \frac{\rho + \Xi}{n\eta} + n^{-\delta} \theta^3
\end{equation}
and 
\begin{equation} \label{eq:G-M_theta}  
 \abs{G-M} \prec \theta + \sqrt{\frac{\rho + \Xi}{n \eta}}, \qquad \qquad \abs{G-M}_\mathrm{av} \prec \theta + \frac{\rho + \Xi}{n\eta} 
\end{equation}
hold, where $\xi_2$ and $\xi_1$ are chosen as in Proposition~\ref{pro:cubic_equation} and $\Theta = \scalar{\wh{B}}{G-M}/\scalar{\wh{B}}{B}$.  

Moreover, for fixed $z$, $\Theta$ is a continuous function of $\eta$ as long as $(z,\eta) \in \DD_\zeta$. 
\end{lemma}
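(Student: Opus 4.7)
The plan is to specialize the deterministic Proposition~\ref{pro:cubic_equation} to the random resolvent $G$, using the moment bounds on the error matrix $D$ provided by Proposition~\ref{pro:D_bounds} together with the equivalence between high-moment and stochastic-domination bounds given by Lemma~\ref{lem:high_moment_bound_to_high_prob_bound}. First I verify the smallness hypothesis $\normstar{G-M}+\normstar{D}\lesssim n^{-30/K}$ on the very-high-probability event where it holds: the bound on $\normstar{G-M}$ is immediate from the assumption $\abs{G-M}\prec\Lambda\lesssim n^{-c}$, while $\normstar{D}\prec\psi_q'\lesssim\sqrt{(\rho+\Xi)/(n\eta)}\lesssim n^{-\zeta/2}$ follows from the isotropic bound \eqref{eq:D_bound_isotropic} applied with a sufficiently large moment order $q$ and using $\norm{\Im G}_q\lesssim\rho+\Xi$. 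Choosing $K$ large enough depending on $c,\zeta,\eps$, both quantities lie below $n^{-30/K}$, so the expansion \eqref{eq:G_minus_M_expansion_via_Theta_and_D} and the cubic equation \eqref{eq:cubic_equation} hold pointwise on this event.

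The central step is to bound the error $\eps_*$ from \eqref{eq:bound_eps_star} by the right-hand side of \eqref{eq:cubic_equation_refined}. The routine terms $n^{62/K}(\normstar{D}^3+\abs{\scalar{R_i}{D}}^{3/2})$ for $i=1,2$ are controlled by the average bound \eqref{eq:D_bound_average} with $\norm{R_i}\lesssim 1$, each $\prec((\rho+\Xi)/(n\eta))^{3/2}$, which is absorbed into the first summand on the right-hand side of \eqref{eq:cubic_equation_refined}. The crux is the linear-in-$D$ error $\abs{\scalar{\wh{B}}{MD}}=\abs{\avg{(\wh{B}^*M)D}}$, for which the naive average bound yields only $(\psi_q')^2\sim(\rho+\Xi)/(n\eta)$, missing the extra smallness factor $\rho+\sqrt{\eta/\rho}+\sqrt{(\rho+\Xi)/(n\eta)}$ required on the right of \eqref{eq:cubic_equation_refined}. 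This factor must be recovered from a structural decomposition of $\wh{B}^*M$ along $\dM\oplus\oM$. Using $\wh{B}=-\rho^{-1}\Im M^{-1}+\ord(\rho^2+\eta/\rho)$ from \eqref{eq:expansion_L}, the balanced polar decomposition \eqref{eq:balanced_polar_decomposition}, the identities \eqref{eq:Im_M_in_terms_of_Q_U}, and $U=\Re U+\ii\rho F_U$ with $\Re U\in\oM$, $F_U\in\dM$, one computes
\begin{equation*}
\wh{B}^*M=Q^{-1}F_U(\Re U)Q+\ii\rho\, Q^{-1}F_U^{\,2}\,Q+\ord(\rho^2+\eta/\rho),
\end{equation*}
whose off-diagonal part $(\wh{B}^*M)_o$ is $\ord(1)$ and whose diagonal part $(\wh{B}^*M)_d$ is $\ord(\rho+\eta/\rho)$. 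Splitting $\avg{(\wh{B}^*M)D}$ accordingly, the off-diagonal piece is estimated by the improved cusp-fluctuation-averaging bound \eqref{eq:D_bound_cusp_FA} as $\sigma_q(\psi+\psi_q')^2$, while the diagonal piece is bounded by \eqref{eq:D_bound_average} as $(\rho+\eta/\rho)(\psi_q')^2$. Inserting $\sigma_q\lesssim\rho+\sqrt{\eta/\rho}+\sqrt{(\rho+\Xi)/(n\eta)}+\Lambda$ yields exactly the target bound, with the $\Lambda$-contribution absorbed via a Young-type inequality into the $n^{-\eps}\theta^3$ slack (using the a priori smallness $\Lambda\lesssim n^{-c}$ together with $n\eta\ge n^\zeta$). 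The quadratic-in-$D$ error $\abs{\scalar{\wh{B}}{M(\cS[Y])Y}}$ with $Y\defeq\cB^{-1}\cQ[MD]$ is treated by an entirely analogous $\dM$/$\oM$ decomposition of $\wh{B}^*M\cS[Y]$ applied isotropically to $Y$; this decomposition step is the main technical obstacle.

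The bounds on $G-M$ in \eqref{eq:G-M_theta} then follow directly from the expansion \eqref{eq:G_minus_M_expansion_via_Theta_and_D}: the leading $\Theta B$ contributes $\prec\theta$ by $\norm{B}\lesssim 1$ (both isotropically and on average); the term $\cB^{-1}\cQ[MD]$ is controlled isotropically by $\psi_q'\prec\sqrt{(\rho+\Xi)/(n\eta)}$ via $\normstarop{\cB^{-1}\cQ}\lesssim 1$ from \eqref{eq:cB_inverse_Q_lesssim_one} (cf.~\eqref{eq:aux_bounds_star_norm}) combined with \eqref{eq:D_bound_isotropic}, and on average by cycling $\avg{R\cB^{-1}\cQ[MD]}=\avg{(M(\cB^{-1}\cQ)^*[R])D}$ and applying \eqref{eq:D_bound_average} to the test matrix $M(\cB^{-1}\cQ)^*[R]$, giving $\prec(\psi_q')^2\prec(\rho+\Xi)/(n\eta)$; the $\Theta^2$ term and the error $E$ from \eqref{eq:normstar_E} are of lower order under the smallness assumption. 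Finally, continuity of $\Theta(\eta)=\scalar{\wh{B}}{G-M}/\scalar{\wh{B}}{B}$ in $\eta$ for fixed $z\in\DD_\zeta$ is immediate from the analyticity of $G(z,\ii\eta)$ for $\eta>0$, the continuity of $M$ in $\eta$ from Lemma~\ref{lem:scaling_relation_v_u}(iii), the continuity of $\wh{B}$ and $B$ from Proposition~\ref{pro:stability_operator}, and the non-vanishing $\abs{\scalar{\wh{B}}{B}}\sim 1$.
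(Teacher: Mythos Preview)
Your overall strategy matches the paper's: verify the hypotheses of Proposition~\ref{pro:cubic_equation} via the $D$-bounds from Proposition~\ref{pro:D_bounds}, split $\wh{B}^*M$ into its $\oM$-part (handled by the cusp fluctuation averaging \eqref{eq:D_bound_cusp_FA}) and a small remainder (handled by \eqref{eq:D_bound_average}), and read off \eqref{eq:G-M_theta} from the expansion. There are, however, three concrete points where your write-up does not go through as stated.

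\textbf{(1) The order of \eqref{eq:G-M_theta} and \eqref{eq:cubic_equation_refined}.} You bound $\sigma_q$ using $\psi_q''=\norm{G-M}_q\lesssim\Lambda$, which produces the stray term $\Lambda\cdot(\rho+\Xi)/(n\eta)$. Your claim that this is ``absorbed via a Young-type inequality into the $n^{-\eps}\theta^3$ slack'' is not justified: Young would give $\Lambda^3$, not $\theta^3$, and there is no hypothesis linking $\Lambda$ to $\theta$. The paper's remedy is simply to \emph{prove \eqref{eq:G-M_theta} first} and then feed it back into $\psi_q''$, yielding $\sigma_q\lesssim\rho+\sqrt{\eta/\rho}+\sqrt{(\rho+\Xi)/(n\eta)}+\theta$; the residual $\theta\cdot(\rho+\Xi)/(n\eta)$ is then legitimately split by Young into $n^{-\eps}\theta^3$ and $n^{\eps/2}((\rho+\Xi)/(n\eta))^{3/2}$.

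\textbf{(2) The quadratic-in-$D$ term.} Your proposed ``analogous $\dM/\oM$ decomposition of $\wh{B}^*M\cS[Y]$'' does not work because $\cS[Y]$ is random, so neither piece is an admissible deterministic test matrix for \eqref{eq:D_bound_average} or \eqref{eq:D_bound_cusp_FA}. The paper instead exploits that $\cS[A]$ is an honest diagonal matrix: writing the trace entrywise gives a sum of products $(\wh{B}^*M)_{a\cdot}\,\cS[A]_{aa}\,A_{\cdot a}$, and one bounds $\max_a\abs{\cS[A]_{aa}}\prec(\rho+\Xi)/(n\eta)$ via \eqref{eq:D_bound_average} (since $\cS[A]_{aa}=\avg{(M^*(\cB^{-1}\cQ)^*\cS[\be_a\be_a^*])^*D}$ with a deterministic, bounded test matrix) and the remaining entry of $A=\cB^{-1}\cQ[MD]$ isotropically by $\sqrt{(\rho+\Xi)/(n\eta)}$. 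No $\oM$-improvement is needed or used here.

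\textbf{(3) The constraint $\scalar{E_-}{G}=0$.} Proposition~\ref{pro:cubic_equation} requires this as an input, and you do not verify it. The paper checks it directly from the block structure of $G$: the two diagonal $n\times n$ blocks of $(H_z-\ii\eta)^{-1}$ are $\ii\eta((X-z)(X-z)^*+\eta^2)^{-1}$ and $\ii\eta((X-z)^*(X-z)+\eta^2)^{-1}$, which have equal trace.
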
 

\begin{proof} 
Owing to Lemma~\ref{lem:high_moment_bound_to_high_prob_bound} and $\abs{G} \prec \norm{M} + \Lambda \lesssim 1$, the high-moment bounds in \eqref{eq:D_bound_isotropic} and \eqref{eq:D_bound_average} imply
\begin{equation} \label{eq:aux_bounds_D} 
 \abs{D} \prec \sqrt{\frac{\rho + \Xi}{n \eta}}, \qquad \qquad \abs{D}_\mathrm{av} \prec \frac{\rho + \Xi}{n\eta}.  
\end{equation}
We conclude that the assumption on $\normstar{G-M} + \normstar{D}$ in Proposition~\ref{pro:cubic_equation} is satisfied for sufficiently large $K$ depending on $c$ and $\eps$ in the definition of $\prec$ in Definition~\ref{def:stochastic_domination}. 
What remains to ensure the applicability of Proposition~\ref{pro:cubic_equation} is checking $\scalar{E_-}{G} = 0$. 
In fact, we now prove that, for each $z \in \C$ and $\eta >0$, the resolvent $G = (H_z - \ii \eta)^{-1}$ satisfies 
\begin{equation} \label{eq:scalar_E_minus_G_equals_0} 
\scalar{E_-}{G} = 0. 
\end{equation} 
For the proof of \eqref{eq:scalar_E_minus_G_equals_0}, we denote by $G_{11}, G_{22} \in \C^{n\times n}$ the upper-left and lower-right $n\times n$-minor of the resolvent $G = (H_z- \ii \eta)^{-1} \in \C^{2n\times 2n}$. 
Then the block structure of $H_z$ from \eqref{eq:def_H_z} yields 
\[ G_{11} = \frac{\ii \eta }{(X-z)(X-z)^* + \eta^2}, \qquad G_{22} = \frac{\ii \eta}{(X-z)^* (X-z) + \eta^2} .\] 
Since $(X-z)(X-z)^*$ and $(X-z)^*(X-z)$ have the same eigenvalues we obtain $(2n) \scalar{E_-}{G} = \Tr G_{11} - \Tr G_{22} = 0$. This shows \eqref{eq:scalar_E_minus_G_equals_0} and, thus, ensures the applicability of
Proposition~\ref{pro:cubic_equation}.

The first bound in \eqref{eq:aux_bounds_D}, the bounds on $\cB^{-1} \cQ$ and $MR$ in \eqref{eq:aux_bounds_star_norm} and Lemma~\ref{lem:high_moment_bound_to_high_prob_bound} 
yield 
\begin{equation} \label{eq:proof_cubic_refined_aux4} 
\abs{\cB^{-1} \cQ[MD]}\prec \sqrt{\frac{\rho + \Xi}{n \eta}} 
\end{equation} by choosing $K$ sufficiently large to absorb various $n^{1/K}$-factors into $\prec$.  
Similarly, we use \eqref{eq:aux_bounds_star_norm}, \eqref{eq:aux_bounds_D}, the assumption $\abs{\Theta} \prec \theta$ and Lemma~\ref{lem:high_moment_bound_to_high_prob_bound} to estimate the other terms in  
\eqref{eq:G_minus_M_expansion_via_Theta_and_D} and \eqref{eq:normstar_E} and deduce \eqref{eq:G-M_theta}. 

What remains is estimating the right-hand side of \eqref{eq:bound_eps_star} to obtain \eqref{eq:cubic_equation_refined}. 
Incorporating the $n^{1/K}$ factors into $\prec$, we see that $\norm{D}_*^3$, $\abs{\scalar{R_1}{D}}^{3/2}$ and $\abs{\scalar{R_2}{D}}^{3/2}$ are dominated by the right-hand side of \eqref{eq:cubic_equation_refined} 
due to \eqref{eq:D_bound_isotropic} and Lemma~\ref{lem:high_moment_bound_to_high_prob_bound}. 
Recall $M = \Re M + \ord(\rho)$ with $\Re M \in \oM$ and $\wh{B}^* = -\rho^{-1} \Im M^{-1} + \ord(\rho + \eta/\rho)$ (cf.~\eqref{eq:expansion_L}) 
with $-\rho^{-1} \Im M^{-1} \sim 1$ and $-\rho^{-1} \Im M^{-1} \in \dM$. Therefore, $M$ is almost off-diagonal while $\wh{B}^*$ is almost diagonal, so we find 
$B_1 \in \oM$, $B_2 \in \C^{2n\times 2n}$ such that $\wh{B}^*M = B_1 + B_2$ and $\norm{B_1} \lesssim 1$, $\norm{B_2} \lesssim \rho + \eta/\rho$. 
Hence, \eqref{eq:D_bound_average} and \eqref{eq:D_bound_cusp_FA} imply 
\[ \abs{\scalar{\wh{B}}{MD}} \prec \bigg( \rho + \frac{\eta^{1/2}}{\rho^{1/2}} + \theta + \bigg( \frac{\rho + \Xi}{n \eta} \bigg)^{1/2} \bigg) \frac{ \rho + \Xi}{n \eta} \prec n^{\delta} \bigg( \rho + \frac{\eta^{1/2}}{\rho^{1/2}} 
+ \bigg( \frac{\rho + \Xi}{n \eta} \bigg)^{1/2} \bigg) \frac{ \rho + \Xi}{n \eta} + n^{-{\delta}} \theta^3, 
\] 
where we used the bound on $\abs{G-M}$ from \eqref{eq:G-M_theta} in the second step and Young's inequality in the last step. 

We now conclude the proof of \eqref{eq:cubic_equation_refined} by showing that 
\begin{equation} \label{eq:proof_cubic_refined_aux2} 
 \abs{\scalar{\wh{B}}{M (\cS \cB^{-1} \cQ[MD])\cB^{-1} \cQ[MD]}} \prec \bigg( \frac{\rho + \Xi}{n\eta} \bigg)^{3/2}. 
\end{equation} 
Defining $A \defeq \cB^{-1}\cQ[MD]$, the inclusion $\ran \cS\subset \dM$ implies 
\begin{equation} \label{eq:proof_cubic_refined_aux3} 
\abs{\scalar{\wh{B}}{M (\cS \cB^{-1} \cQ[MD])\cB^{-1} \cQ[MD]}} = \frac{1}{2n} \absbb{\sum_{a \in [2n]} (A\wh{B}^* M)_{aa} \cS[A]_{aa} } \lesssim \max_{a \in [2n]} \abs{\cS[A]_{aa}} \max_{a \in [2n]} \abs{(A \wh{B}^* M)_{aa}}. 
\end{equation}
Owing to the second bound in \eqref{eq:aux_bounds_D}, the definition of $\abs{\genarg}_\mathrm{av}$ and $\norm{M^* (\cB^*)^{-1} \cQ^* \cS[{\be_a}\be_a^*]} \lesssim 1$, we obtain 
\begin{equation} \label{eq:proof_cubic_refined_aux1} 
\abs{\cS[A]_{aa}} = \abs{\scalar{\be_a}{\cS[A] \be_a}} = \abs{\scalar{\cS[\be_a\be_a^*]}{\cB^{-1} \cQ[MD]}} = \abs{\scalar{M^* (\cB^*)^{-1} \cQ^*\cS[{\be_a}\be_a^*]}{D}} \prec \frac{\rho + \Xi}{n\eta}. 
\end{equation}
To conclude \eqref{eq:proof_cubic_refined_aux2}, we use \eqref{eq:proof_cubic_refined_aux1} in \eqref{eq:proof_cubic_refined_aux3} 
and, to bound the second factor in \eqref{eq:proof_cubic_refined_aux3}, we deduce from \eqref{eq:proof_cubic_refined_aux4} 
and $\norm{\wh{B}^* M} \lesssim 1$   
that $\max_{a \in [2n]} \abs{(A\wh{B}^* M)_{aa}} = \max_{a \in [2n]} \abs{\scalar{\be_a}{A \wh{B}^*M \be_a}} = \max_{a \in [2n]} \abs{A_{a, \wh{B}^*M\be_a}}$ is stochastically dominated by the right-hand side of \eqref{eq:proof_cubic_refined_aux4}. 
This completes the proof of \eqref{eq:cubic_equation_refined}. 

Finally, we note that $\Theta$ is a continuous function of $\eta$ as $\wh{B}$, $B$, $G$ and $M$ are continuous with respect to $\eta$. This completes the proof of Lemma~\ref{lem:cubic_inequality_refined}. 
\end{proof}

We now introduce $\wt{\xi}_2$ and $\wt{\xi}_1$ which will turn out to be comparable versions of the coefficients $\xi_2$ and $\xi_1$, respectively, (see \eqref{eq:scaling_xi_2_xi_1} above and Lemma~\ref{lem:inequality_coefficients} \ref{item:xi_and_tilde_xi} below). 
Moreover, they depend explicitly and monotonically on $\eta$ which will be important for our arguments. 
We define  
\begin{equation} \label{eq:def_wt_xi_2_1} 
 \wt{\xi}_2 \defeq \absb{1-\abs{z}^2}^{1/2} + \eta^{1/3}, \qquad \wt{\xi}_1 \defeq \big(\wt{\xi}_2\big)^2. 
\end{equation}
These definitions are chosen in analogy to \cite[Eq.~(3.7e)]{Cusp1}, where in the first case we chose $\abs{\omega} \sim \Delta \sim (\abs{z}^2 - 1)^{3/2}$ and, in the second case, $\rho(\tau_0) \sim (1- \abs{z}^2)^{1/2}$ 
and $\omega = 0$.

\begin{lemma}[Properties of $\wt{\xi}_2$ and $\wt{\xi}_1$] \phantomsection \label{lem:inequality_coefficients} 
\begin{enumerate}[label=(\roman*)]
\item \label{item:xi_and_tilde_xi} 
For all $z \in D_\tau(0)$ and $\eta \in (0,1]$, we have $\rho^2 + \eta/\rho \sim \wt{\xi}_1$. 
For any $\eta \in (0,1]$, we have $\wt{\xi}_2 \sim \rho$ if $z \in \C$ satisfies $\abs{z} \leq 1$ and $\wt{\xi}_2 \gtrsim \rho$ if $z \in D_\tau(0)\setminus \overline{D_1(0)}$.  
\item \label{item:comparsion_tilde_xi_rho} 
Uniformly for all $z \in D_\tau(0)$ and $\eta \geq \etaf$, we have 
\[ \wt{\xi}_2 \gtrsim \frac{1}{n\eta} + \bigg( \frac{\rho}{n \eta} \bigg)^{1/2}, \qquad \wt{\xi}_1 \gtrsim \wt{\xi}_2 \bigg( \rho + \frac{1}{n \eta} \bigg). \] 
\end{enumerate}
\end{lemma}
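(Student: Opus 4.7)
The plan is to prove both parts by a direct case analysis based on the two regimes $|z| \le 1$ and $|z| > 1$ that appear in the scaling relations \eqref{eq:scaling_rho} for $\rho$.

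For part (i), when $|z| \le 1$ the scaling $\rho \sim (1-|z|^2)^{1/2} + \eta^{1/3}$ coincides with the definition of $\wt\xi_2$, so the second claim is immediate; squaring gives $\wt\xi_1 \sim \rho^2$, and since $\rho \ge \eta^{1/3}$ we have $\eta/\rho \le \eta^{2/3} \lesssim \rho^2$, so $\rho^2 + \eta/\rho \sim \rho^2 \sim \wt\xi_1$. When $1 \le |z| \le \tau$, writing $d \defeq |z|^2-1 \ge 0$ and using $\rho \sim \eta/(d+\eta^{2/3})$, a short calculation gives $\rho^2 \lesssim \eta^{2/3}$ and $\eta/\rho \sim d + \eta^{2/3}$, so that $\rho^2 + \eta/\rho \sim d + \eta^{2/3} \sim \wt\xi_1$. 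In this regime the bound $\rho \le \eta^{1/3} \le \wt\xi_2$ yields $\wt\xi_2 \gtrsim \rho$.

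For part (ii), the second inequality $\wt\xi_1 \gtrsim \wt\xi_2(\rho + (n\eta)^{-1})$ is equivalent, via $\wt\xi_1 = \wt\xi_2^2$, to $\wt\xi_2 \gtrsim \rho + (n\eta)^{-1}$, which follows from part (i) (giving $\wt\xi_2 \gtrsim \rho$ in all cases) together with the bound $\wt\xi_2 \gtrsim (n\eta)^{-1}$ contained in the first inequality. So the main task reduces to establishing $\wt\xi_2 \gtrsim (n\eta)^{-1} + (\rho/(n\eta))^{1/2}$ in the four regimes defining $\etaf$ in \eqref{eq:def_etaf}. In each regime the threshold $\etaf$ is calibrated so that the appropriate summand of $\wt\xi_2$ beats the right-hand side: for $|z|^2 \le 1-n^{-1/2}$ with $\etaf = (1-|z|^2)^{-1/2} n^{-1}$ one uses $\wt\xi_2 \ge (1-|z|^2)^{1/2}$ and the scaling $\rho \sim (1-|z|^2)^{1/2} + \eta^{1/3}$; in the cusp regime $\bigl||z|^2-1\bigr| \le n^{-1/2}$ with $\etaf = n^{-3/4}$ one uses $\wt\xi_2 \ge \eta^{1/3} \ge n^{-1/4}$ together with $\rho \lesssim \eta^{1/3}$; in the gap regime $1+n^{-1/2} < |z|^2 \le 2$ with $\etaf = d^{1/6} n^{-2/3}$ one uses $\wt\xi_2 \ge d^{1/2}$; and for $|z|^2 > 2$ both inequalities are immediate from $\wt\xi_2 \gtrsim 1$ and $\eta \ge n^{-2/3}$.

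The main obstacle will be the gap regime, where $\rho \sim \eta/(d+\eta^{2/3})$ has two different effective scalings depending on whether $\eta \le d^{3/2}$ or $\eta > d^{3/2}$, so a further subdivision is needed to control $(\rho/(n\eta))^{1/2}$. In the subcase $\eta \le d^{3/2}$ one has $\rho \sim \eta/d$ and the bound reduces to $d^{1/2} \gtrsim (nd)^{-1/2}$, i.e.\ $nd^2 \gtrsim 1$; in the subcase $\eta > d^{3/2}$ one has $\rho \sim \eta^{1/3}$ and the bound reduces to $\eta^{2/3} \gtrsim n^{-1/2}$. Both follow from $\eta \ge \etaf = d^{1/6} n^{-2/3}$ combined with $d > n^{-1/2}$, which is exactly the defining feature of this regime.
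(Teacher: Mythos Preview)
Your proposal is correct and follows essentially the same strategy as the paper: a case analysis based on the scaling relations \eqref{eq:scaling_rho} for part (i), and for part (ii) reducing the second inequality to $\wt\xi_2 \gtrsim \rho + (n\eta)^{-1}$ via $\wt\xi_1 = \wt\xi_2^2$ and part (i). The one notable difference is organizational: for the bound $\wt\xi_2^2 \gtrsim \rho/(n\eta)$ in the regime $|z|>1$, the paper observes directly that this is equivalent to $d^{1/2}+\eta^{1/3} \gtrsim n^{-1/4}$, which follows uniformly from $\etaf \gtrsim n^{-3/4}$ in that regime --- so the sub-split $\eta \lessgtr d^{3/2}$ that you carry out in the gap regime is avoidable (and note that in your subcase $\eta > d^{3/2}$ you are really using $\wt\xi_2 \ge \eta^{1/3}$ rather than $\wt\xi_2 \ge d^{1/2}$, contrary to what your earlier sentence suggests).
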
 

\begin{proof} 
The scaling relations in \ref{item:xi_and_tilde_xi} follow easily from the scaling relations for $\rho$ in \eqref{eq:scaling_rho} by distinguishing the regimes $\abs{z} \leq 1$ and $\abs{z} >1$. 

The first bound in \ref{item:comparsion_tilde_xi_rho} follows once $\wt{\xi}_2 \gtrsim 1/(n\eta)$ and $(\wt{\xi}_2)^2 \gtrsim \rho/(n\eta)$ are proven. 
For $\abs{z}^2 \leq 1 - n^{-1/2}$, we have $(1-\abs{z}^2)^{1/2} \gtrsim 1/(n\eta)$ if $\eta \geq \etaf(z)$. If $\abs{z}^2 > 1 - n^{-1/2}$ then $\etaf(z) \geq n^{-3/4}$ by definition and, hence, $\eta^{1/3} \gtrsim 1/(n\eta)$. 
This shows $\wt{\xi}_2 \gtrsim 1/(n\eta)$ in all regimes. If $\abs{z} \leq 1$ then $\wt{\xi}_2 \sim \rho$ by \ref{item:xi_and_tilde_xi} and $\rho^2 \gtrsim \rho/(n\eta)$ is easily verified due to \eqref{eq:scaling_rho}. 
For $\abs{z}>1$, $(\wt{\xi}_2)^2 \gtrsim \rho/(n\eta)$ is equivalent to $(\abs{z}^2- 1)^{1/2} + \eta^{1/3} \gtrsim n^{-1/4}$ which follows directly from $\etaf(z) \gtrsim n^{-3/4}$ in this regime. 
This shows the first bound in \ref{item:comparsion_tilde_xi_rho}. 

We note that, owing to $\wt{\xi}_1 = (\wt{\xi}_2)^2$, the second bound in \ref{item:comparsion_tilde_xi_rho} is equivalent to $\wt{\xi}_2 \gtrsim \rho + 1/(n \eta)$. But we know $\wt{\xi}_2 \gtrsim \rho$ from \ref{item:xi_and_tilde_xi}. This completes the proof of Lemma~\ref{lem:inequality_coefficients}. 
\end{proof} 

\begin{proof}[Proof of Theorem~\ref{thm:local_law_H}] 
We will only consider the bounds in \eqref{eq:G_minus_M_standard} for $\eta \leq \tau_*$ since the opposite regime is covered by \cite[Theorem~2.1]{Erdos2017Correlated} due to $\rho \sim \eta^{-1}$ for $\eta \geq \tau_*$ 
by \eqref{eq:scaling_rho} and \cite[Eq.~(3.9)]{Altcirc}. 
The bounds \eqref{eq:G_minus_M_standard} and \eqref{eq:G_minus_M_average_improved} are the analogs of (3.28) and (3.30) in \cite{Cusp1}, respectively. 
Given the preparations presented above, the proofs of \eqref{eq:G_minus_M_standard} and \eqref{eq:G_minus_M_average_improved}, thus, the one of Theorem~\ref{thm:local_law_H}, are identical to the proofs of \cite[Eq.'s~(3.28) and (3.30)]{Cusp1} in \cite[Section~3.3]{Cusp1}. 
Therefore, we only describe the main strategy here and explain the applicability of certain inputs. 

The proof of Theorem~\ref{thm:local_law_H} starts with the following (isotropic) rough bound on $G-M$. 
\begin{lemma}[Rough bound] \label{lem:rough_bound} 
For any $\zeta >0$, there exists a constant $c>0$ such that the rough bounds 
\[ \abs{G-M} \prec n^{-c} \] 
holds on the spectral domain $\DD_\zeta$. 
\end{lemma}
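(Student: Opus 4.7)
The plan is to establish the rough bound by a continuity argument in $\eta$, starting from the regime $\eta \sim 1$ where the bound is easy, and descending in fine steps down to $\eta = n^{-1+\zeta}$. At each step I would use Lemma~\ref{lem:cubic_inequality_refined} to convert an \emph{a priori} polynomial-in-$n$ bound on $G-M$ into a cubic inequality for $\Theta = \scalar{\wh B}{G-M}/\scalar{\wh B}{B}$, and then feed the resulting bound on $\Theta$ back through \eqref{eq:G-M_theta} to obtain a strictly better isotropic bound on $G-M$.

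For the base case at $\eta = \tau_*$, both $G$ and $M$ have operator norm $\lesssim 1/\tau_* \sim 1$ and the stability operator $\cB$ is invertible with bounded inverse (since $\rho$ is bounded away from 0 and $\eta/\rho$ is bounded). In this non-singular bulk regime the standard isotropic local law for Wigner-type matrices with flat variance profile (essentially \cite[Theorem~2.1]{Erdos2017Correlated} applied to $H_z$) gives $\abs{G-M} \prec n^{-1/2}$, a much stronger bound than we need. Next I would introduce a grid $\eta_0 = \tau_* > \eta_1 > \cdots > \eta_N = n^{-1+\zeta}$ with $\eta_k - \eta_{k+1} = n^{-10}$, so that $N \leq n^{11}$ grid points suffice. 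Because $\pt_\eta G = \ii G^2$ and $\norm{G} \leq \eta^{-1} \leq n$, we have the deterministic Lipschitz bound $\norm{G(\ii\eta_{k+1}) - G(\ii\eta_k)} \leq n^{-8}$, and likewise for $M$. Hence any bound of the form $\abs{G-M}\prec n^{-c_k}$ at $\eta_k$ (with $c_k < 7$, say) transfers unchanged to $\eta_{k+1}$.

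The inductive step at the fixed grid point $\eta = \eta_{k+1}$ is the heart of the argument. Given $\abs{G-M}\prec n^{-c_k}$, the hypotheses of Lemma~\ref{lem:cubic_inequality_refined} are satisfied with $\Lambda = \Xi = \theta = n^{-c_k}$, yielding
\begin{equation*}
 \absb{\Theta^3 + \xi_2 \Theta^2 + \xi_1 \Theta} \prec n^{2\eps}\pb{\rho + \sqrt{\eta/\rho} + \sqrt{n^{-c_k}/(n\eta)}} \frac{\rho + n^{-c_k}}{n\eta} + n^{-\eps - 3c_k}.
\end{equation*}
From \eqref{eq:scaling_xi_2_xi_1} we have $\abs{\xi_1}\sim\rho^2 + \eta/\rho\gtrsim\eta^{2/3}\geq n^{-2/3+2\zeta/3}$, with the minimum attained at the cusp scale $\rho\sim\eta^{1/3}$. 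Provided that $c_k$ is large enough that $\abs{\Theta}^2 + \abs{\xi_2\Theta}\lesssim n^{-c_k}(n^{-c_k}+\rho) \ll \abs{\xi_1}$, the quadratic factor $\Theta^2+\xi_2\Theta+\xi_1$ stays comparable to $\xi_1$, and factoring $\Theta$ out of the cubic gives $\abs{\Theta}\prec n^{-c_{k+1}}$ with $c_{k+1} \geq c_k + c_0$ for some fixed $c_0(\zeta)>0$. Plugging this into the isotropic part of \eqref{eq:G-M_theta} yields $\abs{G-M}\prec n^{-c_{k+1}} + \sqrt{(\rho+n^{-c_k})/(n\eta)}\prec n^{-c_{k+1}}$ provided we choose $c_{k+1}$ below a universal threshold. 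Iterating this improvement $O(1)$ times at each $\eta_{k+1}$, then cycling through all $N$ grid points, propagates the rough bound from $\eta = \tau_*$ down to $\eta = n^{-1+\zeta}$.

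The main obstacle is the opening iterations, where a weak input bound $\abs{\Theta}\prec n^{-c_k}$ with $c_k$ small does not yet guarantee that the quadratic and cubic corrections are negligible compared to the linear term $\xi_1\Theta$, particularly in the near-cusp regime where $\xi_1$ is at its smallest. The cubic could in principle have three small roots, and one needs to select the branch corresponding to the analytic continuation from large $\eta$; fortunately, the continuity of $\Theta$ in $\eta$ (guaranteed by Lemma~\ref{lem:cubic_inequality_refined} together with Lemma~\ref{lem:scaling_relation_v_u}~(iii)) combined with the $n^{-8}$ Lipschitz control lets us track the distinguished root across the grid and rule out bifurcations, closing the bootstrap.
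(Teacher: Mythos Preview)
Your approach is essentially the same as the paper's: bootstrap in $\eta$ from $\eta=\tau_*$ (where \cite[Theorem~2.1]{Erdos2017Correlated} gives the base case) down to $\eta=n^{-1+\zeta}$, using Lemma~\ref{lem:cubic_inequality_refined} at each step. The paper's sketch differs only in two technical choices: it propagates control on $G$ via the monotonicity of $\eta\mapsto\eta\norm{G(z,\eta)}_p$ rather than your Lipschitz grid, and it delegates the entire cubic analysis (including branch selection) to the bootstrapping lemma \cite[Lemma~3.10]{Cusp1}, verified through the explicit comparison coefficients $\wt\xi_1,\wt\xi_2$ of~\eqref{eq:def_wt_xi_2_1} and Lemma~\ref{lem:inequality_coefficients}.

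The only place your sketch is genuinely thin is the paragraph on the ``opening iterations.'' Your heuristic that the linear term dominates once $c_k>1/3-\zeta/3$ is right, but the self-consistency loop may not immediately land you there: feeding \eqref{eq:G-M_theta} back gives $\abs{G-M}\prec\theta+\sqrt{(\rho+\Xi)/(n\eta)}$, whose second term can be as large as $n^{-1/6-\zeta/3}$ at the cusp scale, and $1/6+\zeta/3<1/3-\zeta/3$ for small $\zeta$. Your continuity argument is the correct fix in principle, but it needs a quantitative separation between the small root of the cubic (near $0$) and the other two roots (at distance $\sim\abs{\xi_1}^{1/2}$), together with a check that $\abs{\eps_*}\ll\abs{\xi_1}^{3/2}$ so that the perturbed small root stays in its own basin; this is precisely the content packaged into \cite[Lemma~3.10]{Cusp1} via the monotone surrogates $\wt\xi_1,\wt\xi_2$, which the paper invokes rather than reproves.
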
 

\begin{proof} 
The proof of Lemma~\ref{lem:rough_bound} is identical to the one of \cite[Lemma~3.9]{Cusp1}. 
We explain the main idea. 
From \cite[Theorem~2.1]{Erdos2017Correlated}, an initial bound on $\abs{G-M}$ at $\eta = \tau_*$ is deduced. 
We remark that \cite[Theorem~2.1]{Erdos2017Correlated} is also applicable in our setup. 
Using the monotonicity of the map $\eta \mapsto \eta \norm{G(z,\eta)}_p$ (which is shown in the same way as in \cite[Eq.~(5.11)]{Erdos2017Correlated}), the bootstrapping result  
\cite[Lemma~3.10]{Cusp1} for cubic inequalities and Lemma~\ref{lem:cubic_inequality_refined}, this initial bound is strengthened and propagated down to all (small) values of $\eta$ in $\DD_\zeta$. 
Moreover, the assumptions in (ii) of \cite[Lemma~3.10]{Cusp1} are easily checked by using the definitions of $\wt{\xi}_2$ and $\wt{\xi}_1$ in \eqref{eq:def_wt_xi_2_1} 
and Lemma~\ref{lem:inequality_coefficients} \ref{item:xi_and_tilde_xi}. 
This completes the proof of Lemma~\ref{lem:rough_bound}. 
\end{proof} 

As in the proof of \cite[Theorem~2.5]{Cusp1}, we now record an intermediate local law in the next proposition.
It is obtained by following the proof of \cite[Proposition~3.11]{Cusp1} and employing Lemma~\ref{lem:cubic_inequality_refined} instead of \cite[Lemma~3.8]{Cusp1}. 

\begin{proposition}[Local law uniformly for $\eta \geq n^{-1+\zeta}$]\label{pro:local_law_all_scales} 
Let $\zeta>0$. On $\DD_\zeta$, we have the bounds 
\begin{equation} \label{eq:local_law_n_minus_1} 
\abs{G-M} \prec  \theta_* + \sqrt{\frac{\rho}{n \eta}} + \frac{1}{n\eta}, \qquad \abs{G-M}_\mathrm{av} \prec \theta_* + \frac{\rho}{n \eta} + \frac{1}{(n\eta)^2}, 
\end{equation}
where $\theta_*$ is defined through 
\[ \theta_* \defeq \min \{ d_*^{1/3}, d_*^{1/2}/\wt{\xi}_2^{1/2}, d_*/\wt{\xi}_1 \}, \qquad d_* \defeq \wt{\xi}_2 \bigg( \frac{\wt{\rho}}{n \eta} + \frac{1}{(n \eta)^2} \bigg) + \frac{1}{(n \eta)^3} + \bigg( \frac{\wt{\rho}}{n\eta} 
\bigg)^{3/2} \] 
and $\wt{\rho}$ denotes the right-hand side of \eqref{eq:scaling_rho}, i.e. $\rho \sim \wt{\rho}$. \qed
\end{proposition}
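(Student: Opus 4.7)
The plan is to upgrade the rough bound from Lemma~\ref{lem:rough_bound} to the sharp bound \eqref{eq:local_law_n_minus_1} by feeding the a~priori estimates into the refined cubic inequality \eqref{eq:cubic_equation_refined} and then solving that cubic in a self-improving manner, exactly as in the proof of \cite[Theorem~2.5]{Cusp1}. The only modifications compared to \cite{Cusp1} are that the coefficients $(\xi_2,\xi_1)$ of our cubic are the ones produced by Proposition~\ref{pro:cubic_equation} and their size is controlled via Lemma~\ref{lem:inequality_coefficients}; once these inputs are substituted, the combinatorial bootstrap argument from \cite{Cusp1} carries over verbatim.

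First I would fix $(z,\eta)\in\DD_\zeta$ and use Lemma~\ref{lem:rough_bound} to obtain $\abs{G-M}\prec n^{-c}$, which simultaneously gives initial control parameters $\Lambda=\Xi=\theta=n^{-c}$. Feeding these into Lemma~\ref{lem:cubic_inequality_refined} yields both \eqref{eq:G-M_theta} and the approximate cubic inequality $\abs{\Theta^3+\xi_2\Theta^2+\xi_1\Theta}\prec d+n^{-\eps}\theta^3$, where $d$ abbreviates the explicit deterministic right-hand side of \eqref{eq:cubic_equation_refined}. By Lemma~\ref{lem:inequality_coefficients}\ref{item:xi_and_tilde_xi} we have $\abs{\xi_2}\sim\wt\xi_2$ and $\abs{\xi_1}\sim\wt\xi_1$, so that the standard scalar cubic-inequality lemma \cite[Lemma~3.10]{Cusp1} applies and produces the trichotomic bound
\[
\abs{\Theta}\;\prec\;\min\!\Big\{d^{1/3},\ d^{1/2}/\wt\xi_2^{1/2},\ d/\wt\xi_1\Big\}.
\]
Inserting this back into \eqref{eq:G-M_theta} improves the control on $\Xi$ (since $\abs{\Im(G-M)}\leq\abs{G-M}\prec\Theta+\sqrt{(\rho+\Xi)/(n\eta)}$), which in turn shrinks $d$ in the cubic inequality. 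I would iterate this self-improvement a bounded number of times; the iteration converges because each round reduces the exponent of the control parameter by a fixed multiplicative gain, and the fixed point is precisely $d\sim d_*$ and $\abs\Theta\prec\theta_*$.

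Once $\abs\Theta\prec\theta_*$ is established, the isotropic bound in \eqref{eq:local_law_n_minus_1} follows from the first inequality in \eqref{eq:G-M_theta} with $\Xi\lesssim\rho+\theta_*+(n\eta)^{-1}$, and the averaged bound follows from the second inequality in \eqref{eq:G-M_theta} together with the improved averaged $D$-estimate \eqref{eq:D_bound_average} that already contributed the $(n\eta)^{-2}$ term via \eqref{eq:proof_cubic_refined_aux4}. To pass from a pointwise-in-$\eta$ statement to a uniform-in-$\eta$ statement on all of $\DD_\zeta$, I would discretize $\eta$ on a polynomially fine grid, apply the above pointwise estimate at each grid point via a union bound, and fill in the gaps using the Lipschitz continuity of $\eta\mapsto G(z,\eta)$ and $\eta\mapsto M(z,\eta)$ (the latter controlled by \eqref{eq:pt_eta_M_bound}) together with the continuity of $\Theta$ stated at the end of Lemma~\ref{lem:cubic_inequality_refined}; here one also needs to invoke the monotonicity of $\eta\mapsto\eta\norm{G(z,\eta)}_p$ to justify a standard stopping-time/continuity argument that propagates the bound from $\eta=\tau_*$ (where the sharp estimate is already available from \cite[Theorem~2.1]{Erdos2017Correlated}) down to $\eta=n^{-1+\zeta}$.

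The main technical obstacle is not the cubic inequality itself (whose analysis is purely algebraic and is essentially the content of \cite[Lemma~3.10]{Cusp1}) but rather ensuring that the self-improvement loop converges to the claimed $\theta_*$ in \emph{all} the different regimes simultaneously: inside the bulk ($\abs z<1$, where $\wt\xi_2\sim\rho$ and the cusp-like cubic degenerates), across the edge ($\abs{\abs z-1}\lesssim n^{-1/4}$, where both $\xi_2$ and $\xi_1$ are small), and inside the gap ($\abs z>1$, where $\rho$ itself is tiny and the improved off-diagonal estimate \eqref{eq:D_bound_cusp_FA} must be used through \eqref{eq:proof_cubic_refined_aux2} to keep the error in the cubic below $d_*$). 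Each regime is handled by exactly the same algebra as in \cite[Section~3.3]{Cusp1}, with our Lemmas~\ref{lem:cubic_inequality_refined} and~\ref{lem:inequality_coefficients} playing the role of the corresponding inputs there.
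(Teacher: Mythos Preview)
Your approach is essentially the same as the paper's: the paper states that Proposition~\ref{pro:local_law_all_scales} is deduced exactly as in the proof of \cite[Theorem~2.5]{Cusp1}, with Lemma~\ref{lem:cubic_inequality_refined} replacing \cite[Lemma~3.8]{Cusp1}, and your sketch correctly unpacks this into the rough-bound/cubic-bootstrap/iteration scheme with the appropriate inputs from Lemma~\ref{lem:inequality_coefficients} and \cite[Lemma~3.10]{Cusp1}. One small inaccuracy: you write $\abs{\xi_2}\sim\wt\xi_2$, but Lemma~\ref{lem:inequality_coefficients}\ref{item:xi_and_tilde_xi} only gives $\abs{\xi_2}\sim\rho\lesssim\wt\xi_2$ when $\abs{z}>1$ (with possible strict inequality); this is harmless because the bootstrapping lemma \cite[Lemma~3.10]{Cusp1} only requires the one-sided comparison together with the monotonicity of $\wt\xi_2,\wt\xi_1$ in $\eta$, which is precisely why the paper introduces the explicit $\wt\xi_i$ in \eqref{eq:def_wt_xi_2_1}.
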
 

Now, we follow the proof of \cite[Eq.~(3.28)]{Cusp1} and use Lemma~\ref{lem:inequality_coefficients} \ref{item:comparsion_tilde_xi_rho} instead of \cite[Lemma~3.3]{Cusp1} to obtain both bounds in \eqref{eq:G_minus_M_standard}. 

We now strengthen \eqref{eq:G_minus_M_standard} to \eqref{eq:G_minus_M_average_improved} outside of the spectrum. 
If $\abs{z}^2 > 1 + (n^\gamma \eta)^{2/3}$ then $\theta_* \leq d_*/\wt{\xi}_1$, Lemma~\ref{lem:inequality_coefficients} \ref{item:comparsion_tilde_xi_rho}, $(\wt{\xi}_2)^2 = \wt{\xi}_1$ and \eqref{eq:scaling_rho} imply
\begin{equation} \label{eq:proof_local_law_H_aux1} 
\theta_* + \frac{\wt{\rho}}{n \eta} + \frac{1}{(n\eta)^2} \lesssim \frac{\wt{\xi}_2}{\wt{\xi}_1} \bigg( \frac{\rho}{n\eta} + \frac{1}{(n\eta)^2} \bigg) 
\lesssim \frac{1}{(\abs{z}^2 - 1)^{1/2}} \bigg( \frac{\eta}{\abs{z}^2 - 1} + \frac{1}{n \eta} \bigg)\frac{1}{n\eta}  \lesssim \frac{n^{-\gamma/3}}{n\eta} 
\end{equation} 
for $\eta \geq \etaf(z) \gtrsim n^{-3/4}$ (cf.~\cite[Eq.~(3.30)]{Cusp1}).  
Applying \eqref{eq:proof_local_law_H_aux1} to the second bound in \eqref{eq:local_law_n_minus_1} yields the improved bound \eqref{eq:G_minus_M_average_improved} and, thus, completes the proof of Theorem~\ref{thm:local_law_H}. 
\end{proof} 

From Proposition~\ref{pro:local_law_all_scales}, we now conclude Corollary~\ref{cor:delocalization}. 

\begin{proof}[Proof of Corollary~\ref{cor:delocalization}]
The bound on $\abs{G-M}$ in \eqref{eq:local_law_n_minus_1} and the argument from \cite[Corollary~1.14]{Ajankirandommatrix} directly imply Corollary~\ref{cor:delocalization}. 
\end{proof}

We conclude this subsection by collecting two simple consequences of the previous results  and \cite{Altcirc}. For the remainder of Section~\ref{subsec:proof_local_law_H}, $\tau>0$ will be a parameter bounding the spectral parameter $z$ from above. The implicit constant in $\prec$-estimates is allowed to depend on $\tau$.

\begin{corollary} \label{cor:im_local_law_everywhere} 
Let $X$ satisfy \ref{assum:flatness} and \ref{assum:bounded_moments}. Let $\zeta>0$. Then we have 
\[ \abs{\Im \avg{G(z,\eta)-M(z,\eta)}} \prec \frac{1}{n\eta} \] 
uniformly for all $z \in D_{\tau}(0)$ and all $ \eta \in [n^\zeta \etaf(z), n^{100}]$.  
\end{corollary}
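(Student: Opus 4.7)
The plan is to cover $D_\tau(0)$ with three overlapping regions based on $|z|$ and invoke a different (already-established) result in each. Specifically, I would split according to whether $|z|$ lies in $[0, 1-\tau_*/2]$ (bulk), in $[1-\tau_*, 1+\tau_*]$ (transition), or in $[1+\tau_*/2, \tau]$ (exterior), where $\tau_*\sim 1$ is the constant from Theorem~\ref{thm:local_law_H}. Since $|\Im \avg{G-M}| \leq |\avg{G-M}|$, it suffices to establish the stronger $|\avg{G-M}|\prec 1/(n\eta)$ in each region.

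In the transition region, the bound is an immediate consequence of the second estimate in \eqref{eq:G_minus_M_standard} applied with the deterministic matrix $R=I$, for which $\norm{R}=1$. This directly gives $|\avg{G-M}| \prec 1/(n\eta)$ uniformly for $\eta \in [n^\zeta\etaf(z), n^{100}]$. In the bulk region $|z|\leq 1-\tau_*/2$, I would invoke the bulk local law \cite[Theorem~2.5]{Altcirc}, which provides the averaged bound $|\avg{G-M}| \prec 1/(n\eta)$ down to scales $\eta \geq n^{-1+\zeta}$; since $\etaf(z)\sim (1-|z|^2)^{-1/2}n^{-1}\lesssim n^{-1}$ in this regime, the hypothesis $\eta \geq n^\zeta \etaf(z)$ already implies $\eta \geq n^{-1+\zeta}$ and the result of \cite{Altcirc} is applicable.

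In the exterior region $|z|\geq 1+\tau_*/2$, the eigenvalue density of $H_z$ has a gap of size $\Delta \gtrsim \tau_*^{3/2}$ around the origin, so $\rho\sim \eta$ and $\eta/\rho \sim 1$ uniformly for the relevant range of $\eta$. In this regime the stability operator $\cB$ becomes well-conditioned: as in the proof of Lemma~\ref{lem:derivatives_M_controlled_by_eta} for large $|z|$ (combined with $\normtwoop{\cF}\leq 1 - c$ for some $c\sim 1$ via the factorization $\cB = \cC_Q(1-\cC_U\cF)\cC_Q^{-1}$ from \eqref{eq:cB_rep_C_F}), one has $\norm{\cB^{-1}}\lesssim 1$ uniformly. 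With $\cB^{-1}$ bounded, the full local law machinery for correlated Hermitian matrices as in \cite[Theorem~2.1]{Erdos2017Correlated} applies to $H_z$ without the need for cubic-equation analysis, and yields the averaged bound $|\avg{G-M}|\prec 1/(n\eta)$ on the required range.

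The main obstacle is the exterior regime, where care is needed for two reasons. First, our self-energy operator $\cS$ differs from the canonical choice used in \cite{Erdos2017Correlated} (cf.~Remark~\ref{rem:convention_for_S}); however the identity $\scalar{E_-}{G}=0$ from \eqref{eq:scalar_E_minus_G_equals_0} together with the uniform bound on $\cB^{-1}$ means that the stability analysis of \eqref{eq:stability_equation} reduces to a direct application of the implicit function theorem, and the fluctuation-averaging estimates of Proposition~\ref{pro:D_bounds} (which are insensitive to which variant of $\cS$ is used, being moment bounds on $D$ alone) deliver the needed $1/(n\eta)$ averaged bound. Second, one must check that for $|z|$ just above $1+\tau_*$ the improved exterior bound \eqref{eq:G_minus_M_average_improved} smoothly transitions into this simpler regime, but this is straightforward since both arguments produce bounds no worse than $1/(n\eta)$.
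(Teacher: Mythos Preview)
Your approach is essentially the same as the paper's: cover the edge regime by Theorem~\ref{thm:local_law_H} and the complementary regime by the previously established local law for $H_z$ away from the edge. The paper's own proof is a single sentence citing \eqref{eq:G_minus_M_standard} for $\abs{\abs{z}-1}\le\tau_*$ and \cite[Eq.~(5.4)]{Altcirc} for the rest, together with the definition of $\etaf$.

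Two minor points of comparison. First, your bulk citation is slightly off: \cite[Theorem~2.5]{Altcirc} is the local circular law for $X$, not the local law for $H_z$; the correct input is the $H_z$ local law \cite[Eq.~(5.4)]{Altcirc}, which in fact handles both the bulk $\abs{z}\le 1-\tau_*$ and the exterior $\abs{z}\ge 1+\tau_*$ in one stroke. Second, your exterior discussion is more elaborate than necessary. The argument via $1-\normtwoop{\cF}\sim\eta/\rho\gtrsim 1$ and hence $\norm{\cB^{-1}}\lesssim 1$ is correct, but invoking Proposition~\ref{pro:D_bounds} there is a slight mismatch since that proposition is stated only for $(z,\eta)\in\DD_0$, i.e.\ $\abs{\abs{z}-1}\le\tau_*$; the cleaner route is simply to quote the away-from-edge local law already proved in \cite{Altcirc}, which is exactly what the paper does.
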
 

\begin{proof} 
The corollary is a direct consequence of the local law near the edge, \eqref{eq:G_minus_M_standard}, the 
local law away from the edge, \cite[Eq.~(5.4)]{Altcirc}, and the definition of $\etaf$ in \eqref{eq:def_etaf}.  
\end{proof} 

We denote the eigenvalues of $H_z$ by $\lambda_1(z), \ldots, \lambda_{2n}(z)$. 
The following lemma provides a simple bound on the number of eigenvalues of $H_z$ in the interval $[-\eta,\eta]$. It is an extension of \cite[Eq.~(5.22)]{Altcirc} to the edge regime. 

\begin{lemma}[Eigenvalues of $H_z$ near 0] \label{lem:number_eigenvalues} 
Let $X$ satisfy \ref{assum:flatness} and \ref{assum:bounded_moments}. Let $\zeta>0$. Then we have 
\[ \abs{ \{ i \in [2n] \colon \abs{\lambda_i(z)} \leq \eta \}} \prec n \eta \rho + 1\] 
uniformly for all $z \in D_\tau(0)$ and for all $\eta \in [n^\zeta\etaf(z),n^{100}]$. 
\end{lemma}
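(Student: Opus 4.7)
The plan is to reduce the eigenvalue counting to a bound on $\Im \avg{G(z,\eta)}$, for which Corollary~\ref{cor:im_local_law_everywhere} is tailor-made. Concretely, using the spectral decomposition of the Hermitian matrix $H_z$, I will first observe that for any $\eta>0$,
\[ \Im \Tr G(z,\eta) \;=\; \sum_{i=1}^{2n} \frac{\eta}{\lambda_i(z)^2+\eta^2} \;\geq\; \sum_{i:\,|\lambda_i(z)|\leq \eta} \frac{\eta}{\lambda_i(z)^2+\eta^2} \;\geq\; \frac{1}{2\eta}\,\absb{\{ i\in[2n]\colon|\lambda_i(z)|\leq\eta\}}, \]
so that $N_\eta(z)\defeq|\{ i\in[2n]\colon|\lambda_i(z)|\leq\eta\}|$ satisfies $N_\eta(z)\leq 2\eta\,\Im \Tr G(z,\eta)=4n\eta\,\Im\avg{G(z,\eta)}$.

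Next, I will split $\Im\avg{G(z,\eta)}=\Im\avg{M(z,\eta)}+\Im\avg{G(z,\eta)-M(z,\eta)}$ and bound the two pieces. For the deterministic part, the definition \eqref{eq:def_rho} gives $\Im\avg{M(z,\eta)}=\avg{\Im M(z,\eta)}=\pi\rho(z,\eta)$. For the fluctuating part, Corollary~\ref{cor:im_local_law_everywhere} applies precisely in the range $\eta\in[n^\zeta\etaf(z),n^{100}]$ with $z\in D_\tau(0)$ and yields
\[ \absb{\Im\avg{G(z,\eta)-M(z,\eta)}}\;\prec\; \frac{1}{n\eta}. \]
Combining these two facts gives $\Im\avg{G(z,\eta)}\prec \rho(z,\eta)+\tfrac{1}{n\eta}$.

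Inserting this into the inequality $N_\eta(z)\leq 4n\eta\,\Im\avg{G(z,\eta)}$ then produces
\[ N_\eta(z)\;\prec\; 4n\eta\,\Big(\rho(z,\eta)+\tfrac{1}{n\eta}\Big)\;=\;4n\eta\,\rho(z,\eta)+4, \]
which is exactly the claimed bound $N_\eta(z)\prec n\eta\rho+1$. The uniformity statement transfers automatically from the uniformity of Corollary~\ref{cor:im_local_law_everywhere}. There is no real obstacle here beyond having the uniform averaged imaginary-part local law of the correct precision up to the spectral edge; once Corollary~\ref{cor:im_local_law_everywhere} is available, the argument is entirely soft and uses only the standard Poisson-kernel lower bound for $\Im G$. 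In particular, no new stability or fluctuation analysis is needed at this step.
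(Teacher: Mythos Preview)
Your proof is correct and essentially identical to the paper's: both use the Poisson-kernel lower bound $\frac{|\Lambda_\eta|}{2\eta}\le 2n\,\Im\avg{G(z,\eta)}$ from the spectral decomposition of $H_z$, then split $\Im\avg{G}=\pi\rho+\Im\avg{G-M}$ and invoke Corollary~\ref{cor:im_local_law_everywhere} for the fluctuation term. The paper compresses this into a single chain of inequalities, but the content is the same.
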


\begin{proof} 
We define $\Lambda_\eta \defeq \{ i \in [2n] \colon \abs{\lambda_i(z)} \leq \eta \}$. 
For $\eta \geq n^{\zeta}\etaf$, we obtain from Corollary~\ref{cor:im_local_law_everywhere} and $\rho = \avg{\Im M}/\rho$ that 
\[ \frac{\abs{\Lambda_\eta}}{2\eta} \leq \sum_{i \in \Lambda_\eta} \frac{\eta}{\eta^2 + \lambda_i(z)^2} \leq 2n \Im \avg{G(z,\eta)} \prec n \bigg(\rho + \frac{1}{n \eta} \bigg) \lesssim n \rho + \frac{1}{\eta}. \] 
This completes the proof of Lemma~\ref{lem:number_eigenvalues}. 
\end{proof}

\subsection{Cusp fluctuation averaging -- Proof of Proposition~\ref{pro:D_bounds}} \label{subsec:proof_D_bounds} 

In this subsection we will provide the proof of Proposition~\ref{pro:D_bounds}. Since the self-consistent density of states of the Hermitian matrix $H=H_z$  develops a cusp singularity at the origin in the regime $\abs{\abs{z}-1}\ll 1$, this result is analogous to \cite[Theorem~3.7]{Cusp1}, which provides an improved bound for specific averages of the random error matrix in the MDE. This improved bound is called \emph{cusp fluctuation averaging} and takes the form \eqref{eq:D_bound_cusp_FA} in the current work. In \cite{Cusp1} the expectation $\E H$ was  diagonal and the self-energy operator was assumed to satisfy the flatness condition \cite[Eq.~(3.6)]{Cusp1}. Both conditions are violated in our current setup and thus the result from \cite{Cusp1} is not directly applicable. However, with minor modifications the proof of \cite[Theorem~3.7]{Cusp1} can be adjusted to yield  Proposition~\ref{pro:D_bounds}. In fact, the cancellation that underlies the cusp fluctuation averaging \eqref{eq:D_bound_cusp_FA} is simpler and more robust for $H$ with the bipartite structure \eqref{eq:def_H_z}. An indication of this fact is that \eqref{eq:D_bound_cusp_FA} holds for any $R \in \oM$ while the corresponding bound in \cite[Theorem~3.7]{Cusp1} only holds when the error matrix is averaged against a specific vector that depends on  
 $M$, the solution to the MDE.

For the purpose of being able to follow the strategy from \cite{Cusp1} very closely we define 
\begin{equation}\label{eq:quantities from Cusp1}
W\defeq H_z-\E H_z\,, \qquad \wt{\cal{S}}[R]\defeq \E W RW\,, \qquad \wt{D} \defeq WG+ \wt{\cal{S}}[G]G\,.
\end{equation}
The modified self-energy operator $\wt{\cal{S}}$  is introduced to match the convention of \cite{Cusp1} (cf.\ Remark~\ref{rem:convention_for_S}). 
 This differs from the self-energy operator $\cal{S}$ defined in this paper (see \eqref{eq:def_cS} and Remark~\ref{rem:convention_for_S}), which is
the  block diagonal part of $\wt{\cal S}$, consisting
of the blocks  $ \E  X R_{22} X^*$ and  $ \E  X^* R_{11} X$, both themselves being diagonal since $X$ has independent entries. 
The difference between the two versions of the self-energy is 
\begin{equation}\label{def of cal T}
\cal{T}[ R ] \defeq (\wt{\cal{S}}-\cal{S} )[R]=
\left(
\begin{array}{cc}
0 & \E X R_{21} X 
\\
\E X^* R_{12}X^*&0
\end{array}
\right)
=  \left(
\begin{array}{cc}
0 & \smallT  \odot R_{21}^t 
\\
 {\smallT}^*  \odot R_{12}^t &0
\end{array}
\right)
= T \odot R^t
\,,
\end{equation}
where $\odot$ indicates the entrywise (Hadamard) matrix product and we introduced the matrices $\smallT=(\E x_{ij}^2)_{i,j=1}^n \in \C^{n\times n}$ and $T = (t_{ij})_{i,j=1}^{2n}\in\C^{2n\times 2n}$ with entries 
$t_{ij} = \E w_{ij}^2$ being the second moments of the entries of $W =(w_{ij})_{i,j=1}^{2n}$ from \eqref{eq:quantities from Cusp1}.  
The modified error matrix $\wt{D}$ was labelled $D$ in \cite{Cusp1} and is the natural error when considering the MDE with self-energy $\wt{\cal{S}}$ and corresponding solution $\wt{M}$. In the current work we will stick to the convention from \cite{Altcirc} with respect to the definition of $\cal{S},D,M$ in order to keep the MDE and its solution simple and  thus we indicate the corresponding quantities  $\wt{\cal{S}},\wt{D},\wt{M}$ from \cite{Cusp1} by a tilde. 
 Another notational difference is that the dimension of $H$ was denoted by $N$ in \cite{Cusp1}, that corresponds to $N=2n$ in this paper.

We start the proof of  Proposition~\ref{pro:D_bounds} by showing that it suffices to establish its statement for $D$ replaced by $\wt{D}$. Let us therefore assume the following proposition whose proof is the main content of this subsection. 
\begin{proposition} \label{pro:wtD_bounds} 
The statement of Proposition~\ref{pro:D_bounds} holds with $D$ replaced by $\wt{D}$, i.e. under the same assumptions and with the same constants we have the estimates
\begin{subequations} 
\begin{align} 
\norm{\scalar{\bx}{ \wt{D}\by}}_p & \leq_{\eps,p} \norm{\bx} \norm{\by} n^\eps  \psi_q' \Big( 1 + \norm{G}_q \Big)^C \bigg( 1 + \frac{\norm{G}_q}{\sqrt{n}} \bigg)^{Cp}, \label{eq:wtD_bound_isotropic} \\ 
\norm{\avg{R \wt{D}}}_p & \leq_{\eps,p} \norm{R} n^\eps   \Big[\psi_q'\Big]^2 \Big( 1 + \norm{G}_q \Big)^C \bigg( 1 + \frac{\norm{G}_q}{\sqrt{n}} \bigg)^{Cp},\label{eq:wtD_bound_average} 
\end{align} 
and for $R \in \oM$ the improved estimate 
\begin{equation} \label{eq:wtD_bound_cusp_FA} 
\norm{\avg{R \wt{D}}}_p \leq_{\eps,p} \norm{R} n^\eps  \sigma_q \Big[\psi + \psi_q'\Big]^2 \Big( 1 + \norm{G}_q \Big)^C \bigg( 1 + \frac{\norm{G}_q}{\sqrt{n}} \bigg)^{Cp}. 
\end{equation}
Furthermore, with $\cal{T}$ from \eqref{def of cal T}  and for an arbitrary deterministic matrix $R$, 
\begin{equation}\label{eq:cal T GG}
\norm{\avg{R\cal{T}[G]G} }_p \leq_{\eps,p} \norm{R} n^\eps  \sigma_q \Big[\psi + \psi_q'\Big]^2 \Big( 1 + \norm{G}_q \Big)^C \bigg( 1 + \frac{\norm{G}_q}{\sqrt{n}} \bigg)^{Cp}. 
\end{equation}
\end{subequations} 
\end{proposition}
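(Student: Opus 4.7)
The plan is to reduce the proposition to the analogous result \cite[Theorem~3.7]{Cusp1} by adapting its proof to the present bipartite (Kronecker) setup, where the flatness of $\wt{\cal S}$ and the diagonality of $\E H_z$ both fail, but where the specific block structure of $H_z$ in~\eqref{eq:def_H_z} provides the extra cancellations needed. First I will recall that, because $W$ has independent entries with all cumulants bounded by $\mu_m n^{-m/2}$ (Assumption~\ref{assum:bounded_moments}), we may expand high moments of $\langle \bx, \wt D \by\rangle$ and $\avg{R \wt D}$ via the multivariate cumulant expansion exactly as in~\cite[Section~4.1]{Cusp1}. Each such term is represented by a decorated graph encoding contractions of $W$-entries against resolvent entries $G_{ab}$ and deterministic decorations. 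At this stage the standard power counting (the degree of each internal vertex carries the $\psi'_q$ small factor from the Ward identity $\sum_b \abs{G_{ab}}^2 = (\Im G)_{aa}/\eta$) yields the weaker estimates \eqref{eq:wtD_bound_isotropic} and \eqref{eq:wtD_bound_average} with essentially no modification of the arguments in \cite{Cusp1}; the non-diagonal part of $M$ only affects the constants.

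The core of the proof is the cusp fluctuation averaging \eqref{eq:wtD_bound_cusp_FA}. In \cite{Cusp1} the extra factor $\sigma_q$ was extracted by averaging $\wt D$ against a specific direction determined by the unstable eigenvector of the linearized stability operator. In our setting the role of this privileged direction is played by the space $\oM$: when $R \in \oM$, then $\wh B^*M R$ is (to leading order) off-diagonal while $\cS[\cdot]$ lives in $\dM$, producing the same cancellation automatically. Concretely, I will revisit the ``cusp graphs'' appearing in \cite[Section~4.2]{Cusp1}: for every graph whose naive value is of order $(\psi_q')^2$, I will show that when the averaging matrix $R$ lies in $\oM$ there is an additional pairing of blocks of $M$ (using $M \in \oM + \ord(\rho)$ from \eqref{eq:Im_M_in_terms_of_Q_U} together with $\Im M \in \dM$) that produces either an extra factor $\rho$, or an extra Ward factor $\psi_q'$, or an extra $\sqrt{\eta/\rho}$ via the MDE, \eqref{eq:mde}, so that the overall gain is $\rho + \psi + \sqrt{\eta/\rho} + \psi_q' + \psi_q'' = \sigma_q$. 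This is the analogue of the cusp cancellation identity proved in \cite[Lemmas~4.6 and 4.9]{Cusp1}, but is structurally simpler here: it follows from $P E_- = -E_- P$ (see \eqref{eq:P_Emin}) and the fact that the off-diagonal/diagonal splitting of $\C^{2n\times 2n}$ is invariant under $\cS$ and interchanged by multiplication by $M$.

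For the last estimate \eqref{eq:cal T GG}, I will observe from \eqref{def of cal T} that $\cal T$ maps $\dM \to \oM$ and $\oM \to \dM$, so that $\cal T[G]G$ naturally contains the same block structure responsible for the cancellation in \eqref{eq:wtD_bound_cusp_FA}. The pointwise estimate $\avg{R \cal T[G] G} = \frac{1}{2n}\sum_{ab} R_{ba} t_{ab} G_{ba} G_{aa}$ (with appropriate transposition of blocks) can then be controlled by the same cumulant/graph argument, using that each factor $G_{ba}$ with $a\neq b$ delivers a Ward factor $\psi_q'$ and that the diagonal factor $G_{aa}$ is close to $M_{aa} \sim \rho$ on the off-diagonal blocks — giving the required factor $\sigma_q(\psi+\psi_q')^2$ after one application of the local law bootstrap hypothesis $\|G-M\|_q \le \psi_q''$.

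The main obstacle I anticipate is tracking the bookkeeping in the graph-expansion step when $\E H_z \neq 0$: the deterministic shift $z$ enters through the MDE, and although $M$ encodes it, the graphical expansion of \cite{Cusp1} was written for $\E H = 0$ models. I will therefore absorb the shift by working with the centered matrix $W = H_z - \E H_z$ throughout (as already done in the definition of $\wt D$), and treat the $z$-dependence only through $M$ and $\cS$. Once this is set up, the proof reduces to verifying that the algebraic identities underlying the cusp cancellation of \cite{Cusp1} carry over under the weaker hypothesis that $\cS$ has the block form~\eqref{eq:def_cS} instead of being flat — which is precisely what the $\dM/\oM$ decomposition of this paper is designed to handle.
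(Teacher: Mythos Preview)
Your high-level intuition is correct: the extra $\sigma_q$ gain in \eqref{eq:wtD_bound_cusp_FA} ultimately comes from the fact that $M$ is almost off-diagonal, i.e.\ $|M_{aa}|\lesssim\rho$. However, the concrete mechanism you describe does not match the paper's and contains some red herrings. The paper never invokes $\wh B$, the stability operator, or the relation $PE_-=-E_-P$ in this proof; the gain is obtained purely at the level of the single-index graph expansion of \cite[Section~4]{Cusp1}. Specifically, the paper writes $R=J\diag(\br)$ with $J=\bigl(\begin{smallmatrix}0&1\\1&0\end{smallmatrix}\bigr)$, so that $\avg{R\wt D}=\avg{\diag(\br)\wt D J}$, and then extends the graph calculus by a single new ingredient: \emph{twisted} $G$-edges representing $GJ$ and $(GJ)^*$. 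With this device every graph in $\cal G(p)$ is in one-to-one correspondence with the graphs in \cite{Cusp1}, and the $\sigma$-cells are exactly as in \cite[Definition~4.10]{Cusp1} with one edge twisted. The cancellation is then extracted not by acting with the stability operator as in \cite{Cusp1}, but by the simpler substitution $G=M-G\cS[M]M-GWM$ at the twisted edge of a $\sigma$-cell; the leading term picks up a factor $\cS[MJK^{(b)}M^*]$, which is diagonal with entries $\lesssim\rho/N$ precisely because $\diag(M)=\ord(\rho)$. This is the only place where the smallness of $\diag(M)$ is used.

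Your treatment of \eqref{eq:cal T GG} also needs correction: the formula you wrote for $\avg{R\cal T[G]G}$ is not right (note $\cal T[G]=T\odot G^t$, so $\avg{R\cal T[G]G}=\frac{1}{2n}\sum_{i,j,k}R_{ki}t_{ij}G_{ji}G_{jk}$, a genuine triple sum), and the heuristic ``diagonal factor $G_{aa}\sim\rho$'' does not directly give the estimate. In the paper, \eqref{eq:cal T GG} is handled by the same $\sigma$-cell mechanism as \eqref{eq:wtD_bound_cusp_FA}; the term $G\cal T[M]MJK^{(b)}G^*$ that appears in the expansion \eqref{eq:sigma cell expansion} is controlled by noting that $\cal T[M]MJK^{(b)}$ is block-diagonal with norm $\lesssim N^{-1}$, which yields an extra $N^{-1}$ edge weight. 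The obstacle you flag about $\E H_z\ne 0$ is not an issue: the graph expansion in \cite{Cusp1} and \cite{Erdos2017Correlated} is already formulated for $W=H-\E H$ and nowhere requires $\E H$ to be diagonal; the shift enters only through $M$ as you say.
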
 
Given the bounds from Proposition~\ref{pro:wtD_bounds} it suffices to  estimate the difference  $\wt{D}-D = \cal{T}[G]G$. First \eqref{eq:D_bound_isotropic} follows from \eqref{eq:wtD_bound_isotropic} because for normalized vectors $\bx, \by \in \C^{2n}$ we have 
\[
\norm{\scalar{\bx}{\cal{T}[G]G \by}}_p =   \normb{{\textstyle \sum_i} G_{i \bv_i }G_{i\by}}_p 
\lesssim  n^\eps \norm{G}_{1/\eps}\pbb{\frac{\norm{\im G}_p}{n \eta}}^{1/2}\le n^\eps \norm{G}_{q}\psi_q',
\]
where in the equality we introduced  the vectors $\bv_i = (t_{ji}\ol{x}_j)_j $ with $\norm{\bv_i}_\infty \lesssim \frac{1}{n} $ and in the first inequality we used the Ward identity in the second factor after applying  the general bound $\norm{\sum_i X_i Y_i}_p \le  n^\eps \sup_i\norm{X_i}_{1/\eps}\norm{\sum_i  \abs{Y_i}}_{2p}$ for any random variables $(X_i,Y_i)_{i=1}^{2n}$ and $\eps \in (0,1/2p)$.
Then \eqref{eq:wtD_bound_average} implies \eqref{eq:D_bound_average} by taking the $\norm{\2\cdot\2}_p$-norm on both sides of 
\[
\abs{\avg{R\cal{T}[G]G}} \lesssim \frac{\norm{R}}{n} \avg{G^*G} = \norm{R}\frac{\avg{\im G}}{n\eta} \,,
\]
where we used $\normtwoop{\cal{T}}\lesssim \frac{1}{n}$. 
Finally, \eqref{eq:D_bound_cusp_FA} immediately follows from \eqref{eq:wtD_bound_cusp_FA} and \eqref{eq:cal T GG}.

The remainder of this subsection is dedicated to proving Proposition~\ref{pro:wtD_bounds}. To avoid repetition we will only point out the necessary modifications to the proof of \cite[Theorem~3.7]{Cusp1}.

The proof of \eqref{eq:wtD_bound_isotropic} and \eqref{eq:wtD_bound_average} is exactly the same as the proof of \cite[Eq.'s (3.11a) and (3.11b)]{Cusp1}, which,
in turn directly follow from  \cite[Theorem 4.1]{Erdos2017Correlated}. Note that this latter theorem does not assume flatness, i.e.\ 
lower bound on $\wt{\cS}$, hence it is directly applicable to our $H$ as well. We also remark that the proof of \eqref{eq:wtD_bound_isotropic} and \eqref{eq:wtD_bound_average}
requires only double index graphs (in the sense of \cite{Erdos2017Correlated}) and their estimates rely only on the
power counting of Wardable edges. A self-contained summary of the necessary concepts can be found in 
\cite[Section~4.1-4.7]{Cusp1}, where the quite involved cumulant expansion from \cite{Erdos2017Correlated}, originally designed to handle
any correlation, is translated into the much simpler independent setup. This summary  in \cite{Cusp1} has the advantage that it also introduces
the single index graphs as a preparation for the more involved $\sigma$-cell estimates needed for the cusp fluctuation averaging. 

In the rest of the proof we focus on \eqref{eq:wtD_bound_cusp_FA} and \eqref{eq:cal T GG} and we assume that the reader is
 familiar with \cite[Section~4]{Cusp1}, but no familiarity with \cite{Erdos2017Correlated} is assumed. We will exclusively work
 with single index graphs 
as defined in \cite[Section~4.2]{Cusp1}. In the rest of this section 
we use $N=2n$ for easier comparison with \cite{Cusp1}.

We start with the proof of \eqref{eq:wtD_bound_cusp_FA}. 
We write $R \in \oM $ as $R = J\diag(\br)$ for some $\br \in \R^{2n}$, where we can without loss of generality assume that $R$ has real entries and the matrix $J$ that exchanges $\oM$ and $\dM$ is 
\[
J \defeq \left(
\begin{array}{cc}
0 & 1 
\\
1& 0
\end{array}
\right)\,.
\]
With this notation the left-hand side of \eqref{eq:wtD_bound_cusp_FA}  takes the form $\avg{\diag(\br) \wt{D}J}= \avg{\diag(\br)(W + \wt{\cal{S}}[G])GJ}$. This form exactly matches the left-hand side of \cite[Eq.~(3.11c)]{Cusp1}
with $\br = \mathbf{pf}$, 
 except that the last factor inside the trace is $GJ$ instead of just $G$. To accommodate this change we slightly extend the set of single index graphs $\Gamma \in \cal{G}$ defined  in \cite[Section~4.2]{Cusp1} by allowing two additional types of $G$-edges in $\rm{GE}=\rm{GE}(\Gamma)$. We call the original $G$-edges from  \cite{Cusp1}, that encode the entries of $G$ and $G^*$, \emph{straight $G$-edges} and add  new \emph{twisted $G$-edges} that represent the entries of $GJ$ and $(GJ)^* = JG^*$, respectively.
  Graphically $(GJ)_{ab}$  will be encoded by a solid directed line from vertex $a$ to vertex $b$ and with a superscript $J$ on the
 line. Similarly, $(GJ)^*_{ab}$ is a dashed line from $a$ to $b$ with a superscript $J$. Hence, the new twisted $G$-edges are represented by 
\[ GJ = \ssGraph{a --[g,J] b;}\, , \qquad (GJ)^* = \ssGraph{a --[s,J] b;}\,. \] 
  The terminology $G$-edge will refer to all four types of edges. 
In particular, all of them are taken into account for the $G$-edge degree of vertices.

According to the single index graph expansion (cf.\ \cite[Eq.~(4.10)]{Cusp1}) the $p$-th moment of  $\avg{R \wt{D}}$ can now be written as a sum over the values $\rm{Val}(\Gamma)$ associated to the graphs $\Gamma$ within the subset $\cal{G}(p) \subset \cal{G}$ of single index graphs. This subset originates from the single index resolution (cf.\ \cite[Definition~4.2]{Cusp1}) of double index graphs, i.e.
\begin{equation}
\label{eq:graph expansion}
\E \abs{\avg{\diag(\br) \wt{D}J}}^p = N^{-p} \sum_{\Gamma \in \cal{G}(p)}\rm{Val}(\Gamma)+ \ord(N^{-p})\,.
\end{equation}
The twisted $G$-edges enter into the graphs $\cal{G}(p)$ through the following simple modification (iv)' of \cite[(iv) from Definition~4.2]{Cusp1} that originates from the fact that a wiggled $G$-edge in  double index graphs is now associated to the matrix $GJ\diag(\br)$ and its adjoint instead of $G\diag(\br)$ with $\br = \mathbf{pf}$ as in \cite{Cusp1}:
\begin{itemize}
\item[(iv)'] If a wiggled $G$-edge is mapped to an edge $e$ from $u$ to $v$, then $v$ is equipped with a weight of $\br$ and $e$ is twisted. If a  wiggled  $G^*$-edge is mapped to an edge $\wt{e}$ from $u$ to $v$, then  $u$ is equipped with weight $\br$ and $\wt{e}$ is twisted. All vertices with no weight specified in this way  are equipped with constant weight $\mathbf 1$. 
\end{itemize}
The above changes reveal a one-to-one correspondence between the set of graphs $\cal{G}(p)$ in \cite{Cusp1} and its modification in the current work. 
This correspondence shows that the single index graph expansion is entirely unaffected by the presence of the off-diagonal matrix $J$ apart from replacing each 
weight $\mathbf{pf}$ in $\cal{G}(p)$ from \cite{Cusp1} by a weight $\br$ and replacing $p$  straight $G$-edges by twisted ones. 
More precisely, if in a graph from \cite{Cusp1}  a vertex $v$ had a weight $(\mathbf{pf})^k$ (see \cite[Fact 2 at the end of Section~4.3]{Cusp1} for the introduction of the concept of a general weight and \cite[Eq.~(4.12)]{Cusp1} for an example), then in its corresponding graph the vertex $v$ is adjacent to exactly $k_1$ twisted $G$-edges  that end at $v$ and $k_2$ twisted $G^*$-edges that start from $v$ such that $k_1 +k_2=k$.

Since the  graphs contained in the sets $\cal{G}$ and $\cal{G}(p)$ do not differ between the current work and \cite{Cusp1} once the distinction between straight and twisted edges is dropped and the exact form of the weight $\br$ is irrelevant, any result from \cite[Section~4]{Cusp1} that is insensitive to these distinctions can be directly applied here. 
When determining whether a subset $\rm{GE}_W \subset \rm{GE}$ is classified as Wardable (cf.\ \cite[Definition~4.6]{Cusp1}), 
we take into account all $G$-edges, i.e.\ straight as well as twisted $G$-edges. This is justified since the Ward estimates (cf.\ \cite[Eq.~(4.14b)]{Cusp1})
\bels{Ward estimates}{
\sum_a \abs{(GJ)_{ab}}^2 = \frac{\big(J(\im G)J\big)_{bb}}{\eta}\lesssim N \psi^2\,, \quad \sum_b \abs{(GJ)_{ab}}^2 = \frac{\im G_{aa}}{\eta}\lesssim N \psi^2
\,, \quad \psi= \bigg(\frac{\rho}{N \eta}\bigg)^{1/2}
}
are valid  for twisted $G$-edges as well.  As in \cite{Cusp1} the inequalites \eqref{Ward estimates} are meant in a high moment sense. 
In particular, \cite[Lemmas~4.7, 4.8 and 4.11]{Cusp1} 
remain valid. 
Note that \cite[Lemma 4.11]{Cusp1} involves the concept of a $\sigma$-cell that we explain next. 

The most relevant difference between our setup and \cite{Cusp1} concerns the specific mechanism behind  the cusp fluctuation averaging. This mechanism is revealed by exploiting a local cancellation within the graphs $\cal{G}$ appearing along the expansion that is associated to specific edges, called $\sigma$-cells (cf.\ \cite[Definition~4.10]{Cusp1}). For the following discussion we recall the definition of $\sigma$-cells and rephrase it so that it fits our setup.

\begin{definition} A $\sigma$-cell is  an interaction edge $e=(a,b)$ inside a graph $\Gamma \in \cal{G}$ such that
there are exactly two $G$-edges adjacent to each endpoint of $e$, loops are not allowed, and to precisely one
of the endpoints (say $a$) an additional weight   $\br$ is attached. In a typical $\sigma$-cell there are four $G$-edges that
connect external   vertices $x,y,u,v$ with $(a,b)$ and the adjacent $G$-edges encode the expression 
\begin{equation}\label{eq:sigma cell}
\E \2\sum  r_a (\wt{G}J)_{xa}G^{(1)}_{ay}K_{ba}G^{(2)}_{ub}G^{(3)}_{vb}f_{xyuv}\,.
\end{equation}
Here $f_{xyuv}$ represents the rest of the graph and is independent of $a,b$; the sum runs over all vertex indices and $K_{ba}$ is either $\E \abs{ w_{ba}}^2$, 
$\E  w_{ba}^2$ or $\E {w}_{ab}^2$. Furthermore, $\wt{G} \in \{G, \ol{G}\}$ and $G^{(i)} \in \{G,G^t,G^*,\ol{G}\}$ 
(here $G^t$ and $\ol{G}$  just denote a $G$- or a $G^*$-edge with opposite orientation to save
us from  writing out all possibilities in \eqref{eq:sigma cell}).
Some $G$'s in \eqref{eq:sigma cell} may coincide giving rise to  two additional options for a $\sigma$-cell, the first one with two
external indices, and the second one with no external index:
\begin{equation}\label{eq:sigma cell1}
\E \2\sum  r_a (\wt{G}J)_{xa}G^{(1)}_{ab}K_{ba}G^{(2)}_{ub}f_{xu}\,, \qquad \mbox{and}\qquad
 \E \2\sum  r_a (\wt{G}J)_{xa}G^{(1)}_{ab}K_{ba}f\,.
\end{equation}
\end{definition}
The graphical representation of these three types of  $\sigma$-cells is the same as drawn in \cite[Definition~4.10]{Cusp1}
with weight $\br =\mathbf{pf}$, except that one $G$-edge adjacent to $a$ is twisted.  For example, the $\sigma$-cell with four external
indices \eqref{eq:sigma cell} is represented by 
\[ 
\ssGraph{ a[label=$a$,lbr] --[K] b[label=$b$,r1]; x[label=below:$x$,o] --[J] a -- y[label=$y$,o]; v[label=below:$v$,o] -- b --u[label=$u$,o]; }, 
\] 
 where the solid lines are $G$-edges, exactly one of them twisted (indicated by $J$), and without indicating their orientation.  The interaction
 edge $K$ is depicted by the dotted line, while the weights $\br$ and $\mathbf{1}$ attached to vertices are indicated 
 by arrows pointing to these vertices. The weight $\mathbf{1}$ could be ignored, it plays no specific role in the current paper; we drew it 
 only for consistency with the picture in \cite{Cusp1} where it was essential that exactly one edge of the $\sigma$-cell receives a specific weight.
 The graphical picture of the other two types of $\sigma$-cells are analogous.

Exactly as in  \cite{Cusp1} the cusp fluctuation mechanism will allow us to gain a factor $\sigma_q$ as defined in Proposition~\ref{pro:D_bounds} for every $\sigma$-cell inside each graph. In \cite{Cusp1} this gain is stated as \cite[Proposition~4.12]{Cusp1}. 
In our setup, its analog is the next proposition.

\begin{proposition} \label{pro:proposition_4_12} 
Let $c>0$ be any constant and $\Gamma \in \mathcal G$ be a single index graph with at most $cp$ vertices and $cp^2$ edges with a $\sigma$-cell $(u,v) = e \in \mathrm{IE}(\Gamma)$. Then there exists a finite collection of graphs $\mathcal G_\Gamma$ with at most one additional vertex and at most $6p$ additional $G$-edges such that 
\[ 
 \mathrm{Val}(\Gamma) = \sum_{\Gamma' \in \mathcal G_{\Gamma}} 
\mathrm{Val}(\Gamma') + \ord(N^{-p}), 
\qquad \qquad 
\mathrm{W}\text{-}\mathrm{Est}(\Gamma') \leq_p \sigma_q \mathrm{W}\text{-}\mathrm{Est}(\Gamma'), \quad \Gamma' \in \mathcal G_\Gamma
\] 
and all graphs $\Gamma' \in \mathcal G_\Gamma$ have exactly one $\sigma$-cell less than $\Gamma$. 
\end{proposition} 

The statement of Proposition~\ref{pro:proposition_4_12} coincides with \cite[Proposition~4.12]{Cusp1} 
except for the missing term $\sigma \mathrm{Val}(\Gamma_\sigma)$ in the expansion of $\mathrm{Val}(\Gamma)$ originating from $\sigma = 0$ (cf.\ Remark~\ref{rem:sigma})
and 
 the modified definition of $\sigma_q$ (see Proposition~\ref{pro:D_bounds}). 
Up to \cite[Proposition~4.12]{Cusp1} (which is replaced by Proposition~\ref{pro:proposition_4_12}), we have now verified  all  ingredients in the proof of \cite[Theorem 3.7]{Cusp1} and thus its analog Proposition~\ref{pro:wtD_bounds}. Therefore, we will finish this subsection by pointing out the necessary modifications to the proof of \cite[Proposition~4.12]{Cusp1} in order to show Proposition~\ref{pro:proposition_4_12}.

\begin{proof}[Proof of Proposition~\ref{pro:proposition_4_12}]  
We follow the proof of \cite[Proposition~4.12]{Cusp1} and explain the necessary changes. 
The proof of \cite[Proposition~4.12]{Cusp1}  
has two ingredients.  The first is an explicit computation that involves the projections on stable and unstable directions of the stability operator $B$ of the MDE
(cf.\ \cite[Eq.~(4.30)]{Cusp1}).  This computation is extremely delicate and involves the precise choice for $K_{ba}$, $r_a$,  $G^{(i)}$ and their relation in the $\sigma$-cell \eqref{eq:sigma cell}.
Its outcome is that up to a sufficiently small error term it is possible to act with the stability operator on any vertex $a$ of the $\sigma$-cell. This action of $B$ on $a$ leads to an improvement of the bound on the corresponding graph that is stated as \cite[Lemma~4.13]{Cusp1}. 

In our current setup the gain $\sigma_q$ for every $\sigma$-cell inside a graph $\Gamma$ is much more robust than in \cite{Cusp1},
it is basically a consequence of the almost off-diagonality of $M$. There is no need to act with the stability operator
and also the specific weights attached to the vertices of the $\sigma$-cells are not important.
 Instead, the value of any graph containing a $\sigma$-cell 
 can be estimated directly by $\sigma_q$ times the  sum of values of graphs with one sigma cell locally resolved (removed).
For this gain the concrete choice of $K_{ab}$, $r_a$  and $G^{(i)}$  does not matter as  
 long as $\abs{K_{ba}} \lesssim \frac{1}{N}$ and $r_a \lesssim 1$. 
Furthermore, we will not make use of resolvents $G^{(2)}$ and $G^{(3)}$ in the corresponding calculations. 
Thus in the following, instead of \eqref{eq:sigma cell}, we will only consider the simplified expression 
\begin{equation}\label{halfcell}
\E\sum (\wt{G}J K^{(b)} G^{(1)})_{xy}f_{xyb} =\E\sum_a (\wt{G}J)_{xa} k^{(b)}_a G^{(1)}_{ay}f_{xyb}\, ,
\end{equation}
 where $K^{(b)} \defeq \diag (\bk^{(b)})$ is a diagonal matrix whose diagonal $\bk^{(b)}$ has components $k^{(b)}_a\defeq r_a K_{ba}$,
and $f = f_{xyb}$ encodes the rest of the graph.
 This is exactly the reference graph $\Gamma$ 
at the beginning of the proof of \cite[Lemma~4.13]{Cusp1}, but now the left edge is twisted and a weight $\bk^{(b)}$
is attached to the vertex  $a$.  With the choice $\wt{G}=G$, $G^{(1)} = G^*$ we have  
\[ \Gamma \defeq \ssGraph{x[label=below:$x$,o] --[g,J] a[label=below:$a$,tbk]; a -- [s] y[label=below:$y$,o];}, \] 
which corresponds to the case $GJK^{(b)}G^*$ in \eqref{halfcell}.  Since for all possible choices $\wt{G} \in \{G, \ol{G}\}$ and $G^{(1)} \in \{G,G^t,G^*,\ol{G}\}$ the discussion is analogous, we will restrict ourselves to  the case $GJK^{(b)}G^*$.
 
In complete analogy to \cite[Eq.~(4.24)]{Cusp1}, but  using simply the identity operator instead of the stability operator $B$
and  inserting the identity 
\[
G=M -G\cal{S}[M]M-GWM
\]
for the  first   resolvent factor on the left into \eqref{halfcell},  we  find by \eqref{def of cal T} that 
\begin{equation}
\label{eq:sigma cell expansion}
 \E (GJK^{(b)}G^*)_{xy}f = \E \Big( \sb{1+G\cal{S}[G-M]-G(\wt{\cal{S}}[G]+W)+G\cal{T}[G-M]+G\cal{T}[M]}(MJK^{(b)}G^*)\Big)_{xy} f\,.
\end{equation}
Notice that the twisted $G$-edge corresponding to $GJ$ disappeared and $J$ now appears  only together with $M$ in the form $MJK^{(b)}$.

We estimate the  five summands inside the square brackets of \eqref{eq:sigma cell expansion}.
This means to show that their  power counting estimate (defined as W-Est in \cite[Lemma~4.8]{Cusp1}) is smaller than 
the W-Est of the left-hand side of \eqref{eq:sigma cell expansion}, $\mbox{W-Est}(\Gamma)$, at least by a factor $\sigma_q$,  i.e. we have
\bels{graph expansion}{
\rm{Val}(\Gamma) = \sum_{\Gamma' \in \cal{G}_\Gamma}\rm{Val}(\Gamma') + \ord(N^{-p}) \quad \text{with} \quad  \mbox{W-Est}(\Gamma')\le_p \sigma_q\mbox{W-Est}(\Gamma)\,,
}
where all graphs $\Gamma' \in \cal{G}_\Gamma$ have one $\sigma$-cell less than $\Gamma$.  
Note that in contrast to  \cite[Lemma~4.13]{Cusp1} no insertion of the stability operator $B$ is needed for \eqref{graph expansion} to hold and that in contrast to   \cite[Proposition~4.12]{Cusp1}   the additional  graph $\Gamma_\sigma$ is absent 
  from the right-hand side. In this sense \eqref{graph expansion} combines these 
  two statements from \cite{Cusp1} in a simplified fashion.

We remark that here the notion of differential edge (defined around \cite[Eq.~(4.27)]{Cusp1}) is understood to include twisted $G$-edges as well. 
The derivatives of the twisted edges follow the same rules as the derivatives of the untwisted edges with respect to
the matrix elements of $W$, for example
$$
    \frac{\partial}{\partial w_{ab}} (GJ) = G \Delta^{ab} (GJ),  \qquad \mbox{with}\quad (\Delta^{ab})_{ij} = \delta_{ia}\delta_{jb},
$$
i.e., simply one of the resulting two $G$-edges remains twisted.  In particular, the number of twisted edges remains unchanged.

 The  first, second and fourth  summands in \eqref{eq:sigma cell expansion} correspond to the graphs treated in parts (a), (b) and (c) inside the proof of \cite[Lemma~4.13]{Cusp1} and their estimates follow completely analogously.  We illustrate this with the simplest first and the more complex fourth term.
The first term gives 
$   \E (MJK^{(b)}G^*)_{xy} f$, 
which would exactly be case (a) in the proof of \cite[Lemma~4.13]{Cusp1} if $MJK^{(b)}$ were diagonal. 
However, even if it has an offdiagonal part 
(in the sense of  $\oM$), the same bound holds, i.e. we still have 
\begin{equation}\label{aterm}
  \mbox{W-Est}\Big( \sum_a (MJK^{(b)})_{xa} G^*_{ay} f\Big) \le \frac{1}{N\psi^2}  \mbox{W-Est}\Big( \sum_a (GJK^{(b)} G^*)_{xy}f \Big) = 
  \sigma_q  \mbox{W-Est}(\Gamma)\,,
\end{equation}
where $1/N$ comes from the fact that the summation over $a$ collapses to two values, $a=x$ and $a=\hat x$
 and $\psi^2$ accounts for the two Wardable edges in $\Gamma$. Here we defined $\hat x\defeq x+n \mbox{(mod $2n$)}$
 to be the complementary index of $x$.  In order to see \eqref{aterm} more systematically, 
 set $\bm_d \defeq \diag(M)$ and $\bm_o\defeq\diag(MJ)$
 to be the vectors representing the diagonal and offdiagonal parts of $M$. Then
 \begin{equation}\label{MJ}
   MJK^{(b)} = \diag( \bm_o \bk^{(b)}) + J\diag( \widetilde {\bm_d} \bk^{(b)})
   = \diag( \bm_o \bk^{(b)}) + \diag(  \bm_d  \widetilde {\bk^{(b)} })J\,,
 \end{equation}
 where for any $2n$ vector $\bv =(\bv_1, \bv_2)$ with $\bv_i\in \C^n$
 we define $\widetilde{\bv}\defeq (\bv_2, \bv_1)$. Thus, graphically, the factor $MJK^{(b)}$ 
 can be represented as a sum of two graphs with a weight assigned to one vertex and for one of them there is an additional twist operator $J$
 which one may put on either side. 
 Therefore, the graph on the left-hand side of \eqref{aterm} can be represented
 by 
\begin{equation}\label{Mres}
\ssGraph{x[label=below:$x$,o] -- [eq] a[label=below:$a$,tmok] -- [s] y[label=below:$y$,o];} + \ssGraph{x[label=below:$x$,o] -- [eq] a[label=below:$a$,mdtk] -- [s,J] y[label=below:$y$,o];}.
\end{equation}
Here the double  solid line depicts the identity operator as in \cite{Cusp1}. Both graphs  in \eqref{Mres}  are exactly the same as the one in 
   case (a) within the proof of \cite[Lemma~4.13]{Cusp1} with the only changes being the modified  vertex weights  $\bm_o \bk^{(b)}$ and $ \bm_d\widetilde{\bk}^{(b)}$ instead of just $\bm$ and the twisted $G^*$-edge in the second graph of \eqref{Mres}.
 To justify the Ward estimate \eqref{aterm} we use only the fact that $\norm{\bm_o}_\infty \lesssim 1$
 and $\norm{\bm_d}_\infty \lesssim 1$, although the latter estimate can be improved to $\norm{\bm_d}_\infty \lesssim \rho$. Here we denote by  $\norm{\bx}_\infty :=\max_{a \in [2n]}\abs{x_a}$ the maximum norm of a vector $\bx$.

The  decomposition~\eqref{MJ}  of $MJK^{(b)}$ into a sum of two terms, each with usual  weights  $\bm$ and one of them with 
a twisted edge, can be reinterpreted in all other cases. Thus, the second and fourth term 
in \eqref{eq:sigma cell expansion} can be treated analogously to the
 cases (b) and (c) within  the proof of \cite[Lemma~4.13]{Cusp1}.  In particular, the fourth term is split as 
 \begin{equation}\label{spl}
 \E (G \cal{T}[G-M]MJK^{(b)}G^*)_{xy} f = \E \sum  \pb{t_{ba}G_{xb} (G-M)_{ab}u_a G^*_{ay} + t_{ba}G_{xb} (G-M)_{ab}v_a (GJ)^*_{ay}}  f \,,
 \end{equation}
 for some bounded vectors $\bu,\bv$ with $\norm{\bu}_\infty+\norm{\bv}_\infty\lesssim 1$. Thus the corresponding two graphs exactly match the first graph depicted in (c) of the proof of \cite[Lemma~4.13]{Cusp1} with locally modified weights and the second one having a twisted $G^*$-edge.

For the third term in \eqref{eq:sigma cell expansion} we use a cumulant expansion  and find
\begin{equation}\label{eq:cumulant expansion}
\begin{split}
\E \2(G(\wt{\cal{S}}[G]+W)MJK^{(b)}G^*)_{xy} f &=  - \E \2(G\wt{\cal{S}}[MJK^{(b)}G^*]G^*)_{xy} f 
\\
&\qquad+ \sum_{a,c}\sum_{k=2}^{6p} \sum_{\mathbf{\beta} \in I^k} \kappa(ac,\underline{\beta}) \E \partial_{\underline{\beta}}[G_{xa}(MJK^{(b)}G^*)_{cy}f]
\\
&\qquad+ \sum_{a,c}  \E \, G_{xa}(MJK^{(b)}G^*)_{cy}(\E \abs{w_{ac}}^2\partial_{ac}+\E w_{ac}^2\partial_{ca}) f+ \ord(N^{-p})\,,
\end{split}
\end{equation}
where $\mathbf{\beta}$ is a $k$-tuple of double indices from $I=[N]\times[N]$,
 $\underline{\beta}$ is the multiset formed out of the entries  of $\mathbf{\beta}$
 (multisets allow repetitions) and $\partial_{\underline{\beta}} =  \prod_{(ij)\in\underline{\beta}} \partial_{w_{ij}}$.  The notation 
 $\kappa(ac,\underline{\beta})$ denotes the higher order cumulants of $w_{ab}$ and $\{ w_\beta\; : \; \beta\in \underline{\beta}\}$.   
The second and third summands on the right-hand side of \eqref{eq:cumulant expansion} correspond to the graphs treated in (e) and (d) of the proof of \cite[Lemma~4.13]{Cusp1}, 
respectively, with the factor $MJK^{(b)}$ reinterpreted as sum of two terms with 
some weight $\bm$   as explained in \eqref{MJ}.
  Thus we focus our attention on the first summand which reflects the the main difference between our setup and that in \cite{Cusp1}.

This term is expanded further  using
\begin{equation}\label{SM}
\wt{\cal{S}}[MJK^{(b)}G^*]   = \cal{S}[MJK^{(b)}M^*]+ \cal{S}[MJK^{(b)}(G-M)^*]+ {\cal{T}}[MJK^{(b)}G^*]\,.
\end{equation}
At this point the main mechanism behind the cusp fluctuation averaging is revealed by the fact that the leading 
term $\cal{S}[MJK^{(b)}M^*] = \diag(\bx)$ for some vector $\bx$ with $\abs{x_i} \lesssim \frac{\rho}{N}$, because the diagonal elements of $M$ are of order $\ord(\rho)$. 
This is the only place where the smallness of the diagonal elements of $M$ is used, in all other estimates
we used only the block diagonal structure of $M$ and the boundedness of  its matrix elements. 
The other two terms in \eqref{SM} are smaller order.
In fact, the second term exactly corresponds to the term encoded by the fourth graph on the right-hand side of \cite[Eq.~(4.29)]{Cusp1}, taking into account that this graph now splits into two due to   the decomposition~\eqref{MJ}  
with an extra twisted edge on one of the resulting graphs  similarly to \eqref{spl}. 
Therefore  this  term is estimated as explained in (b) of the proof of \cite[Lemma~4.13]{Cusp1}. The  term corresponding to the last summand in \eqref{SM} is analogous to the sixth  graph in \cite[Eq.~(4.29)]{Cusp1} 
 and thus treated as explained in (c) within the proof of \cite[Lemma~4.13]{Cusp1}.

Finally, we treat the last $G\cal{T}[M]$ term in \eqref{eq:sigma cell expansion} which  
was absent in \cite{Cusp1} and stems from the difference between the two self-energy operators $\cal{S}$ and $\wt{\cal{S}}$. This term leads to a contribution of the form $\E (GKG^*)_{xy} f $ with the block diagonal matrix $ L^{(b)}  \defeq \cal{T}[M]MJ K^{(b)}$ that satisfies $\norm{ L^{(b)}  }\lesssim N^{-1}$ in \eqref{eq:sigma cell expansion}  as $\norm{\mathcal T[M]} \lesssim N^{-1}$.  Thus the ensuing two graphs $\Gamma' \in \cal{G}_\Gamma$ have the same Ward estimates as $\Gamma$ but an additional $N^{-1}$-edge weight. 
This completes the proof of Proposition~\ref{pro:wtD_bounds}. 
\end{proof}

\section{Local law for $X$}\label{sec:Xlaw}

In this section, we provide the proofs of Theorem~\ref{thm:spectral_radius_X} and Theorem~\ref{thm:local_law_X}. We start with the proof of Theorem~\ref{thm:local_law_X} which, given Theorem~\ref{thm:local_law_H}, 
will follow a similar argument as the proof of \cite[Theorem~2.5(i)]{Altcirc}. 

In this section, the model parameters consist of $s_*$, $s^*$ from \ref{assum:flatness}, the sequence $(\mu_m)_m$ from \ref{assum:bounded_moments} and $\alpha$, $\beta$ from \ref{assum:bounded_density} 
as well as $a$ and $\varphi$ from Theorem~\ref{thm:local_law_X}. Therefore, the implicit constants in the comparison relation and the stochastic domination are allowed to depend on these parameters. 

\begin{proof}[Proof of Theorem~\ref{thm:local_law_X}] 
Let $T>0$. From \cite[Eq.~(2.15)]{Altcirc} and \eqref{eq:integration_by_parts_L_Delta_f} in the 
proof of Proposition~\ref{pro:properties_sigma}, we get that 
\begin{equation} \label{eq:main_decomposition} 
\begin{aligned} 
 \frac{1}{n} \sum_{i=1}^n f_{z_0,a}(\zeta_i) - \int_\C f_{z_0,a}(z) \sigma(z) \di^2 z = \, & \frac{1}{4\pi n} \int_\C \Delta f_{z_0,a}(z) \log \abs{\det(H_z - \ii T)} \di^2 z \\ 
 &  - \frac{1}{2\pi} \int_\C \Delta f_{z_0,a}(z) \int_0^T \Im \avg{G(z,\eta)- M(z,\eta)} \di \eta \, \di^2 z \\ 
 & + \frac{1}{2\pi} \int_\C \Delta f_{z_0,a}(z) \int_T^\infty \bigg( \Im \avg{M(z,\eta)} - \frac{1}{1 + \eta } \bigg) \di \eta \, \di^2 z. 
\end{aligned} 
\end{equation}
Here, we used that $\avg{v_1^\tau(\eta)|_{\tau = \abs{z}^2}} = \Im \avg{M(z,\eta)}$ and $m^z(\ii \eta) = \avg{G(z,\eta)}$, where $m^z$ is the Stieltjes transform of the empirical spectral measure of $H_z$ (see \cite[Eq.~(2.12)]{Altcirc}). 
We also employed $\int_{\C} \Delta f_{z_0,a}(z)\int_0^T (1+ \eta)^{-1} \dd \eta \dd^2 z = 0$ as $f \in C_0^2(\C)$.  

We remark that $\supp f_{z_0,a} \subset z_0 + \supp f \subset D_{2\varphi}(0)$. For the remainder of the proof, we choose $T \defeq n^{100}$. 
The same arguments used in the proof of \cite[Theorem~2.5]{Altcirc} to control the first and third term on the right-hand side of \eqref{eq:main_decomposition} for $\abs{z_0} \leq 1$ imply that 
those terms are stochastically dominated by $n^{-1 + 2a} \norm{\Delta f}_1$ for all $z_0 \in \C$ such that $\abs{z_0} \leq \varphi$. 

What remains is bounding the second term on right-hand side of \eqref{eq:main_decomposition}. To that end, we fix $z$ and estimate the $\di \eta$-integral by 
\[ I(z) \defeq \int_0^T \Im \avg{G(z,\eta)- M(z,\eta)} \di \eta  \] 
for $z \in D_{2\varphi}(0)$ via the following lemma which is an extension of \cite[Lemma~5.8]{Altcirc}. 

\begin{lemma} \label{lem:moment_bound_I} 
For every $\delta >0$ and $p \in \N$, there is a positive constant $C$, depending only on $\delta$ and $p$ in addition to the model parameters, such that 
\[ \sup_{z \in D_{2\varphi}(0)} \E \abs{I(z)}^p \leq C \frac{n^{\delta p}}{n^p}. \] 
\end{lemma}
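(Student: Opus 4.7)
The plan is to extend the strategy of \cite[Lemma~5.8]{Altcirc}, which handles the bulk case $\abs{z}\leq 1-\tau_*$, to the edge and exterior regimes by splitting the integral at the fluctuation scale. Concretely, I would write
\begin{equation*}
I(z) \;=\; \int_0^{\eta_1}\Im\avg{G(z,\eta)-M(z,\eta)}\,d\eta \;+\; \int_{\eta_1}^{T}\Im\avg{G(z,\eta)-M(z,\eta)}\,d\eta
\end{equation*}
with threshold $\eta_1 \defeq n^{\zeta'}\etaf(z)$ for some small $\zeta'\ll\delta$. For $\eta\in[\eta_1,T]$, Corollary~\ref{cor:im_local_law_everywhere} delivers a pointwise bound $\abs{\Im\avg{G(z,\eta)-M(z,\eta)}}\prec(n\eta)^{-1}$, and integration yields a contribution of size $O(\log(T/\eta_1)/n) = O(n^{\delta/2}/n)$.

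For the lower range I would treat the $G$- and $M$-contributions separately. The $M$-part equals $\pi\int_0^{\eta_1} \rho(z,\eta)\,d\eta$; using \eqref{eq:scaling_rho} together with the definition \eqref{eq:def_etaf} one checks by a short case analysis (one case per regime in \eqref{eq:def_etaf}) that this deterministic integral is bounded by $n^{-1+\zeta'}$. For the $G$-part, the spectral decomposition of $\Im\avg{G}$ yields
\begin{equation*}
\int_0^{\eta_1}\Im\avg{G(z,\eta)}\,d\eta \;=\; \frac{1}{4n}\sum_{i=1}^{2n}\log\!\left(1 + \frac{\eta_1^2}{\lambda_i(z)^2}\right),
\end{equation*}
and one splits the sum according to whether $\abs{\lambda_i(z)}\leq \eta_1$ or $\abs{\lambda_i(z)}>\eta_1$. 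For the second set the inequality $\log(1+x)\leq x$ together with Corollary~\ref{cor:im_local_law_everywhere} applied at $\eta=\eta_1$ reduces the contribution to $\eta_1\Im\avg{G(z,\eta_1)}\prec \eta_1\rho(z,\eta_1) + n^{-1} = O(n^{\delta/2}/n)$. For the first set Lemma~\ref{lem:number_eigenvalues} gives a cardinality of order $n\eta_1\rho+1$, and combined with the polynomial lower bound $\abs{\lambda_i}\gtrsim n^{-C}$ provided with overwhelming probability by Assumption~\ref{assum:bounded_density} this produces at most $O(n^{\delta/2}/n)$ logarithmic terms of size $O(\log n)$ each.

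Finally, I would promote the resulting stochastic domination $\abs{I(z)}\prec n^{\delta/2}/n$ to the required uniform $p$-th moment bound $\E\abs{I(z)}^p\leq Cn^{\delta p}/n^p$ by combining it with a crude deterministic polynomial upper bound on $\abs{I(z)}$ that holds with overwhelming probability, in the same way as in \cite[Lemma~5.8]{Altcirc}; uniformity in $z\in D_{2\varphi}(0)$ is standard via a polynomially fine $z$-grid and Lipschitz continuity of the resolvent. The main technical obstacle is the small-$\eta$ regime: the cusp-type behaviour of the self-consistent density at the origin when $\abs{z}$ is near $1$ means that the naive estimate $\Im\avg{G}\lesssim 1$ is too wasteful, and one must rely simultaneously on Lemma~\ref{lem:number_eigenvalues} to count near-zero eigenvalues, on \ref{assum:bounded_density} to prevent any eigenvalue from being anomalously close to zero (so that $\log(1+\eta_1^2/\lambda_i^2)$ stays bounded by $O(\log n)$), and on the explicit scaling of $\rho$ in \eqref{eq:scaling_rho} to balance all four sub-regimes of $\etaf$.
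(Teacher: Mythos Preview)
Your proposal is correct and follows essentially the same route as the paper: split the $\eta$-integral at a threshold of order $n^{\zeta'}\etaf(z)$, use Corollary~\ref{cor:im_local_law_everywhere} above the threshold, and below it treat the $M$-part via the scaling \eqref{eq:scaling_rho} and the $G$-part via the spectral representation $\int_0^{\eta_1}\Im\avg{G}\,d\eta = \frac{1}{4n}\sum_i\log(1+\eta_1^2/\lambda_i^2)$, splitting the eigenvalues according to their size and invoking the smallest-singular-value bound from \ref{assum:bounded_density} (via \cite[Proposition~5.7]{Altcirc}) for the near-zero ones. Your treatment of the regime $\abs{\lambda_i}>\eta_1$ via $\log(1+x)\le x$ and the bound $\frac{1}{4n}\sum_{\abs{\lambda_i}>\eta_1}\eta_1^2/\lambda_i^2 \le \eta_1\Im\avg{G(z,\eta_1)}$ is in fact slightly cleaner than the paper's three-way split (small, dyadic middle, large).

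Two minor points of looseness are worth flagging. First, the statement is $\sup_z \E\abs{I(z)}^p$, not $\E\sup_z\abs{I(z)}^p$, so no $z$-grid argument is needed; uniformity of the constants in $z$ suffices. Second, the conversion of $\abs{I(z)}\prec n^{-1+\delta/2}$ to a genuine moment bound cannot rely solely on a ``deterministic polynomial upper bound on $\abs{I(z)}$ that holds with overwhelming probability'', because on the exceptional event the smallest eigenvalue may be arbitrarily close to zero and $\abs{I(z)}$ is unbounded there. The paper (following \cite[Lemma~5.8]{Altcirc}) handles this by isolating the contribution of eigenvalues below a fixed threshold $n^{-l}$ and bounding its $p$-th moment \emph{directly} from the quantitative tail estimate $\P(\lambda_1<s)\lesssim n^C s^{c}$ supplied by \cite[Proposition~5.7]{Altcirc}; the remaining terms then do admit a deterministic polynomial bound and \eqref{eq:estimate_prec_to_moment} applies. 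Your reference to \cite[Lemma~5.8]{Altcirc} covers this, but the mechanism should be stated explicitly.
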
 

We postpone the proof of Lemma~\ref{lem:moment_bound_I} to the end of this section. Now, Lemma~\ref{lem:moment_bound_I} implies Theorem~\ref{thm:local_law_X} along the same steps 
used to conclude \cite[Theorem~2.5(i)]{Altcirc} from \cite[Lemma~5.8]{Altcirc}. This completes the proof of Theorem~\ref{thm:local_law_X}.  
\end{proof} 

Before we prove Lemma~\ref{lem:moment_bound_I}, we first conclude Theorem~\ref{thm:spectral_radius_X} from Theorem~\ref{thm:local_law_X} and the improved bound on $G-M$ in \eqref{eq:G_minus_M_average_improved} 
in Theorem~\ref{thm:local_law_H}. 

\begin{proof}[Proof of Theorem~\ref{thm:spectral_radius_X}] 
We first show the upper bound on $\varrho(X)$ as detailed in Remark~\ref{rem:upper_bound_spectral_radius}. 
Throughout the proof, we say that an event $\Xi =\Xi_n$ (in the probability space of $n\times n$ random matrices $X$) 
 holds with \emph{very high probability} if for each $D>0$, 
there is $C>0$ such that $\P ( \Xi) \geq 1 - C n^{-D}$ for all $n \in \N$. 

Let $\tau_* \sim 1$ be chosen as in Theorem~\ref{thm:local_law_H}. From \cite[Theorem~2.5(ii)]{Altcirc}, we conclude that the event 
\[ \Xi_* \defeq \big\{ \spec (X) \subset D_{1 + \tau_*} (0) \big \} \] 
holds with very high probability. 
Fix $\eps >0$. We set $\eta \defeq \etaf n^{\eps/12}$. We use \eqref{eq:G_minus_M_average_improved}
for a sufficiently fine grid of values of $z$, 
 $\abs{\avg{\Im M}}\lesssim n^{-\eps/6} /(n\eta)$ by \eqref{eq:scaling_rho} if $\abs{z} -1 \geq n^{-1/2+\eps}$, a union bound 
 over the grid elements and the Lipschitz-continuity of $G(z,\eta)$ as a function of $z$ 
to obtain that the event
\[\Xi_1 \defeq \big\{\avg{\Im G(z,\eta)} \leq n^{-\eps/8}/(n\eta) \text{ for all } z \in \C \text{ satisfying } n^{-1/2 + \eps} \leq \abs{z} -1 \leq \tau_* \big \} \] 
holds with very high probability. 
Let $\lambda_1, \ldots, \lambda_{2n}$ denote the eigenvalues of $H_z$. 
If $H_z$ has a nontrivial kernel, i.e. $\lambda_i = 0$ for some $i \in \{1, \ldots, 2n\}$ then we conclude $\avg{\Im G(z,\eta)} \geq 1/(2n\eta)$ for any $\eta >0$.  
Hence, $\Xi_1 \subset \Xi_2$, where we defined 
\[ \Xi_2 \defeq \big \{  0 \notin \spec(H_z) \text{ for all } z \in \C \text{ satisfying } n^{-1/2 + \eps} \leq \abs{z} -1 \leq \tau_* \big \}. \] 
On the other hand, $0 \in \spec (H_z)$ if and only if $z \in \spec(X)$. 
Since $\Xi_*$ and $\Xi_2$ hold with very high probability, this completes the proof of the upper bound on $\varrho(X)$, i.e. the one of Remark~\ref{rem:upper_bound_spectral_radius}. 

The corresponding lower bound on $\varrho(X)$ follows directly from Theorem~\ref{thm:local_law_X} with $\abs{z_0} = 1 - n^{-1/2 + \eps}$, $a = 1/2 - \eps$ and a suitably chosen test function $f$ which completes the proof of Theorem~\ref{thm:spectral_radius_X}.  
\end{proof}

\begin{proof}[Proof of Lemma~\ref{lem:moment_bound_I}] 
The proof proceeds analogously to the proof of \cite[Lemma~5.8]{Altcirc}. However, we have to replace the fluctuation scale in the bulk, $n^{-1}$, by the $z$-dependent fluctuation scale $\etaf$ defined in \eqref{eq:def_etaf}. 
Throughout the proof, we will omit the dependence of $G(z,\eta)$ and $M(z,\eta)$ on $z$ and $\eta$
from our notation and write $G= G(z,\eta)$ and $M = M(z,\eta)$. 
Similarly, we denote the $2n$ eigenvalues of $H_z$ by $\lambda_1, \ldots, \lambda_{2n}$. 

We will have to convert a few $\prec$-bounds into moment bounds. In order to do that, we will use the following straightforward estimate. 
Let $c>0$. Then, for each $\delta>0$ and $p \in \N$, there is $C$, depending on $c$, $\delta$ and $p$, such that any random variable $Y \geq 0$ satisfies 
\begin{equation} \label{eq:estimate_prec_to_moment}
Y \prec n^{-1}, ~ Y \leq n^c \qquad \Longrightarrow \qquad \E Y^p \leq C n^{p(-1 + \delta)}.  
\end{equation} 

We fix $\eps >0$, choose $l >0$ sufficiently large and decompose the integral in the definition of $I$ to obtain 
\begin{equation} \label{eq:decomposition_I} 
I(z) = \frac{1}{n} \sum_{\abs{\lambda_i} < n^{-l}} \log \bigg(1 + \frac{\etaf^2 n^{2\eps}}{\lambda_i^2}\bigg) + \frac{1}{n} \sum_{\abs{\lambda_i} \geq n^{-l} } \log \bigg( 1 + \frac{\etaf^2 n^{2\eps}}{\lambda_i^2} \bigg)
- \int_0^{\etaf n^\eps} \avg{\Im M} \di \eta + \int_{\etaf n^\eps}^T  \Im \avg{G- M}  \di \eta.  
\end{equation} 

We now estimate the $p$th moment of each term on the right-hand side of \eqref{eq:decomposition_I} individually.  
Exactly as in the proof of \cite[Lemma~5.8]{Altcirc}, we choose $l>0$ sufficiently large, depending on $\alpha$, $\beta$ from \ref{assum:bounded_density}
and  on $p$ such that the estimate on the smallest singular value of $X-z$ in \cite[Proposition~5.7]{Altcirc}, which holds uniformly for $z \in D_{2\varphi}(0)$, implies 
\[ \E \absbb{\frac{1}{n} \sum_{\abs{\lambda_i} < n^{-l}} \log \bigg( 1 + \frac{\etaf^2 n^{2\eps}}{\lambda_i^2} \bigg)}^p \leq n^{-p}. \] 
For the second term on the right-hand side of \eqref{eq:decomposition_I}, we distinguish the three regimes, $\abs{\lambda_i} \in [n^{-l}, \etaf n^{\eps}]$, $\abs{\lambda_i} \in [\etaf n^{\eps},\etaf n^{1/2}]$ and $\abs{\lambda_i} > \etaf n^{1/2}$. 
In the first regime, we use $\etaf^2 n^{2 \eps} \lambda_i^{-2}  \leq n^{2l +2}$ and Lemma~\ref{lem:number_eigenvalues} with $\eta = \etaf n^\eps$ and obtain 
\[ \frac{1}{n} \sum_{\abs{\lambda_i} \in [n^{-l}, \etaf n^\eps]} \log \bigg(1 + \frac{\etaf^2 n^{2\eps}}{\lambda_i^2} \bigg) \leq \frac{C \log n}{n} \abs{\{ i \colon \abs{\lambda_i} \leq \etaf n^\eps \}} \prec \frac{n^{2\eps}}{n}. \] 
We decompose the second regime, $\abs{\lambda_i} \in [\etaf n^{\eps}, \etaf n^{1/2}]$, into the union of the intervals 
$[\eta_k, \eta_{k+1}]$, where $\eta_k \defeq \etaf n^\eps 2^k$, $k=0, \ldots, N$ and $N \lesssim \log n$. 
Hence, $\log(1+ x) \leq x$ for $x >0$ yields 
\[ \frac{1}{n} \sum_{\abs{\lambda_i} \in [\etaf n^{\eps}, \etaf n^{1/2}]} \log \bigg( 1 + \frac{\etaf^2 n^{2\eps}}{\lambda_i^2} \bigg) \leq \frac{2}{n} \sum_{k=0}^N \sum_{\lambda_i \in [\eta_k, \eta_{k+1}]} \etaf^2 n^{2\eps} \lambda_i^{-2} \prec \frac{n^{3\eps}}{n}.  
\] 
In the third regime, $\abs{\lambda_i} > \etaf n^{1/2}$, we conclude from $\abs{\lambda_i} > \etaf n^{1/2}$ that 
\[ \frac{1}{n}\sum_{\abs{\lambda_i} >\etaf n^{1/2}} \log \bigg( 1 + \frac{\etaf^2 n^{2\eps}}{\lambda_i^2} \bigg) \leq \frac{1}{n} \sum_{\abs{\lambda_i} > \etaf n^{1/2}} \log \big( 1 + n^{-1 + 2\eps}\big) \leq \frac{2n^{2\eps}}{n}. 
\] 
Therefore, \eqref{eq:estimate_prec_to_moment} implies that the second term on the right-hand side of \eqref{eq:decomposition_I} satisfies the bound in Lemma~\ref{lem:moment_bound_I}. 

For the third term in \eqref{eq:decomposition_I}, we use \eqref{eq:scaling_rho} and distinguish the regimes in the definition of $\etaf$ in \eqref{eq:def_etaf} and obtain 
\[ \int_0^{\etaf n^\eps}  \avg{\Im M} \,\di \eta \sim \int_0^{\etaf n^\eps} \rho\, \di \eta \lesssim n^{-1 + 4\eps/3}. \]

Before estimating the last term in \eqref{eq:decomposition_I}, we conclude 
$\sup_{\eta \in [\etaf n^{\eps},T]} \eta \abs{\Im \avg{G-M}} \prec n^{-1}$ 
from Corollary~\ref{cor:im_local_law_everywhere}.
Here, we also used a union bound and the Lipschitz-continuity of $G$ and $M$ as functions of $\eta$ in the following sense: 
There is $c\geq 2$ such that $\norm{G(z,\eta_1) - G(z,\eta_2)} + \norm{M(z,\eta_1) - M(z,\eta_2)} \lesssim n^{c} \abs{\eta_1-\eta_2}$ 
for all $\eta_1, \eta_2 \geq n^{-1}$ and $z \in D_{2\varphi}(0)$. 
For $G$ this follows from resolvent identities. For $M$ this was shown in 
\cite[Corollary~3.8]{Kronecker}\footnote{Note that the operator norm on $\C^{2n\times 2n}$ induced by the Euclidean norm on $\C^{2n}$ was denoted by $\norm{\genarg}_2$ in \cite{Kronecker}.}. 
The bound $\sup_{\eta \in [\etaf n^\eps, T]} \eta \abs{\Im \avg{G- M}} \prec n^{-1}$ implies 
\[ \int_{\etaf n^\eps}^T \abs{\Im \avg{G-M}} \di \eta \prec n^{-1}. \] 
Owing to \eqref{eq:estimate_prec_to_moment} this implies the desired estimate on the $p$th moment of the last term on the right-hand side of \eqref{eq:decomposition_I}. 
This completes the proof of Lemma~\ref{lem:moment_bound_I}. 
\end{proof} 

\begin{remark}[Alternative to Assumption~\ref{assum:bounded_density}, modifications in the proof] \label{rem:alternative_changes_proof} 
Instead of Assumption~\ref{assum:bounded_density} we now assume the condition $\max_{i,j} \cal{L}(\sqrt{n}\, x_{ij}, t) \leq b$ from Remark~\ref{rem:alternative_A3} and we explain the necessary modifications in the proof of Theorem~\ref{thm:local_law_X}. 
The only place where Assumption~\ref{assum:bounded_density} is used in our entire argument was in estimating the first term in~\eqref{eq:decomposition_I}.
Under this new condition, it is a simple consequence of \cite[Theorem~1.1]{LTV_singular_value2019} that 
\begin{equation} \label{eq:abs_log_lambda_1_prec_1} 
  \abs{\log \lambda_1(z)} \prec 1,
\end{equation}
where we denoted by $\lambda_1(z)$ the smallest, nonnegative eigenvalue of $H_z$. 
With this bound at hand, the sampling method from \cite[Lemma~36]{tao2015} can be used to approximate the first $z$-integral 
in \eqref{eq:girko} (see also \eqref{eq:main_decomposition})  
by an average over $n^C$ many evaluations of the integrand in $z$ (the sampling method requires 
the additional condition $\|\Delta f\|_{L^{2+\epsilon}}\le n^C\| \Delta f\|_{L^1}$). 
Applying a union bound reduces Theorem~\ref{thm:local_law_X} to the local law for $H_z$, Theorem~\ref{thm:local_law_H}, with one fixed $z$.  Note that $I(z)$ does not have finite expectation in general, e.g. if the  distribution of the matrix elements of $X$ has atoms. Thus,
Lemma~\ref{lem:moment_bound_I}  as it is stated cannot be correct in general. Instead, we
need to  split the $\eta$-integration in the definition of $I(z)$
and we use \eqref{eq:abs_log_lambda_1_prec_1}  to control 
the regime $\eta \in [0,n^{-l}]$.  For the remaining  regime the very small singular values do not play any role and 
the above proof directly applies.
\end{remark}

\appendix 

\section{Derivation of cubic equations} \label{app:cubic_equation} 

The following general lemma determines the first few terms in the perturbative expansion of 
 the solution $Y$ to the matrix-valued quadratic equation $\cB[Y] - \cA[Y,Y] + { Z} =0$ in the regime, where ${ Z}$ is small. 
 Here, $\cB$ is a linear map and $\cA$ is a bilinear map on the space of matrices.  As explained in Section~\ref{sec:outline},
 the unstable directions of $\cB$ play a particular role. 
  The case of one unstable direction was treated in \cite[Lemma~A.1]{Cusp1} which was sufficient for analysing Wigner-type matrices with a flat variance matrix. 
In our current situation, a second unstable direction is present due to the specific block structure of the matrix $H_z$. Accordingly, in the next lemma, we need to treat both unstable directions, $B$ and $B_*$, separately. 
We are, however, in the special situation, where $Y$ is orthogonal to $E_-$ and 
$B_*$ is far from being orthogonal to $E_-$. This allows us to neglect the component of $Y$ in the 
direction of $B_*$ and arrive at a single cubic equation for $\Theta$, the coefficient of $Y$ in the $B$
direction, $Y=\Theta B+ \mathrm{error}$. The main result of this lemma is to determine
the coefficients of the cubic equation for $\Theta$.

\begin{lemma} \label{lem:cubic_equation_abstract} 
Let $\scalar{\genarg}{\genarg}$ be the Hilbert-Schmidt scalar product on $\C^{2n \times 2n}$ and 
$\norm{\genarg}$ an arbitrary norm on $\C^{2n\times 2n}$.  
Let $\cA \colon \C^{2n\times 2n}\times \C^{2n \times 2n} \to \C^{2n \times 2n}$ be a bilinear map such that 
 $\cA[R,T] = \cA[T,R]$ for all $R, T \in \C^{2n \times 2n}$. 
Let $\cB \colon \C^{2n \times 2n} \to \C^{2n \times 2n}$ be a linear operator with two simple eigenvalues $\beta$ and $\beta_*$ with associated left and right eigenvectors $\wh{B}$, $B$ and $\wh{B}_*$, $B_*$, respectively. 

For some $\lambda \geq 1$, we assume that 
\begin{equation} \label{eq:conditions_abstract_cubic} 
\norm{\cA} + \norm{\cB^{-1} \cQ} + \frac{1}{\abs{\scalar{\wh{B}}{B}}} + \norm{\scalar{\wh{B}}{\genarg}} + \norm{B} + 
\frac{1}{\abs{\scalar{\wh{B}_*}{B_*}}} + \norm{\scalar{\wh{B}_*}{\genarg}} + \norm{B_*} + \frac{1}{\abs{\scalar{E_-}{B_*}}} + \norm{\scalar{E_-}{\genarg}} \leq \lambda, 
\end{equation}
where $\norm{\genarg}$ denotes the norm on bilinear maps, linear operators and linear forms induced by the norm on $\C^{2n \times 2n}$. 

Then there is a universal constant $c>0$ such that for any $Y, { Z} \in \C^{2n \times 2n}$ with $\norm{Y} + \norm{{ Z}} \leq c\lambda^{-12}$ that satisfy the quadratic equation 
\begin{equation} \label{eq:quadratic_equation} 
 \cB[Y] - \cA[Y,Y] + { Z} = 0 
\end{equation}
with the constraint $\scalar{E_-}{Y} = 0$ the following holds: For any $\delta \in (0,1)$, the coefficient 
\[ \Theta \defeq \frac{\scalar{\wh{B}}{Y}}{\scalar{\wh{B}}{B}} \] 
fulfills the cubic equation 
\begin{equation} \label{eq:abstract_cubic} 
\begin{aligned} 
& \mu_3 \Theta^3 + \mu_2 \Theta^2 + \mu_1 \Theta + \mu_0 \\ 
 & \qquad \quad = \lambda^{40} 
 \ord\big(\delta \abs{\Theta}^3 + \abs{\Theta}^4 + \delta^{-2} ( \norm{{ Z}}^3 + \abs{\scalar{E_-}{\cB^{-1} \cQ[{ Z}]}}^{3/2}) + \abs{\Theta}^2( \abs{\scalar{E_-}{B}}^2 + \abs{\scalar{\wh{B}}{\cA[B,B_*]}}^2) \big)
\end{aligned} 
\end{equation}
whose coefficients are given by 
\begin{equation} \label{eq:coefficients_abstract_cubic} 
\begin{aligned} 
\mu_3 & =  2\scalar{\wh{B}}{\cA[B,\cB^{-1} \cQ \cA[B,B]]}- \frac{2 \scalar{\wh{B}}{\cA[B,B_*]} \scalar{E_-}{\cB^{-1} \cQ \cA[B,B]}}{\scalar{E_-}{B_*}}, 
\\ \mu_2 & = \scalar{\wh{B}}{\cA[B,B]}, 
\\ \mu_1 & = -\beta \scalar{\wh{B}}{B} - 2 \scalar{\wh{B}}{\cA[B, \cB^{-1} \cQ [{ Z}]]} + \frac{2 \scalar{E_-}{B}}{\scalar{E_-}{B_*}} \scalar{\wh{B}}{\cA[\cB^{-1} \cQ[{ Z}],B_*]}, 
\\ \mu_0 & = \scalar{\wh{B}}{\cA[\cB^{-1}\cQ[{ Z}], \cB^{-1} \cQ[{ Z}]] - { Z}}.
\end{aligned} 
\end{equation} 
Moreover, $Y$ can be expressed by $\Theta$ and ${ Z}$ via 
\begin{equation} \label{eq:expansion_Y_abstract_cubic} 
\begin{aligned} 
 & Y  =  \Theta B - \cB^{-1} \cQ[{ Z}] + \Theta^2 \cB^{-1} \cQ \cA[B,B] - \Theta^2 \frac{\scalar{E_-}{\cB^{-1} \cQ \cA[B,B]}}{\scalar{E_-}{B_*}} B_* \\ 
 & \qquad \qquad \qquad \qquad \qquad \qquad \qquad + \lambda^{30} \ord\big( \abs{\Theta}^3 + \abs{\Theta}(\norm{{ Z}} + \abs{\scalar{E_-}{B}}) + \norm{{ Z}}^2 +  \abs{\scalar{E_-}{\cB^{-1} \cQ[{ Z}]}} \big).  
\end{aligned} 
\end{equation}
\end{lemma}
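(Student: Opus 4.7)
The plan is to follow the strategy of \cite[Lemma A.1]{Cusp1}, which treated the case of a single unstable direction, but now a second unstable direction $B_*$ must be handled simultaneously. The essential new ingredient is that the constraint $\scalar{E_-}{Y} = 0$, together with the non-degeneracy $\abs{\scalar{E_-}{B_*}} \gtrsim \lambda^{-1}$, will allow the coefficient $\Theta_* \defeq \scalar{\wh{B}_*}{Y}/\scalar{\wh{B}_*}{B_*}$ of $B_*$ to be expressed in terms of $\Theta$ and $X$ without ever dividing by the small eigenvalue $\beta_*$.

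First I would establish the spectral decomposition. Since $\beta \neq \beta_*$ are simple and $\cB^*[\wh{B}] = \bar\beta \wh{B}$, the usual biorthogonality argument yields $\scalar{\wh{B}}{B_*} = 0$ and $\scalar{\wh{B}_*}{B} = 0$, so that writing $Y = \Theta B + \Theta_* B_* + \cQ[Y]$ is consistent with the definitions of $\Theta$ and $\Theta_*$. Applying $\cQ$ to \eqref{eq:quadratic_equation} and using $\cQ \cB = \cB \cQ$ together with the invertibility of $\cB$ on $\ran\cQ$ gives
\[ \cQ[Y] = \cB^{-1}\cQ\cA[Y,Y] - \cB^{-1}\cQ[X]. \]
Taking the scalar product of the decomposition with $E_-$ and using the constraint $\scalar{E_-}{Y} = 0$ produces
\[ \Theta_* = -\frac{\Theta \scalar{E_-}{B} + \scalar{E_-}{\cQ[Y]}}{\scalar{E_-}{B_*}}, \]
so $\Theta_*$ is controlled without ever passing through $\beta_*$.

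Next I would iterate these identities; the a priori smallness $\norm{Y} + \norm{X} \leq c\lambda^{-12}$ makes the iteration convergent. Substituting the leading approximation $\cA[Y,Y] \approx \Theta^2 \cA[B,B]$ into the formula for $\cQ[Y]$, and then into the expression for $\Theta_*$, yields
\[ \cQ[Y] = -\cB^{-1}\cQ[X] + \Theta^2\, \cB^{-1}\cQ\cA[B,B] + (\text{error}) \]
and a companion expansion for $\Theta_*$, together giving \eqref{eq:expansion_Y_abstract_cubic}. The cubic equation itself is obtained by applying $\scalar{\wh{B}}{\cdot}$ to \eqref{eq:quadratic_equation}, which produces $\beta\Theta\scalar{\wh{B}}{B} = \scalar{\wh{B}}{\cA[Y,Y]} - \scalar{\wh{B}}{X}$, and then expanding
\[ \cA[Y,Y] = \Theta^2\cA[B,B] + 2\Theta\Theta_*\cA[B,B_*] + 2\Theta\cA[B, \cQ[Y]] + 2\Theta_* \cA[B_*, \cQ[Y]] + \Theta_*^2\cA[B_*, B_*] + \cA[\cQ[Y], \cQ[Y]]. \]
Substituting the expansions of $\Theta_*$ and $\cQ[Y]$ and collecting by powers of $\Theta$ produces precisely the four coefficients \eqref{eq:coefficients_abstract_cubic}. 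Notably, $\mu_3$ acquires two contributions — one from $2\Theta\,\cA[B, \cQ[Y]]$ via the $\Theta^2$-term of $\cQ[Y]$, and one from $2\Theta\Theta_*\cA[B, B_*]$ via the $\Theta^2$-term of $\Theta_*$ — which explains its somewhat intricate formula.

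The main obstacle will be the careful bookkeeping of neglected terms, verifying that each lies within the error of \eqref{eq:abstract_cubic}. The fourth-order corrections from iteration contribute to $\abs{\Theta}^4$; cross products involving $\scalar{E_-}{B}$ and $\scalar{\wh{B}}{\cA[B,B_*]}$ are absorbed into the $\abs{\Theta}^2(\abs{\scalar{E_-}{B}}^2 + \abs{\scalar{\wh{B}}{\cA[B,B_*]}}^2)$ piece via Cauchy--Schwarz; and mixed terms involving both $\Theta$ and $\norm{X}$ or $\scalar{E_-}{\cB^{-1}\cQ[X]}$ are handled by Young's inequality, balanced against the allowed $\delta\abs{\Theta}^3$ term to yield the $\delta^{-2}(\norm{X}^3 + \abs{\scalar{E_-}{\cB^{-1}\cQ[X]}}^{3/2})$ error. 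The generous $\lambda^{40}$ prefactor leaves ample room for a crude accounting of the $\lambda$ powers arising from $\norm{\cA}$, $\norm{\cB^{-1}\cQ}$, $\norm{\scalar{\wh{B}}{\cdot}}$, etc., so that each error contribution falls into one of the four admissible categories.
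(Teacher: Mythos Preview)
Your proposal is correct and follows essentially the same approach as the paper: both use the spectral decomposition $Y = \Theta B + \Theta_* B_* + \cQ[Y]$, exploit the constraint $\scalar{E_-}{Y}=0$ to solve for $\Theta_*$ without dividing by $\beta_*$, iterate the identity $\cQ[Y] = \cB^{-1}\cQ\cA[Y,Y] - \cB^{-1}\cQ[X]$, and then apply $\scalar{\wh{B}}{\genarg}$ to extract the cubic. The paper packages the iteration via an explicit splitting $Y = Y_1 + Y_2 + Y_3$ (with $Y_1$ the leading part, $Y_3$ the $B_*$-correction determined by the $E_-$-constraint, and $Y_2$ the remainder shown to be higher order), but this is just a concrete bookkeeping device for the same expansion you describe, and the Young-inequality treatment of mixed $\Theta$--$X$ terms is identical.
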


\begin{proof} 
We decompose $Y$ according to the spectral subspaces of $\cB$. This yields 
\begin{equation} \label{eq:proof_abstract_cubic_Y_decom_spectral}
 Y = \Theta B + \Theta_* B_* + \cQ[Y], \qquad \qquad  
\Theta\defeq \frac{\scalar{\wh{B}}{Y}}{\scalar{\wh{B}}{B}}, \quad \Theta_*\defeq \frac{\scalar{\wh{B}_*}{Y}}{\scalar{\wh{B}_*}{B_*}}.  
\end{equation}
We define 
\begin{equation} \label{eq:proof_abstract_cubic_decomposition} 
\begin{aligned} 
 Y_1 & \defeq \Theta B - \cB^{-1} \cQ[{ Z}], & Y_3 \defeq \frac{\scalar{E_-}{ \cB^{-1} \cQ[{ Z}] -\Theta {B} - {\cB^{-1} \cQ \cA[Y_1, Y_1]} }}{\scalar{E_-}{B_*}} B_*,\\ 
 Y_2 & \defeq \cQ[Y] + \cB^{-1} \cQ[{ Z}] + \Theta_* B_* - Y_3. \quad & 
\end{aligned} 
\end{equation}  
Obviously, $Y = Y_1 + Y_2 + Y_3$ as well as $Y_1 = \lambda\ord_1$ and $Y_3 = \lambda^7\ord_2$, where we introduced the notation
\[ \ord_k = \ord\Big( \abs{\Theta}^k + \norm{{ Z}}^k + \abs{\scalar{E_-}{B} \Theta}^{k/2} + \abs{\scalar{E_-}{\cB^{-1} \cQ[{ Z}]}}^{k/2}\Big) \] 
and used the convention that $ R  =\ord_k$ means $\norm{ R } = \ord_k$. Here and in the following, the implicit constant in $\ord$ will always be independent of $\lambda$. 

From $\scalar{E_-}{Y} = 0$ and \eqref{eq:proof_abstract_cubic_Y_decom_spectral}, we obtain 
\[ \Theta_* \scalar{E_-}{B_*} = - \Theta \scalar{E_-}{B} - \scalar{E_-}{\cQ[Y]} = \scalar{E_-}{\cB^{-1} \cQ [{ Z}] - \Theta B- \cB^{-1} \cQ \cA[Y, Y]}.  \] 
Here, we used that $\cQ[Y] = \cB^{-1} \cA[Y,Y] - \cB^{-1} \cQ[{ Z}]$ by \eqref{eq:quadratic_equation} in the second step. 
This shows that $\Theta_*$ is the coefficient of $B_*$ in the definition of $Y_3$ up to replacing $Y$ by $Y_1$. 
Thus, we deduce that 
\begin{equation} \label{eq:proof_abstract_cubic_8} 
 \norm{\Theta_* B_* - Y_3} = \lambda^5\ord\big(\norm{Y_1}( \norm{Y_2} + \norm{Y_3}) + \norm{Y_2}^2 + \norm{Y_3}^2\big). 
\end{equation}

We insert $Y =Y_1 + Y_2 + Y_3$ into \eqref{eq:quadratic_equation} and obtain 
\begin{equation} \label{eq:proof_abstract_cubic_1} 
 \Theta \beta B + \Theta_* \beta_* B_* + \cB \cQ[Y_2] + (1 - \cQ)[{ Z}] = \cA[Y,Y]. 
\end{equation}
Applying $\cB^{-1}\cQ$ to the previous relation implies 
\begin{equation} \label{eq:proof_abstract_cubic_3} 
 \cQ[Y_2] = \cB^{-1} \cQ \cA[Y,Y]. 
\end{equation}
Hence, $\norm{\cQ[Y_2]} = \lambda^2\ord(\norm{Y_1}^2 + \norm{Y_2}^2 + \norm{Y_3}^2)$. As $Y_2 = \cQ[Y_2] + \Theta_* B_* - Y_3$, we get from \eqref{eq:proof_abstract_cubic_8} that 
\begin{equation} \label{eq:proof_abstract_cubic_9} 
 \norm{Y_2} = \lambda^5\ord(\norm{Y_1}^2 + \norm{Y_2}^2 + \norm{Y_3}^2). 
\end{equation}
The definition of $Y_1$, $Y_2$ and $Y_3$ as well as the conditions \eqref{eq:conditions_abstract_cubic} and $\norm{{ Z}} + \norm{Y}\leq c \lambda^{-12}$ yield $\norm{Y_2} \leq cC \lambda^{-5}$ for some universal constant $C>0$. 
Hence, \eqref{eq:proof_abstract_cubic_9} implies $Y_2 = \lambda^{19}\ord_2$ as $Y_1 = \lambda \ord_1$ and $Y_3 = \lambda^7 \ord_2$ if $c$ is chosen sufficiently small independently of $\lambda$. Therefore, we conclude from \eqref{eq:proof_abstract_cubic_3}, \eqref{eq:proof_abstract_cubic_8} and $\abs{\Theta} + \norm{{ Z}} = \ord(\lambda^{-10})$ that 
\begin{equation} \label{eq:proof_abstract_cubic_4} 
 Y_2 = \cB^{-1} \cQ \cA[Y_1, Y_1] + \lambda^{30}\ord_3.
\end{equation}
In particular, this implies \eqref{eq:expansion_Y_abstract_cubic} as $Y = Y_1 + Y_2 + Y_3$. 

Applying $\scalar{\wh{B}}{\genarg}$ to \eqref{eq:proof_abstract_cubic_1} yields 
\begin{equation} \label{eq:proof_abstract_cubic_6} 
 \Theta \beta \scalar{\wh{B}}{B} + \scalar{\wh{B}}{{ Z}} = \scalar{\wh{B}}{\cA[Y,Y]}. 
\end{equation}
Therefore, we now show \eqref{eq:abstract_cubic} by computing $\scalar{\wh{B}}{\cA[Y,Y]}$. Using $Y_1 = \lambda\ord_1$, $Y_2 = \lambda^{19} \ord_2$, $Y_3 = \lambda^{7}\ord_2$ and \eqref{eq:proof_abstract_cubic_4}, we deduce 
\begin{equation} \label{eq:proof_abstract_cubic_2} 
\scalar{\wh{B}}{\cA[Y,Y]}  = \scalar{\wh{B}}{\cA[Y_1, Y_1]} + 2\scalar{\wh{B}}{\cA[Y_1, Y_3]} + 2 \scalar{\wh{B}}{\cA[Y_1,  \cB^{-1} \cQ \cA[Y_1, Y_1]]} + \lambda^{40}\ord_4. 
\end{equation}
For a linear operator $\cal{K}_1$ and a bilinear operator $\cal{K}_2$ with $\norm{\cal{K}_1} + \norm{\cal K_2} \leq 1$, we have 
\[ \norm{\Theta \cal{K}_2[R,R]} \leq \delta \abs{\Theta}^3 + \delta^{-1/2} \norm{R}^3, \qquad \norm{\Theta^2 \cal{K}_1[R]} \leq \delta \abs{\Theta}^3 + \delta^{-2} \norm{R}^3 \] 
for any matrix $R \in \C^{2n \times 2n}$ since $\delta >0$. Therefore, as $\delta \in (0,1)$, we obtain 
\begin{equation} \label{eq:proof_abstract_cubic_5} 
 \scalar{\wh{B}}{\cA[Y_1,\cB^{-1} \cQ\cA[Y_1,Y_1]]} = \Theta^3 \scalar{\wh{B}}{\cA[B, \cB^{-1} \cQ\cA[B,B]]} + \lambda^7\ord(\delta\abs{\Theta}^3 + \delta^{-2} \norm{{ Z}}^3). 
\end{equation}
Similarly, we conclude 
\begin{equation} \label{eq:proof_abstract_cubic_7} 
\hspace*{-0.33cm}
\begin{aligned} 
\scalar{\wh{B}}{\cA[Y_1, Y_3]} = \, &  \frac{\Theta \scalar{\wh{B}}{\cA[\cB^{-1} \cQ[{ Z}], B_*]} \scalar{E_-}{B} - \Theta^3 \scalar{\wh{B}}{\cA[B,B_*]}\scalar{E_-}{\cB^{-1} \cQ\cA[B,B]} }{\scalar{E_-}{B_*}} \\ 
& + \lambda^{10}\ord\big(\delta \abs{\Theta}^3 + \delta^{-2} \norm{{ Z}}^3 + \delta^{-2} \abs{\scalar{E_-}{\cB^{-1} \cQ[{ Z}]}}^{3/2} + \abs{\Theta}^2( \abs{\scalar{E_-}{B}}^2 + \abs{\scalar{\wh{B}}{\cA[B,B_*]}}^2) \big).
\end{aligned} 
\end{equation} 

Finally, we expand the first term on the right-hand side of \eqref{eq:proof_abstract_cubic_2} using the definition of $Y_1$ from \eqref{eq:proof_abstract_cubic_decomposition} and 
insert \eqref{eq:proof_abstract_cubic_5} as well as \eqref{eq:proof_abstract_cubic_7} into \eqref{eq:proof_abstract_cubic_2} to compute the second and third term. 
We apply the result to \eqref{eq:proof_abstract_cubic_6} and obtain the cubic equation in \eqref{eq:abstract_cubic} with the coefficients detailed in \eqref{eq:coefficients_abstract_cubic}. 
\end{proof}

\section{Non-Hermitian perturbation theory}  \label{app:perturbation_th} 

In this section, we present for the reader's convenience the perturbation theory for a non-Hermitian operator $\cK$ on $\C^{2n\times 2n}$ with an isolated eigenvalue $\kappa$.  
We denote by $\PK$ the spectral projection of $\cK$ associated to $\kappa$ and set $\QK \defeq 1- \PK$.
We assume that the algebraic multiplicity of $\kappa$ coincides with its geometric multiplicity. 
In particular, this condition ensures that, for any $L \in \C^{2n\times 2n}$, we have 
\[ \cK [K] = \kappa K, \qquad \cK^*[\wh{K}] = \bar \kappa \wh{K},  \] 
where $K \defeq \PK [L]$ and $\wh{K} \defeq \PK^*[L]$. 
That is, $K$ and $\wh{K}$ are right and left eigenvectors of $\cK$ corresponding to $\kappa$, respectively. 

Throughout this section, we suppose that there is a constant $C>0$ such that 
\begin{equation} \label{eq:assumptions_cK} 
\norm{\cK} + \norm{(\cK-\kappa)^{-1} \QK} +\norm{\PK} \leq C. 
\end{equation}
Here and in the following, $\norm{\genarg}$ denotes the operator norm of operators on $\C^{2n\times 2n}$ induced by some norm $\norm{\genarg}$ on the matrices in $\C^{2n\times 2n}$.

\begin{lemma} \label{lem:perturbation_theory_2} 
There is $\eps >0$, depending only on $C$ from \eqref{eq:assumptions_cK},  such that the following holds. 

If $\cL$ is a linear map on $\C^{2n\times 2n}$ satisfying $\norm{\cK - \cL} \leq \eps$ and $\lambda$ is an eigenvalue of $\cL$ 
satisfying $\abs{\kappa - \lambda} \leq \eps$ then, for any right and left normalized eigenvectors $L$ and $\wh{L}$ of $\cL$ associated to $\lambda$, we have 
\begin{subequations} 
 \begin{align} 
\lambda \scalar{\wh{L}}{L} & = \kappa\scalar{\wh{K}}{K} + \scalar{\wh{K}}{\cD[K]} + \scalar{\wh{K}}{\cD \QK (2 \kappa - \cK) (\cK- \kappa)^{-2} \QK \cD [K]}  + \ord(\norm{\cD}^3),\label{eq:pert_th_eigenvalue}\\ 
L & =  K -(\cK- \kappa)^{-1}\QK \cD [K] + L_2 + \ord(\norm{\cD}^3) ,\label{eq:pert_th_eigenvector_right} \\ 
\wh{L} &= \wh{K} -(\cK^*- \bar \kappa)^{-1}\QK^* \cD^* [\wh{K}] + \wh{L}_2 + \ord(\norm{\cD}^3)\label{eq:pert_th_eigenvector_left} , 
\end{align} 
\end{subequations} 
where we used the definitions $\cD \defeq \cL - \cK$, $K \defeq \PK[L]$ and $\wh{K} \defeq \PK^*[\wh{L}]$ as well as  
\begin{align*}
L_2 & \defeq (\cK-\kappa)^{-1} \QK \cD (\cK-\kappa)^{-1} \QK \cD [K] - (\cK- \kappa)^{-2} \QK \cD \PK\cD [K], \\ 
\wh{L}_2 & \defeq (\cK^*-\bar \kappa)^{-1} \QK^* \cD^* (\cK^*-\bar \kappa)^{-1} \QK^* \cD^* [\wh{K}] - (\cK^*- \bar\kappa)^{-2} \QK^* \cD^*\PK^*\cD^* [\wh{K}]. 
\end{align*} 
\end{lemma}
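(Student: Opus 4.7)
The plan is to decompose $L = K + R$ with $K \defeq \PK[L]$ and $R \defeq \QK[L]$, and likewise for $\wh L$, then solve the resulting block equations by Neumann expansion using that $\cK - \omega$ restricted to $\ran \QK$ has a bounded inverse for $\omega$ near $\kappa$. Since the algebraic and geometric multiplicities of $\kappa$ coincide, $\cK$ acts as $\kappa$ on $\ran \PK$, hence $\PK \cK = \cK \PK = \kappa \PK$ and $\cK$ preserves $\ran \QK$. Inserting $L = K + R$ into $\cL[L] = \lambda L$ with $\cL = \cK + \cD$ and projecting with $\PK$ and $\QK$ separately yields the coupled system
\begin{equation*}
(\lambda - \kappa) K = \PK \cD[K + R], \qquad R = -(\cK-\lambda)^{-1}\QK\, \cD[K + R].
\end{equation*}

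For $\eps$ small enough (depending only on $C$ in \eqref{eq:assumptions_cK}) a short Neumann argument yields $\norm{(\cK-\lambda)^{-1}\QK} \lesssim 1$ together with the expansion $(\cK-\lambda)^{-1}\QK = (\cK-\kappa)^{-1}\QK + (\lambda-\kappa)(\cK-\kappa)^{-2}\QK + \ord((\lambda-\kappa)^2)$. Iterating the second block equation twice, inserting this Neumann expansion, and using $\lambda-\kappa = \ord(\norm{\cD})$ from the first block equation produces
\begin{equation*}
R = -(\cK-\kappa)^{-1}\QK \cD K + (\cK-\kappa)^{-1}\QK \cD (\cK-\kappa)^{-1}\QK \cD K - (\lambda-\kappa)(\cK-\kappa)^{-2}\QK \cD K + \ord(\norm{\cD}^3).
\end{equation*}
To match the form of $L_2$ stated in the lemma I would substitute the first-order identity $\PK \cD K = (\lambda - \kappa) K + \ord(\norm{\cD}^2)$ (read off from the $\PK$-equation after plugging in the leading expansion of $R$) to rewrite the last term as $(\cK-\kappa)^{-2}\QK \cD \PK \cD K + \ord(\norm{\cD}^3)$. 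This yields \eqref{eq:pert_th_eigenvector_right}; the identical argument applied to $\cL^*[\wh L] = \bar\lambda \wh L$ gives \eqref{eq:pert_th_eigenvector_left}.

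For the eigenvalue formula, cross terms in $\scalar{\wh L}{L} = \scalar{\wh K + \wh R}{K + R}$ vanish since $\wh K \in \ran \PK^*$ and $R \in \ran \QK$, so $\scalar{\wh L}{L} = \scalar{\wh K}{K} + \scalar{\wh R}{R}$. Plugging in the leading expansions of $R$ and $\wh R$ and using the adjoint relation produces $\scalar{\wh R}{R} = \scalar{\wh K}{\cD(\cK-\kappa)^{-2}\QK \cD K} + \ord(\norm{\cD}^3)$. Combining this with the projected eigenvalue identity $(\lambda - \kappa)\scalar{\wh K}{K} = \scalar{\wh K}{\cD K} - \scalar{\wh K}{\cD(\cK-\kappa)^{-1}\QK \cD K} + \ord(\norm{\cD}^3)$, multiplying by $\lambda$ (and absorbing $\lambda - \kappa = \ord(\norm{\cD})$ into the remainder), and repackaging the two resulting second-order terms via the algebraic identity $(2\kappa - \cK)(\cK-\kappa)^{-2}\QK = -(\cK-\kappa)^{-1}\QK + \kappa(\cK-\kappa)^{-2}\QK$, gives \eqref{eq:pert_th_eigenvalue}.

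The main technical delicacy is the bookkeeping required to convert the $(\lambda-\kappa)$-corrections appearing in the Neumann series for $(\cK-\lambda)^{-1}\QK$ into $\PK \cD \PK$-insertions via the projected eigenvalue equation; this step is necessary because $\ran \PK$ need not be one-dimensional, so one cannot simply cancel scalars. All remaining estimates are routine Neumann tail bounds controlled uniformly by the hypothesis \eqref{eq:assumptions_cK}.
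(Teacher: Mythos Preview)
Your proposal is correct and follows essentially the same approach as the paper. The paper also decomposes $L = K + \QK[L]$ and iterates the resulting equation $\QK[L] = -(\cK-\kappa)^{-1}\QK\cD[L] + \delta(\cK-\kappa)^{-1}\QK[L]$ (which is equivalent to your $(\cK-\lambda)^{-1}\QK$ formulation via a one-step Neumann expansion), then replaces the residual $\delta$ using the identity $\delta K = \PK\cD[K] + \ord(\norm{\cD}^2)$ exactly as you do. For the eigenvalue formula the paper computes $\lambda\scalar{\wh L}{L} = \scalar{\wh L}{(\cK+\cD)[L]}$ directly by inserting both eigenvector expansions, whereas you first split $\scalar{\wh L}{L} = \scalar{\wh K}{K} + \scalar{\wh R}{R}$ and then invoke the projected scalar identity; the two organizations differ only in bookkeeping and lead to the same second-order terms, which you correctly repackage via $(2\kappa-\cK)(\cK-\kappa)^{-2}\QK = -(\cK-\kappa)^{-1}\QK + \kappa(\cK-\kappa)^{-2}\QK$.
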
 

In the previous lemma and in the following, the implicit constants in the comparison relation $\lesssim$ and in 
$\ord$ depend only on $C$ from \eqref{eq:assumptions_cK}.  

\begin{proof} 
We first establish the relations \eqref{eq:pert_th_eigenvector_right} and \eqref{eq:pert_th_eigenvector_left} for the eigenvectors. 
The eigenvector relation $\cL[L] = \lambda L$ together with the definition $\delta \defeq \lambda - \kappa$ yield 
\begin{equation} \label{eq:delta_a} 
 \cK \QK [L] + \cD [L] = \delta K + \beta \QK [L ]. 
\end{equation}
Here, we also employed $\cK[K]= \kappa K$ and $ L = K + \QK [L]$. 

By applying $(\cK-\kappa)^{-1} \QK$ in \eqref{eq:delta_a}, we get 
\begin{equation} \label{eq:Q_b}
 \QK[L] = - (\cK-\kappa)^{-1} \QK \cD [L ] + \delta (\cK-\kappa)^{-1} \QK [L]. 
\end{equation}
This relation immediately implies 
\[ \norm{\QK [L]} \lesssim \norm{\cD} + \abs{\delta}\norm{\QK[L]}.  \]  
Thus, we obtain $\norm{\QK[L]} \lesssim \norm{\cD}$ by choosing $\eps$ sufficiently small. 
Hence, \eqref{eq:delta_a} implies 
\[ \abs{\delta} \norm{K} \leq \abs{\beta}\norm{\QK[L]} + \norm{\cK \QK [L]}  + \norm{\cD [L]} = \ord(\norm{\cD}).  \] 
Since $\norm{K} \geq \norm{L} - \norm{\QK[L]} \geq 1/2$ for sufficiently small $\eps$, we conclude 
\[ \abs{\delta} \lesssim \norm{\cD}. \] 

\noindent We start from $ L = K  + \QK[L]$ and iteratively replace $\QK[L]$ by using \eqref{eq:Q_b} to obtain 
\begin{equation} \label{eq:proof_expansion_b}
\hspace*{-0.20cm} 
\begin{aligned} 
 L & = K - (\cK- \kappa)^{-1} \QK \cD [L] + \delta (\cK- \kappa)^{-1} \QK[L] \\ 
 & = K - (\cK- \kappa)^{-1} \QK \cD [L] - \delta (\cK- \kappa)^{-2} \QK \cD [L]  + \ord(\norm{\cD}^3)  \\ 
& =  K -(\cK- \kappa)^{-1}\QK \cD [K] + (\cK-\kappa)^{-1} \QK \cD (\cK-\kappa)^{-1} \QK \cD [K] - \delta (\cK- \kappa)^{-2} \QK \cD [K] + \ord(\norm{\cD}^3)\\ 
& =  K -(\cK- \kappa)^{-1}\QK \cD [K] + (\cK-\kappa)^{-1} \QK \cD (\cK-\kappa)^{-1} \QK \cD [K] - (\cK- \kappa)^{-2} \QK \cD \PK\cD [K] + \ord(\norm{\cD}^3) . 
\end{aligned} 
\end{equation} 
Here, we also used that $\abs{\delta} + \norm{\QK[L]} =\ord(\norm{\cD})$. 
The last step in \eqref{eq:proof_expansion_b} follows from 
\[ \delta K = \cD [L] + \cK \QK [L] - \beta \QK [L] = \cD [K] + (\cK - \kappa)\QK[L] + \ord(\norm{\cD}^2) = \cD [K] - \QK \cD [K] +\ord(\norm{\cD}^2), \] 
which is a consequence of \eqref{eq:delta_a} and \eqref{eq:Q_b}.
This completes the proof of \eqref{eq:pert_th_eigenvector_right}. A completely analogous argument yields \eqref{eq:pert_th_eigenvector_left}. 

For the proof of \eqref{eq:pert_th_eigenvalue}, we first define $L_1 \defeq -(\cK- \kappa)^{-1}\QK \cD [K]$ and $\wh{L}_1 \defeq -(\cK^*- \bar \kappa)^{-1}\QK^* \cD^* [\wh{K}]$. 
Using \eqref{eq:pert_th_eigenvector_right} and \eqref{eq:pert_th_eigenvector_left} in the relation $\beta \scalar{\wh{L}}{L} = \scalar{\wh{L}}{(\cK +\cD)[L]}$ yields 
\[ \begin{aligned} 
\beta \scalar{\wh{L}}{L}  =\, & \kappa \scalar{\wh{K}}{K} + \kappa \scalar{\wh{L}_1}{K} + \kappa \scalar{\wh{L}_2}{K} + \scalar{\wh{K}}{\cK[L_1]} + \scalar{\wh{K}}{\cK[L_2]} + \scalar{\wh{L}_1}{\cK[L_1]} \\ 
& \qquad \qquad \qquad + \scalar{\wh{K}}{\cD[K]} + \scalar{\wh{L}_1}{\cD[K]}  + \scalar{\wh{K}}{\cD[L_1]} + \ord(\norm{\cD}^3) 
\end{aligned} \] 
since $L_1, \wh{L}_1 = \ord(\norm{\cD})$ and $L_2, \wh{L}_2 = \ord(\norm{\cD}^2)$. 
We remark that $\scalar{\wh{L}_1}{K} = \scalar{\wh{K}}{\cK[L_1]} = \scalar{\wh{L}_2}{K} = \scalar{\wh{K}}{\cK[L_2]} = 0$ since $\QK[K] = 0$ and $\QK^*[\wh{K}] = 0$. 
For the remaining terms, we get 
\[ \scalar{\wh{L}_1}{\cK[L_1]} = \scalar{\wh{K}}{\cD \cK ( \cK -\kappa)^{-2} \QK \cD [K] }, \qquad \qquad 
\scalar{\wh{L}_1}{\cD [K]} = \scalar{\wh{K}}{\cD[L_1]} = - \scalar{\wh{K}}{\cD ( \cK -\kappa)^{-1} \QK \cD [K]}. \] 
Therefore, a simple computation yields \eqref{eq:pert_th_eigenvalue}. This completes the proof of Lemma~\ref{lem:perturbation_theory_2}. 
\end{proof}

\bibliography{literature} 
\bibliographystyle{amsplain} 

\end{document}